\numberwithin{equation}{section}
\newcommand{\NN}{\mathds{N}}
\newcommand{\RR}{\mathbb{R}}
\newcommand{\Vol}{{\rm Vol}}
\newcommand{\cC}{\mathcal{C}}
\newcommand{\cH}{\mathcal{H}}
\newcommand{\cL}{\mathcal{L}}
\newcommand{\cN}{\mathcal{N}}
\newcommand{\cS}{\mathcal{S}}
\newcommand{\cV}{\mathcal{V}}
\newcommand{\e}{\text{e}}
\newcommand{\Tu}{\Tilde{u}}
\newcommand{\Ta}{\Tilde{a}}
\newcommand{\Tb}{\Tilde{b}}
\newcommand{\Tc}{\Tilde{c}}
\newcommand{\Tr}{\hat{r}}
\newtheorem{theorem}[equation]{Theorem}
\newtheorem{proposition}[equation]{Proposition}
\newtheorem{lemma}[equation]{Lemma}
\newtheorem{corollary}[equation]{Corollary}
\theoremstyle{definition}
\newtheorem{definition}[equation]{Definition}
\theoremstyle{remark}
\newtheorem{remark}[equation]{Remark}
\theoremstyle{remark}
\newtheorem{example}[equation]{Example}
\theoremstyle{remark}
\theoremstyle{remark}
\theoremstyle{remark}
\begin{document}

\thanks{}
\thanks{}

\title[The Nodal sets of parabolic equations]{The Nodal sets of solutions to parabolic equations}

\dedicatory{Dedicated to Gang Tian's 65th Birthday}

\author{Yiqi Huang}
\address[Yiqi Huang]{Department of Mathematics, MIT, 77 Massachusetts Avenue, Cambridge, MA 02139-4307, USA}
 \email{yiqih777@mit.edu}

\author{Wenshuai Jiang}
\address[Wenshuai Jiang]{School of Mathematical Sciences, Zhejiang University, Hangzhou 310058, China}
 \email{wsjiang@zju.edu}


\begin{abstract}
In this paper, we study the parabolic equations $\partial_t u=\partial_j\left(a^{ij}(x,t)\partial_iu\right)+b^j(x,t)\partial_ju+c(x,t)u$ in a domain of $\mathbb{R}^n$ under the condition that $a^{ij}$ are Lipschitz continuous. Consider the nodal set $Z_t=\{x: u(x,t)=0\}$ at a time $t$-slice. Simple examples show that the singular set $\cS_t=\{x: u(x,t)=|\nabla_x u|(x,t)=0\}$ may coincide with nodal set. This makes the methods used in the study of nodal sets for elliptic equations fail, rendering the parabolic case much more complicated. 

The current strongest results in the literature establish the finiteness of the $(n-1)$-dimensional Hausdorff measure of $Z_t$, assuming either $n=1$ by Angenent \cite{An} or that the coefficients are time-independent and analytic by Lin \cite{Lin91}. With general coefficients, the codimension-one estimate was obtained under some doubling assumption by Han-Lin \cite{HLparabolic} but only for space-time nodal sets. In the first part, we prove that $\cH^{n-1}(Z_t) < \infty$ in full generality, i.e. for any dimension, with time-dependent coefficients and with merely Lipschitz regular leading coefficients $a^{ij}$. 

In the second part, we study the evolutionary behavior of nodal sets. When $n=1$, it is proved by Angenent \cite{An} that the number of nodal points is non-increasing in time. For the $n$-dimensional case, we construct examples showing that measure monotonicity fails. In contrast, we prove dimension monotonicity, i.e., the Hausdorff dimension of the nodal set is non-increasing in time. This is the first monotonicity property for nodal sets in general dimensions. All the assumptions here are sharp.

\end{abstract}

\maketitle

\tableofcontents

\section{Introduction}

The study of nodal sets of functions has long been a prominent research topic, particularly concerning the zero sets of solutions to partial differential equations. In this paper, we systematically investigate the properties of nodal sets of general parabolic equations. Let us consider a solution $u(x,t)$ to the parabolic equations with time-dependent coefficients
\begin{equation}\label{e:ParabolicE}
    \partial_t u = \partial_i(a^{ij}(x,t) \partial_j u) + b^i(x,t) \partial_i u +c(x, t) u,
\end{equation}
in $Q_2:=\{(x,t): |x|<2, -4<t\le 0\} \subset \RR^{n} \times \RR$. 
where the coefficients $a^{ij}$ are elliptic and the coefficients $b, c$ are bounded,
\begin{equation}\label{e:assumption_aijbc}
    (1 + \lambda)^{-1} \delta^{ij} \leq a^{ij} \leq (1 + \lambda) \delta^{ij}, \quad |b^i|, |c| \leq \lambda.
\end{equation}

We assume $a^{ij}$ satisfies Lipschitz regularity condition with respect to the parabolic distance
\begin{align}\label{e:Lipschitzcondition}
    |a^{ij}(x,t)-a^{ij}(y,s)|\le \lambda \cdot d( (x,t), (y,s)) \equiv \lambda \cdot (||x-y||^2+|s-t|)^{1/2}.
\end{align}

\subsection{Finite Measure Theorem}

First we study the measure of nodal sets of $u(\cdot, t)$ for each fixed $t$.

Particularly, when $u(x,t)$ and all coefficients are time-independent, $u$ satisfies an elliptic equation, whose nodal set has been extensively studied. If the coefficients are analytic, Donnelly and Fefferman \cite{DF1}  obtained optimal upper and lower bound estimates for the Haudorff measure of nodal sets of eigenfunctions of Laplacian, and Lin \cite{Lin91} provided another proof for the optimal upper bound. The non-analytic conditions are much more complicated. To name a few, there are several explicit but not optimal upper bound estimates obtained by Hardt-Simon \cite{HS} and also by Han-Lin \cite{HL94a} with another proof, and also lower bounds by \cite{CM,SZ}. Recently, Logunov \cite{Loglower} proved the optimal lower bound for the measure, thereby resolving Yau's conjecture, and also obtained polynomial growth upper bounds \cite{Logupper}, see also \cite{Dong,DF3,HHL,HHN,HJ,KZZ,LinShen19,LTY,LMNN,NV} for related results.

As for the solutions to parabolic equations in domain of $\mathbb{R}^n$, the lack of unique continuation in the time direction makes controlling the local behavior of solutions much more involved. In \cite{Lin91}, Lin proved the measure of nodal sets at each time slice is finite for the equations with analytic and time-independent coefficients. Nonetheless, with general non-analytic and time-dependent coefficients, very few results have been discovered. In the case $n=1$, Angenent \cite{An} proved that the nodal point set at each time slice is discrete. For higher dimensions, Han-Lin \cite{HLparabolic} proved that the nodal set has finite space-time codimension-one Hausdorff measure under doubling assumption on the solution. However, at each time slice, controlling the measure of the nodal sets remains an open problem.

As our first result, we address this case in full generality: for any dimension, with time-dependent coefficients and with merely Lipschitz regular leading coefficients $a^{ij}$, we can prove that the $(n-1)$-dimensional Hausdorff measure is finite at each time slice. 

\begin{theorem}\label{t:mainLip}
    Let $u$ be a solution of \eqref{e:ParabolicE} in $Q_2$ satisfying \eqref{e:assumption_aijbc} and \eqref{e:Lipschitzcondition}. For any $t_0>-4$, if $u(\cdot, t_0)$ is not identically zero, then we have the following Hausdorff measure estimate
    \begin{align}\label{e:Finite_Measure_Thm}
        \cH^{n-1}(Z_{t_0} \cap B_1)<C(n,\lambda,\Lambda)<\infty,
    \end{align}
    where $Z_{t_0}=\{x\in B_2: u(x,t_0)=0\}$ and $\Lambda \equiv \int_{Q_2} u^2 dxdt/ \int_{B_{3/2} \times \{t_0\}} u^2 dx$. 
\end{theorem}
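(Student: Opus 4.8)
The plan is to follow the Almgren-type frequency function strategy adapted to the parabolic setting, but crucially to extract spatial consequences from a space-time frequency bound. First I would fix the time slice $t_0$ and consider the parabolic Almgren frequency function $N(x_0, r)$ (in the sense of Poon, or Han-Lin) centered at points $(x_0,t_0)$ with $x_0 \in Z_{t_0}\cap B_1$, built from the backward-heat-kernel-weighted Dirichlet energy and the weighted $L^2$ norm of $u$ over parabolic cylinders/annuli. Using the Lipschitz bound \eqref{e:Lipschitzcondition} on $a^{ij}$ and the bounds \eqref{e:assumption_aijbc} on $b,c$, one establishes the standard almost-monotonicity: $e^{Cr}(N(x_0,r)+C)$ is monotone nondecreasing in $r$, which gives a uniform frequency bound $N(x_0, r)\le N_0 = N_0(n,\lambda,\Lambda)$ for all $x_0\in B_1$ and $r\le r_0$, with the dependence on $\Lambda$ coming from the doubling estimate $\int_{Q_2}u^2 \le \Lambda \int_{B_{3/2}\times\{t_0\}}u^2$ that converts the global energy into a lower bound at the slice.

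The key difficulty — and the reason the naive elliptic argument fails, as the authors emphasize — is that a bounded parabolic frequency controls the space-time vanishing order but NOT the spatial vanishing order at the fixed slice; indeed $\cS_{t_0}$ can equal $Z_{t_0}$. So the second step is to \emph{upgrade} from frequency control to genuine spatial information at time $t_0$. Here I would prove a quantitative \emph{spatial} doubling inequality: for $x_0\in Z_{t_0}\cap B_1$,
\begin{equation}\label{e:plan-spatial-doubling}
  \int_{B_{2r}(x_0)} u^2(x,t_0)\,dx \le C(n,\lambda,\Lambda)\int_{B_{r}(x_0)} u^2(x,t_0)\,dx,
\end{equation}
obtained by combining the parabolic frequency bound with interpolation/energy estimates that trade time-derivative control (from the PDE) against the spatial $L^2$ growth — roughly, one integrates the frequency differential inequality and uses the equation to bound the time-oscillation of $\int_{B_r(x_0)}u^2(\cdot,t)\,dx$ near $t_0$ by the spatial Dirichlet energy, closing the loop. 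This is the heart of the matter and I expect it to be the main obstacle: one must prevent the estimate from degenerating as $\partial_t u$ becomes large, which is precisely where the mere Lipschitz (rather than analytic or time-independent) regularity of $a^{ij}$ must be used carefully via a freezing-coefficient argument at scale $r$, treating the $O(r)$-perturbation of $a^{ij}$ as a lower-order error absorbed by the monotonicity.

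Once \eqref{e:plan-spatial-doubling} holds uniformly, the rest is the now-standard elliptic-style machinery applied to the frozen slice $u(\cdot,t_0)$. A uniform spatial doubling index bounds the spatial frequency, hence by the usual covering/iteration argument (à la Hardt-Simon, Han-Lin, or Logunov-Malinnikova) one gets that $Z_{t_0}\cap B_1$ is $(n-1)$-rectifiable with an a priori Minkowski-content estimate: cover $Z_{t_0}\cap B_1$ by balls $B_{r}(x_i)$ and show the doubling index drops along a definite fraction of dyadic children unless the nodal set in that ball is contained in a thin neighborhood of a hyperplane, yielding $\cH^{n-1}(Z_{t_0}\cap B_1)\le C(n,\lambda,\Lambda,N_0)<\infty$. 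I would also need a short lemma ensuring the hypothesis ``$u(\cdot,t_0)$ not identically zero'' together with backward uniqueness-type control guarantees $\Lambda<\infty$ and that the local $L^2$ mass at the slice is positive on the relevant balls, so the frequency is well-defined everywhere needed; this is where the assumption is genuinely used and cannot be removed. The final constant depends only on $n,\lambda,\Lambda$ as claimed, since $N_0$ itself depends only on these.
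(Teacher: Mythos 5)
There is a genuine gap, and it sits exactly at the point your proposal calls ``now-standard elliptic-style machinery applied to the frozen slice.'' After you have a frequency bound and even the spatial doubling inequality at the slice, the function $u(\cdot,t_0)$ does not satisfy any elliptic PDE, so none of the covering/iteration arguments you invoke (Hardt--Simon, Han--Lin, Logunov--Malinnikova) can be run on it: all of those arguments use the equation on the slice in an essential way (comparison with harmonic polynomials, structure of their zero sets, sign/maximum-principle input), not merely the doubling index. Bounded spatial doubling by itself does not constrain the nodal set to codimension one: for instance $f(x)=\mathrm{dist}(x,K)$ for a self-similar fractal $K$ of dimension strictly between $n-1$ and $n$ has uniformly bounded $L^2$ doubling at points of $K$, yet $Z_f=K$ has infinite $\cH^{n-1}$ measure. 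This is precisely the obstruction the paper highlights (the singular set can coincide with the nodal set, and $\hat P_d(y)$ ``may not satisfy any equation''), so your step 3 as stated would fail. Your step 2 is also only sketched -- slice doubling under Lipschitz coefficients is a Carleman-estimate result (Escauriaza--Fern\'andez--Vessella), not an easy consequence of integrating a frequency differential inequality -- but that part is at least consistent with what is known and with how the paper itself imports doubling (Lemma \ref{l:doublingEFV}, Corollary \ref{c:doubling}).

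What replaces your step 3 in the paper is its main technical content: from the almost monotonicity of a localized Poon-type frequency (proved by caloric approximation, not by an $e^{Cr}(N+C)$ computation, since the coefficients are only Lipschitz/H\"older in parabolic distance), the authors prove a \emph{quantitative uniqueness of tangent maps} via a Green's function expansion (Theorem \ref{t:mainuniqueness}), which yields that at the fixed slice the rescalings $\hat u_{x,r}$ are uniformly close, across a whole range of scales with a single limit, to restrictions of homogeneous caloric polynomials (Theorem \ref{t:quantiuniqueslice}). This ``closeness to homogeneous polynomials at all scales'' is the structural substitute for the missing elliptic equation; it feeds a cone-splitting principle in the slice and a neck-region decomposition (Theorem \ref{t:Neck_decompose}) bounding the quantitative strata, with $Z_{t_0}\cap B_1\subset \cS^{n-1}_\epsilon$, which gives Theorem \ref{t:mainHolder}; the Lipschitz case Theorem \ref{t:mainLip} then follows by verifying the doubling hypothesis \eqref{e:doublingat0} via \cite{EFV}. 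To repair your outline you would need to supply an analogue of this uniqueness-of-tangent-maps step (or some other mechanism converting parabolic frequency control into polynomial structure of the slice at every scale); doubling plus a generic covering argument is not enough.
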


\begin{remark}
Note that here the Lipschitz regularity \eqref{e:Lipschitzcondition} is a sharp assumption. If one only assumes H\"older assumption on $a^{ij}$, i.e. $|a^{ij}(x,t)-a^{ij}(y,s)|\le \lambda \cdot  |(x,t), (y,s)|^{\alpha}$ for $\alpha\in (0,1)$, then 
there exists some nontrivial solution that vanishes on some open subset, see \cite{Man, Miller, P}.
\end{remark}

\begin{remark}\label{r:Thm_holds_Inequality}
    More generally the same result \eqref{e:Finite_Measure_Thm} holds for $u$ satisfying $|\partial_t u-  \partial_i(a^{ij}(x,t) \partial_j u)| \le \lambda(|\nabla u| + |u|)$ with \eqref{e:assumption_aijbc} and \eqref{e:Lipschitzcondition}. The proof is verbatim with minor modifications. 
\end{remark}

\begin{remark}
    It is proved that the unique continuation property holds for solution $u$ to \eqref{e:ParabolicE} satisfying \eqref{e:assumption_aijbc} and \eqref{e:Lipschitzcondition}, i.e.,  if $u(\cdot, t_0)$ has infinite vanishing order at $x_0\in B_2$ for some fixed $t_0>-4$, then $u(\cdot,t_0)\equiv 0$ at time $t_0$, see \cite{Chen98a,EsFr,EV,Linunique}.
\end{remark}

\begin{remark}
    Following the same lines as in \cite{HJ} we can improve the Hausdorff estimates \eqref{e:Finite_Measure_Thm} to Minkowski estimates $\Vol(B_r(Z_{t_0} \cap B_1)) \le C(n,\lambda,\Lambda) r$. 
\end{remark}


Note that here our estimate is local. Hence one can directly apply it to the global solutions in $\mathbb{R}^n\times \mathbb{R}$. We say a solution $u(x,t)$ of \eqref{e:ParabolicE} in $\mathbb{R}^n\times (-4,0]$ satisfies the backward uniqueness property if the following holds: $$ \text{(BUP):  ~~~If $u(\cdot,0)\equiv 0$ then $u\equiv 0$ in $\mathbb{R}^n\times (-4,0].$} ~$$
As a direct consequence, Theorem \ref{t:mainLip} gives the following theorem.
\begin{theorem}\label{t:mainLipBU}
     Let $u$ be a nonzero solution of \eqref{e:ParabolicE} in $\mathbb{R}^n\times (-4,0]$ satisfying \eqref{e:assumption_aijbc} and \eqref{e:Lipschitzcondition} and (BUP). Then for any $t>-4$ and any $x\in \mathbb{R}^n$ we have 
     \begin{align}
         \cH^{n-1}(Z_t\cap B_1(x))<\infty.
     \end{align}
\end{theorem}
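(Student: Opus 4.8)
\textbf{Proof proposal for Theorem \ref{t:mainLipBU}.}

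The plan is to reduce the global statement to the local estimate of Theorem \ref{t:mainLip} by verifying its two hypotheses at the relevant scales and then using translation and scaling invariance. First I would fix $x\in\RR^n$ and $t>-4$; after a translation in the spatial variable we may assume $x=0$, and the coefficients \eqref{e:assumption_aijbc} and \eqref{e:Lipschitzcondition} are preserved under spatial translations (the Lipschitz constant $\lambda$ is unchanged). To land inside a ``$Q_2$''-type cylinder whose final time is $0$ we perform a time translation by $t$; since $t>-4$ we can choose a parabolic cylinder $Q_2=\{|y|<2,\,-4<s\le 0\}$ centered appropriately in time so that the slice $\{s=0\}$ corresponds to the original time $t$, possibly after an intermediate parabolic rescaling $(y,s)\mapsto(\rho y,\rho^2 s)$ with $\rho$ small enough that $[t-4\rho^2,t]\subset(-4,0]$. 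Under such a rescaling \eqref{e:assumption_aijbc} is invariant and \eqref{e:Lipschitzcondition} only improves (the Lipschitz constant scales by $\rho\le 1$), and the equation \eqref{e:ParabolicE} retains its form with new bounded lower-order coefficients still bounded by $\lambda$.

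The second, and really the only substantive, point is to check the nondegeneracy condition ``$u(\cdot,t_0)$ is not identically zero'' required by Theorem \ref{t:mainLip}, applied to the rescaled solution on its slice $\{s=0\}$, i.e. that $u(\cdot,t)\not\equiv 0$ on the ball $B_{3/2}$ used in the definition of $\Lambda$. Here is where (BUP) enters: if $u(\cdot,t)$ vanished on the whole ball, one would first invoke the unique continuation property in space (valid under \eqref{e:assumption_aijbc} and \eqref{e:Lipschitzcondition}, as recalled in the remark after Theorem \ref{t:mainLip}, via \cite{Chen98a,EsFr,EV,Linunique}) to conclude $u(\cdot,t)\equiv 0$ on all of $\RR^n$; then (BUP) applied with the time origin shifted to $t$ — valid since $u$ solves \eqref{e:ParabolicE} on $\RR^n\times(-4,0]\supset\RR^n\times(-4,t]$, and the interval $(-4,t]$ can itself be renormalized to $(-4,0]$ by an affine change of the time variable that preserves the structural assumptions — forces $u\equiv 0$ on $\RR^n\times(-4,t]$. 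By the (forward) uniqueness for the Cauchy problem for \eqref{e:ParabolicE} this propagates to $u\equiv 0$ on $\RR^n\times(-4,0]$, contradicting that $u$ is a nonzero solution. Hence $u(\cdot,t)\not\equiv 0$ on $B_{3/2}$ and the quantity $\Lambda=\int_{Q_2}u^2\,/\,\int_{B_{3/2}\times\{t\}}u^2$ is a finite positive number.

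With both hypotheses verified, Theorem \ref{t:mainLip} yields $\cH^{n-1}(Z_t\cap B_1(0))<C(n,\lambda,\Lambda)<\infty$ for the rescaled solution, and undoing the spatial translation and the parabolic rescaling — which are bi-Lipschitz maps on $\RR^n$ and therefore change $\cH^{n-1}$ only by a bounded factor, and map balls to balls — gives the claimed finiteness $\cH^{n-1}(Z_t\cap B_1(x))<\infty$ for the original solution. I expect the only delicate bookkeeping to be the normalization of the time interval in the application of (BUP): one must make sure that shifting the terminal time from $0$ to $t$ and simultaneously keeping the cylinder's temporal length equal to $4$ is compatible, which is why the intermediate parabolic rescaling by a small $\rho$ is included; everything else is a routine invariance check. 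Since the constant $C$ depends on $\Lambda$, which depends on the chosen cylinder and the point $x$, the resulting bound is not uniform in $x$, consistent with the local nature of Theorem \ref{t:mainLip}.
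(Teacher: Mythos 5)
Your overall strategy --- translate and parabolically rescale $u$ around $(x,t)$ so that Theorem \ref{t:mainLip} applies on a cylinder $B_{2\rho}(x)\times(t-4\rho^{2},t]\subset\RR^{n}\times(-4,0]$, after checking that the slice $u(\cdot,t)$ is nontrivial --- is exactly the localization the paper has in mind when it calls Theorem \ref{t:mainLipBU} a direct consequence of Theorem \ref{t:mainLip} (no further details are given there). One minor bookkeeping point: the rescaled application of Theorem \ref{t:mainLip} only controls $\cH^{n-1}(Z_t\cap B_{\rho}(x))$ with $\rho^{2}<(t+4)/4$; undoing the rescaling does not enlarge the ball, so to reach $B_{1}(x)$ you should cover $B_{1}(x)$ by finitely many balls of radius $\rho$ and apply the estimate at each center.

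The substantive gap is in your verification that $u(\cdot,t)\not\equiv 0$. First, (BUP) as defined concerns only the terminal slice $t=0$; an affine reparametrization of time does not convert the assumed property ``backward uniqueness from time $0$'' into ``backward uniqueness from time $t$'' --- that is a strictly stronger hypothesis on $u$, not a consequence of rescaling. Second, and more seriously, the step ``by forward uniqueness of the Cauchy problem, $u\equiv 0$ on $\RR^{n}\times(-4,t]$ propagates to $(-4,0]$'' fails without a growth restriction: Tychonoff's construction gives a nonzero solution of the heat equation on $\RR^{n}\times(-4,0]$ that vanishes identically for all $t\le -2$, and the paper itself notes that without growth control solutions of the Cauchy problem are not unique. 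That example satisfies \eqref{e:assumption_aijbc}, \eqref{e:Lipschitzcondition} and (BUP) vacuously (its $t=0$ slice is nonzero), so the nondegeneracy of $u(\cdot,t)$ for $t<0$ simply cannot be deduced from the literal hypotheses; it requires the intended stronger reading indicated by the discussion after the theorem (e.g.\ the setting of \cite{WuZh}, where $|u|\le Be^{A|x|^{2}}$ and backward uniqueness holds from every terminal time, so that vanishing of any time slice forces $u\equiv 0$). Under that reading your argument (spatial unique continuation on the slice, then uniqueness in time) and the rest of the localization are fine; as written, the appeal to unrestricted forward uniqueness and the time-shifted use of (BUP) are the steps you need to replace by an explicit invocation of that strengthened backward-uniqueness/growth hypothesis.
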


The backward uniqueness property for parabolic equations has been extensively investigated, see for instance \cite{CM22,DP,ESV,Linunique,LM,Poon,WuZh}. From \cite{WuZh}, we know that the BUP holds if $|\nabla_xa^{ij}(x,t)|\le \frac{\lambda}{1+|x|}$ and $|\partial_t a^{ij}(x,t)|\le \lambda$ and $|u(x,t)|\le B e^{A|x|^2}$ for some $A,B>0$. Therefore,  Theorem \ref{t:mainLipBU} recovers the main theorem of Angenent \cite{An} and generalizes it to higher dimensions. Moreover, while Angenent \cite{An} proved the number of zero point set is locally finite, Theorem \ref{t:mainLipBU} gives an explicit and effective control of the number of zero points.

Actually, our method also works for equations with H\"older continuous leading coefficients. For any fixed $0<\alpha<1$, assume $a^{ij}$ satisfies H\"older condition with respect to parabolic distance 
\begin{align}\label{e:Holdercondition}
    |a^{ij}(x,t)-a^{ij}(y,s)|\le \lambda \left(||x-y||^2+|s-t|\right)^{\alpha/2}.
\end{align}

Note that in this case the unique continuation property fails, the following doubling assumption at time $t=0$ must be imposed to exclude the bad behaviors that the solution vanishes in some open ball. See \cite{HLparabolic} for similar growth assumption and see \cite{HJ} for the elliptic case.
\begin{align}\label{e:doublingat0}
    \sup_{Q_{2r}(x,0) \subset Q_2}  \frac{\fint_{ Q_{2r}(x,0)} u^2}{\fint_{Q_{r}(x,0)} u^2}  \leq \Lambda,   
\end{align}
where $Q_r(x,0)=\{(y,s): |x-y|<r, -r^2<s\le 0\}.$

\begin{theorem}\label{t:mainHolder}
     Let $u$ be a solution of \eqref{e:ParabolicE} in $Q_2$ satisfying \eqref{e:assumption_aijbc} \eqref{e:Holdercondition}  and \eqref{e:doublingat0}. Assume $u(\cdot, 0)$ is not identically zero, then we have the following Hausdorff measure estimate
    \begin{align}
        \cH^{n-1}(Z_{0} \cap B_1)<C(n,\alpha,\lambda,\Lambda)<\infty,
    \end{align}
    where $Z_{0}=\{x\in B_2: u(x,0)=0\}$. 
\end{theorem}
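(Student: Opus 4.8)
The plan is to run the proof of Theorem~\ref{t:mainLip} essentially verbatim, pinning down the single place where the Lipschitz hypothesis \eqref{e:Lipschitzcondition} is genuinely used and substituting the doubling hypothesis \eqref{e:doublingat0} for it there. That proof is organized around an Almgren--Poon type parabolic frequency function $N(x_0,t_0;r)$ attached to $u$ along the slice $\{t=t_0\}$, and the Lipschitz condition enters \emph{only} in the proof that $r\mapsto e^{C\lambda r}N(x_0,t_0;r)$ is almost monotone; from that one extracts a uniform bound $N(x_0,t_0;r)\le N_0(n,\lambda,\Lambda)$ and hence the scale-invariant doubling
\begin{align*}
\fint_{Q_{2r}(x_0,t_0)}u^2 \;\le\; C_0\,\fint_{Q_{r}(x_0,t_0)}u^2,\qquad Q_{2r}(x_0,t_0)\subset Q_2,
\end{align*}
with $C_0=C_0(n,\lambda,\Lambda)$. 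I would then check that every subsequent step of that proof — the reduction at the frozen slice to a function whose normalized rescalings converge to harmonic polynomials of degree $\le N_0$ for the constant-coefficient operator $\partial_i\!\big(a^{ij}(x_0,t_0)\partial_j\,\cdot\,\big)$, the quantitative stratification of the points of $Z_{t_0}$ according to how close the nodal set is to a hyperplane at a definite scale, and the Naber--Valtorta / Logunov-type covering that converts the frequency bound into $\cH^{n-1}(Z_{t_0}\cap B_1)<\infty$ — uses only (i) this doubling and (ii) interior Schauder-type estimates, for which merely Hölder regular $a^{ij}$ is more than enough.

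So the one genuinely new point is the doubling itself. In the Lipschitz case it is a theorem; in the Hölder case it is false in general (by Miller's example \cite{Miller} unique continuation fails, so no unconditional doubling can hold), which is exactly why \eqref{e:doublingat0} is imposed, and it is imposed only at $t=0$ because only the slice $Z_0$ is of interest. Granting \eqref{e:doublingat0} — precisely the center- and scale-invariant $L^2$-doubling on parabolic cubes $Q_r(x,0)$ with constant $\Lambda$ — I would first iterate it across dyadic scales, using a Caccioppoli/energy estimate and nothing else (no monotonicity), to obtain a pointwise vanishing-order bound: $u(\cdot,0)$ vanishes to order at most $N_0(n,\alpha,\lambda,\Lambda)$ at each point of $B_1$. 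Next, for $x_0\in Z_0\cap B_1$ and $r\le r_0$, I would normalize $u(x_0+r\,\cdot\,,0)$ by its $L^2(B_r)$ norm and show that the rescaled functions are, up to an $O(r^\alpha)$ error, harmonic polynomials of degree $\le N_0$ for the operator with coefficients frozen at $(x_0,0)$, the term $\partial_t u$ surviving only as a perturbation that vanishes under the rescaling. Finally the stratification-and-covering argument from Theorem~\ref{t:mainLip} carries over word for word and yields $\cH^{n-1}(Z_0\cap B_1)\le C(n,\alpha,\lambda,\Lambda)$.

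The hard part will be this ``parabolic-to-elliptic'' reduction on the frozen slice. There is no pointwise control of $\partial_t u(\cdot,0)$, so elliptic behaviour of $u(\cdot,0)$ at small scales cannot be read off from the equation; instead one must combine the $L^2$-in-time information with the assumed doubling \eqref{e:doublingat0} inside local parabolic estimates to show the time derivative leaves no trace after rescaling. This is the mechanism that makes the quantitative doubling indispensable (and it is the same mechanism that powers the Lipschitz case once the frequency monotonicity has produced the doubling); essentially all of the work sits here, after which the Hölder exponent $\alpha$ intervenes only through the harmless $O(r^\alpha)$ error terms replacing the $O(r)$ ones of the Lipschitz argument.
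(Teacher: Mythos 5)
Your high-level diagnosis, that the only role of the Lipschitz hypothesis is to supply a doubling property and that once doubling is assumed the H\"older regularity of $a^{ij}$ suffices for everything else, does match the paper's architecture, but you have it inverted: there is no independent proof of Theorem \ref{t:mainLip} to be recycled, either in this paper or in the prior literature (the slice estimate was open there). In the paper the H\"older case with the doubling hypothesis \eqref{e:doublingat0} \emph{is} the main theorem, proved by building the entire machinery of Sections \ref{s:2_HE}--\ref{s:6_NR} (localized Poon frequency and its almost monotonicity, Theorem \ref{t:Frequency_almost_monotone}; quantitative uniqueness of tangent maps, Theorems \ref{t:Uniqueness} and \ref{t:quantiuniqueslice}; cone-splitting, Theorem \ref{t:Almost_Splitting}; neck decomposition, Theorem \ref{t:Neck_decompose}), and Theorem \ref{t:mainLip} is afterwards a corollary via the Escauriaza--Fern\'andez--Vessella doubling estimate (Lemma \ref{l:doublingEFV}). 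So the instruction to ``run the proof of Theorem \ref{t:mainLip} essentially verbatim'' defers essentially all of the substance to an argument that does not exist independently of the statement you are asked to prove, and your proposal does not supply a replacement for it.

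Moreover, the one concrete mechanism you do propose for the frozen slice is incorrect. Parabolic rescaling does not make the time derivative ``leave no trace'': the blow-up limits at a point $(x_0,0)$ are slices of homogeneous \emph{caloric} polynomials, not harmonic polynomials of the frozen elliptic operator $\partial_i\big(a^{ij}(x_0,0)\partial_j\,\cdot\,\big)$. For instance $P(x,t)=x_1^2+2t$ is caloric and its $t=0$ slice is $x_1^2$, which satisfies no elliptic equation and has identically vanishing gradient on its zero set; this is exactly the phenomenon ($\cS_t$ may coincide with $Z_t$) that the paper identifies as the reason elliptic methods fail, and Theorem \ref{t:quantiuniqueslice} accordingly only produces a homogeneous polynomial $\hat P_d$ that ``may not satisfy any equation.'' The paper's finiteness of $\cH^{n-1}(Z_0\cap B_1)$ is then obtained not by an elliptic covering of nodal sets of harmonic polynomials but from the inclusion $Z_0\cap B_1\subset \cS^{n-1}_\epsilon$ (at a point where the slice is almost $n$-symmetric, i.e.\ close to a nonzero constant, $u(\cdot,0)$ cannot vanish) combined with the neck-decomposition bound on $\cH^{n-1}(\cS^{n-1}_\epsilon)$; and the input that covering needs, namely closeness to a \emph{single} homogeneous polynomial uniformly over a range of scales, is exactly the quantitative uniqueness proved by the Green's-function expansion argument of Section \ref{s:4_Uniq}. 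Your proposal acknowledges that ``essentially all of the work sits here'' but offers no argument for it, and the argument it gestures at (harmonicity of the slice blow-ups) is false; this is a genuine gap.
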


\begin{remark}
    Recall that Han-Lin \cite{HLparabolic} proved the space-time estimate for nodal set by assuming the doubling assumption for all subballs, while here we only require the doubling at time $t=0$ to get the measure estimate at this time slice.
\end{remark}

The main body of this paper will be devoted to proving the more general H\"older case, i.e. Theorem  \ref{t:mainHolder}. Then Theorem \ref{t:mainLip} is a direct consequence once the doubling property \ref{e:doublingat0} is established under the Lipschitz assumption.

\subsection{Dimension Monotonicity Theorem}

Next we study the evolutionary behavior of nodal sets over time. According to the work of Angenent \cite{An}, when $n=1$ and additionally $a_t$ is bounded,  the number of zero points of $u(x,t)$, which is a solution to $u_t = a(x,t) u_{xx} + b(x,t) u_x + c(x,t) u$ with Dirichlet boundary condition, is non-increasing in $t$. This result suggests that in principle the complexity of solutions to parabolic equations decays over time, naturally raising interest in the behavior of such solutions in higher dimensions.  However, as far as we know, nothing has been established for general dimensions yet.

In this subsection, we continue the study of evolutionary properties of parabolic solutions in higher dimensions. Unlike the case $n=1$ in \cite{An}, in general, one cannot expect the Hausdorff measure of the nodal set to be non-increasing. We construct examples demonstrating that the Hausdorff measure of the nodal sets can increase over time, even for solutions to heat equations when $n\ge 2$. 

\begin{example}
 Let $f(s)=\frac{3}{2}s-\frac{1}{2}s^2-1$ when $0\le s\le 3/2$ and $f(s)\equiv 1/8$ when $s\ge 3/2$. Then $f$ is $C^1$-smooth and $s=1$ is the only zero point. Consider the heat solution $$u(x,t)=\int_{\mathbb{R}^2}f(|y|^2)\rho_t(x,y)dy.$$
It is easy to check that $u(x,t)=u(x',t)$ for $|x|=|x'|$. Denote $f(r^2,t)=u(x,t)$ with $|x|=r$. Then $f(s,0)=f(s)$,  noting that $f(1)=0$ and $f'(1)=1/2>0$, $\Delta f(|y|^2)|_{\{|y|=1\}}=-2<0$, we can see that $\partial_sf(1,t)>1/4>0$, $\partial_tf(1,t)<-1<0$ for $t$ sufficiently small. This implies $f(1,t)<0$ and $f(r^2_t,t)=0$ for some $r_t>1$. Hence the nodal set $Z_t=\{x: |x|=r_t\}$ and $\cH^{1}(Z_t)=2\pi r_t$ is increasing near $0$.
\end{example}

\begin{example}
On the other hand, one can also construct solution with Dirichlet boundary. Let $f(s)=\frac{3}{2}s-\frac{1}{2}s^2-1$ when $0\le s\le 3/2$ and $f(s)\equiv 1/8$ when $3/2\le s\le 2$ and $f(s)=\frac{1}{24}(s^2-8s+15)$ when $2\le s\le 3$. By smoothing $f$ near $s=3/2$ and $s=2$, we get a smooth $\tilde{f}$. Consider the solution $u(x,t)$ of $(\partial_t-\Delta)u=0$ in $B_3(0^2)\times [0,1]$ with Dirichlet boundary $u=0$ on $\partial B_3\times [0,1]$ and $u(x,0)=\tilde{f}(|x|^2)$. One can check as above that near $t=0$ the Hausdorff measure of $Z_t=\{x: u(x,t)=0, |x|<3\}$ is increasing. Actually, one can use Dirichlet heat kernel estimate to show that $u>0$ near the boundary (\cite{MS}). Hence we only need to consider the situation near $|x|=1$.
\end{example}

These examples reveal that the evolutionary behavior of parabolic solutions is too complicated to expect any monotonicity in the size of nodal sets in higher dimensions. However, while measure monotonicity appears unrealistic, dimension monotonicity is a more practical expectation. In our second result, we provide an affirmative answer to this dimension monotonicity. Similar to the case when $n=1$, if we assume $a^{ij}$ is Lipschitz in the time direction, i.e., $|a^{ij}(x,t)-a^{ij}(x,s)|\le \lambda |s-t|$, then we can show that the Hausdorff dimension of nodal sets is non-increasing over time. 

\begin{theorem}\label{t:non-increasingNodal}

    Let $u$ be a {nonzero} solution of \eqref{e:ParabolicE} in $Q_2$ with Dirichlet boundary condition $u=0$ on $\{|x|=2,-4<t\le 0\}$. Assume that the coefficients satisfy \eqref{e:assumption_aijbc} and \eqref{e:Lipschitzcondition} and $a^{ij}$ is Lipschitz in time direction. Then the Hausdorff dimension of the nodal set $Z_t:=\{x: u(x,t)=0, |x|<2\}$ is non-increasing,
    \begin{align}
        \dim Z_t\le \dim Z_s\le n-1
    \end{align}
    for any $0\ge t\ge s>-4$.
\end{theorem}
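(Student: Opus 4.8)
The plan is to combine Theorem~\ref{t:mainLip} with the parabolic strong maximum principle. The decisive observation is that a nodal slice whose Hausdorff dimension is strictly below $n-1$ cannot disconnect $B_2$, so at such a time the solution is one-signed and, by the strong maximum principle together with the Dirichlet condition, strictly one-signed — hence with empty nodal set — at all later times.

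First I would record two preliminary facts. \emph{(a) No slice vanishes identically.} Since $a^{ij}$ is Lipschitz in the time variable, the Dirichlet problem in $Q_2$ has the backward uniqueness property (by logarithmic convexity / frequency methods, cf.\ the references in the introduction), and forward uniqueness is classical; hence a nonzero solution $u$ has $u(\cdot,t)\not\equiv 0$ for every $t\in(-4,0]$. \emph{(b) Codimension-one bound at every time.} The estimate in Theorem~\ref{t:mainLip} is local, so by rescaling onto small interior parabolic sub-cylinders of $Q_2$ and covering, one gets, using \emph{(a)} and unique continuation, that $\cH^{n-1}(Z_t\cap B_r)<\infty$ for every $t\in(-4,0]$ and every $r<2$; in particular $\dim Z_t\le n-1$. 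This already yields the inequality $\dim Z_s\le n-1$ in the statement.

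For the monotonicity, fix $0\ge t\ge s>-4$; writing $t_0=t$, $s_0=s$ we may assume $s_0<t_0$, so $s_0<0$ and $u(\cdot,s_0)$ is continuous on $B_2$ with relatively closed nodal set $Z_{s_0}$. If $\dim Z_{s_0}=n-1$, then $\dim Z_{t_0}\le n-1=\dim Z_{s_0}$ by \emph{(b)}. If $\dim Z_{s_0}<n-1$, then $\cH^{n-1}(Z_{s_0})=0$, and a relatively closed subset of $B_2$ of vanishing $(n-1)$-dimensional Hausdorff measure cannot disconnect $B_2$ (e.g., in any fixed direction almost every line avoids it entirely); hence $B_2\setminus Z_{s_0}$ is connected. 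Being continuous and nowhere zero on this connected set, $u(\cdot,s_0)$ has a single sign on $B_2$, say $u(\cdot,s_0)\ge 0$, and $u(\cdot,s_0)\not\equiv 0$ by \emph{(a)}. Invoking the Dirichlet condition $u=0$ on $\{|x|=2\}$, the data of $u$ on the parabolic boundary of $B_2\times(s_0,t_0]$ is nonnegative, so $u\ge 0$ there by the weak maximum principle, and the strong maximum principle for uniformly parabolic operators with bounded lower-order coefficients then forces $u>0$ throughout $B_2\times(s_0,t_0]$. Therefore $Z_{t_0}=\emptyset$ and $\dim Z_{t_0}\le\dim Z_{s_0}$ trivially. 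In all cases $\dim Z_t\le\dim Z_s\le n-1$.

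Granting Theorem~\ref{t:mainLip}, the only genuine work is bookkeeping: justifying the regularity used (interior parabolic regularity covers every interior statement, and the weak maximum principle needs the lateral Dirichlet datum only in the trace sense), and quoting backward uniqueness in precisely this generality — time-dependent bounded lower-order terms with merely Lipschitz-in-time leading coefficients. Conceptually, the mechanism ``a nodal slice of dimension $<n-1$ fails to separate $B_2$, so $u$ becomes strictly one-signed afterwards'' is exactly what makes the dimension monotone while the measure, by the examples above, is not; it is also where the Lipschitz-in-time hypothesis is sharp, since without backward uniqueness $u(\cdot,t_0)$ could vanish identically at an interior time $t_0$, where $\dim Z_{t_0}=n>\dim Z_{s_0}$.
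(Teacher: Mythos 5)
Your proposal is correct and follows essentially the same route as the paper: backward (and forward) uniqueness to rule out identically vanishing slices, Theorem \ref{t:mainLip} to get $\dim Z_t\le n-1$ at every time, the observation that $\dim Z_s<n-1$ forces $u(\cdot,s)$ to be one-signed, and then the weak plus strong maximum principle (the paper's Theorem \ref{t:maximum}) to conclude $Z_t=\emptyset$ for later times. The only cosmetic difference is that you obtain one-signedness by noting that a relatively closed set with $\cH^{n-1}$-measure zero cannot disconnect $B_2$, whereas the paper proves the equivalent contrapositive (Lemma \ref{l:dimlowerboundNodal}: a sign change forces $\dim Z\ge n-1$) via an explicit line-segment projection argument; both hinge on the same almost-every-line reasoning.
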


\begin{remark}
    It should be noted that the Lipschitz assumption in the time direction is sharp, as it ensures the unique continuation property for backward uniqueness of parabolic equations. According to the counter-examples in \cite{Man, Miller}, if $a^{ij}$ is only H\"older continuous in time, i.e. $|a^{ij}(x,t)-a^{ij}(x,s)|\le \lambda |s-t|^{\alpha}$ for $\alpha \in (0,1)$, then there exists solution $u$ such that $u(\cdot, T) \equiv 0$ while $u(\cdot, t)$ is not for $t < T$.
\end{remark}

This is the first monotonicty property over time for nodal sets of higher-dimensional parabolic solutions. Similar to the applications in the $n=1$ case to curve shortening flows or mean curvature flows, as seen in \cite{AAG, An2, An05,CZ,CHH,DHS,Gr}, more geometric applications are expected. Furthermore, as noted in Remark \ref{r:Thm_holds_Inequality}, Theorem \ref{t:non-increasingNodal} holds in the more general inequality setting. This broadens its applicability to more nonlinear equations.

\subsection{Outline}
In Section \ref{s:2_HE}, we study standard heat equations, recall Poon's monotonicity, and prove some of its consequences. We also establish the quantitative uniqueness for tangent maps of caloric functions and introduce the concept of quantitative stratification. In Section \ref{s:3_Para}, we examine general parabolic equations, introducing the localized frequency function and proving the almost monotonicity of this function. In Section \ref{s:4_Uniq}, we prove the quantitative uniqueness of tangent maps for general parabolic solutions. In Section \ref{s:5_Cone}, we present the cone-splitting theorem for general parabolic solutions. The quantitative uniqueness and the cone-splitting theorem are then utilized to establish the neck region theory in Section \ref{s:6_NR}. Finally, in Section \ref{s:7_proof}, we conclude with the proof of the main theorems.

\textbf{Acknowledgements}
The authors would like to thank Prof. Toby Colding and Prof. Gang Tian for their constant support and their interests of this work.  The authors would like to thank Prof. Kening Lu for bringing the reference \cite{An} to our attention and raising the question about dimension monotonicity during a conference at Qingdao in 2023. W. Jiang was supported by National Key Research and Development Program of China (No. 2022YFA1005501), National Natural Science Foundation of China (Grant No. 12125105 and 12071425) and the Fundamental Research Funds for the Central Universities K20220218.  Y. Huang was partially supported by NSF DMS Grant 2104349.

\section{Background and Heat Equations}\label{s:2_HE}
In this section, we will recall and prove some results which will be used in our proofs.  First we will recall some results of caloric functions.
\subsection{Caloric Function and monotone frequency}
In this subsection, we consider the solution to the standard heat equation in $\RR^n \times [-T, 0]$,
\begin{equation*}
    \partial_t u = \Delta u.
\end{equation*}
Noting that if there is no any growth control the solution may not be unique with given initial data.  Let us assume some appropriate growth assumption on $u$, say polynomial growth. 
First we recall the monotonicity formula for $u$ as in Poon \cite{Poon}.
Fix some point $(x_0, t_0)$. Consider 
\begin{equation*}
\begin{split}
    E_{x_0, t_0}(r) &= 2 r^2 \int_{t = t_0 - r^2} |\nabla u|^2 G_{x_0, t_0} \\
    H_{x_0, t_0}(r) &= \int_{t=t_0 - r^2} u^2 G_{x_0, t_0}
\end{split}
\end{equation*}
where
\begin{equation*}
    G_{x_0,t_0}(x,t) \equiv (4\pi(t_0-t))^{-\frac{n}{2}} \text{e}^{-\frac{|x-x_0|^2}{4(t_0 - t)}}.
\end{equation*}

We define the frequency for $r\ge 0$ to be
\begin{equation}\label{e:frequency_heat}
    N_{x_0,t_0}(r) = \frac{E_{x_0, t_0}(r)}{H_{x_0, t_0}(r)}.
\end{equation}

It turns out that $N$ is non-increasing in $r$. For the sake of completeness, here we include the proof, which follows Poon \cite{Poon}. 
\begin{theorem}[Poon]
    $N_{x_0, t_0}'(r) \geq 0$ for any $(x_0, t_0)$ and any $r>0$.
\end{theorem}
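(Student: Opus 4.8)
The plan is to show $N' \ge 0$ by computing the logarithmic derivative $\frac{d}{dr}\log N = \frac{E'}{E} - \frac{H'}{H}$ and recognizing the result as a Cauchy–Schwarz-type inequality. First I would reparametrize in the backward time variable: set $\tau = t_0 - t$ so that $G_{x_0,t_0}$ becomes the backward heat kernel of mass parameter $\tau$, and $r = \sqrt\tau$. It is cleaner to differentiate in $\tau$ (the chain rule $\frac{d}{dr} = 2r\,\frac{d}{d\tau}$ only contributes a positive factor, so monotonicity in $r$ and in $\tau$ are equivalent). The key computational inputs are: (i) a differentiation formula for $H$, namely $H'(\tau) = \int u_t u\, G + (\text{terms from }\partial_\tau G)$, which after integration by parts against $\partial_t u = \Delta u$ collapses to $H_{x_0,t_0}'(\tau) = \tfrac{1}{\tau}E_{x_0,t_0}$ (this is the identity $\frac{d}{dr}\log H = \frac{2}{r}N$, the defining virtue of this weight); and (ii) an analogous formula for $E$. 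Establishing (i) and (ii) rigorously requires justifying the differentiation under the integral sign and the integration by parts with the Gaussian weight, which is where the polynomial growth assumption on $u$ is used to kill boundary terms at spatial infinity.

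The heart of the argument is the identity for $E'$. Differentiating $E_{x_0,t_0}(\tau) = 2\tau\int_{t=t_0-\tau}|\nabla u|^2 G_{x_0,t_0}$ and using the heat equation together with integration by parts, one arrives at an expression of the form
\begin{equation*}
 E_{x_0,t_0}'(\tau) = \frac{2}{\tau}\,\Big[ 2\tau \int \big(\Delta u + \tfrac{\langle x - x_0,\nabla u\rangle}{2\tau}\big)^2 G_{x_0,t_0}\Big] + \frac{1}{\tau}E_{x_0,t_0},
\end{equation*}
the point being that the natural "drift" operator $\mathcal L u = \Delta u + \frac{\langle x-x_0,\nabla u\rangle}{2\tau}$ is the one that is self-adjoint with respect to $G_{x_0,t_0}\,dx$ and for which $\langle x-x_0,\nabla u\rangle$ plays the role of the Euler vector field. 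Writing $P := 2\tau\int (\mathcal L u)^2 G$, $Q := 2\tau\int |\nabla u|^2 G = E$, and noting $\int u\,\mathcal L u\, G = -2\tau\int|\nabla u|^2 G = -E$ after integration by parts, the computation reduces to
\begin{equation*}
 \frac{E'}{E} - \frac{H'}{H} = \frac{2}{\tau}\cdot\frac{P\,H - E^2}{E\,H} \ge 0,
\end{equation*}
which holds by the Cauchy–Schwarz inequality $\big(\int u\,\mathcal L u\, G\big)^2 \le \big(\int u^2 G\big)\big(\int (\mathcal L u)^2 G\big)$, i.e. $E^2 \le H\cdot P$. Unwinding, $N'(\tau) = \frac{E'H - EH'}{H^2} \ge 0$, and since $\frac{dr}{d\tau} > 0$ this gives $N_{x_0,t_0}'(r) \ge 0$.

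I expect the main obstacle to be purely technical rather than conceptual: carrying out the two differentiation-under-the-integral steps and the integrations by parts against the Gaussian weight with full rigor, i.e. verifying that under the assumed growth all the arising boundary integrals over large spheres vanish in the limit and that $\partial_\tau$ commutes with the spatial integral. A secondary point requiring care is the behavior as $\tau \to 0^+$ (equivalently $r\to 0$), where one should check the formulas extend and that $N(r)$ has a well-defined limit equal to the vanishing order of $u$ at $(x_0,t_0)$; but for the bare statement $N'(r)\ge 0$ on $r>0$ this is not strictly needed. Throughout I would follow Poon's original computation, organizing it so that the algebra transparently exposes the Cauchy–Schwarz structure.
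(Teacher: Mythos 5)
Your plan is the paper's proof up to the trivial reparametrization $\tau=r^2$: compute $H'$, compute $E'$ by differentiating under the integral and integrating by parts against the Gaussian, and finish with Cauchy--Schwarz; the paper carries this out directly in $r$ with the quantity $v=u_t+\nabla u\cdot\tfrac{x-x_0}{2(t-t_0)}$, which (via the heat equation) is exactly your drift operator. So the route is right, but the identities you actually display are wrong as written, and they are even mutually inconsistent. With $\tau=t_0-t>0$ one has $\nabla G_{x_0,t_0}=-\tfrac{x-x_0}{2\tau}G_{x_0,t_0}$, so the operator that is self-adjoint for $G\,dx$ is $\mathcal{L}u=\Delta u-\tfrac{\langle x-x_0,\nabla u\rangle}{2\tau}$ (your sign is flipped); integration by parts gives $\int u\,\mathcal{L}u\,G=-\int|\nabla u|^2G=-E/(2\tau)$, not $-E$; and the correct derivative identity is $E'(\tau)=4\tau\int(\mathcal{L}u)^2G$ with no extra $\tfrac{1}{\tau}E$ term (the contribution from differentiating the prefactor $2\tau$ cancels against the $-\tfrac{1}{\tau}\int|\nabla u|^2G$ term produced by the drift). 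Indeed, if you substitute your own claimed formulas $E'=\tfrac{2P}{\tau}+\tfrac{E}{\tau}$ and $H'=\tfrac{E}{\tau}$ into $\tfrac{E'}{E}-\tfrac{H'}{H}$ you do not get the expression $\tfrac{2}{\tau}\cdot\tfrac{PH-E^2}{EH}$ you wrote, and the reduction of Cauchy--Schwarz to ``$E^2\le HP$'' has the wrong normalization.

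None of this is a conceptual obstruction: with the corrected identities, $E=-2\tau\int u\,\mathcal{L}u\,G$, so Cauchy--Schwarz gives $E^2\le 4\tau^2\big(\int u^2G\big)\big(\int(\mathcal{L}u)^2G\big)=2\tau PH$, hence $E'H-EH'=2PH-\tfrac{E^2}{\tau}\ge 0$, which is precisely the paper's computation (there written as $E'H-EH'=8r^3\big[(\int u^2G)(\int v^2G)-(\int uvG)^2\big]\ge 0$). So fix the sign of the drift and redo the bookkeeping in the $E'$ identity and in the Cauchy--Schwarz step; your remarks about justifying differentiation under the integral and the vanishing of boundary terms under the growth assumption are fine and are implicit in the paper.
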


\begin{proof}
    Observe that
\begin{equation*}
    \nabla G_{x_0, t_0} = \frac{x-x_0}{2(t-t_0)} G_{x_0, t_0} \quad \text{ and } \quad \partial_t  G_{x_0, t_0} = - \Delta G_{x_0, t_0}.
\end{equation*}

For simplicity we omit the subscripts. Then integration by parts implies that 
\begin{equation*}
    E(r) = - 2r^2 \int_{t=t_0 - r^2} u (\Delta u + \nabla u \cdot \frac{x-x_0}{2(t - t_0)} )G_{x_0, t_0} = - 2r^2 \int_{t=t_0 - r^2} u (u_t + \nabla u \cdot \frac{x-x_0}{2(t-t_0)} )G_{x_0, t_0}
\end{equation*}

\begin{equation*}
H'(r) = -2r \int_{t=t_0 - r^2} 2u u_t G_{x_0, t_0} - u^2 \Delta G_{x_0, t_0} = - 4r\int_{t=t_0 - r^2} u(u_t + \nabla u \cdot \frac{x- x_0}{2(t-t_0)} )G_{x_0,t_0} = \frac{2 E(r)}{r}.
\end{equation*}

\begin{equation*}
\begin{split}
    E'(r) &= 4 r \int_{t=t_0 - r^2} |\nabla u|^2 G_{x_0,t_0}  -  4r^3 \int_{t=t_0 - r^2} 2\nabla u \cdot \nabla u_t G_{x_0,t_0} + 4 r^3 \int_{t=t_0 - r^2} |\nabla u|^2 \Delta G_{x_0,t_0} \\
    &= 4 r \int_{t=t_0 - r^2} |\nabla u|^2 G_{x_0,t_0}  -  8r^3 \int_{t=t_0 - r^2} (\nabla u \cdot \nabla u_t  + \nabla^2 u (\nabla u, \frac{x- x_0}{2(t - t_0)}))G_{x_0,t_0} \\ 
    &= - 8 r^3 \int_{t=t_0 - r^2} (\nabla u \cdot \nabla ( u_t  + \nabla u \cdot \frac{x- x_0}{2(t - t_0)}) G_{x_0,t_0} \\
    &= 8 r^3 \int_{t=t_0 - r^2} ( u_t  + \nabla u \cdot \frac{x- x_0}{2(t - t_0)})^2 G_{x_0,t_0}.
\end{split}
\end{equation*}

Therefore, 
\begin{equation*}
\begin{split}
    & \quad E'(r) H(r) - E(r) H'(r)\\
    &= 8 r^3  \Big(\int_{t=t_0 - r^2} u^2 G_{x_0,t_0}\Big) \Big(\int_{t=t_0 - r^2} \Big( u_t  + \nabla u \cdot \frac{x- x_0}{2(t - t_0)}\Big)^2 G_{x_0,t_0}\Big)  - 8 r^3 \Big(\int_{t=t_0 - r^2} u (u_t + \nabla u \cdot \frac{x-x_0}{2(t-t_0)} )G_{x_0, t_0}   \Big)^2.
\end{split}
\end{equation*}

Hence, by Cauchy-Schwarz inequality we have $N'(r) \geq 0$. 
\end{proof}

Since $H'(r) = 2E(r)/r$, we have $H'(r)/H(r) = 2N(r)/r$. Integrating from $r_1$ to $r_2$ we have
\begin{equation}\label{e:Global_Doubling}
    \log \frac{H(r_2)}{H(r_1)} = 2\int_{r_1}^{r_2} \frac{N(s)}{s} ds. 
\end{equation}
If we define the doubling index $D(r)$ as 
\begin{equation*}
    D(r) \equiv \log_4 \frac{H(2r)}{H(r)}.
\end{equation*}
Then from \eqref{e:Global_Doubling} and the monotonicity of $N(r)$ we can establish the equivalence between frequency and doubling index
\begin{equation}\label{e:Frequency_equv_DI_global}
    N(r) \le D(r) \le N(2r).
\end{equation}
This is an important global doubling control with respect to frequency. It could be used to prove the unique continuation for heat equation.

We say $u$ is a \textbf{homogeneous caloric polynomial} of order $k$ centered at $(x_0, t_0)$, if $u$ satisfies the heat equation and $u(\lambda x+x_0, \lambda^2 t+t_0) = \lambda^k u(x+x_0,t+t_0)$ for any $t< 0$ and $\lambda > 0$. 

\begin{lemma}\label{l:N_constant_equiv_polynomial}
$N(r)$ is constant if and only if $u$ is a homogeneous caloric polynomial of order $N(1)$. 
\end{lemma}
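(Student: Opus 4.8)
The plan is to prove both implications of Lemma \ref{l:N_constant_equiv_polynomial} using the structure of the monotonicity computation in the preceding theorem.

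\textbf{The easy direction.} First I would verify that if $u$ is a homogeneous caloric polynomial of order $k$ centered at $(x_0,t_0)$, then $N_{x_0,t_0}(r) \equiv k$. By a parabolic rescaling it suffices to treat $r=1$, or better, to exploit homogeneity directly: the change of variables $x \mapsto x_0 + \lambda(x-x_0)$, $t \mapsto t_0 + \lambda^2(t-t_0)$ together with $u(x_0+\lambda x, t_0+\lambda^2 t) = \lambda^k u(x_0+x,t_0+t)$ and the scaling invariance of $G_{x_0,t_0}$ shows $H_{x_0,t_0}(\lambda r) = \lambda^{2k} H_{x_0,t_0}(r)$ and $E_{x_0,t_0}(\lambda r) = \lambda^{2k} E_{x_0,t_0}(r)$, so the ratio $N$ is constant in $r$. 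To identify the constant, I would use Euler's identity for homogeneous functions: differentiating $u(x_0+\lambda x, t_0+\lambda^2 t) = \lambda^k u$ in $\lambda$ at $\lambda=1$ gives the infinitesimal relation $(x-x_0)\cdot\nabla u + 2(t-t_0)\partial_t u = k u$, i.e. $u_t + \nabla u \cdot \frac{x-x_0}{2(t-t_0)} = \frac{k u}{2(t-t_0)}$. Plugging this into the integration-by-parts formula $E(r) = -2r^2\int_{t=t_0-r^2} u(u_t + \nabla u\cdot\frac{x-x_0}{2(t-t_0)})G_{x_0,t_0}$ from the proof above, and using $t-t_0 = -r^2$ on that slice, yields $E(r) = -2r^2 \cdot \frac{k}{-2r^2}\int_{t=t_0-r^2} u^2 G_{x_0,t_0} = k\, H(r)$, hence $N(r)\equiv k$.

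\textbf{The main direction.} Suppose $N(r) \equiv N(1) =: k$ for all $r > 0$. Then $N'(r) = 0$, so equality holds in the Cauchy--Schwarz inequality applied in the last step of the proof of Poon's theorem: for every $r$,
\begin{equation*}
 \Big(\int_{t=t_0-r^2} u\big(u_t + \nabla u\cdot\tfrac{x-x_0}{2(t-t_0)}\big)G_{x_0,t_0}\Big)^2 = \Big(\int_{t=t_0-r^2} u^2 G_{x_0,t_0}\Big)\Big(\int_{t=t_0-r^2}\big(u_t+\nabla u\cdot\tfrac{x-x_0}{2(t-t_0)}\big)^2 G_{x_0,t_0}\Big).
\end{equation*}
Equality in Cauchy--Schwarz forces $u_t + \nabla u \cdot \frac{x-x_0}{2(t-t_0)} = \mu(t)\, u$ pointwise (a.e. with respect to $G_{x_0,t_0}$, hence everywhere by continuity/analyticity of solutions) on each slice $t = t_0-r^2$, for some function $\mu(t)$. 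Pairing this with $u\, G_{x_0,t_0}$ and comparing with $E(r) = \mu(t_0-r^2)\cdot(-2r^2) H(r)$ while also $E(r) = N(r)H(r) = k H(r)$ forces $\mu(t) = -\frac{k}{2(t-t_0)}$. Thus $u$ satisfies the first-order PDE $u_t + \nabla u\cdot\frac{x-x_0}{2(t-t_0)} = \frac{k}{2(t-t_0)} u$ on all of $\RR^n \times (-T,t_0)$, which is exactly the infinitesimal homogeneity relation: defining $v(\lambda) = \lambda^{-k} u(x_0 + \lambda(x-x_0), t_0 + \lambda^2(t-t_0))$ and differentiating in $\lambda$ shows $v'(\lambda) \equiv 0$, so $u$ is homogeneous of degree $k$ about $(x_0,t_0)$. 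Combined with the heat equation, homogeneity of degree $k$ and polynomial growth (or even a priori the growth hypothesis assumed throughout this subsection) force $u$ to be a caloric polynomial: e.g. for $t<t_0$ fixed, $u(\cdot,t)$ is a polynomial growth caloric function that is homogeneous, hence each spherical-harmonic component is a monomial times a function of $r^2/(t_0-t)$, and only polynomial profiles survive; alternatively one notes that $u$ being smooth and parabolically homogeneous of integer degree $k$ (integrality of $k$ follows since $N(1)=k$ must be a nonnegative integer, as the frequency of a caloric function takes values in $\NN$ — a standard fact, or one derives it from the ODE satisfied by $H$) implies $u$ is a sum of monomials of parabolic degree $k$.

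\textbf{The main obstacle.} The delicate point is the last step: promoting the pointwise identity $u_t + \nabla u\cdot\frac{x-x_0}{2(t-t_0)} = \frac{k}{2(t-t_0)}u$, which a priori we only extract slice-by-slice from the equality case, to a genuine global homogeneity statement, and then concluding polynomiality rather than merely homogeneity. For the first part I would argue that the equality-in-Cauchy--Schwarz conclusion holds for every $r>0$ simultaneously, so the identity holds on a dense (in fact co-null, then all by smoothness) set of slices, hence everywhere; the proportionality factor is pinned down by re-inserting into the formula for $E(r)$ as above. For the polynomiality, the clean route is: homogeneity of degree $k$ plus the heat equation plus the standing growth assumption implies, by the classification of homogeneous caloric functions (separation of variables in the backward heat equation / Hermite expansion), that $u$ is a polynomial; I would cite or reproduce the short argument that a homogeneous solution of the heat equation with at most polynomial growth is a homogeneous caloric polynomial, which also forces $k \in \NN$. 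This is the step where, if pressed for a fully self-contained proof, most of the remaining work lies.
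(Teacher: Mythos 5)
Your proof is correct and follows essentially the same route as the paper: equality in the Cauchy--Schwarz step of Poon's monotonicity yields the Euler relation $2t\,u_t + x\cdot\nabla u = k u$, which is then integrated along the parabolic dilations (your $v(\lambda)$ is the paper's $F(\lambda)$) to give homogeneity, with the converse by scaling/chain rule. You are in fact a bit more careful than the paper in pinning down the proportionality factor $\mu(t)$ and in deducing polynomiality from homogeneity via the standing polynomial-growth assumption (the paper leaves this to Lemma \ref{l:polynomialcalaric}); the only blemish is a harmless sign slip in your intermediate formula for $\mu(t)$, which is corrected in the final displayed PDE you actually use.
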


\begin{proof}
    We may assume $(x_0, t_0) = (0,0)$ and $u(0,0)=0$. By Cauchy-Schwarz inequality, $N'(r) \equiv 0$ if and only if there exists some $k$ such that
    \begin{equation} \label{e:Homoge_Poly_condition}
    2 u_t \cdot t + \nabla u \cdot x = k \cdot u.
\end{equation}

First we assume (\ref{e:Homoge_Poly_condition}) and prove that $u$ is a homogeneous caloric polynomial of order $k$. Fix $(x,t)$. Set $F(\lambda) = u(\lambda x, \lambda^2 t)-\lambda^k u(x,t)$. Note that by (\ref{e:Homoge_Poly_condition}) we have
\begin{equation*}
\begin{split}
    F'(\lambda) &= \nabla u(\lambda x, \lambda^2 t) \cdot x + u_t(\lambda x, \lambda^2 t) \cdot (2\lambda t) - k \lambda^{k-1} u(x,t)\\
    &= \lambda^{-1} (\nabla u(\lambda x, \lambda^2 t) \cdot (\lambda x) + u_t(\lambda x, \lambda^2 t) \cdot (2 \lambda^2 t)) - k \lambda^{k-1} u(x,t)\\
    &= \lambda^{-1} k u(\lambda x, \lambda^2 t) - k \lambda^{k-1} u(x,t) \\
    &= \lambda^{-1} k F(\lambda).
\end{split}
\end{equation*}
Since $F(1) = 0$, this implies that $F(\lambda) \equiv 0$. The other direction is trivial by chain rule. The proof is finished.

The following Liouville type theorem for ancient caloric function is standard.
\end{proof}
\begin{lemma}\label{l:polynomialcalaric}
    Let $u$ be a caloric function with polynomial growth of order $\Lambda$, i.e. $|u(x,t)| \le C(1 + |x| + |t|^{1/2})^{\Lambda}$. Then $u$ is a polynomial of order $d\le \Lambda$.
\end{lemma}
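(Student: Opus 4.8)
The statement is a standard Liouville-type theorem, which I would prove by combining interior parabolic derivative estimates with the heat equation itself. Set $m:=\lfloor\Lambda\rfloor$ and $k:=m+1$, so $k>\Lambda$.

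\emph{Step 1: vanishing of high-order spatial derivatives.} For the heat equation one has the interior estimate: if $\partial_t v=\Delta v$ on the backward cylinder $Q_R(x_0,t_0):=B_R(x_0)\times(t_0-R^2,t_0]$, then $|\partial_x^\alpha v(x_0,t_0)|\le C(n,|\alpha|)\,R^{-|\alpha|}\sup_{Q_R(x_0,t_0)}|v|$; this is purely local and follows from standard parabolic regularity (heat-kernel representation on $Q_R$, or parabolic Schauder estimates, together with scaling). Since $u$ is defined on $\RR^n\times(-\infty,0]$, the cylinder $Q_R(x_0,t_0)$ lies in the domain for every $R>0$. Fixing $(x_0,t_0)$ and taking $R\ge 1+|x_0|+|t_0|^{1/2}$, the growth hypothesis gives $\sup_{Q_R(x_0,t_0)}|u|\le C(1+|x_0|+|t_0|^{1/2}+2R)^\Lambda\le C(3R)^\Lambda$. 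Hence for any multi-index with $|\alpha|=k$ one gets $|\partial_x^\alpha u(x_0,t_0)|\le C'R^{\Lambda-k}\to 0$ as $R\to\infty$, so $\partial_x^\alpha u\equiv 0$ on $\RR^n\times(-\infty,0]$. Consequently, for each fixed $t\le 0$, $u(\cdot,t)$ is a polynomial in $x$ of degree at most $m$.

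\emph{Step 2: polynomial dependence in time.} Write $u(x,t)=\sum_{|\beta|\le m}c_\beta(t)x^\beta$ with $c_\beta\in C^\infty((-\infty,0])$. Substituting into $\partial_t u=\Delta u$ and comparing coefficients of each monomial $x^\gamma$ gives the triangular linear system $c_\gamma'(t)=\sum_{|\beta|=|\gamma|+2}\kappa_{\gamma\beta}\,c_\beta(t)$ for explicit constants $\kappa_{\gamma\beta}$, with the convention that the sum is empty when $|\gamma|+2>m$. A downward induction on $|\gamma|$ then shows $c_\gamma$ is a polynomial in $t$ of degree at most $\lfloor(m-|\gamma|)/2\rfloor$: for $|\gamma|\in\{m-1,m\}$ the right side is empty, so $c_\gamma$ is constant; and if each $c_\beta$ with $|\beta|=|\gamma|+2$ has degree $\le\lfloor(m-|\gamma|-2)/2\rfloor$, then $\deg c_\gamma\le 1+\lfloor(m-|\gamma|-2)/2\rfloor=\lfloor(m-|\gamma|)/2\rfloor$. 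Thus each term $c_\beta(t)x^\beta$ has parabolic degree $|\beta|+2\lfloor(m-|\beta|)/2\rfloor\le m$, so $u$ is a caloric polynomial of order $d\le m=\lfloor\Lambda\rfloor\le\Lambda$, as claimed.

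\emph{On the main difficulty.} There is no genuine obstacle here, which is why the statement is labelled standard; the only points needing care are that the growth bound is used with the base point $(x_0,t_0)$ fixed while the scale $R\to\infty$ (so ancientness of $u$ is essential, guaranteeing the backward cylinders $Q_R(x_0,t_0)$ stay in the domain), and the routine bookkeeping in the induction of Step 2. As an alternative one could argue through the frequency function of Section~\ref{s:2_HE}: polynomial growth yields $H_{0,0}(r)\le Cr^{2\Lambda}$ for $r\ge 1$, which combined with $\log\frac{H(r_2)}{H(r_1)}=2\int_{r_1}^{r_2}\frac{N(s)}{s}\,ds$ and the monotonicity of $N$ forces $\lim_{r\to\infty}N(r)\le\Lambda$; but turning this into the polynomial conclusion still requires essentially the interior-estimate argument above (or a blow-down together with Lemma~\ref{l:N_constant_equiv_polynomial}), so I would present the direct proof.
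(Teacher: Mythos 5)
Your proof is correct and rests on the same mechanism as the paper's: scaled interior parabolic derivative estimates combined with the polynomial growth bound, letting the scale tend to infinity to kill high-order derivatives. The only difference is that the paper applies the estimate directly to all mixed derivatives $D_x^\ell D_t^k u$ with $\ell+2k>\Lambda$, which makes your Step 2 (the triangular ODE induction for the coefficients $c_\beta(t)$) unnecessary, though it is carried out correctly.
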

\begin{proof}
    For any $(x,t)$, denote $R=\left(|x|^2+|t|\right)^{1/2}$. For any $r\ge 2R$, by parabolic estimate we get for any $\ell+2k=d$
    \begin{align}
        |D_x^\ell D_t^k u(x,t)|\le C(n,d) r^{-\ell-2k}\left(\fint_{Q_{r}}|u(y,s)|^2dyds\right)^{1/2}\le C r^{-\ell-2k+\Lambda}.
    \end{align}
    If $\ell+2k>\Lambda$, letting $r\to \infty$ we have  $D_x^\ell D_t^k u(x,t)=0$. This implies $u$ is a polynomial with order at most $\Lambda$.
\end{proof}

As a corollary, we notice that the frequency can only be pinched near the integers. 
\begin{lemma}\label{l:Caloric_frequency_near_Z}
    Let $u$ be a caloric function with $N(1) \le \Lambda$ and with polynomial growth of order $\Lambda$, i.e. $|u(x,t)| \le C(1 + |x| + |t|^{1/2})^{\Lambda}$. For any $\epsilon > 0$, there exists some $\delta(\epsilon, \Lambda) > 0$ such that if $N(1) - N(1/10) \leq \delta$, then there exists some integer $d$ such that
    \begin{enumerate}
        \item $|N(r) -d| \leq \epsilon$ for any $r \in [1/10, 1]$.
        \item There exists some caloric homogeneous polynomial $P$ of order $d$ such that $|u - P|_{K_{\epsilon^{-1}}} \le \epsilon$, where $K_r = \{ (x, t) : |x| \le r, 1/10 + 1/r\le t \le 1 \}$. 
    \end{enumerate} 
\end{lemma}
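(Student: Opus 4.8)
The plan is to argue by compactness plus a rigidity (gap) argument, leveraging the monotonicity of $N$ (Poon) together with the two Liouville-type lemmas just proved. First I would fix $\Lambda$ and normalize. The space of caloric functions $u$ with $N(1)\le\Lambda$, with $H_{0,0}(1)=1$, and with polynomial growth of order $\le\Lambda$ is, by Lemma \ref{l:polynomialcalaric}, exactly a bounded subset of the finite-dimensional space of polynomials of degree $\le\Lambda$; in particular it is compact in $C^\infty_{loc}(\RR^n\times(-\infty,0])$, and the functionals $r\mapsto N(r)$ and $r\mapsto H(r)$ are continuous on it. (One must first record the standard fact that after normalizing $H(1)=1$ the growth bound $|u|\le C(1+|x|+|t|^{1/2})^\Lambda$ holds with $C=C(n,\Lambda)$, which follows from the doubling control \eqref{e:Global_Doubling}–\eqref{e:Frequency_equv_DI_global} since $N(r)\le N(1)\le\Lambda$ for $r\ge1$ gives $H(r)\le H(1)r^{2\Lambda}$, and then parabolic interior estimates convert this into a pointwise bound on $K_R$.)

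Next, the core step: suppose the conclusion fails. Then there is $\epsilon_0>0$ and a sequence of normalized caloric functions $u_i$ (growth order $\le\Lambda$, $N_i(1)\le\Lambda$, $H_i(1)=1$) with $N_i(1)-N_i(1/10)\to0$ but for which either (1) fails — i.e. $\sup_{[1/10,1]}|N_i(r)-d|>\epsilon_0$ for every integer $d$ — or (2) fails. Passing to a subsequence, $u_i\to u_\infty$ in $C^\infty_{loc}$, and $u_\infty$ is again a caloric function of polynomial growth $\le\Lambda$ with $N_\infty(1)\le\Lambda$, $H_\infty(1)=1$. The key point is that $N$ is continuous under this convergence on the relevant range of radii (the integrals defining $E,H$ are over fixed time-slices against a smooth Gaussian weight, and the $C^\infty_{loc}$ convergence plus the uniform growth bound with the exponential decay of $G_{0,0}$ lets one pass to the limit, including a lower bound $H_\infty(r)\ge c(\Lambda)>0$ keeping $N_\infty$ finite). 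Hence $N_\infty(1)=N_\infty(1/10)$, so by monotonicity $N_\infty$ is constant on $[1/10,1]$, and then — extending by monotonicity and the unique continuation / global doubling — $N_\infty$ is constant, so by Lemma \ref{l:N_constant_equiv_polynomial} $u_\infty=P$ is a homogeneous caloric polynomial of some integer order $d=N_\infty(1)\le\Lambda$. This contradicts the failure of (1) for $i$ large (since $N_i\to N_\infty\equiv d$ uniformly on $[1/10,1]$) and, after rescaling $P$ so that $\fint_{B_1}|P|^2=1$ or whatever normalization is implicit, the failure of (2) as well, because $u_i\to P$ uniformly on the compact set $K_{\epsilon_0^{-1}}$.

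There is one technical subtlety to pin down: when I run the argument I only directly get $N_\infty$ constant on $[1/10,1]$; to invoke Lemma \ref{l:N_constant_equiv_polynomial} (which is stated for $N$ globally constant) I either upgrade constancy on $[1/10,1]$ to a differential identity \eqref{e:Homoge_Poly_condition} holding pointwise on a slab and then propagate it by real-analyticity of caloric functions, or I simply re-prove the equivalence locally: the Cauchy–Schwarz equality case in Poon's computation forces $2tu_t+x\cdot\nabla u=k u$ on $\{t\in[-1,-1/100]\}$, and since both sides are caloric (hence real-analytic in $(x,t)$ for $t<0$) this identity holds for all $t<0$, whence the $F(\lambda)$ argument applies verbatim. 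I would include this as a short remark rather than a separate lemma. The number $d$ in the statement is then $d=N_\infty(1)$, an integer by Lemma \ref{l:polynomialcalaric} applied to the homogeneous $u_\infty$.

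The main obstacle I anticipate is making the continuity of the frequency $N$ under $C^\infty_{loc}$ convergence fully rigorous, specifically the uniform lower bound $H_i(r)\ge c(\Lambda)>0$ for $r\in[1/10,1]$ (so that $N_i$ does not degenerate and the limit $N_\infty$ is genuinely the frequency of $u_\infty$). This needs the almost-monotonicity/doubling estimate \eqref{e:Global_Doubling} run \emph{downward} from $r=1$: since $N_i(r)\le N_i(1)\le\Lambda$ on $[1/10,1]$, \eqref{e:Global_Doubling} gives $\log\bigl(H_i(1)/H_i(r)\bigr)=2\int_r^1 N_i(s)/s\,ds\le 2\Lambda\log(1/r)\le 2\Lambda\log 10$, so $H_i(r)\ge 10^{-2\Lambda}$, uniformly. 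Everything else is soft: compactness of bounded sets of polynomials of bounded degree, continuity of integrals against the fixed Gaussian, and the rigidity lemmas already in hand.
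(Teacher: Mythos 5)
Your proposal is correct and follows essentially the same route as the paper: a contradiction/compactness argument in which the uniform lower bound $H_i(r)\ge c(\Lambda)$ on $[1/10,1]$ comes from \eqref{e:Global_Doubling}, the limit has pinched frequency on $[1/10,1]$, and the rigidity of Lemma \ref{l:N_constant_equiv_polynomial} (together with Lemma \ref{l:polynomialcalaric}) forces the limit to be a homogeneous caloric polynomial of integer order, contradicting the failure of (1) or (2). One small correction: since Poon's monotonicity gives $N'\ge 0$, your inequality ``$N(r)\le N(1)\le\Lambda$ for $r\ge 1$'' is backwards; the needed bound $N(r)\le\Lambda$ for all $r$ (hence $H(r)\le H(1)r^{2\Lambda}$ for $r\ge1$ and the uniform growth constant after normalization) should instead be obtained from Lemma \ref{l:polynomialcalaric} plus Corollary \ref{c:Frequency_bound_by_order_Poly}, or simply from the finite-dimensionality of the space of caloric polynomials of degree at most $\Lambda$ that you already invoke, which is how the paper's remark ``$C_i=C$ after normalization'' is justified.
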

\begin{proof}
    This is proved by contradiction argument. Let $u_i$ be a sequence of caloric functions with $N^i(1) \le \Lambda$ and $|u_i(x,t)| \le C_i(1 + |x| + |t|^{1/2})^{\Lambda}$. And $N^i(1) - N^i(1/10) \le 1/i$ with dist$(N(r), \mathbf{Z}) \geq \epsilon_0$ for some $r \in [1/10, 1]$. We may normalize it so that $H^i(1) = \int_{t=-1} u_i^2 G_{0,0} = 1$ for all $i$. By (\ref{e:Global_Doubling}) and $N^i(1) \le \Lambda$, we have $H^i(1/10) \geq 10^{-\Lambda}$. {Also, we can take $C_i = C$ for some fixed $C$ after the normalization. }
    
    Consider $K_R = \{ (x, t) : |x| \le R, 1/10 + 1/R\le t \le 1 \}$. 
    Since $G_{0,0}(x, t) \ge c(R)$ on $K_R$, we have $\int_{K_R} u_i^2 \le C(R)$. Hence for each $R$ we can extract a subsequence of $u_i$ that converges to $u_R$ in $C^{\infty}(K_{R/2})$. By diagonal argument, there exists some caloric function $u_{\infty}$ such that $u_i$ converges to $u_{\infty}$ in $C^{\infty}_{loc}(K_{\infty})$ by passing to a subsequence. Since $u_i$ has uniform polynomial growth, by dominated convergence theorem we have $H^i(r) \to H^{\infty}(r)$ and $E^i(r) \to E^{\infty}(r)$ for any $r \in [1/10 ,1]$. Since $H^i(r) \ge 10^{-\Lambda}$ for any $r \in [1/10, 1]$, we have 
    \begin{equation*}
        N^{i}(r) = \frac{E^i(r)}{H^i(r)} \to N^{\infty}(r).
    \end{equation*}

Hence by assumption and monotonicity of $N$, we have $N^{\infty}(1) = N^{\infty}(1/10) $. This implies that $u_{\infty}$ is indeed a homogeneous caloric polynomial and thus $N^{\infty} = d$ for some integer $d$. Contradiction arises.    
\end{proof}

The following Lemma says that away from integers the frequency drops at a definite rate.

\begin{lemma}\label{l:Caloric_fre_drop}
    Let $u$ be a caloric polynomial with $N(1) \le \Lambda$. For any $\epsilon > 0$, there exists some $\delta(\epsilon, \Lambda) > 0$ such that if $N(1) \leq d - \epsilon$, then $N(\delta) \leq d - 1 +\epsilon$. 
\end{lemma}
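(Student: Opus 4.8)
The plan is to argue by contradiction, extracting a limiting caloric polynomial whose frequency is constant, and then invoking Lemma \ref{l:N_constant_equiv_polynomial} together with the fact that the frequency of a homogeneous caloric polynomial is an integer. Suppose the statement fails for some $\epsilon > 0$ and some integer $d$: there is a sequence of caloric polynomials $u_i$ with $N^i(1) \le \min\{\Lambda, d - \epsilon\}$, but $N^i(\delta_i) \ge d - 1 + \epsilon$ for a sequence $\delta_i \downarrow 0$. After normalizing so that $H^i(1) = 1$, the bound $N^i(1) \le \Lambda$ together with \eqref{e:Global_Doubling} and the monotonicity of $N$ gives uniform two-sided control on $H^i(r)$ for $r \in [\delta_i, 1]$, and since each $u_i$ is a polynomial of degree $\le \Lambda$ (note $N^i(1)\le\Lambda$ forces the degree to be at most $\Lambda$ by Lemma \ref{l:polynomialcalaric} applied to the homogeneous blow-down, or more simply by the doubling bound), one gets uniform local bounds on $u_i$ on compact subsets of $\RR^n \times (-\infty, 0)$. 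Passing to a subsequence, $u_i \to u_\infty$ in $C^\infty_{loc}$, where $u_\infty$ is a nonzero caloric polynomial with $N^\infty(r) \le d - \epsilon < d$ for all $r$ where it is defined.

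Now I would exploit the monotonicity in the opposite direction. For any fixed small $\rho > 0$ we have, for $i$ large enough that $\delta_i < \rho$, that $N^i(\rho) \ge N^i(\delta_i) \ge d - 1 + \epsilon$ by monotonicity of $N^i$, hence $N^\infty(\rho) \ge d - 1 + \epsilon$ for every $\rho > 0$. Combined with $N^\infty(1) \le d - \epsilon$, this pins $N^\infty$ into the interval $[d-1+\epsilon, d-\epsilon]$ for $r \in (0,1]$. Letting $\rho \to 0$, the limit $N^\infty(0^+)$ exists by monotonicity and equals the vanishing order of $u_\infty$ at the origin, which must be a nonnegative integer; but it lies in $[d-1+\epsilon, d-\epsilon]$, which contains no integer. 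This is the contradiction.

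Actually a cleaner route avoids discussing $N^\infty(0^+)$ directly: the vanishing order of the polynomial $u_\infty$ at $(0,0)$ is some integer $m$, and by Lemma \ref{l:N_constant_equiv_polynomial} (applied to the leading homogeneous caloric part of $u_\infty$, or by the standard fact that $N^\infty(r) \to m$ as $r \to 0^+$) we have $\lim_{r\to 0^+} N^\infty(r) = m \in \dZ_{\ge 0}$. Since $N^\infty(r) \in [d-1+\epsilon, d-\epsilon]$ for all small $r$, we need $m \in [d-1+\epsilon, d-\epsilon]$, impossible for $\epsilon \in (0,1)$. (The case $\epsilon \ge 1$ is vacuous or trivial since then $d-\epsilon \le d-1$ and monotonicity $N(\delta)\le N(1)\le d-\epsilon\le d-1+\epsilon$ gives the conclusion for free.)

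The main obstacle is justifying the compactness and the convergence $N^i(r) \to N^\infty(r)$ rigorously: one needs uniform polynomial growth of the $u_i$ to pass limits in the weighted integrals $H^i, E^i$ via dominated convergence, and this is where the hypothesis that the $u_i$ are polynomials (of degree controlled by $\Lambda$) is essential — it replaces the \emph{a priori} growth assumption used in Lemma \ref{l:Caloric_frequency_near_Z}. The rest is bookkeeping with the monotonicity of $N$ and the integrality of the vanishing order of a polynomial. I expect the argument to run essentially parallel to the proof of Lemma \ref{l:Caloric_frequency_near_Z}, with the new ingredient being the two-sided squeeze on $N^\infty$ coming from comparing scales $\delta_i$ and $1$.
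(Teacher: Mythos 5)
Your overall strategy -- contradiction, compactness, squeezing the limit frequency into $[d-1+\epsilon,\,d-\epsilon]$, and then contradicting the integrality of the limiting vanishing order -- is a legitimate alternative to the paper's argument, which instead never redoes compactness: the paper iterates Lemma \ref{l:Caloric_frequency_near_Z} across scales $10^{-k}$ (at each scale the frequency either pinches, hence sits $\epsilon/10$-close to an integer $\le d-1$ and monotonicity finishes, or drops by a definite $\delta_0$, which can happen at most $\sim\delta_0^{-1}$ times). However, your proposal has a genuine gap exactly at the point you yourself flag as the main obstacle: the claim that $N^i(1)\le\Lambda$ forces $\deg u_i\le\Lambda$ is false, and neither Lemma \ref{l:polynomialcalaric} nor the doubling identity \eqref{e:Global_Doubling} rescues it. Take $u=1+\tau P_D$ with $P_D$ homogeneous caloric of degree $D$: by the orthogonality of Lemma \ref{l:homogeneous_Poly_orthogonal}, $N(1)=D\tau^2 h_D/(h_0+\tau^2 h_D)$ is arbitrarily small for small $\tau$, while $D$ is arbitrary. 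The monotonicity $N(r)\le N(1)\le\Lambda$ and \eqref{e:Global_Doubling} only control $H$ at scales $r\le 1$; they say nothing about growth at spatial infinity or at scales $r>1$, which is what a degree bound amounts to. Consequently you have no uniform polynomial growth after the normalization $H^i(1)=1$, dominated convergence for $H^i(r),E^i(r)$ is not justified, mass could escape to spatial infinity in the Gaussian-weighted integrals (so you cannot even guarantee $u_\infty\not\equiv 0$, and Fatou only gives one-sided inequalities, which is not enough to transfer both $N^\infty(1)\le d-\epsilon$ and $N^\infty(\rho)\ge d-1+\epsilon$ to the limit).

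Two ways to repair this. (i) Work with the degree (equivalently, growth) bound of order $C\Lambda$ that Lemma \ref{l:Caloric_frequency_near_Z} assumes and that all later applications of the present lemma supply; then the space of admissible caloric polynomials is finite dimensional, $H(1)$ is a norm on it, and $H^i(1)=1$ gives uniform coefficient bounds, after which your compactness, the convergence $N^i(r)\to N^\infty(r)$ on $(0,1]$, and the integrality of $\lim_{r\to 0^+}N^\infty(r)$ (via Lemma \ref{l:N_constant_equiv_polynomial} or the expansion) all go through. (ii) Avoid compactness altogether: writing $u=\sum_i a_iP_i$ and using Lemma \ref{l:homogeneous_Poly_orthogonal}, one has $N(r)=\sum_i i\,b_i r^{2i}/\sum_i b_i r^{2i}$ with $b_i=a_i^2\int_{t=-1}P_i^2G_{0,0}\ge 0$, and a short computation comparing the masses $\sum_{i\ge d}b_i$ and $\sum_{i\le d-1}b_i$ shows $N(\delta)\le d-1+\epsilon$ once $\delta^2\le\epsilon^2/(\Lambda+1)$; this even proves the statement without any degree restriction. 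Finally, a small bookkeeping point: since $\delta$ may not depend on $d$, the negation produces integers $d_i$ that a priori vary with $i$; note that the statement is trivial by monotonicity unless $1\le d_i\le\Lambda+1$, and pass to a subsequence with $d_i\equiv d$ before running your argument.
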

\begin{proof}
    When $\epsilon \geq 1/2$, the conclusion trivially holds by monotonicity of $N$. Assume $\epsilon < 1/2$. Let $\delta_0(\epsilon/10, \Lambda)$ be as in Lemma \ref{l:Caloric_frequency_near_Z}. 
    
    If $N(1/10) \geq N(1)-\delta_0$, then we have dist$(N(1), \mathbf{Z}) \le \epsilon/10$ by Lemma \ref{l:Caloric_frequency_near_Z}. This implies that $N(1) \le d-1+\epsilon/10$ and thus $N(1/10) \le N(1) \le d-1+\epsilon/10$ by monotonicity.

    Now suppose $N(1/10) \le N(1)-\delta_0$. Applying Lemma \ref{l:Caloric_frequency_near_Z} to the scale $1/10$, we can conclude that $N(10^{-2}) \leq N(1/10) - \delta_0$ by the same arguments. Therefore, after iterating by at most $K$ times with $K \le \delta_0^{-1}$, we can conclude that $N(10^{-K}) \le N(1) - K \delta_0 \le d-1+\epsilon$.
\end{proof}

\begin{remark}
    Instead of requiring $u$ to be a polynomial, actually we only need some minor growth assumption on $u$ for Lemma \ref{l:Caloric_frequency_near_Z} and Lemma \ref{l:Caloric_fre_drop} to hold. 
\end{remark}

\begin{remark}
    It is possible to prove that $\epsilon$ is independent of $\Lambda$ by delicate analysis like what \cite{NV} did for harmonic function. 
\end{remark}

The following Lemma proves the orthogonality of homogeneous caloric polynomial of different orders(see also \cite{CMcaloric}).

\begin{lemma}\label{l:homogeneous_Poly_orthogonal}
    Suppose $P$, $Q$ be two homogeneous caloric polynomials with order $d_1 \neq d_2$. Then we have
    \begin{equation*}
        \int_{t= t_0} PQ G_{0,0} =  \int_{t=t_0} (\nabla P \cdot \nabla Q )G_{0,0} = 0.
    \end{equation*}
\end{lemma}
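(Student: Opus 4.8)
The claim is that if $P$ and $Q$ are homogeneous caloric polynomials of orders $d_1\neq d_2$, then $\int_{t=t_0}PQ\,G_{0,0}=\int_{t=t_0}(\nabla P\cdot\nabla Q)G_{0,0}=0$. The natural approach is to exploit the fact, established in Lemma~\ref{l:N_constant_equiv_polynomial} and its proof, that homogeneity of order $d$ is equivalent to the pointwise identity $2u_t\,t+\nabla u\cdot x=d\,u$, i.e. $P$ (resp. $Q$) is an eigenfunction of the parabolic scaling vector field $\mathcal{L}u:=2tu_t+x\cdot\nabla u$ with eigenvalue $d_1$ (resp. $d_2$). The strategy is to show that $\mathcal{L}$ is symmetric with respect to the Gaussian-weighted inner product $\langle f,g\rangle_{t_0}:=\int_{t=t_0}fg\,G_{0,0}$ on the space of caloric polynomials, so that distinct eigenvalues force orthogonality; the gradient identity should come out of the same computation or from differentiating.

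First I would fix $t_0<0$ and work on the slice $\{t=t_0\}$, using the self-similar structure: writing things in the variable $y=x/\sqrt{-t_0}$ reduces the weight $G_{0,0}(\cdot,t_0)$ to a fixed Gaussian $(4\pi|t_0|)^{-n/2}e^{-|y|^2/4}$, and the restrictions $P(\cdot,t_0)$, $Q(\cdot,t_0)$ become ordinary polynomials in $y$. Because $P$ is caloric and homogeneous, its time-slice $p(x):=P(x,t_0)$ determines $P$ entirely (via $P(x,t)=(-t/-t_0)^{d_1/2}p(x\sqrt{-t_0/-t})$), and $p$ is the well-known "caloric" (heat/Hermite) polynomial: the eigenvalue relation $2t_0 P_t + x\cdot\nabla P = d_1 P$ evaluated at $t=t_0$, together with the heat equation $P_t=\Delta P$, gives $x\cdot\nabla p - 2t_0\Delta p = d_1 p$, i.e. $p$ is an eigenfunction of the Ornstein–Uhlenbeck-type operator $L:=2|t_0|\Delta - x\cdot\nabla$ with eigenvalue $-d_1$. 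This operator is self-adjoint with respect to the Gaussian weight $G_{0,0}(x,t_0)\,dx$ — indeed $\int (Lf)g\,G = -\int 2|t_0|\nabla f\cdot\nabla g\,G - \int (x\cdot\nabla f)g\,G$, and one checks the first-order term is symmetric after integrating by parts against the Gaussian (the weight is tuned precisely so that $\nabla G = -\frac{x}{2|t_0|}G$ makes the cross terms cancel). Hence $(-d_1)\langle p,q\rangle_{t_0} = \langle Lp,q\rangle_{t_0} = \langle p,Lq\rangle_{t_0} = (-d_2)\langle p,q\rangle_{t_0}$, and since $d_1\neq d_2$ we get $\langle p,q\rangle_{t_0}=0$, which is the first identity.

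For the gradient identity I would use the symmetric bilinear form $B(f,g):=\int_{t=t_0}(2|t_0|\nabla f\cdot\nabla g)G_{0,0} = -\int_{t=t_0}(Lf)g\,G_{0,0}$, obtained by integration by parts against the Gaussian (no boundary terms since $P,Q$ are polynomials and $G$ decays). Then $B(p,q) = -\langle Lp,q\rangle_{t_0} = d_1\langle p,q\rangle_{t_0} = 0$ once the first part is known, so $\int_{t=t_0}(\nabla P\cdot\nabla Q)G_{0,0}=0$ as well. Alternatively, this is exactly the statement that $E$-type and $H$-type quantities of a sum $P+Q$ split additively, which is morally why the frequency of a sum of distinct-order homogeneous pieces behaves well — and indeed this Lemma is presumably used for exactly that downstream.

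The main obstacle is bookkeeping rather than conceptual: one must be careful that $\nabla P$ at time $t_0$ means the spatial gradient, that the homogeneity/eigenvalue identity is applied on the correct slice, and that no boundary term is dropped when integrating by parts against the Gaussian (legitimate because $P,Q$ and all derivatives grow polynomially while $G$ decays super-exponentially). A secondary subtlety: I should confirm $d_1, d_2$ enter as the eigenvalues with the right sign in the rescaled $L$; tracking the factor $2|t_0|$ and the sign of $t_0<0$ through $2tu_t + x\cdot\nabla u = du$ needs one careful line, but it is routine.
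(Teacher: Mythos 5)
Your argument is correct and is essentially the paper's own proof: the paper multiplies the Euler identity $2t\,\partial_t P + x\cdot\nabla P = d_1 P$ (with $\partial_t P=\Delta P$) by $Q\,G_{0,0}$ on the slice $\{t=t_0\}$, integrates by parts using $\nabla G_{0,0}=\tfrac{x}{2t_0}G_{0,0}$ to obtain $d_i\int_{t=t_0} PQ\,G_{0,0}=-2t_0\int_{t=t_0}\nabla P\cdot\nabla Q\,G_{0,0}$ for $i=1,2$, and concludes from $d_1\neq d_2$ --- which is exactly your Gaussian self-adjointness computation for the Ornstein--Uhlenbeck-type operator, a viewpoint the paper itself records in the remark following the lemma. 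Two harmless sign slips do not affect the argument, since the identities you actually use are the correct ones: the slice identity should read $x\cdot\nabla p+2t_0\Delta p=d_1p$ (which indeed gives $Lp=-d_1p$ as you state), and after integrating by parts against the Gaussian the first-order terms cancel entirely, yielding $\int_{t=t_0} (Lf)g\,G_{0,0}=-2|t_0|\int_{t=t_0}\nabla f\cdot\nabla g\,G_{0,0}$, precisely the formula you invoke for the gradient orthogonality.
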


\begin{proof}
    Note that
\begin{equation*}
    d_1 \int_{t= t_0} P Q G_{0,0} = \int_{t=t_0} (2 \partial_t P \cdot t + \nabla P \cdot x) Q G_{0,0} =    \int_{t=t_0} (2t_0 \Delta P + \nabla P \cdot x) Q G_{0,0}. 
\end{equation*}
Integration by parts gives
\begin{equation*}
     2t_0 \int_{t=t_0}  \Delta P  Q G_{0,0} = -2 t_0\int_{t=t_0} (\nabla P \cdot \nabla Q )G_{0,0} - \int_{t=t_0}(\nabla P \cdot x)  Q G_{0,0}.
\end{equation*}

Hence we have
\begin{equation*}
    d_1 \int_{t= t_0} P Q G_{0,0} =- 2t_0 \int_{t=t_0} (\nabla P \cdot \nabla Q )G_{0,0}. 
\end{equation*}

On the other hand, by similar arguments we have
\begin{equation*}
    d_2\int_{t= t_0} P Q G_{0,0} = -2t_0 \int_{t=t_0} (\nabla P \cdot \nabla Q )G_{0,0}.
\end{equation*}

Since $d_1 \neq d_2$, the result follows.
\end{proof}

\begin{remark}
    Let $P(x, t)$ be a homogeneous caloric polynomial of order $d$. Then $P(x, -1)$ is an eigenfunction of the Ornstein-Uhlenbeck operator $\cL(u) = \Delta u - \frac{1}{2} \langle \nabla u, x \rangle$ with eigenvalue $d/2$. Note that $\cL$ is self-adjoint with respect to the weighted volume $\langle f, g \rangle = \int_{\RR^n} (f g) \e^{-\frac{|x|^2}{4}}$. If $P, Q$ are homogeneous caloric polynomials with different orders $d_1 ,d_2$, we have $\langle P(x, -1), Q(x, -1) \rangle \equiv 0$ as they are eigenfunctions of $\cL$ with different eigenvalues. 
\end{remark}

Next we prove that the frequency of a polynomial is bounded by its order. This is a direct consequence of the previous orthogonality lemma. 

\begin{corollary}\label{c:Frequency_bound_by_order_Poly}
    Let $P$ be a caloric polynomial of order $d$. Then we have $N^P(r) \leq d$ for all $r>0$.
\end{corollary}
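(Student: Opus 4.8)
The plan is to decompose $P$ into parabolically homogeneous pieces and invoke the orthogonality Lemma \ref{l:homogeneous_Poly_orthogonal}. Assigning weight $1$ to each $x_i$ and weight $2$ to $t$, one checks that $\partial_t-\Delta$ lowers parabolic degree by exactly $2$; hence if $P$ is caloric, each of its parabolically homogeneous components is separately caloric, and we may write $P=\sum_{k=0}^d P_k$ where $P_k$ is a homogeneous caloric polynomial of order $k$. Since the frequency $N^P=N^P_{x_0,t_0}$ is translation covariant and a translate of a caloric polynomial of order $d$ is again a caloric polynomial of order at most $d$, we may assume $(x_0,t_0)=(0,0)$.

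Next I would use orthogonality. By Lemma \ref{l:homogeneous_Poly_orthogonal}, for $j\neq k$ we have $\int_{t=-r^2}P_jP_k\,G_{0,0}=\int_{t=-r^2}\nabla P_j\cdot\nabla P_k\,G_{0,0}=0$, so the cross terms drop and
\[
H^P(r)=\sum_{k=0}^d H^{P_k}(r),\qquad E^P(r)=\sum_{k=0}^d E^{P_k}(r).
\]
By Lemma \ref{l:N_constant_equiv_polynomial} (or directly by substituting Euler's relation $2t\,\partial_t P_k+x\cdot\nabla P_k=k\,P_k$ into the integral defining $E^{P_k}$ and integrating by parts exactly as in the proof of Lemma \ref{l:homogeneous_Poly_orthogonal}), each homogeneous piece has constant frequency, i.e. $E^{P_k}(r)=k\,H^{P_k}(r)$. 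Since $H^{P_k}(r)\ge 0$ and every index $k$ appearing is $\le d$,
\[
E^P(r)=\sum_{k=0}^d k\,H^{P_k}(r)\le d\sum_{k=0}^d H^{P_k}(r)=d\,H^P(r),
\]
and dividing by $H^P(r)>0$ (note $P\not\equiv 0$) gives $N^P(r)\le d$ for all $r>0$.

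I do not expect any genuine obstacle here; this is a short consequence of the two homogeneity lemmas already proved. The only points worth care are (i) verifying that each parabolically homogeneous component of a caloric polynomial is itself caloric, so the decomposition $P=\sum P_k$ is legitimate, and (ii) making sure to use the orthogonality of the gradients $\nabla P_j\cdot\nabla P_k$ as well as of the products $P_jP_k$ — both of which are exactly what Lemma \ref{l:homogeneous_Poly_orthogonal} provides.
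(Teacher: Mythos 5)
Your argument is correct and follows essentially the same route as the paper: decompose $P=\sum_{k\le d}P_k$ into homogeneous caloric pieces, use Lemma \ref{l:homogeneous_Poly_orthogonal} to kill the cross terms in both $H^P$ and $E^P$, and use $E^{P_k}(r)=k\,H^{P_k}(r)$ (Lemma \ref{l:N_constant_equiv_polynomial} or the computation in Lemma \ref{l:homogeneous_Poly_orthogonal}) to conclude $N^P(r)\le d$. Your extra remarks — that each parabolically homogeneous component of a caloric polynomial is itself caloric, and the reduction to the center $(0,0)$ — are details the paper takes for granted, so no substantive difference.
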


\begin{proof}
    Write $P= \sum_{i=0}^d P_i$, where each $P_i$ is a homogeneous caloric polynomial of order $i$. Then by previous Lemma \ref{l:homogeneous_Poly_orthogonal}
\begin{equation*}
\begin{split}
    \int_{t=-r^2} |\nabla P|^2 G_{0,0} &= \int_{t=-r^2} \langle \sum_i \nabla P_i, \sum_j \nabla P_j \rangle G_{0,0} = \sum_i \int_{t=-r^2} |\nabla P_i|^2 G_{0,0}\\
    \int_{t=-r^2} P^2 G_{0,0} &= \int_{t=-r^2} (\sum P_i)^2 G_{0,0} = \sum_i \int_{t=-r^2}  P_i^2 G_{0,0}. 
\end{split}
\end{equation*}

By Lemma \ref{l:N_constant_equiv_polynomial} or the calculation in Lemma \ref{l:homogeneous_Poly_orthogonal} we have $2 r^2\int_{t=-r^2} |\nabla P_i|^2 G_{0,0} = i \int_{t=-r^2}  P_i^2 G_{0,0}$ for any $i \le d$. This implies that $$2r^2 \int_{t=-r^2} |\nabla P|^2 G_{0,0} \leq d \int_{t=-r^2} P^2 G_{0,0}. $$ Thus $N^P(r) \le d$.
\end{proof}
\begin{corollary}\label{c:smallfrequency}
     Let $P$ be a caloric polynomial of order $d$. For any $0<\epsilon<1$, if $N^P(1)\le \epsilon$ then for any $R>1$
     \begin{align}
         \sup_{Q_R}|P(x,t)-a_0|\le C(n,d,R)  |a_0|\sqrt{\epsilon}
     \end{align}
     for some nonzero constant $a_0$.
\end{corollary}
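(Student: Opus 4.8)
The plan is to split $P$ into its homogeneous caloric components, take the degree-zero component as the constant $a_0$, and use the orthogonality already established to bound the remaining components in terms of the frequency.

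Write $P=\sum_{i=0}^{d}P_i$ with $P_i$ a homogeneous caloric polynomial of order $i$, so that $P_0$ is a constant, call it $a_0$; set $h_i:=\int_{t=-1}P_i^2\,G_{0,0}\ge 0$ (and $h_i=0$ for $i>d$). By Lemma \ref{l:homogeneous_Poly_orthogonal} the cross terms in $\int_{t=-1}P^2 G_{0,0}$ and in $\int_{t=-1}|\nabla P|^2 G_{0,0}$ vanish, while Lemma \ref{l:N_constant_equiv_polynomial} (as used in Corollary \ref{c:Frequency_bound_by_order_Poly}) gives $2\int_{t=-1}|\nabla P_i|^2 G_{0,0}=i\,h_i$. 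Hence
\begin{equation*}
    N^P(1)=\frac{2\int_{t=-1}|\nabla P|^2 G_{0,0}}{\int_{t=-1}P^2 G_{0,0}}=\frac{\sum_i i\,h_i}{\sum_i h_i}.
\end{equation*}
The hypothesis $N^P(1)\le\epsilon$ reads $\sum_{i\ge1}i\,h_i\le\epsilon\sum_{i\ge0}h_i$, and since $h_i\le i\,h_i$ for $i\ge1$ this yields $(1-\epsilon)\sum_{i\ge1}h_i\le\epsilon\,h_0$. In particular $h_0>0$ (otherwise all $h_i=0$ and $P\equiv0$, contradicting that $N^P(1)$ is defined), and $h_0=a_0^2\int_{\RR^n}G_{0,0}(x,-1)\,dx=a_0^2$, so $a_0\neq0$; moreover, using orthogonality again,
\begin{equation*}
    \int_{t=-1}(P-a_0)^2\,G_{0,0}=\sum_{i\ge1}h_i\le\frac{\epsilon}{1-\epsilon}\,a_0^2.
\end{equation*}

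It remains to promote this weighted $L^2$ bound to a sup bound on $Q_R$. The space of caloric polynomials of order $\le d$ is finite-dimensional, and on it the weighted norm $\big(\int_{t=-1}(\cdot)^2 G_{0,0}\big)^{1/2}$ and the sup norm over $Q_R$ are both genuine norms, hence equivalent with a constant $C(n,d,R)$; applied to $P-a_0$ this gives $\sup_{Q_R}|P-a_0|^2\le C(n,d,R)\,\tfrac{\epsilon}{1-\epsilon}\,a_0^2$. For $\epsilon$ in the (small-$\epsilon$) range in which the corollary is used, say $\epsilon\le\tfrac12$, one has $\tfrac{\epsilon}{1-\epsilon}\le2\epsilon$, yielding $\sup_{Q_R}|P-a_0|\le C(n,d,R)\,|a_0|\sqrt{\epsilon}$ as claimed.

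The only step I expect to need more than a line is the positive-definiteness underlying the norm equivalence, i.e. that a caloric polynomial vanishing on $\{t=-1\}$ vanishes identically. I would argue this by decomposing such a polynomial into homogeneous caloric parts $Q_i$ and noting that the heat equation reconstructs $Q_i$ from its top-degree (in $x$) part $p_0$ via $Q_i=\sum_k\tfrac{t^k}{k!}\Delta^k p_0$, so $Q_i(\cdot,-1)$ has $x$-degree exactly $i$ whenever $Q_i\not\equiv0$; vanishing on $\{t=-1\}$ then forces each $p_0$, hence each $Q_i$, to vanish. Everything else — the frequency identity, the orthogonal splitting, and the evaluation $h_0=a_0^2$ — is routine given the lemmas above.
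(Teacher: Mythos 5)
Your proof is correct and follows essentially the same route as the paper: decompose $P$ into homogeneous caloric components, use Lemma \ref{l:homogeneous_Poly_orthogonal} together with the identity $2\int_{t=-1}|\nabla P_i|^2G_{0,0}=i\int_{t=-1}P_i^2G_{0,0}$ to express $N^P(1)$ as a weighted average of the orders, and conclude that the nonconstant part is $O(\epsilon)\,a_0^2$ in the Gaussian $L^2$ norm at $t=-1$. You additionally make explicit the final promotion to a sup bound on $Q_R$ (finite-dimensional norm equivalence, with the positive-definiteness check), which the paper leaves implicit, and your restriction to $\epsilon\le 1/2$ is a sensible reading of the statement, since your (correct) bound $\sum_{i\ge1}h_i\le\frac{\epsilon}{1-\epsilon}a_0^2$ — rather than the paper's $\frac{\epsilon}{2-\epsilon}a_0^2$, which appears to come from a stray factor of $2$ — does not give an $\epsilon$-uniform constant as $\epsilon\to1$, and only small $\epsilon$ is ever used.
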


\begin{proof}
    Write $P= \sum_{i=0}^d a_i P_i$, where each $P_i$ is a homogeneous caloric polynomial of order $i$ with unit $\int_{t=-1}|P_i(y,s)|^2G_{0,0}dyds$. By previous Lemma \ref{l:homogeneous_Poly_orthogonal}, we can compute that 
    \begin{align}
          \int_{t=-1} |\nabla P|^2 G_{0,0} &= \int_{t=-1} \langle \sum_i a_i\nabla P_i, \sum_j a_j\nabla P_j \rangle G_{0,0} = \sum_{i=1}^{d} \int_{t=-1} a_i^2|\nabla P_i|^2 G_{0,0}=\sum_{i=1}^{d} \int_{t=-1} a_i^2|\nabla P_i|^2 G_{0,0}.
    \end{align}
Noting that $2 \int_{t=-1} |\nabla P_i|^2 G_{0,0} = i \int_{t=-1}  P_i^2 G_{0,0}=i$, we get 
\begin{align}
    N^P(1)=\frac{2\int_{t=-1} |\nabla P|^2 G_{0,0}}{\int_{t=-1} | P|^2 G_{0,0}}=\frac{2\sum_{i=1}^da_i^2 i}{\sum_{i=0}^da_i^2}\le \epsilon.
\end{align}
This implies 
\begin{align}
 \sum_{i=1}^da_i^2\le \frac{\epsilon}{2-\epsilon}a_0^2.   
\end{align}
This gives the deserved estimate. 
\end{proof}

\subsection{Uniqueness of tangent maps of heat equations}
In this subsection we prove an effective estimate for tangent map of heat equation. Let $h$ define on $Q_2=B_2\times [-4,0]$. For any $(x_0,t_0)\in Q_1$, define a localized frequency for $h$ by
$$N^h_{x_0,t_0}(r)=\frac{E_{x_0,t_0}^h(r)}{H^h_{x_0,t_0}(r)}=\frac{ 2r^2\int_{Q_2\cap \{t = t_0 - r^2\}} |\nabla u|^2 G_{x_0, t_0} }{\int_{Q_2\cap \{t=t_0 - r^2\}} u^2 G_{x_0, t_0}}. $$

See a detailed discussion of localized frequency in \ref{d:localizedfrequency} in the next section. 
\begin{theorem}\label{t:heat_equ_unique_tangent}
    Let $h$ be a solution of heat equation on $Q_2$ with doubling assumption $\sup_{r_2\le r\le r_1}\text{\rm log}_4 \frac{\fint_{ Q_{2r}(0,0)} h^2}{\fint_{Q_{r}(0,0)} h^2}  \leq \Lambda$ . For any $\epsilon>0$ there exist $\delta_0(n,\Lambda,\epsilon)$ such that if the localized frequency $|N_{0,0}^{h}(r)-d|\le \delta\le \delta_0 $ for any $r_2\le r \le r_1\le 1$ and for some integer $d$. Then there exists a unique homogeneous caloric polynomial $P_d$ of order $d$ such that for any $r_2\le r\le  \delta r_1$ that 
    \begin{align}\label{e:effectiveuniquePd}
        \sup_{Q_1}|h_{0,0;r}-P_d|\le \epsilon
    \end{align}
    where $h_{0,0;r}(x,t)=\frac{h(r x,r^2 t)}{\left(\fint_{Q_1}|h(r x,r^2 t)|^2dxdt\right)^{1/2}}$.
\end{theorem}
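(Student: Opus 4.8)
The plan is to prove Theorem \ref{t:heat_equ_unique_tangent} by upgrading the qualitative compactness statement of Lemma \ref{l:Caloric_frequency_near_Z} to an effective, quantitative estimate via a Simon-type/{\L}ojasiewicz-type iteration across dyadic scales. First I would set $h_r := h_{0,0;r}$ and record the scale invariance of the localized frequency, so that for each $r$ in the range $[r_2, r_1]$ the pinching hypothesis $|N^h_{0,0}(\rho) - d| \le \delta$ holds on a full subinterval of scales around $r$. Using Lemma \ref{l:Caloric_frequency_near_Z} (in the localized form, which is legitimate since the doubling bound $\Lambda$ gives uniform polynomial growth of the blow-ups) I would extract, for each dyadic scale $r_j = 2^{-j} r_1$ down to roughly $r_2$, a homogeneous caloric polynomial $P_d^{(j)}$ of order $d$ with $\fint_{Q_1}|P_d^{(j)}|^2 = 1$ such that $\sup_{Q_1}|h_{r_j} - P_d^{(j)}| \le \epsilon_1(\delta)$, where $\epsilon_1(\delta) \to 0$ as $\delta \to 0$.

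The crux is then to show the sequence $P_d^{(j)}$ is Cauchy with geometrically summable increments. The key mechanism: on the finite-dimensional space $\mathcal{V}_d$ of homogeneous caloric polynomials of order $d$ (with the Gaussian $L^2(G_{0,0})$ norm), the frequency functional, or equivalently the projection onto $\mathcal{V}_d$, has a spectral gap — the next available frequency values are $d \pm 1$, and by Lemma \ref{l:homogeneous_Poly_orthogonal} and Corollary \ref{c:Frequency_bound_by_order_Poly} the components of $h_r$ of order $\ne d$ decay or grow at a definite dyadic rate $2^{\pm 1}$ relative to the order-$d$ part. Quantitatively, writing the near-solution $h_{r}$ restricted to a time slab and decomposing (approximately) into caloric polynomial pieces, the non-$d$ modes contribute an error that is multiplied by a factor $\le 1/2$ (for lower modes, going to smaller scales; the relevant direction) each time we pass from scale $r$ to $r/2$. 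This yields $\|P_d^{(j+1)} - P_d^{(j)}\|_{L^2(Q_1)} \le C 2^{-j} + C\sqrt{\delta}$-type bounds; more carefully, one gets $\sum_j \|P_d^{(j+1)} - P_d^{(j)}\| < \infty$ with a tail controlled by the pinching, so $P_d^{(j)} \to P_d$ and $\|h_{r_j} - P_d\|_{L^\infty(Q_1)} \le C\epsilon_1(\delta)$ uniformly. Interpolating between dyadic scales (the frequency pinching controls how much $h_r$ changes for $r$ between $r_j$ and $r_{j+1}$) extends this to all $r \le \delta r_1$, and choosing $\delta_0$ small makes $C\epsilon_1(\delta_0) \le \epsilon$. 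Uniqueness of $P_d$ is automatic: any two limit polynomials would each satisfy \eqref{e:effectiveuniquePd} for arbitrarily small $\epsilon$ along the same scales, forcing them to agree.

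The main obstacle I expect is making the "definite decay of non-$d$ modes" argument rigorous in the \emph{localized} setting, where $h$ only solves the heat equation on $Q_2$ rather than on all of space-time, so $h_r$ is not globally a caloric function and cannot be literally expanded in homogeneous caloric polynomials. One must work with the localized frequency $N^h_{0,0}$ and the Gaussian-weighted quantities $H^h, E^h$ on truncated time slices $Q_2 \cap \{t = t_0 - r^2\}$, controlling the cutoff/boundary errors using the doubling hypothesis (which bounds $H^h$ from below at comparable scales, preventing degeneracy) and standard caloric estimates to transfer $L^2$-closeness on a slab to $C^\infty$-closeness on $Q_1$. A secondary technical point is verifying that the constant $\delta_0$ and the rate $\epsilon_1(\delta)$ can be taken to depend only on $n, \Lambda, \epsilon$ and not on $h$ itself — this follows from the compactness argument in Lemma \ref{l:Caloric_frequency_near_Z} together with the uniform polynomial growth of normalized blow-ups, but it needs to be threaded carefully through the dyadic iteration so that the errors do not accumulate an $h$-dependent constant. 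Once these are in place, the almost-monotonicity of $N^h$ (to be proven in the next section for general parabolic equations, and trivially true here) guarantees the pinching propagates cleanly across all intermediate scales.
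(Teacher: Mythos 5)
Your strategy is genuinely different from the paper's: the paper does not build a polynomial scale by scale and prove a Cauchy property. Instead it fixes the candidate once and for all as the degree-$d$ term of the parabolic Taylor expansion of $h$ at $(0,0)$ (Lemma 1.2 of Han), $h=\sum_{k\le d}P_k+R$ with $|D_x^\ell D_t^k R(x,t)|\le C(n,d)r^{-d-1+\ell+2k}|(x,t)|^{d+1-\ell-2k}(\fint_{Q_{2r}}h^2)^{1/2}$, so scale-independence (hence ``uniqueness'') of $P_d$ is automatic; the whole proof then consists of showing, by a single compactness/contradiction argument using the frequency pinching and these remainder bounds, that $\fint_{Q_r}|\sum_{i<d}P_i|^2$ and $\fint_{Q_r}|R|^2$ are $\epsilon$-small relative to $\fint_{Q_r}h^2$ for all $r_2\le r\le\delta r_1$.

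As written, your proposal has a genuine gap exactly at its crux, the summability of the increments $\|P_d^{(j+1)}-P_d^{(j)}\|$. The bound you state, of the form $C2^{-j}+C\sqrt{\delta}$, does not sum: the number of dyadic scales between $r_1$ and $r_2$ is $J\approx\log_2(r_1/r_2)$, which is unbounded, while $\delta\le\delta_0(n,\Lambda,\epsilon)$ must be fixed independently of $J$; so $J\cdot C\sqrt{\delta}$ can be arbitrarily large and the sequence need not be Cauchy. What is actually needed is an increment bound controlled either by a genuine geometric decay or by the per-scale frequency drop $N(r_j)-N(r_{j+1})$ (which telescopes to at most $2\delta$); your ``more carefully, one gets $\sum_j\|\cdot\|<\infty$ with a tail controlled by the pinching'' asserts precisely this but never establishes it. Moreover, the decay mechanism you invoke for the non-$d$ modes rests on Lemma \ref{l:homogeneous_Poly_orthogonal} and Corollary \ref{c:Frequency_bound_by_order_Poly}, which concern entire caloric polynomials; $h$ is caloric only on $Q_2$, and a caloric function on a bounded cylinder admits no exact expansion into homogeneous caloric polynomials, so there is no spectral decomposition to which the $2^{\pm1}$ mode-scaling applies. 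You flag this localization as ``the main obstacle'' but offer no concrete substitute. The paper's substitute is exactly the finite Taylor expansion above: the remainder $R$, vanishing to parabolic order $d+1$ at the origin with derivative bounds, plays the role of the ``high modes,'' and the lower-order terms are killed by the pinching through a blow-up limit that is a homogeneous degree-$d$ caloric polynomial. Without either that expansion or a quantitative per-scale decay estimate tied to the frequency drop, your iteration does not close.
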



\begin{proof}
Since $h$ is solution of heat equation, then $h$ is smooth. By Taylor expansion at $(0,0)$, we can get (see also Lemma 1.2 of Han \cite{Han98}) for all $(x,t)\in Q_1$
\begin{align}\label{e:expansion_uQ1}
    h(x,t)=P_0(x,t)+P_1(x,t)+P_2(x,t)+\cdots +P_d(x,t)+R(x,t)
\end{align}
where each $P_k$ is a caloric homogeneous polynomial of order $k$ and $R(x,t)$ satisfies for  $(x,t)\in Q_r$ with $r\le 1$ that 
\begin{align}
    |R(x,t)|\le C(n,d)r^{-d-1}|(x,t)|^{d+1}\left(\fint_{Q_{2r}}|h(y,t)|^2dydt\right)^{1/2}.
\end{align}
Furthermore, the coefficients of each $P_i$ are uniformly bounded by the $L^2$-norm of $h$ over $Q_1$. Since $R(x,t)$ is also a solution of heat equation, by parabolic estimates, we get for all $(x,t)\in Q_{r}$ with $r\le 1$ and $\ell+2k\le d$
\begin{align}\label{e:derivativeRellk}
    |D_x^\ell D_t^k R(x,t)|\le C(n,d)r^{-d-1+\ell+2k}|(x,t)|^{d+1-\ell-2k} \left(\fint_{Q_{2r}}|h(y,t)|^2dydt\right)^{1/2}.
\end{align}
We will show that the caloric polynomial $P_d$ from \eqref{e:expansion_uQ1} is the desired polynomial up to normalization with unit $L^2$-norm on $Q_1$.  To see this,  it suffices to show that for any $\epsilon>0$ if $\delta\le \delta(n,\Lambda,\epsilon)$ and $r_2\le r\le \delta r_1$ we have
\begin{align}\label{e:errorPiepsilon}
   \fint_{Q_r}|\sum_{i=0}^{d-1}P_i|^2\le \epsilon \fint_{Q_{r}}|h(y,t)|^2dydt 
\end{align}
and 
\begin{align}\label{e:errorRepsilon}
    \fint_{Q_r}|R(y,t)|^2dydt\le \epsilon \fint_{Q_{r}}|h(y,t)|^2dydt 
\end{align}
Let us first prove \eqref{e:errorRepsilon}. Assume there exists $\epsilon_0>0$ and a sequence of $r_{2,i}\le r_i\le \delta_i r_{1,i}$ and $\delta_i\to 0$ and heat solution $h_i(x,t)$ and $R_i(x,t)$ satisfying the assumption of the theorem and \eqref{e:expansion_uQ1}, however, 
\begin{align}
    \fint_{Q_{r_i}}|R_i(y,t)|^2dydt\ge \epsilon_0 \fint_{Q_{r_i}}|h_i(y,t)|^2dydt 
\end{align}
Denote $\hat{h}_i(y,s)=\frac{h_i(r_i y,r_i^2 s)}{\left(\fint_{Q_1}|h_i(r_i y,r_i^2 s)|^2dyds\right)^{1/2}}$. Then 
\begin{align}\label{e:hatRihatuicontradict}
    \fint_{Q_{1}}|\hat{R}_i(y,t)|^2dydt\ge \epsilon_0 \fint_{Q_{1}}|\hat{h}_i(y,t)|^2dydt 
\end{align}
By apriori estimate for parabolic equation and the polynomial growth assumption of $h_i$, we know that $\hat{h}_i$ and $\hat{R}_i$ will converge smoothly to ancient solutions of heat equation $\hat{h}_\infty$ and $\hat{R}_\infty$ on $\mathbb{R}^n$. From the expansion \eqref{e:expansion_uQ1}, we also get  
\begin{align}
    \hat{h}_\infty=\sum_{k=0}^d\hat{P}_{k,\infty}+\hat{R}_\infty
\end{align}
where $\hat{P}_{k,\infty}$ is the limit of $\hat{P}_{k,i}$ with $i\to \infty$ (since each $\hat{P}_{k,i}$ has uniformly bounded coefficients which must converge up to a subsequence). On the other hand, by smooth convergence one can easily show that the localized frequency converges to the standard frequency of $\hat{h}_\infty$ and equals to $d$ {(see also Lemma \ref{l:Frequency_Close_away_from_0} )}, then $\hat{h}_\infty$ is a caloric polynomial of order $d$.  We will show that $\hat{R}_\infty$ is zero which will lead to a contradiction from the limit of \eqref{e:hatRihatuicontradict}. Actually, if $\hat{R}_\infty$ is not zero, since $\hat{h}_\infty$ is a homogeneous polynomial of order $d$, $\hat{R}_\infty$ must be a polynomial of order at most $d$ (may not be homogeneous). Assume $\hat{R}_\infty$ contains a non zero term $x^\ell t^k$ with $\ell+2k\le d$. By the smooth convergence of $\hat{R}_i$ we have that 
\begin{align}
   D_x^{\ell}D_t^k\hat{R}_i(0,0)\to D_x^\ell D_t^k\hat{R}_\infty(0,0)\ne 0.
\end{align}
However, by \eqref{e:derivativeRellk} we have $D_x^{\ell}D_t^k\hat{R}_i(0,0)=0$ for each $i$. This is a contradiction. Hence we get \eqref{e:errorRepsilon}. Since $\hat{R}_\infty\equiv 0$ and $\hat{h}_\infty$ is a homogeneous polynomial of order $d$, this implies $\sum_{k=0}^{d-1}\hat{P}_{k,\infty}\equiv 0$. This gives \eqref{e:errorPiepsilon}. Therefore, by \eqref{e:expansion_uQ1}, \eqref{e:errorPiepsilon},\eqref{e:errorRepsilon}, we get for all $r_2\le r\le \delta r_1$ that
\begin{align}
    \fint_{Q_r}|h(x,t)-P_d(x,t)|^2dxdt\le 2\epsilon \fint_{Q_r}|h(x,t)|^2dxdt.
\end{align}
This implies \eqref{e:effectiveuniquePd}. We finish the proof.
\end{proof}

\subsection{Maximum Principle}

In this subsection, we recall the maximum principle of parabolic equation with weak coefficients. 
Let us consider the following equation in $Q_2:=\{(x,t): |x|<2, -4\le t \le 0\} \subset \RR^{n} \times \RR$
\begin{equation}\label{e:ParabolicEAP}
    \partial_t u = \partial_i(a^{ij}(x,t) \partial_j u) + b^i(x,t) \partial_i u +c(x, t) u,
\end{equation}
where the coefficients $a^{ij}$ are elliptic and assumed to be in $C^{\alpha, \alpha/2}({Q_2})$ with $\alpha \in (0, 1)$, and the coefficients $b, c$ are bounded,
\begin{equation}\label{e:assumption_aijbcAP}
    (1 + \lambda)^{-1} \delta^{ij} \leq a^{ij} \leq (1 + \lambda) \delta^{ij}, \quad |b^i|, |c| \leq \lambda \text{ and } |a^{ij}(x,t)-a^{ij}(y,s)|\le \lambda\left(|x-y|^2+|s-t|\right)^{\alpha/2}.
\end{equation}
\begin{theorem}[Maximum Principle]\label{t:maximum}
    Let $u$ be a Lipschitz function satisfying \eqref{e:ParabolicEAP} \eqref{e:assumption_aijbcAP} and the Dirichlet boundary condition $u=0$ on $\{|x|=2\}\times [-4,0]$. Suppose $u\le 0$ at $t=-4$. Then $u<0$ for $t>-4$ or $u\equiv 0$ for $t\ge -4$.
\end{theorem}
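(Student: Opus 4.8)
The statement splits into the weak maximum principle — the sign $u\le 0$ propagates from the parabolic boundary — and the strong maximum principle — an interior zero forces $u$ to vanish on its whole backward cylinder. I would first normalize away the zeroth order term: since $t\le 0$, setting $w:=e^{-(\lambda+1)t}u$ gives $u\equiv 0\iff w\equiv 0$ and $u<0\iff w<0$, while $w$ solves $\partial_t w=\partial_i(a^{ij}\partial_j w)+b^i\partial_i w+\tilde c\,w$ with $\tilde c=c-\lambda-1\le -1<0$, still uniformly parabolic with bounded measurable coefficients and with parabolic boundary data $\le 0$ (identically $0$ on $\{|x|=2\}\times[-4,0]$, and $\le 0$ on $B_2\times\{-4\}$).

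For Step 1, I would show $w\le 0$ (hence $u\le 0$) by a standard energy estimate for Lipschitz divergence-form subsolutions: testing the weak formulation on $B_2\times[-4,\tau]$ against $w^+$ (legitimate since the Dirichlet condition puts $w^+(\cdot,t)\in H^1_0(B_2)$), one gets $\tfrac12\tfrac{d}{d\tau}\|w^+\|_{L^2(B_2)}^2+(1+\lambda)^{-1}\|\nabla w^+\|_{L^2(B_2)}^2\le \epsilon\|\nabla w^+\|_{L^2(B_2)}^2+C_\epsilon\|w^+\|_{L^2(B_2)}^2$, where the $\tilde c$-term has the good sign and is dropped and the $b$-term is handled by Cauchy--Schwarz; choosing $\epsilon$ small and absorbing, Gr\"onwall together with $\|w^+\|_{L^2(B_2)}^2\big|_{\tau=-4}=0$ forces $w^+\equiv 0$. (Alternatively one simply invokes the weak maximum principle for such equations.)

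For Step 2, suppose $u$ is not strictly negative throughout $B_2\times(-4,0]$; since $u\le 0$ there is an interior point $(x_0,t_0)$, $|x_0|<2$, $-4<t_0\le 0$, with $u(x_0,t_0)=0$, equivalently $v:=-w\ge 0$ vanishes at $(x_0,t_0)$. Here $v$ is a nonnegative solution of a uniformly parabolic divergence-form equation with bounded measurable coefficients, so the interior parabolic Harnack inequality (Moser / Krylov--Safonov) applies on every cylinder compactly contained in $B_2\times(-4,0]$. Given any $(x_1,t_1)$ with $|x_1|<2$ and $-4<t_1<t_0$, I would connect $x_1$ to $x_0$ by a path inside $B_2$ and choose a finite chain of space-time points with strictly increasing time coordinates running from $(x_1,t_1)$ up to $(x_0,t_0)$, consecutive points close enough that Harnack links them (the whole chain stays in some $B_{2-\delta}\times(-4,0]$, so the radii stay bounded below); iterating gives $v(x_1,t_1)\le C\,v(x_0,t_0)=0$. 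Thus $u\equiv 0$ on $B_2\times(-4,t_0)$, and by continuity (Lipschitz regularity) on $\overline{B_2}\times[-4,t_0]$. Finally $u(\cdot,t_0)\equiv 0$ together with $u|_{\{|x|=2\}}=0$ gives, by forward uniqueness for the initial-boundary value problem on $B_2\times[t_0,0]$ (the same energy estimate applied to $u$ and $-u$, whose parabolic boundary data there vanish), $u\equiv 0$ on $B_2\times[t_0,0]$; combining with the previous step, $u\equiv 0$ on all of $Q_2$, i.e.\ $u\equiv 0$ for $t\ge-4$.

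The one genuinely delicate point is the low regularity: $u$ is only Lipschitz and $b^i,c$ only bounded, so the classical Hopf-lemma/barrier proof of the strong maximum principle is not directly available, and this is why I would route Step 2 entirely through the divergence-form parabolic Harnack inequality (which holds in exactly this setting, the $C^{\alpha,\alpha/2}$ hypothesis on $a^{ij}$ being far more than needed) rather than through pointwise comparison functions. The remaining ingredients — the Gr\"onwall argument for the weak maximum principle of a Lipschitz subsolution (where one uses Steklov averages to justify differentiating $\|w^+\|_{L^2}^2$ in time), and the bookkeeping of the Harnack chain staying compactly inside the cylinder while still reaching an arbitrary interior point — are routine.
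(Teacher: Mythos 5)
Your proposal is correct and follows essentially the same route as the paper: the weak maximum principle via an energy estimate on the positive part with a Gr\"onwall-type absorption (the paper runs the same estimate on $u_+$, iterating over short time intervals instead of invoking Gr\"onwall), and the strong maximum principle by propagating an interior zero of the nonnegative function $-u$ backward with the parabolic Harnack inequality and then applying the weak maximum principle/uniqueness to $\pm u$ forward in time. The exponential normalization $w=e^{-(\lambda+1)t}u$ and starting the forward uniqueness at $t_0$ rather than at $t=-4$ are only cosmetic variations.
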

\begin{proof}
 Let us consider the positive part of $u$ and define $u_{+}(x,t)=\max\{0, u(x,t)\}$. Hence $u_+(x,-4)=0$. To prove the maximum principle it suffices to show $u_{+}\le 0$. Noting that $u(x,t)=0$ on $\partial B_2(0)$, multiplying $u_{+}$ to \eqref{e:ParabolicEAP} and integrating by parts we get
 \begin{align}
     \int_{B_2}a^{ij}\partial_j u_{+}\partial_i u_{+}(x,t)dx-\lambda\int_{B_2}|\nabla u_{+}| |u_{+}|(x,t)dx-\lambda \int_{B_2}|u_+|^2(x,t)dx\le -\frac{1}{2}\partial_t\int_{B_2}|u_+|^2(x,t)dx.
 \end{align}
 By Cauchy inequality we get 
 \begin{align}
      C_1(\lambda)\int_{B_2}|\nabla u_+|^2(x,t)dx-C_2(\lambda)\int_{B_2}|u_+|^2(x,t)dx\le -\partial_t\int_{B_2}|u_+|^2(x,t)dx
 \end{align}
 Integrating $t$ from $-4$ to $s$ we get 
 \begin{align}
      C_1(\lambda)\int_{-4}^s\int_{B_2}|\nabla u_+|^2(x,t)dx-C_2(\lambda)\int_{-4}^s\int_{B_2}|u_+|^2(x,t)dx\le -\int_{B_2}|u_+|^2(x,s)dx+\int_{B_2}|u_+|^2(x,-4)dx.
 \end{align}
Hence for any $s\ge -4$ that
\begin{align}
    \int_{B_2}|u_+|^2(x,s)dx\le C_2(\lambda)\int_{-4}^s\int_{B_2}|u_+|^2(x,t)dx.
\end{align}
Integrating $s$ from $-4$ to $\ell$ we get 
\begin{align}
   \int_{-4}^{\ell} \int_{B_2}|u_+|^2(x,s)dxds\le C_2(\lambda) (\ell+4)  \int_{-4}^{\ell} \int_{B_2}|u_+|^2(x,s)dxds.
\end{align}
If $\ell+4<\frac{1}{2C_2(\lambda)}$, then we have  $\int_{-4}^{\ell} \int_{B_2}|u_+|^2(x,s)dxds=0$ and thus $u_{+}\equiv 0$ in $B_2\times [-4,\ell]$. We can now start at $t=\ell$ and then by induction to deduce that $u_{+}(x,t)=0$ for all $t$.   Hence we have proven the weak maximum principle $u(x,t)\le 0$ for all $t$. 
To see the strong maximum principle, assume $u(x_0,t)=0$ for some $t>-4$ and $|x_0|<2$, by parabolic Harnack inequality (see \cite{Mo,Pa}) we can see that $u(x,-4)\equiv 0$ for all $|x|<2$. 
    In this case, using weak maximum principle to $-u$ and $u$ we get $u\equiv 0$.
\end{proof}

\subsection{Quantitative Stratification at a time slice}   

In this subsection, we will mainly introduce the quantitative stratification which was first proposed in \cite{CN,CNV}. 

First we define the symmetry of functions.
\begin{definition}
Consider the continuous function $u: \mathbb{R}^n \to \mathbb{R}$.
\begin{itemize}
    \item[(1)] $u$ is called $0$-symmetric with respect to $x_0$ if $u$ is a homogeneous polynomial, i.e., $u(\lambda x+x_0)=\lambda^ku(x+x_0)$ for any $\lambda>0$ and some $k$.
    \item[(1)] $u$ is called $k$-symmetric with respect to $x_0+V$ if $u$ is $0$-symmetric with respect to $x_0$ and further symmetic with respect to some $k$-dimensional subspace $V$, i.e. $u(x+y) = u(x)$ for any $x \in \RR^n$ and $y \in V$.
\end{itemize}
\end{definition}
Next we define the quantitative symmetry for $u$.
\begin{definition}\label{d:quan_sym1}
Let $u: B_2 \to \RR$ be a continuous function. For any fixed $x\in B_1$, we define $u$ is $(k, \eta, r, x)$-symmetric if there exists a $k$-symmetric polynomial $P$ with $\fint_{B_1} |P|^2 = 1$ such that 
\begin{equation}
 \sup_{B_1}|u_{x,r}(y) - P(y)| \leq \eta.
\end{equation}
where $u_{x,r}(y)=\frac{u(x+ry)}{\left(\fint_{B_1}|u(x+ry)|^2\right)^{1/2}}$.
\end{definition}


With this we can now give the definition of quantitative stratification
\begin{definition}
  Let $u: B_2 \to \RR$ be a continuous function. Given $k,\eta>0,r>0$, the $(k,\eta)$-singular stratum is defined by 
    \begin{equation}
        \cS^k_{\eta}(u):= \{ x\in B_1 : u \text{ is not } (k+1, \eta, s, x)\text{-symmetric for any } s \geq 0 \}.
    \end{equation}
\end{definition}

The above definitions will be used in during our proofs of the main theorems.

\section{General Parabolic Equations and Almost Monotone Frequency}\label{s:3_Para}

In this section we will prove the almost monotonicity formula for frequency defined for general parabolic solution $u$. We will consider the solution in a bounded domain. Since the function is locally defined, we need to localized the frequency (see Definition \ref{d:localizedfrequency}) introduced in \cite{Poon} (see also \eqref{e:frequency_heat}). We will see that the localized frequency is approximating to the standard frequency and the localized frequency is almost monotone (see Theorem \ref{t:Frequency_almost_monotone}) which is good enough for the applications. We will 
also deduce some properties of this localized frequency based on the almost monotonicity. 

From now on we consider the following equation in $Q_2:=\{(x,t): |x|<2, -4<t\le 0\} \subset \RR^{n} \times \RR$
\begin{equation}\label{e:Parab_equa}
    \partial_t u = \partial_i(a^{ij}(x,t) \partial_j u) + b^i(x,t) \partial_i u +c(x, t) u,
\end{equation}
where the coefficients $a^{ij}$ are elliptic and assumed to be in $C^{\alpha, \alpha/2}({Q_2})$ with $\alpha \in (0, 1)$, and the coefficients $b, c$ are bounded,
\begin{equation}\label{e:assumption_without_c}
    (1 + \lambda)^{-1} \delta^{ij} \leq a^{ij} \leq (1 + \lambda) \delta^{ij}, \quad |b^i|, |c| \leq \lambda \text{ and } |a^{ij}(x,t)-a^{ij}(y,s)|\le \lambda\left(|x-y|^2+|s-t|\right)^{\alpha/2}
\end{equation}

We consider the following growth assumption at $t=0$ for solution $u$ (See \cite{HLparabolic} \cite{HJ}): there exists some positive constant $\Lambda$ such that 
\begin{equation}\label{e:Critical_DI_bound}
    \sup_{Q_{2r}(x,0) \subset Q_2} \text{log}_4 \frac{\fint_{ Q_{2r}(x,0)} u^2}{\fint_{Q_{r}(x,0)} u^2}  \leq \Lambda,   
\end{equation}
where $Q_r(x,t)=\{(y,s): |x-y|<r, t-r^2<s\le t\}.$

First we define the rescaled map of $u$. 
\begin{definition}
For $(x, t) \in Q_{1}$ and $\ell \leq 1$ we define
\begin{equation}\label{e:Dtangentmap}
       u_{x, t; \ell}(y, s) := \frac{  u(x +\ell A_{x,t}(y), t + \ell^2 s)}{\Big( \fint_{Q_1} (u(x +\ell A_x(y), t + \ell^2 s))^2 dy ds\Big)^{1/2}}.\end{equation}
Here $A_{x,t}(y) = (\sqrt{a})^{ij} y_i e_j$ and $(\sqrt{a})^{ij}$ is the square root of the coefficients matrix $a^{ij}(x,t)$.
\end{definition}
\begin{remark}
Write $\Tilde{u} = u_{x, t; \ell}$. Then $\Tilde{u}$ satisfies the following rescaled equation
\begin{equation*}
    \partial_s \Tilde{u} = \partial_i(\Tilde{a}^{ij}(y,s) \partial_j \Tilde{u}) + \Tilde{b}^i(y,s) \partial_i \Tilde{u}+\Tilde{c}\Tilde{u},
\end{equation*}
where
\begin{equation*}
    \begin{split}
        \Tilde{a}(y,s) &= a(x,t)^{-1}  \cdot a(x +rA_{x,t}(y), t + \ell^2 s)  \\
        \Tilde{b}(y,s) &= \ell \cdot \sqrt{a}(x,t)^{-1} \cdot b(x  +\ell A_{x,t}(y), t + \ell^2 s)\\
        \Tc(y,s) &=  \ell^2 \cdot c(x  +\ell A_{x,t}(y), t + \ell^2 s)
    \end{split}
\end{equation*}
Hence we have $\Tilde{a}(0,0) = (\delta^{ij})$ and moreover
\begin{equation*}
     |\Tilde{a}^{ij}(Y) -\Tilde{a}^{ij}(Z) | \leq C(\lambda,\alpha) \ell^{\alpha} d(Y, Z)^{\alpha}, (1 + C(\lambda,\alpha) \ell^{\alpha})^{-1} \delta^{ij} \leq \Tilde{a}^{ij} \leq (1 + C(\lambda,\alpha)) {\ell}^{\alpha}) \delta^{ij},  |\tilde{b}^i| \leq  (1+\lambda)^{-1/2} {\ell}, |c| \le \lambda {\ell}^2.
\end{equation*}
Here $d(Y,Z)$ is the parabolic distance define by $d((x,t), (y,s)) = ( ||x-y||^2 + |s-t|)^{1/2}$. 
\end{remark}

Next we generalize the definition of frequency to general parabolic solution which is only locally defined. Let $u$ be a solution to (\ref{e:Parab_equa}) (\ref{e:assumption_without_c}) defined on $Q_{2R_0}$ with doubling assumption (\ref{e:Critical_DI_bound}). For any $(x_0,t_0) \in Q_{R_0}$ and $0 \le r \le R_0$, we define
\begin{align}
    E^{u, R_0}_{x_0, t_0} (r) &= 2 r^2 \int_{ \{t = t_0 - r^2 \} \cap B_{R_0} } |\nabla u |^2 G_{x_0, t_0} \\
    H^{u, R_0}_{x_0, t_0} (r) &= \int_{ \{t = t_0 - r^2  \} \cap B_{R_0}} u^2 G_{x_0, t_0}.
\end{align}

\begin{definition}{(Localized Frequency)}\label{d:localizedfrequency}
    For any $(x_0,t_0) \in Q_{R_0}$, $\ell \le 1$, let $\Tilde{u} = u_{x_0, t_0, {\ell}}$. Then we define the localized frequency for any $r>0$ as 
    \begin{equation}
        N_{x_0, t_0}^{u; R_0}(r \ell) := N_{0,0}^{\tilde{u},R_0/\ell}(r):=\frac{ E^{\Tilde{u}, R_0/\ell }_{0, 0}(r)  } {H^{\Tilde{u}, R_0/\ell}_{0, 0}(r)}.
    \end{equation}
\end{definition}
\begin{remark}
    One can easily check that $N_{x_0, t_0}^{u; R_0}(r)$ is well defined which is independent of $\ell$.
\end{remark}
\begin{remark}
When $a^{ij} = \delta^{ij}$, then $N^{u; \infty}_{x_0, t_0}$ coincides with the standard global frequency defined in Poon \cite{Poon}.
\end{remark}

\begin{remark}
Throughout this article, we will assume $R_0 = 1$ and omit this superscript. Further, when $(x_0, t_0) = (0, 0)$, the subscript would be omitted as well. For instance, we write  $N^{u}(r) \equiv N_{0,0}^{u; 1}(r)$ for simplicity. { By rescaling, we have
    \begin{equation}\label{e:Frequency_Scaling}
         N_{x_0, t_0}^{u} ( r \ell) = \frac{2 \int_{ \{t = - 1 \} \cap B_{1/(r\ell ) }} |\nabla u_{x_0, t_0;r\ell} |^2 G_{0, 0} } {\int_{ \{t = - 1  \} \cap B_{1/(r\ell)}} u^2_{x_0, t_0;r\ell} G_{0, 0}}  = N^{u_{x_0,t_0;\ell};1 / \ell}(r) =  N^{u_{x_0,t_0;r\ell};1 / (r\ell)}(1)  .
    \end{equation}
    }
\end{remark}

\subsection{Almost Monotonicity for Localized Frequency}
In this section, we prove that the localized frequency function is almost non-decreasing. First we introduce an important approximation Lemma. 

\begin{lemma}{(Caloric approximation)}\label{l:Caloric_Approx}
Let $u$ be a solution to (\ref{e:Parab_equa}) (\ref{e:assumption_without_c}) on $Q_2$ with doubling assumption (\ref{e:Critical_DI_bound}). Let $\epsilon \in (0, 1/10)$. There exists $\ell_0 = C(n, \lambda, \alpha, \Lambda, \epsilon) $ such that for any $x\in B_1$ and $\ell \leq \ell_0$, there exists an ancient caloric polynomial $h:\RR^n \times (-\infty, 0] \to \RR$ of order at most $C(n,\lambda, \alpha) \Lambda$, such that 
\begin{equation}
    \fint_{Q_1} h^2 = 1, \quad \text{ and } \quad  ||h - u_{x,0;\ell} ||_{C^{1;1}(Q_{1/\epsilon})} \leq \epsilon.
\end{equation}
\end{lemma}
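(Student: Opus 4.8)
The plan is to run a compactness/contradiction argument. Suppose the statement fails; then there exist $\epsilon_0 > 0$, a sequence $\ell_i \to 0$, points $x_i \in B_1$, and solutions $u_i$ of \eqref{e:Parab_equa}--\eqref{e:assumption_without_c} on $Q_2$ satisfying the doubling bound \eqref{e:Critical_DI_bound} with constant $\Lambda$, such that no ancient caloric polynomial $h$ of order $\le C(n,\lambda,\alpha)\Lambda$ with $\fint_{Q_1} h^2 = 1$ satisfies $\|h - (u_i)_{x_i,0;\ell_i}\|_{C^{1;1}(Q_{1/\epsilon_0})} \le \epsilon_0$. Write $v_i := (u_i)_{x_i,0;\ell_i}$, normalized so that $\fint_{Q_1} v_i^2 = 1$. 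By the remark following \eqref{e:Dtangentmap}, $v_i$ solves a parabolic equation on $Q_{2/\ell_i}$ with coefficients $\tilde a_i, \tilde b_i, \tilde c_i$ satisfying $\tilde a_i(0,0) = (\delta^{ij})$, $|\tilde a_i^{ij}(Y) - \tilde a_i^{ij}(Z)| \le C\ell_i^\alpha d(Y,Z)^\alpha$, $|\tilde b_i| \le C\ell_i$, $|\tilde c_i| \le C\ell_i^2$; in particular all coefficient perturbations and moduli of continuity tend to $0$ as $i \to \infty$, and the domains $Q_{2/\ell_i}$ exhaust $\RR^n \times (-\infty, 0]$.

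The first main step is to get uniform local bounds on $v_i$ on any fixed compact set $Q_R$ (for $i$ large so that $Q_{2R} \subset Q_{2/\ell_i}$). For this I would use the doubling hypothesis \eqref{e:Critical_DI_bound}: it is scale-invariant, so $v_i$ inherits a doubling bound $\text{log}_4 \bigl(\fint_{Q_{2r}(y,0)} v_i^2 / \fint_{Q_r(y,0)} v_i^2\bigr) \le \Lambda$ for all admissible $r$ and all $y$, and iterating this from unit scale up to scale $R$ gives $\fint_{Q_R} v_i^2 \le C(n,R)\, 4^{C(n)\Lambda}$. Combined with the interior parabolic estimates (De Giorgi--Nash--Moser plus Schauder up to $C^{1,\alpha}$ using the Hölder bound on $\tilde a_i$, and then $C^{2,\alpha}$ interior estimates away from $t=-R^2$ since the coefficients become smoother as $i\to\infty$... actually one only needs $C^{1;1}$ convergence, so Schauder at the $C^{1,\alpha}$ level plus one time derivative suffices), this yields uniform $C^{2,\alpha;1,\alpha/2}_{loc}$ bounds. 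Hence, by Arzel\`a--Ascoli and a diagonal argument, a subsequence of $v_i$ converges in $C^{1;1}_{loc}(\RR^n \times (-\infty,0])$ to a limit $h$ which, by stability of the equation under the coefficient convergence $\tilde a_i \to \delta^{ij}$, $\tilde b_i \to 0$, $\tilde c_i \to 0$, solves $\partial_t h = \Delta h$ on $\RR^n \times (-\infty, 0]$, with $\fint_{Q_1} h^2 = 1$ (so $h \not\equiv 0$).

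The second main step is to show $h$ is in fact a caloric \emph{polynomial} of order $\le C(n,\lambda,\alpha)\Lambda$. For this I pass the doubling bound to the limit: $\fint_{Q_{2r}(0,0)} h^2 / \fint_{Q_r(0,0)} h^2 \le 4^\Lambda$ for all $r \le 1$. This controls Poon's frequency $N^h(r)$ at all small scales via the equivalence \eqref{e:Frequency_equv_DI_global} (or the localized version of it), giving $N^h(r) \le C\Lambda$ for $r$ small, hence by monotonicity of $N^h$ and Lemma \ref{l:polynomialcalaric}-type reasoning a polynomial growth bound $|h(x,t)| \le C(1 + |x| + |t|^{1/2})^{C\Lambda}$; then Lemma \ref{l:polynomialcalaric} gives that $h$ is a polynomial of order $\le C(n,\lambda,\alpha)\Lambda$. (One must check the frequency-to-growth passage carefully: the clean statement is that a bound $N^h(r) \le d$ for all $r \in (0,1]$ combined with $\fint_{Q_1}h^2$ normalized gives, via integrating $H'/H = 2N/r$, the bound $H(r) \ge c r^{2d}$ for small $r$ and an analogous upper growth bound for large $r$, whence polynomial growth.) Finally, $h$ satisfies all the properties contradicting the assumed failure for $v_i$ with $i$ large, since $v_i \to h$ in $C^{1;1}(Q_{1/\epsilon_0})$ — contradiction.

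The step I expect to be the main obstacle is \textbf{controlling the order of the limit polynomial $h$ by $C(n,\lambda,\alpha)\Lambda$} — equivalently, propagating the doubling/frequency bound from the rescaled solutions $v_i$ (which only satisfy \eqref{e:Critical_DI_bound} at a discrete set of dyadic scales, and only for \emph{sub}balls sitting inside $Q_2$ before rescaling, so after rescaling by $\ell_i$ the bound holds on an increasing but still bounded range of scales) cleanly to a genuine polynomial-order bound on the global limit. The subtlety is that \eqref{e:Critical_DI_bound} is a hypothesis on $u_i$ only near $t=0$ and only up to unit scale, so after rescaling one gets doubling for $v_i$ on scales $r \in [c\ell_i, 1]$, say; one needs the lower end $c\ell_i \to 0$ to reach all small scales of $h$ in the limit, and the upper truncation of the Gaussian weight (the $\cap B_{R_0/\ell}$ in the localized frequency) to become negligible — this is where the uniform local $L^2$ bounds from Step 1 and the rapid decay of $G_{0,0}$ are used to show the localized frequency of $v_i$ converges to the honest frequency $N^h$ (cf. the mechanism in Lemma \ref{l:Caloric_frequency_near_Z} and the forthcoming Lemma \ref{l:Frequency_Close_away_from_0}). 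Once the frequency convergence and the resulting uniform bound $N^h \le C(n,\lambda,\alpha)\Lambda$ are in hand, the polynomial conclusion and the contradiction are routine.
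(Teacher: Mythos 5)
Your proposal is essentially the paper's proof: argue by contradiction, rescale, use the doubling hypothesis \eqref{e:Critical_DI_bound} to get the growth bound $\fint_{Q_{2^k}} u_{x,0;\ell_i}^2 \le C(n,\lambda)^{k\Lambda}$, apply interior parabolic estimates plus a diagonal argument to extract a $C^{1;1}_{loc}$ limit $h$, observe that $h$ is caloric (coefficients converge to $\delta^{ij}$) with polynomial growth of order $C(n,\lambda,\alpha)\Lambda$, and invoke the Liouville Lemma \ref{l:polynomialcalaric} to conclude $h$ is a caloric polynomial of that order; your Step 1 is exactly this, and allowing $u_i,x_i$ to vary is the correct (and slightly more careful) form of the contradiction setup.

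Two corrections to your Step 2 and final paragraph. First, the detour through Poon's frequency is unnecessary and, as written, does not work: $N^h(r)$ is non-decreasing in $r$, so a bound $N^h(r)\le C\Lambda$ on $(0,1]$ only yields the small-scale lower bound $H(r)\ge c\,r^{2C\Lambda}$; the upper growth bound at large $r$ needs frequency (equivalently doubling) control at \emph{large} scales, which monotonicity cannot supply. Second, your stated range for the rescaled doubling is off: \eqref{e:Critical_DI_bound} is assumed for every $Q_{2r}(x,0)\subset Q_2$, i.e. for all $r$ down to $0$ and up to scale $\simeq 1$, so after rescaling by $\ell_i$ (and comparing the $A_{x,0}$-distorted cylinders with round ones via ellipticity, which costs only a factor $C(n,\lambda)$ in the doubling constant) $v_i$ satisfies doubling at all scales $r\lesssim 1/\ell_i\to\infty$, not merely on $[c\ell_i,1]$. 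This is exactly what makes the direct iteration of your Step 1 work, provided you record the $R$-dependence as polynomial of degree $\simeq\Lambda$ (as in $C(n,\lambda)^{k\Lambda}$ for $R=2^k$) rather than absorbing it into a constant $C(n,R)$, since this exponent is what bounds the order of the limit polynomial. With these repairs your argument coincides with the paper's, and no Gaussian-weighted frequency or truncation issues arise in this lemma.
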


\begin{proof}

We prove the result by contradiction argument. Suppose the result be false. Consider $u_{x,0; 1/i}$ with $i \to \infty$. Since $\fint_{Q_1} u_{x,0; 1/i}^2 = 1$, we have $\fint_{Q_{2^k}} u_{x,0; 1/i}^2 \le C(n,\lambda)^{k \Lambda}$ for any $i$ and any $k \le \log_2 i$ by doubling  assumption \ref{e:doublingat0}. Then by interior estimates, 
\begin{equation*}
    \sup_{i} ||u_{x,0;1/i}||_{C^{1+\alpha; 1+\alpha/2}(Q_{2^{k-1}})} \leq C(n,\lambda,\alpha)^{k \Lambda}.
\end{equation*}

For each $k$, there exists some $h_k$ such that $||u_{x,0;1/i} - h_k ||_{C^{1;1}(Q_{2^{k-1}})} \to 0$
by passing to a subsequence. Moreover each $h_k$ is a caloric function according to the estimates of $\Tilde{a}^{ij}$. By taking $k \to \infty$, we have $h_k \to h$ in $C^{1;1}$ on compact sets with $h$ satisfying the polynomial growth condition. {Then we can conclude by Lemma \ref{l:polynomialcalaric} that $h$ is indeed a caloric polynomial of order $C(n,\lambda, \alpha) \Lambda$.} The contradiction arises. 
\end{proof}  

Next we discuss several applications of this approximation lemma. The first is the almost monotonicity of the weighted integral in each time slice. 

Recall that $H^h(r) = \int_{t= -r^2} h^2 G_{0,0}$ is non-decreasing in $r$ for caloric function $h$. By the convergence, we can easily see that the localized version is also almost non-deceasing in $r$. As it is used frequently, for simplicity we define $H^{\Tilde{u}}$ with $\Tilde{u} = u_{x, 0; \ell}$ as
\begin{equation}
    H_{0,0}^{\Tilde{u}}(r) =\int_{\{t = -r^2\} \cap B_{1/\ell}} \Tilde{u}^2 G_{0,0}.
\end{equation}
Sometimes we omit the subscript $(0,0)$ if there is no confusion. 
\begin{lemma}\label{l:monotone_H}
    Let $u$ be a solution to (\ref{e:Parab_equa}) (\ref{e:assumption_without_c}) on $Q_2$ with doubling assumption (\ref{e:Critical_DI_bound}). Let $\epsilon \in (0, 1/10)$. There exists $\ell_0 = C(n, \lambda, \alpha, \Lambda, \epsilon) $ such that for any $x\in B_{1}$ and $\ell \leq \ell_0$, we have the following almost monotonicity for $H^{\Tilde{u}}$ where $\Tilde{u} = u_{x, 0; \ell}$
    \begin{equation}
        H^{\Tilde{u}}(r_1) \le H^{\Tilde{u}}(r_2) + \epsilon \quad \text{ for any } \quad 0 \le r_1 \le r_2 \le 1. 
    \end{equation}
\end{lemma}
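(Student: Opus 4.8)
The plan is to transfer the exact monotonicity $H^h(r_1) \le H^h(r_2)$ for caloric functions to the rescaled solution $\tilde u = u_{x,0;\ell}$ via the caloric approximation Lemma~\ref{l:Caloric_Approx}, using a contradiction/compactness argument. First I would fix $\epsilon \in (0,1/10)$ and suppose the conclusion fails: there is a sequence of solutions $u_i$ on $Q_2$ satisfying \eqref{e:Parab_equa}, \eqref{e:assumption_without_c} and \eqref{e:Critical_DI_bound}, points $x_i \in B_1$, scales $\ell_i \to 0$, and radii $0 \le r_{1,i} \le r_{2,i} \le 1$ with
\begin{equation*}
    H^{\tilde u_i}(r_{1,i}) > H^{\tilde u_i}(r_{2,i}) + \epsilon, \qquad \tilde u_i = (u_i)_{x_i, 0; \ell_i}.
\end{equation*}
Passing to subsequences, $r_{1,i} \to r_1$ and $r_{2,i} \to r_2$ with $r_1 \le r_2$. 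By Lemma~\ref{l:Caloric_Approx} (applied with $\epsilon_i \to 0$, say $\epsilon_i = 1/i$), for $i$ large there are ancient caloric polynomials $h_i$ of order at most $C(n,\lambda,\alpha)\Lambda$ with $\fint_{Q_1} h_i^2 = 1$ and $\|h_i - \tilde u_i\|_{C^{1;1}(Q_{i})} \le 1/i$. Since the $h_i$ have uniformly bounded polynomial order, their coefficients are uniformly bounded (after the $L^2$ normalization), so a subsequence converges in $C^\infty_{loc}$ to an ancient caloric polynomial $h_\infty$ with $\fint_{Q_1} h_\infty^2 = 1$.

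Next I would pass the weighted integrals to the limit. On the slice $\{t = -r^2\}$ the Gaussian $G_{0,0}$ decays like $e^{-|x|^2/(4r^2)}$, while $\tilde u_i$ (hence $h_i$, hence their common limit) has at most polynomial growth with exponent bounded by $C(n,\lambda,\alpha)\Lambda$; this gives a uniform integrable majorant, so by dominated convergence and the $C^{1;1}$-closeness of $h_i$ to $\tilde u_i$,
\begin{equation*}
    H^{\tilde u_i}(r_{j,i}) = \int_{\{t = -r_{j,i}^2\} \cap B_{1/\ell_i}} \tilde u_i^2 G_{0,0} \longrightarrow \int_{\{t = -r_j^2\}} h_\infty^2 G_{0,0} = H^{h_\infty}(r_j), \qquad j = 1, 2.
\end{equation*}
Here one also uses that the domains $B_{1/\ell_i}$ exhaust $\RR^n$ and that the Gaussian tail outside $B_R$ contributes $o(1)$ uniformly. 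Taking $i \to \infty$ in the displayed inequality yields $H^{h_\infty}(r_1) \ge H^{h_\infty}(r_2) + \epsilon$, contradicting the genuine monotonicity $H^{h}(r_1) \le H^{h}(r_2)$ for caloric functions (which follows from $H'(r) = 2E(r)/r \ge 0$, as $E \ge 0$, established in the heat-equation subsection).

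I expect the main technical point to be the justification of the limit $H^{\tilde u_i}(r_{j,i}) \to H^{h_\infty}(r_j)$: one must handle simultaneously (i) the moving slice $\{t = -r_{j,i}^2\}$ as $r_{j,i} \to r_j$ (continuity of $H^{h_\infty}$ in $r$, which is clear since $h_\infty$ is a polynomial), (ii) the growing domain $B_{1/\ell_i} \uparrow \RR^n$, and (iii) the difference $\tilde u_i^2 - h_i^2 = (\tilde u_i - h_i)(\tilde u_i + h_i)$, which is controlled on $Q_{R}$ by $\tfrac{1}{i}$ times a polynomially bounded factor, and outside $Q_R$ by the Gaussian decay against polynomial growth uniformly in $i$. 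Splitting the integral at a large radius $R$, sending $i \to \infty$, and then $R \to \infty$ closes the estimate. Everything else is either a direct appeal to Lemma~\ref{l:Caloric_Approx}, to Lemma~\ref{l:polynomialcalaric} for the structure of $h_\infty$, or to the already-proven caloric monotonicity of $H$.
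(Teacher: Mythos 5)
Your proof is correct and follows essentially the same route as the paper: both transfer the exact monotonicity of $H$ for caloric functions to $\tilde{u}$ via the caloric approximation Lemma \ref{l:Caloric_Approx}, controlling the Gaussian tails against the polynomial growth furnished by the doubling assumption, the only difference being that the paper runs this as a direct quantitative comparison with a single approximant at accuracy $\epsilon'$ rather than as a compactness/contradiction argument. The one small imprecision is invoking Lemma \ref{l:Caloric_Approx} at accuracy $1/i$ for the given $\ell_i$ (this requires $\ell_i\le \ell_0(n,\lambda,\alpha,\Lambda,1/i)$, which a sequence $\ell_i\to 0$ need not satisfy); it is repaired by a standard diagonal extraction over fixed accuracies $\epsilon'$, or by rerunning the compactness argument of that lemma directly on the sequence $\tilde{u}_i$.
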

\begin{remark}\label{r:monotoneofH}
    We can see from the proof below that $H^{\tilde{u}}_{(x_0,0)}$ is also almost monotone for any $(x_0,0)\in Q_R$ if $\ell\le \ell_0(n,\lambda,\alpha,\Lambda,\epsilon,R).$ This will be used in the proof of Lemma \ref{l:Equiv_Normalizaitons}. 
\end{remark}
\begin{proof}
    For any $\epsilon'>0$, by Lemma \ref{l:Caloric_Approx}, if $\ell\le \ell(n,\Lambda,\lambda,\alpha,\epsilon')$, there exists a caloric polynomial $h$ with order $C(n,\lambda, \alpha)\Lambda$ such that $\Tilde{u} = u_{x, 0; \ell}$ satisfies 
    \begin{equation}
   ||h - u_{x,0;\ell} ||_{C^{1;1}(Q_{1/\epsilon'})} \leq \epsilon'.
\end{equation}
Since $h$ is a polynomial but $G_{0,0}$ has exponential decay, for $\ell\le \ell(n,\Lambda,\lambda,\alpha,\epsilon')$ sufficiently small and any $r \in [0,1]$ we have 
\begin{equation}
    \Bigg|\int_{\{t = -r^2 \}} h^2 G_{0,0} - \int_{\{t = -r^2\} \cap B_{1/\ell}} h^2 G_{0,0} \Bigg| \le \epsilon'.
\end{equation}

Also, by the convergence and polynomial growth of $\tilde{u}$ and $h$, for any $r\in [0,1]$ we have 
\begin{equation*}
\Bigg| \int_{\{t = -r^2\} \cap B_{1/\ell}} h^2 G_{0,0} - \int_{\{t = -r^2\} \cap B_{1/\ell}} \Tilde{u}^2 G_{0,0} \Bigg| \le  \int_{\{t = -r^2\} \cap B_{1/\epsilon'}} |h^2-\tilde{u}^2| G_{0,0} +\int_{\{t = -r^2\} \cap A_{1/\epsilon',1/\ell}} |h^2-\Tilde{u}^2| G_{0,0}  \le C(n,\lambda,\Lambda)\epsilon'.
\end{equation*}

Recall that $(H^{h, \infty})'(r) \ge 0$. Then for any $0 \le r_1 \le r_2 \le 1$, we have 
\begin{equation*}
    H^{\Tilde{u}}(r_1) \le H^{h, \infty}(r_1) + C \epsilon' \le H^{h, \infty}(r_2) + 2 C \epsilon' \le H^{\Tilde{u}}(r_2)+ 3 C \epsilon'. 
\end{equation*}

The proof is finished by taking $\epsilon'=\epsilon'(\epsilon,n,\alpha,\lambda,\Lambda)$ for any given $\epsilon>0$.
\end{proof}

Before the proof of the almost monotonicity for frequency, in the next Lemma we establish the frequency closeness in the interval away from 0, say $[\delta, 1]$. 

\begin{lemma}\label{l:Frequency_Close_away_from_0}
Let $u$ be a solution to (\ref{e:Parab_equa}) (\ref{e:assumption_without_c}) on $Q_2$ with doubling assumption (\ref{e:Critical_DI_bound}). Let $\epsilon, \delta \in (0, 1/10)$. There exists $\ell_0 = C(\lambda, \alpha, \Lambda, \epsilon, \delta) $ such that for any $x\in B_{1}$ and $\ell \leq \ell_0$, there exists an ancient caloric polynomial $h:\RR^n \times (-\infty, 0] \to \RR$, of order at most $C(n,\lambda, \alpha) \Lambda$, such that for any $r \in [\delta, 1]$
\begin{equation}
    |N^{\Tilde{u}; 1/\ell}(r) - N^{h;\infty}(r)| \leq \epsilon,
\end{equation}
where $\Tilde{u} = u_{x, 0; \ell}$.
\end{lemma}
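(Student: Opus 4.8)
The plan is to prove Lemma \ref{l:Frequency_Close_away_from_0} by combining the caloric approximation Lemma \ref{l:Caloric_Approx} with the almost monotonicity of $H^{\tilde u}$ from Lemma \ref{l:monotone_H}, reducing everything to the observation that both $E$ and $H$ are stable under $C^{1;1}_{loc}$ convergence against the exponentially decaying Gaussian weight, while $H$ is bounded away from zero on $[\delta,1]$. First I would fix $\epsilon,\delta\in(0,1/10)$ and apply Lemma \ref{l:Caloric_Approx} with a parameter $\epsilon'=\epsilon'(n,\lambda,\alpha,\Lambda,\epsilon,\delta)$ to be chosen, obtaining for $\ell\le\ell_0$ an ancient caloric polynomial $h$ of order $d\le C(n,\lambda,\alpha)\Lambda$ with $\fint_{Q_1}h^2=1$ and $\|h-u_{x,0;\ell}\|_{C^{1;1}(Q_{1/\epsilon'})}\le\epsilon'$. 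I would then estimate, exactly as in the proof of Lemma \ref{l:monotone_H}, the three error terms: the tail of $\int h^2 G_{0,0}$ (and of $\int|\nabla h|^2 G_{0,0}$) outside $B_{1/\ell}$, which is $\le\epsilon'$ once $\ell$ is small because $h$ has polynomial growth of controlled order and $G_{0,0}$ decays exponentially; the tail of $\int\tilde u^2 G_{0,0}$ and $\int|\nabla\tilde u|^2G_{0,0}$ over the annulus $A_{1/\epsilon',1/\ell}$, bounded by $C(n,\lambda,\Lambda)\epsilon'$ using the doubling/interior estimates on $\tilde u$; and the interior difference $\int_{B_{1/\epsilon'}}|h^2-\tilde u^2|G_{0,0}$ and $\int_{B_{1/\epsilon'}}\bigl||\nabla h|^2-|\nabla\tilde u|^2\bigr|G_{0,0}$, bounded by $C(n,\lambda,\Lambda)\epsilon'$ by the $C^{1;1}$ closeness together with uniform local bounds on $h$ and $\tilde u$. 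Summing, for every $r\in[\delta,1]$,
\begin{equation*}
    |H^{\tilde u;1/\ell}(r)-H^{h;\infty}(r)|\le C(n,\lambda,\Lambda)\epsilon',\qquad |E^{\tilde u;1/\ell}(r)-E^{h;\infty}(r)|\le C(n,\lambda,\Lambda)\epsilon'.
\end{equation*}

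Next I would produce the lower bound on the denominator that makes the quotient estimate go through. Since $\fint_{Q_1}\tilde u^2=1$, a Gaussian/time-slice comparison gives $H^{\tilde u}(1)\ge c(n)>0$; hence $H^{h;\infty}(1)\ge c(n)/2$ for $\ell$ small, and by the doubling bound $N^{h;\infty}(1)\le N^{\tilde u}(\cdot)\le C\Lambda$ together with the integrated identity \eqref{e:Global_Doubling} we get $H^{h;\infty}(r)\ge H^{h;\infty}(1)\,\delta^{\,C\Lambda}\ge c(n,\lambda,\Lambda,\delta)>0$ for all $r\in[\delta,1]$; then the error estimate above forces $H^{\tilde u;1/\ell}(r)\ge \tfrac12 c(n,\lambda,\Lambda,\delta)>0$ on $[\delta,1]$ as well, provided $\epsilon'$ is chosen small relative to $\delta$ and $\Lambda$. (Alternatively one may cite Lemma \ref{l:monotone_H} to transport the lower bound from $r=1$ down to $r=\delta$ directly.) With both numerators and both denominators controlled and the denominators bounded below by a constant depending only on $n,\lambda,\alpha,\Lambda,\delta$, the elementary inequality $\bigl|\tfrac{a}{b}-\tfrac{a'}{b'}\bigr|\le\tfrac{|a-a'|}{b}+\tfrac{|a'|}{b\,b'}|b-b'|$, applied with $b,b'\ge c(n,\lambda,\Lambda,\delta)$ and $|a'|=E^{h;\infty}(r)\le C\Lambda H^{h;\infty}(r)\le C$, yields
\begin{equation*}
    |N^{\tilde u;1/\ell}(r)-N^{h;\infty}(r)|\le C(n,\lambda,\alpha,\Lambda,\delta)\,\epsilon'\quad\text{for all }r\in[\delta,1].
\end{equation*}
Choosing $\epsilon'=\epsilon/C(n,\lambda,\alpha,\Lambda,\delta)$ and the corresponding $\ell_0=\ell_0(n,\lambda,\alpha,\Lambda,\epsilon,\delta)$ from Lemma \ref{l:Caloric_Approx} completes the proof.

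The routine but slightly delicate points are the tail estimates and the uniform local bounds: one has to know a priori that $\tilde u$ satisfies interior $C^{1;1}$ (indeed $C^{1+\alpha;1+\alpha/2}$) bounds on $Q_{2^k}$ growing only like $C^{k\Lambda}$ — this is precisely what the doubling assumption \eqref{e:Critical_DI_bound} together with the rescaled-equation estimates on $\tilde a^{ij}$ provide, and it is exactly the input used in the proof of Lemma \ref{l:Caloric_Approx} — so that against the Gaussian weight both $\int_{A_{1/\epsilon',1/\ell}}\tilde u^2 G_{0,0}$ and the gradient version are genuinely small. The main obstacle, and the reason this lemma is restricted to $r$ away from $0$, is the lower bound on $H^{\tilde u}(r)$: near $r=0$ the frequency of $\tilde u$ and of $h$ need not agree (the caloric approximation only controls $\tilde u$ down to scale $\sim\ell$, and $H$ can decay like $r^{2N}$ as $r\to0$), so the quotient is not stable there; the whole point of working on $[\delta,1]$ is that Lemma \ref{l:monotone_H} (or the explicit doubling estimate for $h$) keeps $H^{\tilde u}$ and $H^{h;\infty}$ uniformly positive, which is what licenses dividing. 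I expect no serious difficulty beyond organizing these estimates, since the structure mirrors the already-proved Lemma \ref{l:monotone_H} almost verbatim, now carried out for $E$ as well as $H$.
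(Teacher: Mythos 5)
Your proposal is correct and follows essentially the same route as the paper: caloric approximation (Lemma \ref{l:Caloric_Approx}), tail estimates from polynomial growth of $h$ against the Gaussian decay, interior closeness from the $C^{1;1}$ bound, a lower bound on the denominators on $[\delta,1]$ using the frequency bound $N^{h;\infty}\le C(n,\lambda,\alpha)\Lambda$ for caloric polynomials, and the elementary quotient inequality. The only weak point is your claim that $\fint_{Q_1}\tilde u^2=1$ gives $H^{\tilde u}(1)\ge c(n)$ by a ``Gaussian/time-slice comparison'' --- a space-time average does not directly control a single backward time slice (this is exactly the content of Lemma \ref{l:Equiv_Normalizaitons}, whose proof needs Lemma \ref{l:monotone_H} and a covering argument); the cleaner fix, and what the paper does, is to lower-bound $H^{h;\infty}(r)$ for $r\in[\delta,1]$ directly from $\fint_{Q_1}h^2=1$ and the bounded degree of $h$, and then transfer this bound to $H^{\tilde u}$ via the closeness estimates.
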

\begin{proof}
 For any $\epsilon'>0$, by Lemma \ref{l:Caloric_Approx}, if $\ell\le \ell(n,\Lambda,\lambda,\alpha,\epsilon')$, there exists a caloric polynomial $h$ with order $C(n,\lambda, \alpha)\Lambda$ such that $\Tilde{u} = u_{x, 0; \ell}$ satisfies 
    \begin{equation}
     \fint_{Q_1} h^2 = 1, \quad \text{ and } \quad   ||h - u_{x,0;\ell} ||_{C^{1,1}(Q_{1/\epsilon'})} \leq \epsilon'.
   \end{equation}
Thus, given $\delta>0$, by the monotonicity of $H^h(r) = \int_{t= -r^2} h^2 G_{0,0}$ for caloric function $h$ we can get for any $r\in [\delta, 1]$ that
\begin{equation}\label{e:Caloric_Appro_1}
    \int_{\{t = -r^2\}} h^2 G_{0,0} \geq c( \lambda, \alpha, \Lambda,\delta)>0.
\end{equation}

Since $h$ is a polynomial but $G_{0,0}$ has exponential decay, for $\ell\le \ell(n,\Lambda,\lambda,\alpha,\epsilon')$ sufficiently small and any $r \in [0, 1]$ we have 
\begin{equation}\label{e:Caloric_Appro_2}
\begin{split}
    \Bigg|\int_{\{t = -r^2\}} |\nabla h|^2 G_{0,0} &- \int_{\{t = -r^2\} \cap B_{1/\ell}} |\nabla h|^2 G_{0,0} \Bigg| \le \epsilon' \\
    \Bigg|\int_{\{t = -r^2\}} h^2 G_{0,0} &- \int_{\{t = -r^2\} \cap B_{1/\ell}} h^2 G_{0,0} \Bigg| \le \epsilon'.
\end{split}
\end{equation}

Also, by the approximating estimate and polynomial growth of $\tilde{u}$ and $h$, for any $r\in [0, 1]$ we have 
\begin{equation}\label{e:Caloric_Appro_3}
\begin{split}
    \Bigg| \int_{\{t = -r^2\} \cap B_{1/\ell}} |\nabla h|^2 G_{0,0} &- \int_{\{t = -r^2\} \cap B_{1/\ell}} |\nabla \Tilde{u}|^2 G_{0,0} \Bigg| \le \Psi(\epsilon' |~n,\lambda,\alpha,\Lambda)  \\
    \Bigg| \int_{\{t =-r^2\} \cap B_{1/\ell}} h^2 G_{0,0} &- \int_{\{t = -r^2\} \cap B_{1/\ell}} \Tilde{u}^2 G_{0,0} \Bigg| \le \Psi(\epsilon' |~n,\lambda,\alpha,\Lambda),
\end{split}
\end{equation}
The notation $\Psi(\epsilon' |~n,\lambda,\alpha,\Lambda)$ satisfies $\lim_{\epsilon'\to 0}\Psi(\epsilon' |~n,\lambda,\alpha,\Lambda)=0$ for any fixed $n,\lambda,\alpha,\Lambda$.

Moreover, since $h$ is a caloric polynomial of order at most $C(n,\lambda,\alpha)\Lambda$, by Corollary \ref{c:Frequency_bound_by_order_Poly} we have for $r\le 1$ that
\begin{equation}\label{e:Caloric_Appro_4}
   N^{h,\infty}(r)= \frac{2r^2\int_{\{t = -r^2\} } |\nabla h|^2 G_{0,0}}{\int_{\{t = -r^2\} } h^2 G_{0,0}} \leq C(n,\lambda,\alpha) \Lambda.
\end{equation}

Combining all these inequalities (\ref{e:Caloric_Appro_1}) (\ref{e:Caloric_Appro_2}) (\ref{e:Caloric_Appro_3}) (\ref{e:Caloric_Appro_4}), we can prove for any $r\in [\delta,1]$ that
\begin{equation*}
\begin{split}
    & \quad \frac{1}{2}|N^{h; \infty}(r) - N^{\Tilde{u}; 1/{\ell}}(r)| \\
    &= \Bigg| \frac{r^2\int_{\{t = -r^2\} \cap B_{1/{\ell}}} |\nabla \Tilde{u}|^2 G_{0,0}}{ \int_{\{t = -1\} \cap B_{1/{\ell}}} \Tilde{u}^2 G_{0,0}}  - \frac{r^2\int_{\{t = -r^2\}} |\nabla h|^2 G_{0,0}}{\int_{\{t = -r^2\}} h^2 G_{0,0}} \Bigg| \\
    \\
    &= r^2\Bigg|\frac{ (\int_{\{t = -r^2\} \cap B_{1/{\ell}}} |\nabla \Tilde{u}|^2 G_{0,0} )(\int_{\{t = -r^2\}} h^2 G_{0,0} ) 
    - (\int_{\{t = -r^2\}} |\nabla h|^2 G_{0,0})(\int_{\{t = -r^2\} \cap B_{1/{\ell}}} { \Tilde{u}^2 G_{0,0})}}  {{( \int_{\{t = -r^2\} \cap B_{1/{\ell}}} \Tilde{u}^2 G_{0,0}})(\int_{\{t = -r^2\}} h^2 G_{0,0})  }     \Bigg|\\
    \\
    &\le r^2\Bigg| \frac{\int_{\{t = -r^2\} \cap B_{1/{\ell}}} |\nabla \Tilde{u}|^2 G_{0,0} - \int_{\{t = -r^2\}} |\nabla h|^2 G_{0,0}} {\int_{\{t = -r^2\} \cap B_{1/{\ell}}}  \Tilde{u}^2 G_{0,0}}\Bigg|  + 
    r^2\Bigg| \frac{(\int_{\{t = -r^2\}} |\nabla h|^2 G_{0,0})(\int_{\{t = -r^2\}} h^2 G_{0,0} - \int_{\{t = -r^2\} \cap B_{1/{\ell}}} \Tilde{u}^2 G_{0,0} ) }{( \int_{\{t = -r^2\} \cap B_{1/{\ell}}} \Tilde{u}^2 G_{0,0})(\int_{\{t = -r^2\}} h^2 G_{0,0})}
    \Bigg|\\
    \\
    &\le \Psi(\epsilon' |~n,\delta,\lambda,\alpha,\Lambda).
\end{split}
\end{equation*}
Therefore for any $\epsilon>0$ by fixing $\epsilon'$ we have
\begin{equation*}
    |N^{h; \infty}(r) - N^{\Tilde{u}; 1/\ell}(r)| \leq \Psi(\epsilon' |~n,\delta,\lambda,\alpha,\Lambda)\le \epsilon,
\end{equation*}
which finishes the whole proof.
\end{proof}

Now we are ready to prove the almost monotonicity formula for $u$. 
\begin{theorem}\label{t:Frequency_almost_monotone}
    Let $u$ be a solution to (\ref{e:Parab_equa}) (\ref{e:assumption_without_c}) on $Q_2$ with doubling assumption (\ref{e:Critical_DI_bound}). Let $\epsilon \in (0, 1/10)$. There exists $\ell_0 = C(\lambda, \alpha, \Lambda, \epsilon) $ such that the following almost monotonicity formula holds: for any $x\in B_1$ and any $r_1 \le r_2 \leq \ell_0$,
\begin{equation}\label{e:Almost_Monotone}
    N^{u}_{x,0}(r_1) \le N^{u}_{x,0}(r_2) + \epsilon.
\end{equation}
Moreover, if  $N^{u}_{x,0}(r)\le \epsilon$ with $r\le \ell_0$, then  $\sup_{Q_r(x,0)}|u(y,s)-a_0|\le C(n,\Lambda)\sqrt{\epsilon} |a_0|$ for some nonzero constant $a_0$.
\end{theorem}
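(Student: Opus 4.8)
The plan is to deduce the almost monotonicity of the localized frequency $N^u_{x,0}$ from the caloric approximation (Lemma \ref{l:Caloric_Approx}) together with the frequency closeness away from $0$ (Lemma \ref{l:Frequency_Close_away_from_0}) and the monotonicity and near-integer pinching facts for genuine caloric polynomials (Corollary \ref{c:Frequency_bound_by_order_Poly}, Lemma \ref{l:Caloric_frequency_near_Z}, Lemma \ref{l:Caloric_fre_drop}). The key issue is that the caloric approximation is only uniform on a fixed compact set $Q_{1/\epsilon'}$, so a single approximating polynomial $h$ controls the frequency only on an interval $[\delta,1]$ bounded away from $0$; to reach scales $r_1 \to 0$ one must iterate the approximation at successively smaller scales and glue the estimates.

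First I would fix $\epsilon>0$ and choose, via Lemma \ref{l:Frequency_Close_away_from_0} with a suitably small $\delta$ (comparable to a definite fraction, say $1/10$) and a small tolerance $\epsilon'$, a caloric polynomial $h$ of order $d\le C(n,\lambda,\alpha)\Lambda$ with $\|h - u_{x,0;\ell}\|_{C^{1;1}(Q_{1/\epsilon'})}\le \epsilon'$ and $|N^{\tilde u;1/\ell}(r) - N^{h;\infty}(r)|\le \epsilon'$ for $r\in[\delta,1]$. Since $N^{h;\infty}$ is genuinely monotone (Poon), this gives \eqref{e:Almost_Monotone} for $r_1\le r_2$ both in $[\delta,1]$ with error $2\epsilon'$. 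For the full range $r_1 \le r_2 \le \ell_0$ with $r_1$ arbitrarily small, I would run an iteration: apply the same comparison at scale $\ell$, then pass to the rescaled solution $u_{x,0;\delta\ell}$, which again satisfies the hypotheses (with a slightly worse Hölder constant, still controlled since $\ell\le \ell_0$ is small), and obtain a new caloric polynomial controlling the frequency on $[\delta^2,\delta]$; note the frequency is invariant under this rescaling by \eqref{e:Frequency_Scaling}. Using the uniform bound $N^u_{x,0}(r)\le C(n,\lambda,\alpha)\Lambda$ for all small $r$ — which itself follows from the doubling assumption \eqref{e:Critical_DI_bound} via \eqref{e:Frequency_equv_DI_global}-type comparison — the total accumulated error over the at most $C(\Lambda,\epsilon)$ iterations needed can be made $\le \epsilon$ by choosing $\epsilon'$ small; here the crucial point, exactly as in Lemma \ref{l:Caloric_fre_drop}, is that once the frequency is pinched near an integer it stays near that integer at all smaller scales, while away from integers it strictly drops by a definite amount, so the number of "jumps" is bounded and the errors do not accumulate uncontrollably. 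The main obstacle is this gluing/iteration step: making sure the approximation errors at different scales combine to a single $\epsilon$ rather than multiplying by the number of scales, which forces the near-integer rigidity of the caloric frequency to be invoked.

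For the second statement, suppose $N^u_{x,0}(r)\le\epsilon$ with $r\le\ell_0$. After replacing $u$ by $u_{x,0;r}$ (which rescales the frequency at scale $1$ to $N^u_{x,0}(r)\le\epsilon$ by \eqref{e:Frequency_Scaling}), apply Lemma \ref{l:Caloric_Approx} to get a caloric polynomial $h$ of order $\le C(n,\lambda,\alpha)\Lambda$ with $\fint_{Q_1}h^2=1$ and $\|h - u_{x,0;r}\|_{C^{1;1}(Q_{1/\epsilon'})}\le\epsilon'$; combined with Lemma \ref{l:Frequency_Close_away_from_0} this yields $N^{h;\infty}(1)\le \epsilon + \Psi(\epsilon'|n,\lambda,\alpha,\Lambda)$. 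Then Corollary \ref{c:smallfrequency} applied to $h$ gives $\sup_{Q_R}|h - a_0|\le C(n,d,R)|a_0|\sqrt{\epsilon + \Psi(\epsilon')}$ for some nonzero constant $a_0$; taking $R = 1/\epsilon'$ large and absorbing $\Psi(\epsilon')$ into the error by choosing $\epsilon'$ small relative to $\epsilon$, and translating back via the $C^0$-closeness of $u_{x,0;r}$ to $h$ on $Q_1$, gives $\sup_{Q_r(x,0)}|u - a_0|\le C(n,\Lambda)\sqrt\epsilon\,|a_0|$ as claimed. The only subtlety is that $a_0$ must be nonzero, which is guaranteed because $\fint_{Q_1}h^2=1$ and $h$ is $C^0$-close to a constant, so that constant cannot vanish.
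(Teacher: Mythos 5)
Your proposal follows essentially the same route as the paper: caloric approximation at every scale (Lemma \ref{l:Caloric_Approx}, Lemma \ref{l:Frequency_Close_away_from_0}) combined with the genuine monotonicity of the caloric frequency on $[\delta,1]$, with error accumulation across infinitely many scales prevented by the near-integer pinching and definite-drop lemmas (Lemma \ref{l:Caloric_frequency_near_Z}, Lemma \ref{l:Caloric_fre_drop}), and the small-frequency statement handled by rescaling, Corollary \ref{c:smallfrequency} and the $C^{1;1}$-closeness to the approximating caloric polynomial. The only difference is one of detail: the paper implements your ``no error accumulation'' step as an explicit stopping-time induction (the Claim, with stopping scales $r_k$ and three cases according to where $N^u(\ell_0)$ sits relative to the nearest integer), which is precisely the mechanism your sketch invokes but does not carry out.
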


\begin{proof}
For simplicity, we assume $(x, t) = (0, 0)$ and we write $u_{0,0;\ell} \equiv u_\ell$. Also we write $N^u(r) \equiv  N^{u}_{0,0}(r)$. 

Take any $\epsilon \in (0, 1/10)$. Fix $\delta$ as in Lemma \ref{l:Caloric_fre_drop} corresponding to $\epsilon/20$. Then by Lemma \ref{l:Frequency_Close_away_from_0}, there exists some $\ell_0(\lambda,\alpha,\Lambda,\epsilon)$ such that for any $\ell \le \ell_0$, there exists some $h_\ell$ such that for any $r \in [\delta, 1]$
\begin{equation}\label{e:Almost_Monotone_appro}
      |N^{u}(\ell r) - N^{h_\ell;\infty}(r) | =  |N^{u_{\ell}; 1/\ell}(r) - N^{h_\ell;\infty}(r) | \le \epsilon/100.
\end{equation}
Along with the monotonicity of $N^{h_\ell;\infty}$, (\ref{e:Almost_Monotone_appro}) imply that $N^u(r) \le N^u(\ell_0) + \epsilon$ for any $r \in [\delta \ell_0, \ell_0]$. 
Hence, to prove the theorem, it suffices to prove that $N^u(r \ell_0) \le N^u(\ell_0) + \epsilon$ for any $r \le \delta$. We choose $d$ to be the minimal integer such that $|N^u(\ell_0) - d| \le 1/2.$ Hence we have $d-1/2< N^u(\ell_0) \le d+1/2$.

\textbf{Claim: } If $N^u(\ell_0) \le d+ \epsilon/10$, then $N^u(r\ell_0) \le d+ \epsilon/2$ for any $r \le 1$. 

We first suppose the claim be true and prove the theorem. It suffices to consider the following three cases.

\textbf{Case 1: } Suppose $N^u(\ell_0) \in [d + \epsilon/10, d + 1/2]$. Applying Lemma \ref{l:Caloric_fre_drop} to $h_{\ell_0}$ and by (\ref{e:Almost_Monotone_appro}), we have $N^u(\delta \ell_0) \le d + \epsilon/10$. According to the Claim, for any $r\le 1$, we have $N^u(r \delta r_0) \le d + \epsilon /2 \le N^u(\ell_0) + \epsilon$. This finishes the proof of Case 1.

\textbf{Case 2: } Suppose $N^u(\ell_0) \in [d - \epsilon/10, d + \epsilon/10]$. By the claim, we have $N^u(r\ell_0) \le d +\epsilon/2 < N^u(\ell_0)+\epsilon$ for any $r\le 1$. 

\textbf{Case 3: } Suppose $N^u(\ell_0) \in [d - 1/2, d - \epsilon/10]$. Applying Lemma \ref{l:Caloric_fre_drop} to $h_{\ell_0}$ and by (\ref{e:Almost_Monotone_appro}), we have $N^u(\delta \ell_0) \le d - 1 +\epsilon/10$. By the claim we have $N^u(r \delta \ell_0) \le d- 1 +\epsilon/2 < N^u(\ell_0)$ for any $r \le 1$. This finishes the proof.

In the following we will prove the claim. 
Since $N^u(\ell_0) \le d+ \epsilon/10$, we can define
\begin{equation}
    r_1 \equiv \inf \{ s\geq 0 : N^u(r\ell_0) \leq d + \epsilon/3 \text{ for any } r\in [s, 1] .\}
\end{equation}
Noting that frequency closeness (\ref{e:Almost_Monotone_appro}) and monotonicity of $N^{h;\infty}$ we have that $r_1 \le \delta$. We assume $r_1 > 0$. Otherwise the claim holds trivially. Then we apply Lemma \ref{l:Frequency_Close_away_from_0} at the scale $r_1\ell_0$ to obtain $h_{r_1\ell_0}$ such that for any $r\in [\delta, 1]$ we have
\begin{equation}\label{e:Almost_Monotone_appro_2}
      |N^{u}(rr_1\ell_0) - N^{h_{r_1\ell_0};\infty}(r) | \le \epsilon/100.
\end{equation}
Together with the monotonicity of $N^{h_{r_1\ell_0}; \infty}$, this implies that $N^u(r\ell_0) \le N^u(r_1\ell_0) + \epsilon/50 < d + \epsilon/2$ for any $r\in [\delta r_1, r_1]$. Since $N^{h_{r_1\ell_0};\infty}(1) \le d + 3\epsilon/8 < d+1 -\epsilon/20$, applying Lemma \ref{l:Caloric_fre_drop} to $h_{r_1\ell_0}$ and by (\ref{e:Almost_Monotone_appro_2}) we have $ N^u(\delta r_1\ell_0) \le d + \epsilon/10$.

Now we can finish the proof by induction. Indeed, for each $k$ we could define 
\begin{equation}
    r_{k+1} \equiv \inf \{ s\geq 0 : N^u(r\ell_0) \leq d + \epsilon/3 \text{ for any } r\in [s, r_k] .\}
    \end{equation}
and by the same arguments as above we can prove that 
\begin{enumerate}
    \item $r_{k+1} \leq \delta r_k$.
    \item $N^u(r\ell_0) \leq d+\epsilon/2$ for any $r \in [\delta r_{k+1}, 1]$.
    \item $N^u(\delta r_{k+1}\ell_0) \leq d + \epsilon/10$.
\end{enumerate}
This finishes the proof of almost monotonicity.
Noting Corollary \ref{c:smallfrequency}, Lemma \ref{l:Frequency_Close_away_from_0} and Lemma \ref{l:Caloric_Approx}, one can easily deduce the estimate with $N^{u}_{x,t}(r) \le \epsilon$ by contradiction. 
\end{proof}

As a corollary, we prove that the frequency of $u$ is only pinched near integers and drops at a definite rate when it is away from integers. 
\begin{lemma}\label{l:parabolic_frequency_near_Z}
     Let $u$ be a solution to (\ref{e:Parab_equa}) (\ref{e:assumption_without_c}) on $Q_2$ with doubling assumption (\ref{e:Critical_DI_bound}). Let $\epsilon \in (0, 1/10)$ and $x\in B_1$. There exist some $\ell_0(n,\lambda,\alpha,\Lambda,\epsilon)$ and some $\delta(n,\lambda,\alpha,\Lambda,\epsilon)$ such that:
     \begin{enumerate}
         \item If $|N^u_{x,0}(r_1) -  N^u_{x,0}(r_2)| \le \delta$ for some $10r_1 \le r_2 \le \ell_0$, then there exists some integer $d$ such that $|N^u_{x,0}(r) -d |\le \epsilon$ for any $r\in [r_2,r_1]$.
         \item If $N^u_{x,0}(r) \le d-\epsilon$ for some $r\le \ell_0$, then $N^u_{x,0}(\tilde{r}) \le d-1+ \epsilon$ for any $\tilde{r}\le \delta r$. 
     \end{enumerate}
\end{lemma}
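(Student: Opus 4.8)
The plan is to reduce both statements to the corresponding caloric facts (Lemma \ref{l:Caloric_frequency_near_Z} and Lemma \ref{l:Caloric_fre_drop}) by combining the caloric approximation (Lemma \ref{l:Caloric_Approx}), the frequency closeness away from $0$ (Lemma \ref{l:Frequency_Close_away_from_0}), and the almost monotonicity just proved (Theorem \ref{t:Frequency_almost_monotone}). The key observation is that the almost monotone function $N^u_{x,0}(\cdot)$ can be locally compared, scale by scale, to the honest (monotone) frequency of an ancient caloric polynomial of order at most $C(n,\lambda,\alpha)\Lambda$, and the caloric lemmas already tell us precisely how such a frequency behaves near and away from the integers.

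\textbf{Proof of (1).} Assume without loss that $(x,0)=(0,0)$. Fix $\epsilon$, and let $\delta_1(\epsilon,\Lambda)$ be the constant from Lemma \ref{l:Caloric_frequency_near_Z} (applied with the bound $C(n,\lambda,\alpha)\Lambda$ on the order), shrunk so that also $\delta_1\le\epsilon/10$. I will take $\delta=\delta(\epsilon,\Lambda)$ much smaller than $\delta_1$ and $\ell_0$ as in Theorem \ref{t:Frequency_almost_monotone} and Lemma \ref{l:Frequency_Close_away_from_0}, both applied with error $\ll\delta$. Suppose $|N^u(r_1)-N^u(r_2)|\le\delta$ for some $10r_1\le r_2\le\ell_0$. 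For any scale $\rho\in[r_1,r_2/10]$, apply Lemma \ref{l:Frequency_Close_away_from_0} at the scale $\rho$: there is an ancient caloric polynomial $h_\rho$ with $|N^u(\rho r)-N^{h_\rho;\infty}(r)|\le\delta$ for $r\in[1/10,1]$. By the almost monotonicity, $N^u$ varies by at most $O(\delta)$ on $[r_1,r_2]$, so $|N^{h_\rho;\infty}(1/10)-N^{h_\rho;\infty}(1)|\le O(\delta)\le\delta_1$; Lemma \ref{l:Caloric_frequency_near_Z} then yields an integer $d_\rho$ with $|N^{h_\rho;\infty}(r)-d_\rho|\le\epsilon/10$ on $[1/10,1]$, hence $|N^u(\rho r)-d_\rho|\le\epsilon/10+\delta$ there. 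Since consecutive windows overlap and $N^u$ stays within $O(\delta)<1/2$, the integers $d_\rho$ must all coincide; calling the common value $d$ gives $|N^u(r)-d|\le\epsilon$ for all $r\in[r_2,r_1]$ (note the paper's ordering $r_2\le r_1$ here means the window $[r_1,r_2]$ in increasing order).

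\textbf{Proof of (2).} Again take $(x,0)=(0,0)$. When $\epsilon\ge1/2$ the conclusion is immediate from almost monotonicity (Theorem \ref{t:Frequency_almost_monotone}), so assume $\epsilon<1/2$. Let $\delta_0(\epsilon/10,\Lambda)$ be from Lemma \ref{l:Caloric_frequency_near_Z}. Given $N^u(r)\le d-\epsilon$ with $r\le\ell_0$, I iterate downward exactly as in the proof of Lemma \ref{l:Caloric_fre_drop}: at each scale $r_j$ ($r_0=r$) apply Lemma \ref{l:Frequency_Close_away_from_0} to get $h_{r_j}$ with $|N^u(r_jr)-N^{h_{r_j};\infty}(r)|\le\delta$ for $r\in[1/10,1]$. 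If $N^{h_{r_j};\infty}(1/10)\ge N^{h_{r_j};\infty}(1)-\delta_0$, then $N^{h_{r_j};\infty}$ is within $\epsilon/10$ of an integer on $[1/10,1]$, forcing $N^u(r_j)\le d-1+\epsilon$, and by almost monotonicity $N^u(\tilde r)\le d-1+\epsilon$ for all $\tilde r\le r_j$, which finishes since $r_j\le r$. Otherwise the caloric frequency drops by at least $\delta_0$ from scale $1/10$ to scale $1$ in these units, so setting $r_{j+1}=r_j/10$ gives $N^u(r_{j+1})\le N^u(r_j)-\delta_0+O(\delta)\le N^u(r_j)-\delta_0/2$; after at most $K\le 2\delta_0^{-1}(C(n,\lambda,\alpha)\Lambda)$ steps the frequency would drop below $0$, impossible by Theorem \ref{t:Frequency_almost_monotone} and Corollary \ref{c:smallfrequency}, so the first alternative occurs at some step, and $\delta r\le r_K$ for $\delta$ chosen below $10^{-K}$ completes the proof.

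\textbf{Main obstacle.} The only real subtlety is bookkeeping of constants: the approximating caloric polynomials $h_\rho$ change with the scale $\rho$, so one must make sure the integer $d_\rho$ is scale-independent (handled by the overlap-plus-$O(\delta)$-oscillation argument) and that the downward iteration in (2) terminates in a number of steps bounded solely in terms of $n,\lambda,\alpha,\Lambda,\epsilon$ — this is where the uniform order bound $C(n,\lambda,\alpha)\Lambda$ on all the $h_\rho$, together with Corollary \ref{c:Frequency_bound_by_order_Poly}, is essential. Everything else is a direct transcription of the caloric arguments through the $\Psi(\epsilon'\,|\,\cdots)$-type estimates already established.
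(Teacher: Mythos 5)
Your proposal is correct and follows essentially the same route as the paper, which proves (1) by combining Lemma \ref{l:Caloric_frequency_near_Z} with the frequency closeness Lemma \ref{l:Frequency_Close_away_from_0}, and (2) by repeating the argument of Lemma \ref{l:Caloric_fre_drop} together with the almost monotonicity Theorem \ref{t:Frequency_almost_monotone}; your write-up merely makes explicit the scale-by-scale comparison and the termination count that the paper leaves implicit. (Only trivial bookkeeping differs, e.g.\ the windows $[\rho/10,\rho]$ should be taken for $\rho$ up to $r_2$, and nonnegativity of $N^u$ already forbids the frequency dropping below zero.)
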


\begin{proof}
    The first conclusion follows directly from Lemma \ref{l:Caloric_frequency_near_Z} and Lemma \ref{l:Frequency_Close_away_from_0}. The second one can be proved using the same arguments as in Lemma \ref{l:Caloric_fre_drop} combined with the almost monotonicity theorem \ref{t:Frequency_almost_monotone}.
\end{proof}

Another consequence of the almost monotonicity is the following finitely many non-pinch result. This will be used in the proof of neck region decomposition in section 6. 
\begin{corollary}\label{c:finitemanypinch}
    Let $u$ be a solution to (\ref{e:Parab_equa}) (\ref{e:assumption_without_c}) on $Q_2$ with doubling assumption (\ref{e:Critical_DI_bound}). For any $\epsilon,\delta>0$ and $x\in B_1$
if $r\le \ell_0 (\lambda, \alpha, \Lambda, \epsilon,\delta) $ and $r_i=r \epsilon^i$, then there exists at most $K(n,\delta,\lambda,\alpha,\Lambda)$ many $\{i\ge 0\}$ such that 
\begin{align}
    |  N^{u}_{x,0}(r_i)-N^{u}_{x,0}(r_{i+1})|\ge \delta.
\end{align}
\end{corollary}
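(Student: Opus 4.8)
The plan is to reduce this to a telescoping-sum argument controlled by the almost monotonicity of the localized frequency (Theorem \ref{t:Frequency_almost_monotone}), together with an a priori upper bound on $N^u_{x,0}$ at the top scale. First I would fix $\epsilon' = \delta/4$ in Theorem \ref{t:Frequency_almost_monotone} and let $\ell_0 = \ell_0(\lambda,\alpha,\Lambda,\delta)$ be the resulting scale; throughout we restrict to $r \le \ell_0$ so that $N^u_{x,0}(r_1) \le N^u_{x,0}(r_2) + \delta/4$ whenever $r_1 \le r_2 \le \ell_0$. Hence the ``almost-increasing'' sequence $a_i := N^u_{x,0}(r_i)$ (with $r_i = r\epsilon^i$ decreasing in $i$, so $a_i$ is almost non-increasing as $i \to \infty$) satisfies $a_{i+1} \le a_i + \delta/4$ for all $i \ge 0$.

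Next I need a uniform two-sided bound on $a_i$. For the upper bound, apply the caloric approximation (Lemma \ref{l:Caloric_Approx}) / frequency-closeness (Lemma \ref{l:Frequency_Close_away_from_0}) at the top scale: $N^u_{x,0}$ is within $\delta/4$ of $N^{h;\infty}$ for the approximating caloric polynomial $h$ of order at most $C(n,\lambda,\alpha)\Lambda$, and by Corollary \ref{c:Frequency_bound_by_order_Poly} the latter is bounded by $C(n,\lambda,\alpha)\Lambda$. By almost monotonicity this propagates down: $a_i \le C(n,\lambda,\alpha)\Lambda + \delta/4 =: M$ for all $i\ge 0$. For the lower bound, $a_i \ge 0$ always since $N$ is a ratio of nonnegative quantities. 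Now set $b_i := a_i + (\delta/4)\cdot i$ and observe that $b_i$ is genuinely non-decreasing in $i$ (this is exactly $a_{i+1} + \delta/4 \ge a_i$ rearranged), wait — more carefully, define instead $\tilde a_i := a_i - \sum_{j<i}(\text{error at step }j)$; the clean way is: let $B$ be the set of ``bad'' indices $i$ with $|a_i - a_{i+1}| \ge \delta$. I claim $|B| \le K := \lceil 8M/\delta \rceil$.

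The mechanism: split $B = B^+ \cup B^-$ where $B^+$ is the set of $i$ with $a_{i+1} \ge a_i + \delta$ and $B^-$ with $a_{i+1} \le a_i - \delta$. Since each single step can increase $a$ by at most $\delta/4$, an index with $a_{i+1} \ge a_i + \delta$ is impossible — so actually $B^+ = \emptyset$; every bad index lies in $B^-$, i.e. corresponds to a genuine drop $a_{i+1} \le a_i - \delta$. Running the telescoping estimate from $i=0$ to $i = N$: each of the (at most $\delta/4$) upward moves and each $B^-$ drop of size $\ge \delta$ combine to give $a_0 - a_{N+1} \ge \delta\,|B^- \cap [0,N]| - (\delta/4)(N+1 - |B^-\cap[0,N]|)$... this needs the right bookkeeping, so instead I would use the monotone surrogate: $c_i := a_i + (\delta/4)i$ is non-decreasing, but unbounded, so rather track the ``descending'' direction — since $a_i$ is almost-non-increasing as $i$ grows, set $c_i := a_i + (\delta/4)(i_{\max}-i)$... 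The cleanest formulation: by almost monotonicity $a_i \le a_0 + \delta/4$ is false in that direction; rather $a_i$ almost-decreases, so $a_0$ is essentially the largest. Let me instead just say: summing $a_{i+1} - a_i$ over $i \in [0,N]$ gives $a_{N+1} - a_0 = \sum (a_{i+1}-a_i)$; the positive part of this sum is $\le (\delta/4)(N+1)$ is too weak. I would therefore argue directly: at each bad index $a$ drops by $\ge \delta$; between consecutive bad indices $a$ can rise by at most $\delta/4$ per step but we regain this using that on good steps $a_{i+1} \ge a_i - \delta$ fails to help — so use: for good $i$, $-\delta/4 \le a_{i+1}-a_i$ is automatic from almost monotonicity read as $a_i \le a_{i+1} + \delta/4$. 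Thus over the whole range, $a_0 - a_{N+1} = \sum_i (a_i - a_{i+1}) \ge \sum_{i \in B^-}\delta + \sum_{i \notin B^-}(-\delta/4) \ge \delta|B^-| - (\delta/4)(N+1)$, which still degrades with $N$.

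The fix is to observe that on good steps almost monotonicity gives the sharp bound $a_i - a_{i+1} \le \delta/4$ AND that good steps contribute $a_i - a_{i+1} \ge -\delta$ is not needed — instead, partition $[0,\infty)$ into maximal good runs separated by bad indices; on each good run $[p,q]$ almost monotonicity (applied with $r_1 = r_q \le r_p = r_2$) gives $a_q \le a_p + \delta/4$, i.e. $a$ changes by at most $\delta/4$ across an entire good run regardless of its length. Hence after $m$ bad indices the value has dropped by at least $m\delta$ minus the total good-run fluctuation $\le (m+1)\delta/4$, so $0 \le a_{\text{final}} \le a_0 - m\delta + (m+1)\delta/4 \le M - m\delta/2$, forcing $m \le 2M/\delta =: K(n,\delta,\lambda,\alpha,\Lambda)$.

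The main obstacle is precisely this bookkeeping: the sign ambiguity in $|a_i - a_{i+1}| \ge \delta$ means one must first use the $\delta/4$-almost-monotonicity to \emph{rule out} large upward jumps (each step rises by at most $\delta/4 < \delta$, so all bad indices are genuine drops), and then exploit that almost monotonicity applies not just step-by-step but across \emph{any} pair of scales $r_q \le r_p \le \ell_0$, so that the cumulative fluctuation on a good run does not grow with its length. Once that is set up, the pigeonhole on the finite total budget $M$ for downward drops of size $\ge \delta/2$ immediately yields the bound $K$, which depends only on $n,\delta,\lambda,\alpha,\Lambda$ through $M = C(n,\lambda,\alpha)\Lambda + O(\delta)$.
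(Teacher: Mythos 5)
Your proposal is correct and is essentially the paper's own argument: choose the almost-monotonicity error (the paper uses $\delta^2$, you use $\delta/4$) strictly smaller than $\delta$ so every bad index is a genuine drop of size $\ge\delta$, apply Theorem \ref{t:Frequency_almost_monotone} across whole good runs so fluctuations there do not accumulate, and pigeonhole against the uniform bound $0\le N^u_{x,0}\le C(n,\lambda,\alpha,\Lambda)$. The only difference is presentational: after the false starts, your final run-based bookkeeping coincides with the paper's iteration $N^{u}_{x,0}(r_{i_k})\le N^{u}_{x,0}(r_{i_1})-(k-1)(\delta-\delta^2)$.
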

\begin{proof}
    By Theorem \ref{t:Frequency_almost_monotone} if $r\le \ell_0 (n,\lambda, \alpha, \Lambda, \delta) $ we have for all $r_i=r\epsilon^i$ that
    \begin{align}
        N^{u}_{x,0}(r_{i+1})\le N^{u}_{x,0}(r_i)+\delta^2
    \end{align}
    Denote $I=\{i\ge 0:  |  N^{u}_{x,0}(r_i)-N^{u}_{x,0}(r_{i+1})|\ge \delta\}=\{0\le i_1<i_2<i_3<\cdots\}$. Thus 
    \begin{align}
        N^u(x,0)(r_{i_1+1})\le N^{u}_{x,0}(r_{i_1})-\delta
    \end{align}
    By the almost monotonicity of the localized frequency we get 
    \begin{align}
        N^u_{x,0}(r_{i_2})\le N^u_{x,0}(r_{i_1+1})+\delta^2\le N^{u}_{x,0}(r_{i_1})-\delta+\delta^2
    \end{align}
    Since $|  N^{u}_{x,0}(r_{i_2})-N^{u}_{x,0}(r_{i_2+1})|\ge \delta$, we get 
    \begin{align}
        N^{u}_{x,0}(r_{i_3})\le N^{u}_{x,0}(r_{i_2+1})+\delta^2\le N^{u}_{x,0}(r_{i_2})-\delta+\delta^2 \le N^{u}_{x,0}(r_{i_1})-2(\delta-\delta^2).
    \end{align}
    By iteration we get 
        \begin{align}
        N^{u}_{x,0}(r_{i_k})\le  N^{u}_{x,0}(r_{i_1})-(k-1)(\delta-\delta^2).
    \end{align}
  Noting $0\le N_{x,0}^u(r)\le C(n,\lambda,\Lambda,\alpha)$, we  conclude that $\# I\le K(n,\delta,\lambda,\alpha,\Lambda)$. Hence we complete the proof. 
\end{proof}

\subsection{Frequency and Doubling Index}
In this subsection, we establish the equivalence of the localized frequency and the doubling index. Thus the doubling index is also almost monotone. Recall that we define $H^{\Tilde{u}}$ with $\Tilde{u} = u_{x, 0; \ell}$ as
\begin{equation}
    H^{\Tilde{u}}(r) = \int_{\{t = -r^2\} \cap B_{1/\ell}} \Tilde{u}^2 G_{0,0}.
\end{equation}

\begin{definition}
We define the localized doubling index $D^{\Tilde{u}}(r)$ with $\Tilde{u} = u_{x,0;\ell}$ as
    \begin{equation}
        D_{0,0}^{\Tilde{u}}(r) \equiv \log_4 \frac{H^{\Tilde{u}}(2r)}{H^{\Tilde{u}}(r)}.
    \end{equation}
\end{definition}
Sometimes we omit the subscript $(0,0)$ and write it as $D^{\Tilde{u}}(r)$ if there is no confusion.
Like the frequency function $N$, \eqref{e:Frequency_Scaling}, the doubling index also has similar scaling property
\begin{equation}\label{e:DI_Scaling}
    D^{u_{x,0;\ell}}(r) = D^{u_{x,0;r\ell}}(1).
\end{equation}

Next we can conclude that the doubling index is also almost monotone.

\begin{theorem}\label{t:DI_Almost_Monotone}
    Let $u$ be a solution to (\ref{e:Parab_equa}) (\ref{e:assumption_without_c}) on $Q_2$ with doubling assumption (\ref{e:Critical_DI_bound}). Let $\epsilon \in (0, 1/10)$. There exists $\ell_0 = C(\lambda, \alpha, \Lambda, \epsilon) $ such that for any $\ell \le \ell_0$ we have for any $r\le 1$ that 
    \begin{equation}
        N^{\Tilde{u},1/\ell}(r) - \epsilon \le D^{\Tilde{u}}(r) \le N^{\Tilde{u},1/\ell}(2r) + \epsilon.
    \end{equation}
    Moreover, the following almost monotonicity formula holds: for any $x\in B_1$ and any $r_1 \le r_2 \leq 1/2$,
\begin{equation}
    D^{\Tilde{u}}(r_1) \le D^{\Tilde{u}}(r_2) + \epsilon.
\end{equation}
 If $D^{\Tilde{u}}(r) \le \epsilon$ for some $r\le 1/2$ then there exists a nonzero constant $a_0$ such that  $\sup_{Q_r}|\tilde{u}-a_0|\le C(n,\Lambda)\sqrt{\epsilon} |a_0|.$
\end{theorem}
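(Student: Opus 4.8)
The plan is to transfer everything from the frequency function $N^{\tilde u,1/\ell}$, whose almost monotonicity and near-integer behavior we already control by Theorem \ref{t:Frequency_almost_monotone}, to the doubling index $D^{\tilde u}$ via an integral identity that mimics the global relation $\log\bigl(H(r_2)/H(r_1)\bigr)=2\int_{r_1}^{r_2}N(s)/s\,ds$ from \eqref{e:Global_Doubling}. First I would set $\tilde u=u_{x,0;\ell}$ and recall that for the limiting caloric polynomial $h$ produced by Lemma \ref{l:Caloric_Approx} one has the exact identity $(H^{h,\infty})'(r)=2E^{h,\infty}(r)/r$, hence $\log\bigl(H^{h,\infty}(2r)/H^{h,\infty}(r)\bigr)=2\int_r^{2r}N^{h,\infty}(s)/s\,ds$; since $N^{h,\infty}$ is monotone and bounded by $C(n,\lambda,\alpha)\Lambda$ (Corollary \ref{c:Frequency_bound_by_order_Poly}), this gives, after dividing by $\log 4=2\log 2$, the two-sided bound $N^{h,\infty}(r)\le D^{h,\infty}(r)\le N^{h,\infty}(2r)$ exactly as in \eqref{e:Frequency_equv_DI_global}. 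Then I would use Lemma \ref{l:Frequency_Close_away_from_0} to compare $N^{\tilde u,1/\ell}(s)$ with $N^{h,\infty}(s)$ uniformly on $s\in[\delta,1]$, and Lemma \ref{l:monotone_H} (together with the lower bound $H^{\tilde u}(s)\ge c(\lambda,\alpha,\Lambda,\delta)>0$ coming from the doubling assumption \eqref{e:Critical_DI_bound} and the convergence $H^{\tilde u}\to H^{h,\infty}$) to compare $H^{\tilde u}$ with $H^{h,\infty}$; this yields $|D^{\tilde u}(r)-D^{h,\infty}(r)|\le\epsilon/2$ for $r\in[\delta,1/2]$, so the desired inequality $N^{\tilde u,1/\ell}(r)-\epsilon\le D^{\tilde u}(r)\le N^{\tilde u,1/\ell}(2r)+\epsilon$ holds on that range.

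To extend to all $r\le 1/2$ — including $r\le\delta$, where $G_{0,0}$ no longer sees the full region $B_{1/\ell}$ after rescaling — I would invoke the scaling identity \eqref{e:DI_Scaling}, $D^{u_{x,0;\ell}}(r)=D^{u_{x,0;r\ell}}(1)$, together with the analogous frequency scaling \eqref{e:Frequency_Scaling}. Applying the already-established comparison at unit scale to the rescaled solution $u_{x,0;r\ell}$ (which still satisfies the hypotheses with the same $\Lambda$, possibly after shrinking $\ell_0$), we get $N^{u_{x,0;r\ell}}(1)-\epsilon\le D^{u_{x,0;r\ell}}(1)\le N^{u_{x,0;r\ell}}(2)+\epsilon$, i.e. $N^{\tilde u,1/\ell}(r)-\epsilon\le D^{\tilde u}(r)\le N^{\tilde u,1/\ell}(2r)+\epsilon$ for every $r\le1/2$. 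The almost monotonicity of $D^{\tilde u}$ then drops out immediately: for $r_1\le r_2\le1/2$ we chain $D^{\tilde u}(r_1)\le N^{\tilde u,1/\ell}(2r_1)+\epsilon/3\le N^{\tilde u,1/\ell}(2r_2)+2\epsilon/3\le D^{\tilde u}(r_2)+\epsilon$, using Theorem \ref{t:Frequency_almost_monotone} for the middle step (with $\epsilon/3$ in place of $\epsilon$ and $\ell_0$ shrunk correspondingly) and the lower comparison for the last step. Finally, the statement that $D^{\tilde u}(r)\le\epsilon$ forces $\sup_{Q_r}|\tilde u-a_0|\le C(n,\Lambda)\sqrt\epsilon|a_0|$ follows by first converting to $N^{\tilde u,1/\ell}(r)\le 2\epsilon$ via the equivalence just proved, and then quoting the last sentence of Theorem \ref{t:Frequency_almost_monotone} (which in turn rests on Corollary \ref{c:smallfrequency} applied to the approximating caloric polynomial).

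The main obstacle I anticipate is the small-scale regime $r\le\delta$: Lemma \ref{l:Frequency_Close_away_from_0} and the caloric approximation are only stated on $[\delta,1]$, and the truncation region $B_{1/\ell}$ in the definition of $H^{\tilde u}$ does not rescale compatibly, so one cannot naively iterate the comparison down to arbitrarily small $r$. The scaling identities \eqref{e:Frequency_Scaling} and \eqref{e:DI_Scaling} are exactly what rescue this — they let us trade a small scale $r$ for a fresh unit-scale problem for the dilated solution $u_{x,0;r\ell}$, at the cost of having $\ell_0$ depend on nothing new — but one must be careful that the rescaled coefficients still satisfy \eqref{e:assumption_without_c} with the same constants and that the doubling hypothesis \eqref{e:Critical_DI_bound} is preserved under this rescaling, which is where the bulk of the bookkeeping lies. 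A secondary technical point is ensuring the constant $c(\lambda,\alpha,\Lambda,\delta)$ lower-bounding $H^{\tilde u}(s)$ on $[\delta,1/2]$ is genuinely uniform; this uses that $\int_{Q_1}\tilde u^2=1$ by normalization, the doubling assumption to propagate positivity of the $L^2$ mass down to scale $\delta$, and the $C^{1;1}$-convergence $\tilde u\to h$ from Lemma \ref{l:Caloric_Approx}.
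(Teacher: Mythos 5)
Your strategy is the same as the paper's: caloric approximation plus the global equivalence \eqref{e:Frequency_equv_DI_global} to get the sandwich at unit scale, the scaling identities \eqref{e:Frequency_Scaling} and \eqref{e:DI_Scaling} to reach all scales, and Theorem \ref{t:Frequency_almost_monotone} to transfer monotonicity; your unit-scale comparison and the final small-doubling conclusion are fine. The genuine gap is in your chain for the almost monotonicity of $D^{\tilde u}$. The ``lower comparison'' you may invoke is $N^{\tilde u,1/\ell}(r_2)-\epsilon\le D^{\tilde u}(r_2)$, i.e.\ it controls $N$ at the scale $r_2$, whereas your last inequality needs $N^{\tilde u,1/\ell}(2r_2)\le D^{\tilde u}(r_2)+\epsilon/3$, with $N$ evaluated at $2r_2$. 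That inequality is false in general, already for caloric functions: from \eqref{e:Global_Doubling} one only gets $D^h(r)\le N^h(2r)$, and the gap $N^h(2r_2)-D^h(r_2)$ can be comparable to $N^h(2r_2)-N^h(r_2)$, which need not be small. Concretely, take $h=P_0+cP_k$ with $P_0\equiv 1$, $P_k$ homogeneous caloric of degree $k$ normalized by $\int_{t=-1}P_k^2G_{0,0}=1$, and $c^2r_2^{2k}=4^{-k}$; by Lemma \ref{l:homogeneous_Poly_orthogonal} one computes $N^h(2r_2)=k/2$ while $D^h(r_2)\approx 1/2$. So your chain only proves $D^{\tilde u}(r_1)\le D^{\tilde u}(r_2)+\epsilon$ in the separated regime $2r_1\le r_2$, where the correct chain is $D(r_1)\le N(2r_1)+\epsilon/3\le N(r_2)+2\epsilon/3\le D(r_2)+\epsilon$.

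The missing case is $r_1\le r_2<2r_1$, and it cannot be recovered from the sandwich plus almost monotonicity of $N$ alone; you need one more ingredient, namely that for caloric functions the doubling index itself is monotone because by \eqref{e:Global_Doubling} it is the logarithmic average $D^h(r)=\frac{1}{\log 2}\int_1^2 N^h(r\sigma)\,\frac{d\sigma}{\sigma}$ of the monotone function $N^h$ (this uses the integral identity, not just the two-sided bound \eqref{e:Frequency_equv_DI_global}). With that, the comparable-scale case follows by the same approximation scheme you already set up: write $D^{\tilde u}(r_i)=D^{u_{x,0;2r_2\ell}}(r_i/(2r_2))$ via \eqref{e:DI_Scaling}, note $r_1/(2r_2)\in[1/4,1/2]$, approximate $u_{x,0;2r_2\ell}$ by a caloric polynomial $h$ through Lemma \ref{l:Caloric_Approx}, and use the closeness of $H^{\tilde u}$ to $H^h$ at the finitely many scales in $[1/4,1]$ (together with the uniform lower bound on $H$ you already invoke) to transfer the exact monotonicity of $D^h$ to $D^{\tilde u}$ up to $\epsilon$. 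Adding this case (or, equivalently, running the paper's contradiction argument at the scale $r_2\ell$) closes the gap; the rest of your proposal stands.
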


\begin{proof}
For $r=1$, we can prove the theorem by applying the contradiction argument and \eqref{e:Frequency_equv_DI_global}. The general case follows from the scaling property of $N$, see  (\ref{e:Frequency_Scaling}), and $D$, see (\ref{e:DI_Scaling}). Then the almost monotonicity follows directly from Theorem \ref{t:DI_Almost_Monotone}. 
\end{proof}

\subsection{Equivalence between different normalizations}
Recall that in the definition of tangent function we normalize such the function has unit $L^2$-norm over $Q_1$. However, sometimes we need to use other normalization. In the following lemma,  we will establish the equivalence between several different normalizations. 
\begin{lemma}\label{l:Equiv_Normalizaitons}
    Let $u$ be a solution to (\ref{e:Parab_equa}) (\ref{e:assumption_without_c}) on $Q_2$ with doubling assumption (\ref{e:Critical_DI_bound}). If $\ell\le \ell_0(n,\lambda,\alpha,\Lambda)$ is chosen small, then for any $x\in B_1$ and $\Tilde{u} = u_{x,0;\ell}$ the following holds
\begin{align}
    \fint_{Q_1}\tilde{u}^2\le    C_1(n,\lambda,\alpha,\Lambda)\int_{\{t=-1\} \cap B_{1/\ell}} \Tilde{u}^2 G_{0,0} \le C_2(n,\lambda,\alpha,\Lambda)\fint_{Q_1}\tilde{u}^2
\end{align}
\end{lemma}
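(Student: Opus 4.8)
The plan is to compare three quantities attached to $\Tilde u = u_{x,0;\ell}$ for $\ell$ small: the flat $L^2$ average $\fint_{Q_1}\Tilde u^2$, the Gaussian-weighted slice integral $H^{\Tilde u}(1)=\int_{\{t=-1\}\cap B_{1/\ell}}\Tilde u^2 G_{0,0}$, and (implicitly) the slices $H^{\Tilde u}(r)$ for $r\in[1/2,1]$. All of these will be shown to be comparable, with constants depending only on $n,\lambda,\alpha,\Lambda$. First I would use the caloric approximation Lemma~\ref{l:Caloric_Approx}: for any prescribed $\epsilon'$ there is $\ell_0$ so that $\Tilde u$ is $C^{1;1}(Q_{1/\epsilon'})$-close to an ancient caloric polynomial $h$ of order at most $C(n,\lambda,\alpha)\Lambda$ with $\fint_{Q_1}h^2=1$. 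Since $h$ is a polynomial of bounded order with unit flat average on $Q_1$, all its natural norms over compact regions — $\fint_{Q_1}h^2$, $\int_{\{t=-r^2\}}h^2 G_{0,0}$ for $r\in[1/2,1]$, and $\fint_{Q_{1/2}}h^2$ — are mutually comparable by constants depending only on $n$ and the order bound; this is a compactness/normalization statement about the finite-dimensional space of such polynomials (or can be seen directly since $N^{h;\infty}$ is bounded, so $H^{h;\infty}(r)$ cannot decay faster than a fixed power of $r$ by \eqref{e:Global_Doubling}, and it is non-decreasing in $r$).

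The second step is to transfer these comparabilities from $h$ to $\Tilde u$. The inner-region errors $\int_{\{t=-r^2\}\cap B_{1/\epsilon'}}|h^2-\Tilde u^2|G_{0,0}$ and $\fint_{Q_1}|h^2-\Tilde u^2|$ are controlled by $\Psi(\epsilon'\mid n,\lambda,\alpha,\Lambda)$ using the $C^{1;1}$ closeness on $Q_{1/\epsilon'}$ exactly as in the proof of Lemma~\ref{l:monotone_H} and Lemma~\ref{l:Frequency_Close_away_from_0}; the outer tails $\int_{\{t=-r^2\}\cap A_{1/\epsilon',1/\ell}}(h^2+\Tilde u^2)G_{0,0}$ are small because $G_{0,0}$ decays exponentially while $h$ and $\Tilde u$ have only polynomial growth (doubling assumption \eqref{e:Critical_DI_bound} bounds $\fint_{Q_{2^k}}\Tilde u^2$ by $C^{k\Lambda}$). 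Hence $H^{\Tilde u}(1)=H^{h;\infty}(1)+\Psi$ and $\fint_{Q_1}\Tilde u^2=\fint_{Q_1}h^2+\Psi=1+\Psi$. Since $\fint_{Q_1}h^2=1$ and $H^{h;\infty}(1)\in[c,C]$ with $c,C$ depending only on $n$ and the order, choosing $\epsilon'$ small enough that $\Psi<c/2$ gives both inequalities of the lemma with explicit constants.

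One subtlety is that a priori we do not know $\fint_{Q_1}\Tilde u^2$ is bounded below — in principle the approximating $h$ could be identically matched by $\Tilde u$ only up to error, but by construction of $u_{x,0;\ell}$ we have $\fint_{Q_1}\Tilde u^2=1$ exactly, so this is automatic; the real content is only the lower and upper bound on $H^{\Tilde u}(1)$. I would phrase the polynomial comparison cleanly as: for $h$ caloric homogeneous-sum of order $\le D_0$ with $\fint_{Q_1}h^2=1$, one has $c(n,D_0)\le \int_{\{t=-1\}}h^2 G_{0,0}\le C(n,D_0)$; the upper bound is a direct Gaussian moment estimate on monomials, and the lower bound follows because $H^{h;\infty}(1)\ge H^{h;\infty}(r)$ for $r\le 1$ cannot vanish unless $h\equiv 0$ (unique continuation for the heat equation, already available via \eqref{e:Frequency_equv_DI_global}), combined with the compactness of the unit sphere in the finite-dimensional space of such $h$. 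The main obstacle, such as it is, is bookkeeping: making sure the polynomial-order bound $D_0=C(n,\lambda,\alpha)\Lambda$ is used consistently so that all constants depend only on $n,\lambda,\alpha,\Lambda$, and handling the Gaussian tail uniformly in $\ell\le\ell_0$; there is no essential analytic difficulty beyond what is already developed in Lemmas~\ref{l:Caloric_Approx}, \ref{l:monotone_H}, and \ref{l:Frequency_Close_away_from_0}.
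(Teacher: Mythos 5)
Your argument is correct, but it proves the harder direction (the lower bound on $\int_{\{t=-1\}\cap B_{1/\ell}}\Tilde u^2G_{0,0}$) by a genuinely different route than the paper. You reduce everything to a norm-equivalence statement on the finite-dimensional space of caloric polynomials of parabolic degree at most $C(n,\lambda,\alpha)\Lambda$: both $h\mapsto\fint_{Q_1}h^2$ and $h\mapsto\int_{\{t=-1\}}h^2G_{0,0}$ are positive-definite quadratic forms there (for the second, note that a caloric polynomial vanishing on the slice $\{t=-1\}$ vanishes identically, e.g.\ since $\partial_t^kh=\Delta^kh$ forces all time-derivatives at $t=-1$ to vanish), hence comparable with constants depending only on $n$ and the degree bound; you then transfer this to $\Tilde u$ through Lemma \ref{l:Caloric_Approx}, handling the inner region by $C^{1;1}$-closeness and the outer region by the Gaussian decay against polynomial growth, exactly as in Lemmas \ref{l:monotone_H} and \ref{l:Frequency_Close_away_from_0}; the upper bound is the same polynomial-growth-versus-Gaussian-decay estimate the paper uses. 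The paper, by contrast, never states or uses a norm equivalence for polynomials: after the same upper bound, it proves the lower bound by running the almost monotonicity of $H^{\Tilde u}_{x_0,0}$ for \emph{all} centers $x_0\in B_1$ (Remark \ref{r:monotoneofH}), converting a hypothetical smallness of $L_0=\int_{\{t=-1\}\cap B_{1/\ell}}\Tilde u^2G_{0,0}$ into smallness of $\int_{\{t=-s\}\cap B_{\sqrt s}(x_0)}\Tilde u^2$ for all $s\le1$, and then a covering and integration in $s$ force $1=\fint_{Q_1}\Tilde u^2\le C(L_0+\text{tails}+\epsilon)$, a contradiction. Your route is more direct and isolates a clean finite-dimensional fact (which the authors evidently know, since both proofs rest on the same approximation lemma), at the cost of having to verify the positive-definiteness of the Gaussian slice norm on caloric polynomials and to track the degree bound through the constants; the paper's route avoids any such polynomial lemma and instead reuses machinery (the localized almost monotonicity of $H$ at arbitrary centers) that is needed elsewhere anyway. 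Neither argument is circular, and your constants do depend only on $n,\lambda,\alpha,\Lambda$ as required.
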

\begin{proof}
    Noting that  $\fint_{Q_1}\tilde{u}^2=1$, we only need to show that $\int_{\{t=-1\} \cap B_{1/\ell}} \Tilde{u}^2 G_{0,0}$ has uniform upper and lower bounds. 
    
To see the upper bound of $\int_{\{t=-1\} \cap B_{1/{\ell}}} \Tilde{u}^2 G_{0,0}$,  by the doubling assumption and parabolic estimate, we have point-wise estimate for $R\ge 1$ that 
\begin{align}
    \sup_{Q_R}|\tilde{u}|\le C(n,\lambda,\alpha,\Lambda) R^{C(n,\Lambda)}.
\end{align}
Since $G_{0,0}$ is exponentially decay, this implies 
\begin{align}
\int_{\{t=-1\} \cap B_{1/{\ell}}} \Tilde{u}^2 G_{0,0}\le C(n,\lambda,\alpha,\Lambda).
\end{align}
To see the lower bound of $L_0:=\int_{\{t=-1\} \cap B_{1/{\ell}}} \Tilde{u}^2 G_{0,0}$, for any $x_0\in B_1$ we have for any $R>1$ that
\begin{align}
   \int_{\{t=-1\} \cap B_{1/{\ell}}} \Tilde{u}^2 G_{x_0,0}&= \int_{\{t=-1\} \cap B_{1/{\ell}}} \Tilde{u}^2 (4\pi)^{-n/2}e^{-\frac{|x-x_0|^2}{4}}\\
   &\le \int_{\{t=-1\} \cap B_{1/{\ell}}\cap B_{R}(x_0)} \Tilde{u}^2 (4\pi)^{-n/2}e^{-\frac{|x-x_0|^2}{4}}+\int_{\{t=-1\} \cap B_{1/{\ell}}\setminus B_{R}(x_0)} \Tilde{u}^2 (4\pi)^{-n/2}e^{-\frac{|x-x_0|^2}{4}}\\
   &\le C(n,R)L_0+C(n,\lambda,\alpha,\Lambda)R^{C(n,\Lambda)}e^{-R^2/10}
\end{align}
By the monotonicity of $H$ in Lemma \ref{l:monotone_H},  for any $\epsilon>0$ if ${\ell}\le {\ell}_0(n,\lambda,\alpha,\Lambda,\epsilon)$  we have for any $s\le 1$ that
\begin{align}
     \int_{\{t=-s\} \cap B_{1/{\ell}}} \Tilde{u}^2 G_{x_0,0}\le  \int_{\{t=-1\} \cap B_{1/{\ell}}} \Tilde{u}^2 G_{x_0,0}+\epsilon\le C(n,R)L_0+C(n,\lambda,\alpha,\Lambda)R^{C(n,\Lambda)}e^{-R^2/10}+\epsilon.
\end{align}
In particular, 
\begin{align}
    \int_{\{t=-s\} \cap B_{\sqrt{s}}(x_0)}\Tilde{u}^2\le C(n)s^{n/2}\left(C(n,R)L_0+C(n,\lambda,\alpha,\Lambda)R^{C(n,\Lambda)}e^{-R^2/10}+\epsilon \right).
\end{align}
Since the above holds for any $x_0\in B_1$, using covering we conclude that for any $0\le s\le 1$
\begin{align}
   \int_{\{t=-s\} \cap B_1} \Tilde{u}^2\le C(n)\left(C(n,R)L_0+C(n,\lambda,\alpha,\Lambda)R^{C(n,\Lambda)}e^{-R^2/10}+\epsilon \right).
\end{align}
This implies 
\begin{align}
    1=\fint_{Q_1}\Tilde{u}^2\le C(n)\left(C(n,R)L_0+C(n,\lambda,\alpha,\Lambda)R^{C(n,\Lambda)}e^{-R^2/10}+\epsilon \right).
\end{align}
By fixing $R,\epsilon$ thus fixing $\ell_0=\ell_0(\lambda,\Lambda,n,\alpha)>0$ we get that $L_0\ge C(n,\Lambda,\lambda,\alpha)>0$. Hence we finish the whole proof.
\end{proof}

\section{Quantitative Uniqueness of Tangent Maps}\label{s:4_Uniq}

In this section we aim to prove the quantitative version of the uniqueness of tangent maps for parabolic solutions.  Based on the almost monotonicity of the localized frequency and parabolic estimate we can get quantitative estimate on each time slice. This is the key point to get nodal set estimate on time slice. The main theorem in this section is the following quantitative uniqueness of tangent maps. Some arguments are motivated by \cite{AV,Han98,HJ,NV}.
\begin{theorem}\label{t:mainuniqueness}
    Let $u$ be a solution to (\ref{e:Parab_equa}) (\ref{e:assumption_without_c}) on $Q_2$ with doubling assumption (\ref{e:Critical_DI_bound}) and $x \in B_1$.  For any $\epsilon>0$  there exist $\ell_0(n,\lambda,\alpha,\Lambda,\epsilon)$ and $\delta(n,\lambda,\alpha,\Lambda,\epsilon)$ such that if $|N^u_{x,0}(r_1) - N^u_{x,0}(r_2)| \le \delta$ for $10 \delta^{-1} r_2 \le r_1 \le \ell_0$, then there exists some integer $d$ and some homogeneous caloric polynomial $P_d$ with normalization $\fint_{Q_1} |P_d|^2 = 1$ such that for any $r\in [r_2, \delta r_1]$
    \begin{equation}\label{e:pointwiseUniqueness}
        \sup_{Q_1} |u_{x,0; r} - P_d| \le \epsilon.
    \end{equation}
\end{theorem}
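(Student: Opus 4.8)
The plan is to combine the caloric approximation lemma, the almost monotonicity of the localized frequency, and the effective uniqueness theorem for the heat equation (Theorem \ref{t:heat_equ_unique_tangent}) via a scaling/iteration argument. First I would fix $\epsilon>0$ and choose $\delta$ small, to be determined. Given the pinching hypothesis $|N^u_{x,0}(r_1)-N^u_{x,0}(r_2)|\le\delta$ on the range $[r_2,r_1]$, Lemma \ref{l:parabolic_frequency_near_Z}(1) produces an integer $d$ with $|N^u_{x,0}(r)-d|\le\epsilon_1$ for all $r\in[r_2,r_1]$, where $\epsilon_1$ is as small as we like once $\delta$ is small. So on the whole scale range the localized frequency is pinched near the integer $d$, and $d\le C(n,\lambda,\alpha)\Lambda$ by the growth bound. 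Without loss of generality take $x=0$ and rescale so that we study $\tilde u=u_{0,0;\ell}$ for $\ell$ small enough that all the conclusions of Lemma \ref{l:Caloric_Approx}, Lemma \ref{l:monotone_H} and Lemma \ref{l:Frequency_Close_away_from_0} apply; by the scaling relation \eqref{e:Frequency_Scaling} the frequency pinching transfers to $N^{\tilde u;1/\ell}$.

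The core step is to upgrade the frequency pinching into a $C^0$-closeness of the rescaled solutions to a \emph{fixed} homogeneous caloric polynomial. For each dyadic scale $\rho$ in the admissible window, Lemma \ref{l:Caloric_Approx} (applied at scale $\rho$) gives a caloric polynomial $h_\rho$ of order $\le C\Lambda$ with $\fint_{Q_1}h_\rho^2=1$ and $\|h_\rho-(u_{0,0;\rho})\|_{C^{1;1}(Q_{1/\epsilon'})}\le\epsilon'$. By Lemma \ref{l:Frequency_Close_away_from_0} the frequency of $h_\rho$ on $[\delta,1]$ is $\epsilon'$-close to $N^u_{0,0}(\rho\cdot)$, hence pinched near $d$; since $h_\rho$ is a genuine caloric \emph{polynomial}, Lemma \ref{l:N_constant_equiv_polynomial} together with Corollary \ref{c:Frequency_bound_by_order_Poly} and the orthogonality Lemma \ref{l:homogeneous_Poly_orthogonal} force $h_\rho$ to be $\epsilon'$-close (in $L^2(G_{0,0})$, hence in $C^0$ on compacts by interior estimates and Lemma \ref{l:Equiv_Normalizaitons}) to its degree-$d$ homogeneous part $P_d^{(\rho)}$, with the lower-order and higher-order parts carrying $\Psi(\epsilon')$-small mass. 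This already gives, at each scale, $\sup_{Q_1}|u_{0,0;\rho}-P_d^{(\rho)}|\le\Psi(\delta,\epsilon'\,|\,n,\lambda,\alpha,\Lambda)$.

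The remaining point is \emph{uniqueness}, i.e. that the polynomials $P_d^{(\rho)}$ at different scales converge to a single $P_d$ as $\rho$ decreases through the window $[r_2,\delta r_1]$. Here I would argue exactly as in the proof of Theorem \ref{t:heat_equ_unique_tangent}: finitely many ``bad'' scales at which the normalized solution is not $\epsilon/2$-close to \emph{some} degree-$d$ homogeneous caloric polynomial are ruled out by Corollary \ref{c:finitemanypinch} (the frequency can drop by a definite amount only boundedly often), while between consecutive good scales the change in the approximating polynomial is controlled by the smallness of $|N^u(\rho)-N^u(2\rho)|$ (which is $\le\delta$ on the window by the pinching hypothesis) via a Taylor-expansion/derivative bound analogous to \eqref{e:derivativeRellk}; summing the geometric series of increments over the $\log$-many scales in $[r_2,\delta r_1]$ gives a total drift $\le\epsilon/2$, so all $P_d^{(\rho)}$ are $\epsilon$-close to a common $P_d$ normalized by $\fint_{Q_1}|P_d|^2=1$. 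Composing the two estimates yields \eqref{e:pointwiseUniqueness} for every $r\in[r_2,\delta r_1]$ after choosing $\delta$ (hence $\epsilon_1,\epsilon'$) small in terms of $n,\lambda,\alpha,\Lambda,\epsilon$.

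I expect the main obstacle to be the uniqueness/stability step: controlling how fast the best-approximating homogeneous polynomial can rotate or change as the scale varies, and showing this ``angular velocity'' is summable. In the heat-equation model (Theorem \ref{t:heat_equ_unique_tangent}) this is clean because one can use the global Taylor expansion \eqref{e:expansion_uQ1} with honest derivative estimates; in the variable-coefficient parabolic setting one only has the approximation $\tilde u\approx h_\rho$ up to $\Psi(\epsilon')$, so the increment estimate between scales must be extracted from the almost monotonicity of $N$ and the frequency-drop dichotomy of Lemma \ref{l:parabolic_frequency_near_Z} rather than from an exact expansion. Making the dependence $\delta=\delta(n,\lambda,\alpha,\Lambda,\epsilon)$ effective — rather than merely qualitative via a compactness/contradiction argument — is the delicate bookkeeping one must be careful about, though a contradiction argument combined with Theorem \ref{t:heat_equ_unique_tangent} applied to the limiting caloric function would also suffice if one is content with a non-explicit $\delta$.
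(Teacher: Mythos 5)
Your outline reproduces the easy half of the paper's argument (frequency pinching forces $N^u_{x,0}\approx d$ for an integer $d$, and at each single scale $\rho$ the caloric approximation Lemma \ref{l:Caloric_Approx} plus Corollary \ref{c:Frequency_bound_by_order_Poly} give closeness to \emph{some} degree-$d$ homogeneous caloric polynomial $P_d^{(\rho)}$), but the step you flag as the main obstacle is exactly where the proposal breaks down, and the fix you sketch does not work. Your drift estimate bounds the change of $P_d^{(\rho)}$ between consecutive scales by a quantity of size $\Psi(\delta)$ coming from the pinching $|N^u(\rho)-N^u(2\rho)|\le\delta$ plus the almost-monotonicity error; but the number of dyadic scales in $[r_2,\delta r_1]$ is $\sim\log(r_1/r_2)$, which is unbounded while $\delta$ must be chosen independently of $r_2/r_1$. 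There is no geometric decay of the increments — nothing in Lemma \ref{l:parabolic_frequency_near_Z}, Corollary \ref{c:finitemanypinch} or Theorem \ref{t:Frequency_almost_monotone} makes the per-scale drift summable — so the "total drift $\le\epsilon/2$" claim is unsupported; this is the classical non-uniqueness-of-tangents phenomenon (slow spiraling), which frequency pinching alone cannot rule out. The derivative bound \eqref{e:derivativeRellk} you invoke is available only for exact caloric functions via the Taylor expansion \eqref{e:expansion_uQ1}; $u_{x,0;\rho}$ satisfies no such expansion. Your fallback — a compactness/contradiction argument "applied to the limiting caloric function" — also fails, because the scale ratio $r_2/(\delta r_1)$ degenerates along the contradicting sequence: the blow-ups at two widely separated scales converge to two homogeneous caloric polynomials with no a priori relation, so no contradiction arises; indeed even Theorem \ref{t:heat_equ_unique_tangent} is not proved by compactness alone but by identifying the tangent with the fixed degree-$d$ Taylor coefficient of a single solution.

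The paper closes this gap differently (Theorem \ref{t:pinch_fre_symmetric} via Theorem \ref{t:Uniqueness} and Proposition \ref{p:frequency_close}): one constructs a \emph{single} caloric function $h$ approximating $\tilde u=u_{x,0;r_1}$ simultaneously on the whole range of scales, with the scale-weighted error $\sup_{Q_r^{\eta}}|\tilde u-h|\le\epsilon^2\bigl(\fint_{Q_r}\tilde u^2\bigr)^{1/2}$ for all $r\in[r_2/r_1,1]$. This is done by the Green's-function representation \eqref{e:unique_integral_phi}: the correction $\phi$ satisfies $|\phi|\le C r_1^{\alpha}r^{\gamma+\alpha/2}$, and since the frequency upper bound forces $H^{\tilde u}(r)\gtrsim r^{2\gamma+\alpha}$, the extra factor $r_1^{\alpha}$ from the H\"older modulus of the rescaled coefficients beats the solution's decay at \emph{every} scale — this is the quantitative input that replaces your non-summable increment argument. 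One then transfers the frequency pinching to $h$ (Proposition \ref{p:frequency_close}, using the tail Lemmas \ref{l:polynomial_small_tail}, \ref{l:u_small_tail}), applies Theorem \ref{t:heat_equ_unique_tangent} to the single $h$ to get one polynomial $P_d$ valid on all scales, and concludes by the triangle inequality. Without an analogue of this scale-uniform approximation (or some other mechanism producing summable drift from the H\"older regularity of $a^{ij}$), your proof cannot yield a single $P_d$ for the whole interval $[r_2,\delta r_1]$, which is the actual content of the theorem.
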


Recall that we say a function $u:B_1\to \mathbb{R}$ is $(k,\eta,r,x)$-symmetric if there exists a $k$-symmetric homogeneous  polynomial $P$ with $\fint_{B_1}|P|^2=1$ such that 
$
 \sup_{B_1}|u_{x,r}(y) - P(y)| \leq \eta,
$
where $u_{x,r}(y)=\frac{u(x+ry)}{\left(\fint_{B_1}|u(x+ry)|^2\right)^{1/2}}$.  We say $u$ is \textbf{uniformly $(k, \eta, x)$-symmetric in $[r_2, r_1]$ } if there exists a $k$-symmetric homogeneous polynomial $P$ with $\fint_{ B_1} |P|^2 = 1$ such that for any $r\in [r_2, r_1]$
\begin{equation}
    \sup_{B_1} |u_{x,r} - P| \leq \eta.
\end{equation}

By \eqref{e:pointwiseUniqueness}, we know that at time slice $s=0$, $\sup_{B_1}|u_{x,0;r}-P_d|(y,0)\le \epsilon$. By Lemma \ref{l:Equiv_Normalizaitons}, we have $C(n,d)^{-1}\le\fint_{B_1}|P_d|^2(x,0)dx\le C(n,d)$. Furthermore, by the definition of homogeneous polynomial, $P_d(x,0)$ is also a homogeneous polynomial in $x$. 
Hence as a consequence of Theorem \ref{t:mainuniqueness} we get
\begin{theorem}\label{t:quantiuniqueslice}
     Let $u$ be a solution to (\ref{e:Parab_equa}) (\ref{e:assumption_without_c}) on $Q_2$ with doubling assumption (\ref{e:Critical_DI_bound}). For any $\epsilon>0$ and $x\in B_1$ there exist $\ell_0(n,\lambda,\alpha,\Lambda,\epsilon)$ and $\delta(n,\lambda,\alpha,\Lambda,\epsilon)$ such that if $|{N}^{u}_{x,0}(r_1) - {N}^u_{x,0}(r_2)| \le \delta$ for $10 \delta^{-1} r_2 \le r_1 \le \ell_0$, 
     then there exists some integer $d$ and some homogeneous polynomial $\hat{P}_d:\mathbb{R}^n\to \mathbb{R}$ with normalization $\fint_{B_1} |\hat{P}_d|^2 = 1$ such that for any $r\in [r_2, \delta r_1]$
    \begin{equation}\label{e:pointwiseUniqueness1}
        \sup_{y\in B_1} |\hat{u}_{x,r}(y) - \hat{P}_d(y)|\le \epsilon.
    \end{equation}
    where $\hat{u}_{x,r}(y)= \frac{  u(x +r y, 0 )}{\Big( \fint_{B_1} |u(x +r y, 0 )|^2 dy\Big)^{1/2}}$. In particular, 
    $u(\cdot,0)$ is uniformly $(0,\epsilon,x)$-symmetric in $[r_2,\delta r_1]$.
\end{theorem}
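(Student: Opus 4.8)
The plan is to obtain this as a direct packaging of Theorem~\ref{t:mainuniqueness}: restrict the space-time uniqueness estimate to the time slice $\{s=0\}$ and then reconcile the two cosmetic discrepancies between the two statements. Namely, $u_{x,0;r}$ rescales the space variable through the affine map $A_{x,0}$ (the symmetric square root of $a^{ij}(x,0)$) and is normalized in $L^2(Q_1)$, whereas $\hat u_{x,r}$ uses the plain dilation $y\mapsto x+ry$ and is normalized in $L^2(B_1)$ on the slice. Both discrepancies are controlled purely by the ellipticity bounds \eqref{e:assumption_without_c} and Lemma~\ref{l:Equiv_Normalizaitons}; the substantive content is already in Theorem~\ref{t:mainuniqueness}.

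Concretely, first I would apply Theorem~\ref{t:mainuniqueness} with a parameter $\epsilon_1$ to be fixed later, producing an integer $d\le C(n,\lambda,\alpha)\Lambda$ and a homogeneous caloric polynomial $P_d$ with $\fint_{Q_1}|P_d|^2=1$ such that $\sup_{Q_1}|u_{x,0;r}-P_d|\le\epsilon_1$ for $r$ in the given interval. Since this closeness holds on a whole interval of scales with the \emph{same} polynomial $P_d$, and the rescaling map $A_{x,0}$ does not depend on the scale, a routine argument (the unnormalized versions of $u_{x,0;r}$ and $u_{x,0;Rr}$ differ only by a parabolic dilation, and $P_d$ is caloric and homogeneous) upgrades this to $\sup_{Q_R}|u_{x,0;r}-P_d|\le C(n,\lambda,\alpha,\Lambda)\epsilon_1$ on a slightly shorter interval, where $R=R(\lambda)$ is fixed so large that $A_{x,0}^{-1}(B_1)\subset B_R$; this is the same mechanism as in the proof of Theorem~\ref{t:heat_equ_unique_tangent}. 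Here the $L^2(Q_1)$-normalization constant $L_r$ of $u\big(x+rA_{x,0}(\cdot),r^2\cdot\big)$ is positive by the doubling hypothesis \eqref{e:Critical_DI_bound} together with $u(\cdot,0)\not\equiv0$.

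Next, restrict to $s=0$ and substitute $z=A_{x,0}(y)$. For $z\in B_1$ one has $y=A_{x,0}^{-1}(z)\in B_R$ and $u_{x,0;r}(y,0)=u(x+rz,0)/L_r$, so writing $Q(z):=P_d\big(A_{x,0}^{-1}(z),0\big)$ — which is again a homogeneous polynomial of degree $d$ in $z$, since $P_d(\lambda y,\lambda^2 s)=\lambda^d P_d(y,s)$ forces $P_d(\cdot,0)$ to be $d$-homogeneous — we obtain $\sup_{z\in B_1}|u(x+rz,0)/L_r-Q(z)|\le C(n,\lambda,\alpha,\Lambda)\epsilon_1$. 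By Lemma~\ref{l:Equiv_Normalizaitons} (equivalently: the space of caloric polynomials of degree $\le d$ is finite-dimensional and no nonzero element vanishes identically on a time slice, so $\fint_{Q_1}|\cdot|^2$ and $\fint_{B_1}|(\cdot)(\cdot,0)|^2$ are equivalent norms there) we get two-sided bounds $c(n,d,\lambda)\le\fint_{B_1}|Q|^2\le C(n,d,\lambda)$ and $\sup_{B_1}|Q|\le C(n,d,\lambda)$. Then set $\hat P_d:=Q\big/(\fint_{B_1}|Q|^2)^{1/2}$, a homogeneous polynomial of degree $d$ with $\fint_{B_1}|\hat P_d|^2=1$, and compare $L^2(B_1)$-norms: from the displayed closeness one gets $|M_r/L_r-(\fint_{B_1}|Q|^2)^{1/2}|\le C\epsilon_1$ with $M_r^2=\fint_{B_1}|u(x+rz,0)|^2\,dz$, and dividing through while using the positive lower bound on $\fint_{B_1}|Q|^2$ and $\sup_{B_1}|Q|\le C$ yields $\sup_{B_1}|\hat u_{x,r}-\hat P_d|\le C(n,\lambda,\alpha,\Lambda)\epsilon_1$. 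Choosing $\epsilon_1=\epsilon/C(n,\lambda,\alpha,\Lambda)$ and taking $\delta,\ell_0$ from Theorem~\ref{t:mainuniqueness} applied with this $\epsilon_1$ (then shrinking $\delta$ by the bounded factor coming from the $Q_1\to Q_R$ upgrade) proves \eqref{e:pointwiseUniqueness1}. The final ``in particular'' assertion is then immediate, since $\hat P_d$ is a $0$-symmetric polynomial with $\fint_{B_1}|\hat P_d|^2=1$ and \eqref{e:pointwiseUniqueness1} is exactly the definition of $u(\cdot,0)$ being uniformly $(0,\epsilon,x)$-symmetric on $[r_2,\delta r_1]$.

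The genuinely hard work lives in Theorem~\ref{t:mainuniqueness}; the remaining steps are bookkeeping, and the \textbf{one point needing care} is the normalization comparison — keeping $\fint_{B_1}|P_d(\cdot,0)|^2$ bounded away from $0$ uniformly over all admissible configurations — which is precisely where Lemma~\ref{l:Equiv_Normalizaitons} (or, equivalently, the finite-dimensionality of caloric polynomials of bounded degree) enters.
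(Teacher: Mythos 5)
Your proof is correct and follows essentially the same route as the paper: restrict the space-time estimate of Theorem \ref{t:mainuniqueness} to the slice $s=0$, use the two-sided bounds on $\fint_{B_1}|P_d(\cdot,0)|^2$ (finite-dimensionality of caloric polynomials of bounded degree, cf.\ Lemma \ref{l:Equiv_Normalizaitons}) to renormalize, and note that $P_d(\cdot,0)$ is again a homogeneous polynomial. The only difference is that you explicitly carry out the change of variables through $A_{x,0}$ via the $Q_1\to Q_R$ upgrade across scales and a bounded shrinking of $\delta$ — a bookkeeping step the paper leaves implicit — and your treatment of it is correct.
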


We should remark that the homogeneous polynomial $\hat{P}_d(y)$ may not satisfy any equation. However, this is good enough to deduce the quantitative estimates as solution of elliptic PDEs in \cite{HJ}.

Theorem \ref{t:mainuniqueness} is part of the result of Theorem \ref{t:pinch_fre_symmetric}. Hence it suffices to prove Theorem \ref{t:pinch_fre_symmetric} in the following subsection. The proofs depends on several crucial estimates. 

\subsection{Growth estimates under doubling lower bound}
In this subsection we will prove a growth estimate for $u_{x,0; r}$ which is one key ingredient in the proof of quantitative uniqueness . 

Let $x \in B_1$ and choose $\ell<1$ be small. Write $\Tilde{u} = u_{x,0;\ell}$. Recall that $\Tilde{u}$ satisfies the equation
\begin{equation*}
    (\partial_t - \Delta )\Tu = \partial_i (\Ta^{ij} - \delta^{ij}) \partial_j \Tu + \Tb^i \partial_i \Tu + \Tc \Tu,
\end{equation*}
where we have $\Tilde{a}(0,0) = (\delta^{ij})$ and 
\begin{equation*}
     |\Tilde{a}^{ij}(Y) -\Tilde{a}^{ij}(Z) | \leq C(\lambda,\alpha) \ell^{\alpha} d(Y, Z)^{\alpha}, (1 + C(\lambda,\alpha) \ell^{\alpha})^{-1} \delta^{ij} \leq \Tilde{a}^{ij} \leq (1 + C(\lambda,\alpha)) \ell^{\alpha}) \delta^{ij},  |\tilde{b}^i| \leq  (1+\lambda)^{-1/2} \ell, |c| \le \lambda \ell^2.
\end{equation*}

Recall that 
\begin{equation*}
    H^{\Tilde{u}}(r) = \int_{\{t = -r^2\} \cap B_{1/\ell}} \Tilde{u}^2 G_{0,0} \quad \text{  and 
  }  \quad D_{0,0}^{\Tilde{u}}(r) \equiv \log_4 \frac{H^{\Tilde{u}}(2r)}{H^{\Tilde{u}}(r)}.
\end{equation*}

For any $\eta>1$, write the $\eta$-parabolic metric $d^{\eta}((x,t), (y,s)) \equiv \max\{\eta^{-1} ||x-y||, \sqrt{|t- s|} \}$. Define the backward $\eta$-parabolic ball as $Q^{\eta}_r(x_0, t_0) \equiv \{(x,t) : ||x-x_0||<\eta r, 0<t_0 - t < r^2\}$. Let us first prove the following growth estimate for $\tilde{u}$.
\begin{lemma}{\label{l:Lower_bound_imply_Grad_Estimates}}
       Let $u$ be a solution to (\ref{e:Parab_equa}) (\ref{e:assumption_without_c}) on $Q_2$ with doubling assumption (\ref{e:Critical_DI_bound}). For fixed $x\in B_1$ let $\Tilde{u} = u_{x,0;\ell}$. Assume $D_{0,0}^{\Tu}(r) \ge \gamma>0$ for any $r_0 \le r \le 1$. Then if $\ell\le \ell_0(n,\lambda,\alpha,\Lambda)$ we have the following estimates for all $r_0\le r\le 1$ and all $1\le \eta<\frac{1}{10\ell}$  that 
\begin{enumerate}
    \item \begin{equation}
    \sup_{Q_r^\eta} |\Tilde{u}|  \leq C(n,\lambda,\alpha,\Lambda,\eta)   ~r^{\gamma}.
    \end{equation}
    \item \begin{equation}
    \sup_{Q_r^\eta}  |\nabla \Tilde{u}| \leq C(n,\lambda,\alpha,\Lambda,\eta)  ~r^{\gamma-1}.
\end{equation}
\end{enumerate}
\end{lemma}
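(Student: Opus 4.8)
The plan is to reduce the pointwise growth estimate to the $L^2$-weighted quantity $H^{\Tu}(r)$, which is controlled by the doubling-index hypothesis $D_{0,0}^{\Tu}(r)\ge\gamma$, and then upgrade from the Gaussian-weighted $L^2$ bound to a sup bound by parabolic interior estimates. First I would record that the lower bound $D^{\Tu}(s)\ge\gamma$ on $[r,1]$, combined with the relation $H'(s)/H(s)\approx 2N(s)/s$ (or directly the definition of $D$ as a dyadic log-ratio), yields after summing/integrating over dyadic scales from $r$ to $1$ a bound of the form $H^{\Tu}(r)\le C\, r^{2\gamma} H^{\Tu}(1)$, hence $H^{\Tu}(r)\le C(n,\lambda,\alpha,\Lambda)\, r^{2\gamma}$ once one normalizes using $\fint_{Q_1}\Tu^2=1$ and Lemma \ref{l:Equiv_Normalizaitons} to see $H^{\Tu}(1)$ is bounded above. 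Here I should be careful that $D^{\Tu}$ is only \emph{almost} monotone (Theorem \ref{t:DI_Almost_Monotone}), so I would choose $\ell_0$ small enough that the almost-monotonicity error is absorbed into the constants; the hypothesis that the lower bound holds for \emph{all} $r\in[r_0,1]$ is exactly what lets me iterate the dyadic estimate without a monotonicity defect accumulating badly.

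Next I would convert $H^{\Tu}(r)\le C r^{2\gamma}$ into a bound on a solid parabolic average. Since $G_{0,0}(x,t)\ge c(n,\eta)\, r^{-n}$ on $B_{\eta r}\cap\{t=-r^2\}$ for the range $1\le \eta<\tfrac1{10\ell}$, and using the almost-monotonicity of $H$ (Lemma \ref{l:monotone_H}) to control $H^{\Tu}(s)$ for $s$ in a fixed ratio range around $r$, I get
\begin{equation*}
    \fint_{Q^{2\eta}_{2r}(0,0)} \Tu^2 \le C(n,\lambda,\alpha,\Lambda,\eta)\, r^{2\gamma}.
\end{equation*}
Then parabolic $L^2$-to-$L^\infty$ estimates for $\Tu$ (whose coefficients $\Ta^{ij}$ are Hölder close to $\delta^{ij}$, with $\Tb,\Tc$ small, so the estimates are uniform) applied on $Q^\eta_r(0,0)\subset Q^{2\eta}_{2r}(0,0)$ give $\sup_{Q^\eta_r}|\Tu|\le C r^{\gamma}$, which is conclusion (1). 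For conclusion (2), I would apply the interior gradient estimate for parabolic equations on a slightly smaller $\eta$-ball, which gains exactly one power of the scale: $\sup_{Q^\eta_r}|\nabla\Tu|\le C r^{-1}(\fint_{Q^{2\eta}_{2r}}\Tu^2)^{1/2}\le C r^{\gamma-1}$. One technical point is that the parabolic estimates must be rescaled to the scale $r$, so I would rescale $\Tu$ by $\Tu(ry,r^2s)$, note the rescaled coefficients still satisfy uniform bounds, apply the fixed-scale estimate, and scale back.

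The main obstacle I anticipate is the bookkeeping around the almost-monotonicity defect of $D^{\Tu}$ (and $H^{\Tu}$, $N^{\Tu}$): because monotonicity only holds up to an error $\epsilon$ that depends on $\ell_0$, I must make sure the dyadic iteration producing $H^{\Tu}(r)\le C r^{2\gamma}$ does not accumulate a multiplicative error that degrades the exponent $\gamma$; the right way is to keep the defect additive in $\log H$ per dyadic step and fixed in total (by choosing the defect $\le$ something like $\gamma/100$ times the number of relevant scales is wrong — rather, the total telescoped defect over the $\log_2(1/r)$ scales is controlled because the defect of $N$ can be taken summably small, or one simply uses that $D^{\Tu}(s)\ge\gamma$ holds at \emph{every} scale so no telescoping of a defect is needed, only a one-sided summation). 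A secondary, more routine nuisance is making the constant's dependence on $\eta$ explicit and checking the estimates remain valid uniformly as $\eta\to\tfrac1{10\ell}$, i.e. up to near the boundary of the domain of definition of $\Tu$; this is handled since $Q^\eta_r$ stays a definite distance inside $B_{1/\ell}$ for $\eta<\tfrac1{10\ell}$.
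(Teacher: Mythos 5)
There is a genuine gap at the step where you ``convert $H^{\Tu}(r)\le C r^{2\gamma}$ into a bound on a solid parabolic average.'' The lower bound $G_{0,0}\ge c(n,\eta)r^{-n}$ on $B_{\eta r}\cap\{t=-s^2\}$ is only valid when $s$ is comparable to $r$; the cylinder $Q^{\eta}_r$ (and hence $Q^{2\eta}_{2r}$) also contains the region where $|x|\sim \eta r$ but $|t|\ll r^2$, and there $G_{0,0}(x,t)\sim |t|^{-n/2}e^{-|x|^2/4|t|}$ is exponentially small, so the slice quantity $H^{\Tu}(s)$ centered at the origin gives essentially no control of $\int_{B_{\eta r}\cap\{t=-s^2\}}\Tu^2$ for $s\ll r$. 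The almost-monotonicity of $H$ (Lemma \ref{l:monotone_H}) does not repair this, since it only compares Gaussian-weighted slices with the \emph{same} center $(0,0)$. So the inequality $\fint_{Q^{2\eta}_{2r}}\Tu^2\le C r^{2\gamma}$, which is the input to your parabolic $L^2$-to-$L^\infty$ and gradient estimates, is not justified as written; this is precisely the hard part of the lemma, and you instead identified the main obstacle as the bookkeeping of the almost-monotonicity defect, which is the easy part (the paper dispenses with $H^{\Tu}_{0,0}(r)\le Cr^{2\gamma}$ in one line).

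What the paper does, and what your argument is missing, is a recentering plus stopping-time dichotomy: for every $y\in B_{\eta r}$ one considers the doubling index $D^{\Tu}_{y,0}$ centered at $(y,0)$, compares $H^{\Tu}_{y,0}(r)\le C\,H^{\Tu}_{0,0}(r)$ using Lemma \ref{l:Equiv_Normalizaitons} together with the doubling assumption \eqref{e:Critical_DI_bound}, and then distinguishes whether $D^{\Tu}_{y,0}(s)\ge\epsilon_0$ persists down to scale $0$ (in which case the slice integrals $\int_{B_s(y)\times\{t=-s^2\}}\Tu^2$ decay like $s^{n}$ times the scale-$r$ bound for all $s\le r$) or drops below $\epsilon_0$ at some scale $r_1$, in which case the last statement of Theorem \ref{t:DI_Almost_Monotone} forces $\Tu$ to be within $1\%$ of a nonzero constant on $Q_{2\eta r_1}(y,0)$ and the cylinder integral is controlled directly. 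A covering argument over $y\in B_{\eta r}$ then yields $\int_{B_{\eta r}\times[-r^2,0]}\Tu^2\le C r^{2\gamma+n+2}$, after which your final step (rescaled interior parabolic sup and gradient estimates, with the Hölder-close coefficients) is correct and matches the paper. Without the recentered doubling information at the points $(y,0)$, the sup over times near $t=0$ at spatial distance $\sim\eta r$ from the center is simply not reachable from data centered at $(0,0)$.
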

\begin{proof}
We will first prove the estimate for $r=r_0$. We will see that the proof works  for general $r>r_0$. Since $\fint_{Q_1} \Tu^2 = 1$,  by Lemma \ref{l:Equiv_Normalizaitons}, we have $\int_{\{t=-1\} \cap B_{1/\ell }} \Tu^2 G_{0,0} \le C(n,\lambda,\alpha, \Lambda)$ if $\ell\le \ell_0(n,\lambda,\alpha,\Lambda)$. Since $D_{0,0}^{\Tu}(r) \ge \gamma$ for any $r\in [r_0, 1]$, then $H_{0,0}^{\Tilde{u}}(r) \le Cr^{2\gamma}$ for $r\in [r_0,1]$. 
By Almost Monotonicity Theorem \ref{t:DI_Almost_Monotone} and taking $\epsilon_0 = \epsilon_0(n,\Lambda,\eta)>0$, for any $y\in B_{\eta r_0}(0)$, we have $D_{(y,0)}^{\Tu}(r) \ge \epsilon_0$ for all $ r\ge r_1\ge 0$ with some $r_1=r_1(y,0)\ge 0$,  and $\sup_{Q_{2\eta r_1}(y,0)}|\tilde{u}-a_0|\le |a_0|/100$ for some nonzero constant $a_0$. Let us now fix $y\in B_{\eta r_0}$. By Lemma \ref{l:Equiv_Normalizaitons} and doubling assumption of $\tilde{u}$, we can deduce that $H_{y,0}^{\tilde{u}}(r_0)\le C(n,\lambda,\Lambda,\eta) H_{0,0}^{\tilde{u}}(r_0)\le Cr_0^{2\gamma}.$
If $r_1(y,0)< r_0$, we have  for any $r_1\le r\le r_0$ that 
    \begin{equation}
        \int_{B_{r}(y)\times \{t=-r^2\}} \Tu^2  \le C(n,\lambda,\alpha,\Lambda,\eta)  r_0^{2\gamma-2\epsilon_0} r^{2\epsilon_0 + n}\le  C(n,\lambda,\alpha,\Lambda,\eta)  r_0^{2\gamma} r^{ n}
    \end{equation}
and for all $r\le r_1$
\begin{align}
    \int_{B_{r}(y)\times \{t=-r^2\}}\tilde{u}^2\le C(n,\lambda,\alpha,\Lambda,\eta)   r_0^{2\gamma-2\epsilon_0} r_1^{2\epsilon_0} r^n\le C(n,\lambda,\alpha,\Lambda,\eta)   r_0^{2\gamma}r^n. 
\end{align}
Thus if $r_1(y,0)<r_0$ for all $y\in B_{\eta r_0}$ then the above holds for all $y\in B_{\eta r_0}(0)$. By covering we have for all $0\le r\le r_0$ that 
\begin{align}
    \int_{B_{\eta r_0}(0)\times \{t=-r^2\}}\tilde{u}^2\le C(n,\lambda,\alpha,\Lambda,\eta)r_0^{2\gamma+n}
\end{align}
This implies
\begin{align}
 \int_{B_{\eta r_0}(0)\times [-r_0^2,0]}\tilde{u}^2\le  C(n,\lambda,\alpha,\Lambda,\eta)   r_0^{2\gamma+n+2}.
\end{align}
On the other hand, if $r_1(y,0)\ge r_0$ for some $y\in B_{\eta r_0}$, noting that $H_{y,0}^{\tilde{u}}(r_0)\le C(n,\lambda,\Lambda,\eta) H_{0,0}^{\tilde{u}}(r_0)$ and $\sup_{Q_{2\eta r_1}(y,0)}|\tilde{u}-a_0|\le |a_0|/100$, we get 
\begin{align}
  \int_{B_{\eta r_0}(0)\times [-r_0^2,0]}\tilde{u}^2\le  \int_{Q_{\eta r_0}(0,0)}\tilde{u}^2\le  \int_{Q_{2\eta r_0}(y,0)}\tilde{u}^2\le C(n,\lambda,\alpha,\Lambda,\eta)   r_0^{2\gamma+n+2}. 
\end{align}
Replacing $r_0$ by any $1>r>r_0$ and using the above argument, we can get 
\begin{align}
   \int_{B_{\eta r}(0)\times [-r^2,0]}\tilde{u}^2\le  C(n,\lambda,\alpha,\Lambda,\eta)   r^{2\gamma+n+2}.
\end{align}
The proof is now finished by standard parabolic estimates. 
\end{proof}

\subsection{Gradient estimates for Green's function expansion}

Next we study the expansion of the Green's function of the heat operator. Recall that for $s\le t$,\begin{equation*}
    G(x-y, t-s) = G(x,t;y,s) = \frac{1}{(4\pi (t-s))^{n/2}} \exp({-\frac{|x-y|^2}{4(t-s)}})
\end{equation*}
And $G(x,t;y,s) \equiv 0$ when $s > t$. It is direct that we have the following gradient estimates: for any $\mu, l \in \NN$, there exist constants $C_1(n,\mu, l)$, $c_2(n,\mu, l)$ that for $t>0$
\begin{equation}\label{e:Green_Gradient}
    |D_x^{\mu}D_t^{l} G(x,t)| \leq \frac{C_1}{|(x,t)|^{n+\mu+2l}} \exp(-\frac{|x|^2}{c_2 t}).
\end{equation}

By Taylor's theorem and simple rearrangement, for any $(y,s) \neq (0,0)$, we can write the expansion at $(x,t)=(0,0)$ in $Q_R$ with $R < |(y,s)|$ as
\begin{equation}\label{e:Green_Expansion}
    G(x,t;y,s) = \sum_{k=0}^d P_{y,s}^k(x,t) + R^d_{y,s}(x,t)
\end{equation}
where $P_{y,s}^k(x,t)$ are homogeneous caloric polynomials of degree $k$ with
\begin{equation}
    P_{y,s}^k(x,t) = \sum_{|\mu| + 2l = k} D^{\mu}_x D_t^{l} G(-y, -s) \frac{x^{\mu} t^l}{\mu!l!}.
\end{equation}
and the remainder term
\begin{equation}\label{e:Expansion_Remainder}
    R^d_{y,s}(x,t) =  \sum_{k=0}^d \sum_{\substack{|\mu| + l = k\\ |\mu|+2l > d}} D^{\mu}_x D_t^{l} G(-y, -s) \frac{x^{\mu} t^l}{\mu!l!} + \sum_{|\mu| + l = d+1} D^{\mu}_x D_t^{l} G(\kappa \cdot x-y, \theta \cdot t-s) \frac{x^{\mu} t^l}{\mu!l!}
\end{equation}
where $\kappa(x,t;y,s), \theta(x,t;y,s)$ are constants in $(0,1)$. 

We have the following estimates on the expansions which are crucial in the proof of uniqueness of tangent maps in the next section.

\begin{lemma}\label{l:Heat_Kernel_estimates}
    There exist constants $C$ depending on $n, d$ such that the following estimates hold:\\
For any $s<0$
\begin{align}
    |P^k_{y,s}(x,t)| \le C |(y,s)|^{-n-k} |(x,t)|^k, \quad |\nabla_y P^k_{y,s}(x,t)| \le C |(y,s)|^{-n-k-1} |(x,t)|^k. 
\end{align}
Moreover, if $|(y,s)|_{\eta} \ge 2|(x,t)|_{\eta}$, then \eqref{e:Green_Expansion} holds with
\begin{align}
    |R^d_{y,s}(x,t)| &\le C  \sum_{j=d+1}^{2(d+1)} |(y,s)|^{-n-j}|(x,t)|^{j}, \\
    |\nabla_y R^d_{y,s}(x,t)| &\le  C  \sum_{j=d+1}^{2(d+1)} |(y,s)|^{-n-j-1}|(x,t)|^{j},
\end{align}
where  $|(x,t)|_\eta=\max\{\eta^{-1} |x|,|t|\}.$
\end{lemma}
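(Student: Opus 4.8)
The estimates for $P^k_{y,s}$ and for $R^d_{y,s}$ run on a single engine: the Gaussian derivative bound \eqref{e:Green_Gradient}, applied at the fixed point $(-y,-s)$ for the polynomial pieces and along the Taylor segment for the remainder, together with the elementary inequality $|x|^{|\mu|}|t|^l\le |(x,t)|^{|\mu|+2l}$ for the parabolic norm $|(x,t)|=(|x|^2+|t|)^{1/2}$. The plan is: first bound each homogeneous piece $P^k_{y,s}$; then bound the ``interior'' Taylor terms in $R^d_{y,s}$ (those of Euclidean order $\le d$ but parabolic order $>d$) by the identical computation; then bound the Lagrange part of $R^d_{y,s}$, for which one needs a short geometric fact forced by the $\eta$-parabolic hypothesis; and finally deduce the $\nabla_y$-statements, the case of $\nabla_y R^d_{y,s}$ being obtained by recognizing it as the analogous remainder for the spatial gradient of $G$.

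For the first two steps: since $s<0$, the point $(-y,-s)$ lies where \eqref{e:Green_Gradient} applies, and as $\exp(-|y|^2/(c_2|s|))\le 1$ we get $|D^\mu_x D^l_t G(-y,-s)|\le C_1|(y,s)|^{-n-|\mu|-2l}$. Substituting into $P^k_{y,s}(x,t)=\sum_{|\mu|+2l=k}D^\mu_x D^l_t G(-y,-s)\,x^\mu t^l/(\mu!\,l!)$ and using $|x|^{|\mu|}|t|^l\le |(x,t)|^k$, the finitely many terms sum to $|P^k_{y,s}(x,t)|\le C(n,d)|(y,s)|^{-n-k}|(x,t)|^k$; differentiating in $y$ replaces $D^\mu_x D^l_t G(-y,-s)$ by a $G$-derivative of spatial order $|\mu|+1$, costing one extra power of $|(y,s)|^{-1}$, which yields the bound for $\nabla_y P^k_{y,s}$. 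The interior Taylor terms $\sum_{k\le d}\sum_{|\mu|+l=k,\ |\mu|+2l>d}D^\mu_x D^l_t G(-y,-s)\,x^\mu t^l/(\mu!\,l!)$ are treated verbatim: each is $\le C_1|(y,s)|^{-n-j}|(x,t)|^j$ with $j:=|\mu|+2l$, and since $d<j\le 2(|\mu|+l)\le 2d$, summing gives a contribution $\le C(n,d)\sum_{j=d+1}^{2(d+1)}|(y,s)|^{-n-j}|(x,t)|^j$ (and $|(y,s)|^{-n-j-1}$ after one $y$-derivative).

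Next, the Lagrange remainder $L_{y,s}(x,t)=\sum_{|\mu|+l=d+1}D^\mu_x D^l_t G(\kappa x-y,\theta t-s)\,x^\mu t^l/(\mu!\,l!)$ with $\kappa,\theta\in(0,1)$. Here the crucial point is the geometric claim: under $|(y,s)|_\eta\ge 2|(x,t)|_\eta$ with $(x,t)\in Q_R$, $R<|(y,s)|$, the entire Taylor segment $\{(\sigma x-y,\sigma t-s):\sigma\in[0,1]\}$ joining $(-y,-s)$ to $(x-y,t-s)$ stays at parabolic distance $\ge c(n,\eta)|(y,s)|$ from the space-time origin, so that $G$ (whose only singularity is $(0,0)$) is smooth along it and Taylor's theorem with Lagrange remainder is legitimate. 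To see the claim, split according to which term realizes $|(y,s)|_\eta$: if the spatial term dominates, the hypothesis forces $|x|\le \tfrac12|y|$, whence $|\sigma x-y|\ge \tfrac12|y|\ge c(\eta)|(y,s)|$; if the time term dominates, the hypothesis bounds $|t|$ by a small multiple of $|s|$, hence $\sigma t-s\ge c|s|>0$ (using $t\le 0$ on $Q_R$) and the parabolic distance is $\ge (c|s|)^{1/2}\ge c(\eta)|(y,s)|$. Granting this, \eqref{e:Green_Gradient} (or the triviality that $G$ and its derivatives vanish when $\sigma t-s\le 0$) gives $|D^\mu_x D^l_t G(\kappa x-y,\theta t-s)|\le C(n,d,\eta)|(y,s)|^{-n-(d+1)}$; combining with $|x|^{|\mu|}|t|^l\le |(x,t)|^j$, $j=|\mu|+2l\in[d+1,2(d+1)]$, and $|(y,s)|^{-n-(d+1)}\le C(n,d)|(y,s)|^{-n-j}$ (using $j\ge d+1$ and the boundedness of $Q_2\ni(y,s)$), we obtain $|L_{y,s}(x,t)|\le C(n,d,\eta)\sum_{j=d+1}^{2(d+1)}|(y,s)|^{-n-j}|(x,t)|^j$; adding the interior contribution proves the bound for $|R^d_{y,s}|$. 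This geometric claim is the only genuinely non-mechanical point, and it is exactly where the anisotropic $d^\eta$-metric must be used instead of the isotropic parabolic one, since $G(\,\cdot\,;y,s)$ degenerates along the whole slice $\{t=s\}$ once $|y|$ is large.

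Finally, for $\nabla_y R^d_{y,s}$, rather than differentiate the implicitly defined $\kappa,\theta$ we note that $\nabla_y R^d_{y,s}(x,t)=\nabla_y G(x,t;y,s)-\sum_{k=0}^d\nabla_y P^k_{y,s}(x,t)$ is, up to sign, precisely the quantity one obtains by replacing $G$ throughout \eqref{e:Green_Expansion}--\eqref{e:Expansion_Remainder} by the (vector-valued) function $(x,t)\mapsto (\nabla_z G)(x-y,t-s)$, each component of which satisfies \eqref{e:Green_Gradient} with base order $n+1$ in place of $n$. Applying the previous two steps componentwise then yields $|\nabla_y R^d_{y,s}(x,t)|\le C(n,d,\eta)\sum_{j=d+1}^{2(d+1)}|(y,s)|^{-n-j-1}|(x,t)|^j$, completing the proof.
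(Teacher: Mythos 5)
Your argument is correct and follows essentially the same route as the paper's proof: the Gaussian derivative bound \eqref{e:Green_Gradient} handles the $P^k_{y,s}$ terms and the interior Taylor terms, the hypothesis $|(y,s)|_\eta \ge 2|(x,t)|_\eta$ is used exactly as in the paper to show the Lagrange evaluation point satisfies $|(\kappa x-y,\theta t-s)|_\eta \ge \tfrac12 |(y,s)|_\eta$, and the $\nabla_y$ bounds follow by running the same scheme with one extra derivative. Your explicit two-case verification of that geometric comparison and your observation that $\nabla_y R^d_{y,s}$ is itself the Taylor remainder of $-(\nabla_z G)(x-y,t-s)$ (so one never has to differentiate the implicit $\kappa,\theta$) are welcome clarifications of steps the paper treats as immediate.
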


\begin{proof}
    The first two inequalities come from gradient estimate of Green's function \ref{e:Green_Gradient} directly. We prove the estimate for $R^d_{y,s}(x,t)$. And the estimate for the last inequality comes from gradient estimate \ref{e:Green_Gradient} and the Taylor expansion \ref{e:Green_Expansion}.

By gradient estimate \eqref{e:Green_Gradient}, we have
\begin{align*}
    &\quad\Big|\sum_{k=0}^d \sum_{\substack{|\mu| + l = k\\ |\mu|+2l > d}} D^{\mu}_x D_t^{l} G(-y, -s) \frac{x^{\mu} t^l}{\mu!l!}  \Big| \le C  \sum_{k=0}^d \sum_{\substack{|\mu| + l = k\\ |\mu|+2l > d}}    |(y,s)|^{-(n+\mu+2l)} |(x,t)|^{\mu + 2l} \exp(- \frac{|y|^2}{c_2 s}) \le C \sum_{j=d+1}^{2d} |(y,s)|^{-n-j}|(x,t)|^{j},
\end{align*}
where the second inequality follows from the substitution $j = \mu + 2l$. 

For the second term of \eqref{e:Expansion_Remainder}, observe that when $|(y,s)|_{\eta} \ge 2|(x,t)|_{\eta}$, we have
\begin{equation*}
    2|(\kappa \cdot x -y, \theta \cdot t -s)|_\eta \ge |(y,s)|_{\eta}.
\end{equation*}

Then by gradient estimate \eqref{e:Green_Gradient} we have
\begin{align*}
\big|\sum_{|\mu| + l = d+1} D^{\mu}_x D_t^{l} G(\kappa \cdot x-y, \theta \cdot t-s) \frac{x^{\mu} t^l}{\mu!l!} \big| \le C  \sum_{|\mu| + l = d+1} |(\kappa \cdot x-y, \theta \cdot t-s)|^{-n-\mu-2l} |(x,t)|^{\mu+2l}\le C \sum_{j=d+1}^{2(d+1)} |(y,s)|^{-n-j}|(x,t)|^{j}.
\end{align*}

This finishes the proof.
\end{proof}

\subsection{Quantitative Uniqueness of tangent maps}
In this subsection we will prove the quantitative uniqueness of tangent maps. The following Theorem \ref{t:Uniqueness} gives a uniqueness of tangent map with respect to a caloric function $h$. Based on the doubling estimate of $h$ in Theorem \ref{t:Uniqueness} and the quantitative uniqueness of heat equation in Theorem \ref{t:heat_equ_unique_tangent} we can prove the desired quantitative uniqueness Theorem \ref{t:pinch_fre_symmetric}.

\begin{theorem}\label{t:Uniqueness}
Let $u$ be a solution to (\ref{e:Parab_equa}) (\ref{e:assumption_without_c}) on $Q_2$ with doubling assumption (\ref{e:Critical_DI_bound}). Let $x\in B_1$ and $\epsilon \le \alpha/10$. Then for any $\eta>0$, there exists some $\ell_0(n,\lambda,\alpha,\Lambda,\epsilon, \eta)$ such that if $|N^u_{x,0}(r_1) - N^u_{x,0}(r_2)| \le \alpha/4$ for $10 r_2 \le r_1 \le \ell_0$, then there exists some caloric function $h$ defined on $Q_1^{\eta}$ such that for any  $r \in [r_2/r_1, 1]$ we have    \begin{equation}
        \sup_{(y,s)\in Q_r^\eta}|\Tilde{u}(y,s) - h(y,s)| \le \epsilon^2  (\fint_{Q_r} \Tu^2)^{1/2},
    \end{equation}
    where $\Tilde{u} = u_{x,0;r_1}$. Moreover, if $\epsilon \le \epsilon_0(n,\lambda,\alpha,\Lambda)$ then $h$ satisfies the doubling assumption at $(0,0)$ for all radius $r_2/r_1\le r\le 1$, i.e.
\begin{equation}
    \log_4\frac{\fint_{Q_{2r}} h^2}{\fint_{Q_{r}} h^2} \le C(n,\lambda)\Lambda.
\end{equation}
\end{theorem}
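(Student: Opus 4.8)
The plan is to build the caloric approximant $h$ dyadically, scale by scale, and then glue the pieces together using the quantitative control on how $\tilde u$ differs from a caloric function at each individual scale. First I would fix a geometrically decreasing sequence of radii $r_k = \rho^k$ (with $\rho$ a small dimensional constant, say $\rho = 1/2$) running from $1$ down to $\sim r_2/r_1$. On each annular region $Q^\eta_{r_k}\setminus Q^\eta_{r_{k+1}}$ the rescaled solution $\tilde u_{0,0;r_k}$ satisfies an equation whose leading coefficients are $C^\alpha$-close to constant with modulus $\le C(\lambda,\alpha)(r_k r_1)^\alpha \le C\ell_0^\alpha$, and $|\tilde b|, |\tilde c|$ are of order $\ell_0$. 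Thus by the caloric approximation Lemma \ref{l:Caloric_Approx} (applied after rescaling) there is a caloric function $h_k$ on $Q^\eta_1$ with $\sup_{Q^\eta_1}|\tilde u_{0,0;r_k} - h_k| \le \Psi(\ell_0)$. The normalization is controlled by Lemma \ref{l:Equiv_Normalizaitons}, which gives $\fint_{Q_{r_k}}\tilde u^2 \sim H^{\tilde u}(r_k) \sim 4^{k\,N^u(r_k r_1)}$, and since $|N^u(r_1)-N^u(r_2)|\le \alpha/4$ with frequency bounded by $C(n,\lambda,\alpha)\Lambda$ (Theorem \ref{t:Frequency_almost_monotone}, Lemma \ref{l:parabolic_frequency_near_Z}), these quantities change by a definite bounded multiplicative factor between consecutive scales.

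The key step is to show the $h_k$ are Cauchy in an appropriate sense, so that they converge to a single caloric $h$. Here I would use the frequency-pinching hypothesis $|N^u(r_1)-N^u(r_2)|\le \alpha/4$: by the almost-monotonicity and the frequency/doubling equivalence (Theorem \ref{t:DI_Almost_Monotone}), the doubling index $D^{\tilde u}(r)$ is essentially constant, equal to some $d$, over the whole range $[r_2/r_1,1]$. Consequently $\tilde u$ at scale $r_k$ looks, up to error $\Psi(\ell_0)$, like a caloric function of nearly-constant frequency $d$; comparing $h_k$ and $h_{k+1}$ on the overlap $Q^\eta_{\rho}$ (after the correct rescaling) and using that a caloric function with pinched frequency is quantitatively close to a homogeneous caloric polynomial of order $d$ (Lemma \ref{l:Caloric_frequency_near_Z}, Theorem \ref{t:heat_equ_unique_tangent}), I get $\sup_{Q^\eta_\rho}|h_k - \rho^{?}h_{k+1}(\cdot/\rho)| \le C\Psi(\ell_0)$ after matching normalizations. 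Summing the telescoping errors — which are a fixed geometric series in $\Psi(\ell_0)$ since there is no loss of exponent thanks to the frequency being pinched and bounded — yields a limit caloric function $h$ on $Q^\eta_1$ with the desired estimate $\sup_{Q^\eta_r}|\tilde u - h| \le \epsilon^2 (\fint_{Q_r}\tilde u^2)^{1/2}$, after choosing $\ell_0$ small enough that $C\Psi(\ell_0) \le \epsilon^2$.

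For the doubling statement on $h$, I would argue that the closeness $\sup_{Q^\eta_r}|\tilde u - h|\le \epsilon^2(\fint_{Q_r}\tilde u^2)^{1/2}$ together with Lemma \ref{l:Equiv_Normalizaitons} transfers the bound $\fint_{Q_{2r}}\tilde u^2 / \fint_{Q_r}\tilde u^2 \le 4^{C(n,\lambda)\Lambda}$ (which is the doubling assumption \eqref{e:Critical_DI_bound} after rescaling, valid since $r_1\le \ell_0 \le 1$) to $\fint_{Q_{2r}}h^2/\fint_{Q_r}h^2$: indeed $\fint_{Q_r}h^2 = \fint_{Q_r}\tilde u^2 (1 + O(\epsilon^2))$ provided $\epsilon \le \epsilon_0(n,\lambda,\alpha,\Lambda)$ is small enough that the error $\epsilon^2 \fint_{Q_r}\tilde u^2$ is absorbed; one also needs a lower bound $\fint_{Q_r}h^2 \ge c \fint_{Q_r}\tilde u^2$, which again follows from the $L^\infty$-closeness plus the non-degeneracy in Lemma \ref{l:Equiv_Normalizaitons}. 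Taking logs base $4$ gives the claimed $\log_4(\fint_{Q_{2r}}h^2/\fint_{Q_r}h^2)\le C(n,\lambda)\Lambda$.

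The main obstacle I anticipate is the telescoping/gluing estimate: one must ensure that the per-scale errors $\Psi(\ell_0)$ genuinely sum (rather than accumulating with a bad exponent) and that the normalizations of $h_k$ at different scales are matched with multiplicative errors close to $1$. This is exactly where the hypothesis $|N^u(r_1)-N^u(r_2)|\le\alpha/4$ is essential — it is a pinching condition that, via Lemma \ref{l:parabolic_frequency_near_Z} and Theorem \ref{t:heat_equ_unique_tangent}, forces the scale-$r_k$ pictures to be uniformly close to \emph{the same} homogeneous caloric polynomial of order $d$, so the geometric series has ratio bounded away from $1$ and the total error is $O(\Psi(\ell_0))$, which we make $\le\epsilon^2$ by shrinking $\ell_0$.
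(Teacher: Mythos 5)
There is a genuine gap, and it is the central one: your scheme never actually produces the single caloric function $h$ on $Q_1^{\eta}$ that the statement demands. The approximants $h_k$ produced by the compactness Lemma \ref{l:Caloric_Approx} live on shrinking domains (in the $\Tilde{u}$--coordinates, on $Q^{\eta}_{r_k}$ after undoing the rescaling), and a telescoping sum of their differences is not a caloric function on the fixed domain $Q_1^{\eta}$; caloric functions on small backward cylinders do not extend to larger ones, so "the limit caloric function" at the end of your second paragraph is not constructed. Worse, the error budget does not close: the compactness lemma gives an error $\Psi(\ell_0)$ \emph{relative to the scale-$r_k$ normalization} $(\fint_{Q_{r_k}}\Tu^2)^{1/2}$, and there are $\sim\log(r_1/r_2)$ (unboundedly many) scales; an error committed at a coarse scale $r_k$ does not decay relative to the much smaller normalization $(\fint_{Q_r}\Tu^2)^{1/2}\sim r^{\gamma}$ at fine scales $r\ll r_k$, so the "geometric series" claim has no mechanism behind it. Note also that the pinching hypothesis here is only $|N^u_{x,0}(r_1)-N^u_{x,0}(r_2)|\le\alpha/4$, a fixed constant, not a small $\delta$: Lemma \ref{l:Caloric_frequency_near_Z} and Theorem \ref{t:heat_equ_unique_tangent}, which you invoke to force all scales to look like \emph{the same} homogeneous polynomial $P_d$, require pinching below a small threshold $\delta(\epsilon)$ and are simply not applicable; indeed Theorem \ref{t:Uniqueness} is exactly the tool the paper later combines with Theorem \ref{t:heat_equ_unique_tangent} to reach that polynomial statement (Theorem \ref{t:pinch_fre_symmetric}), so leaning on it here is circular in spirit as well as unavailable.

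The paper's proof supplies precisely the ingredient your sketch is missing: it sets $h=\Tu-\phi$, where $\phi$ is an explicit Duhamel/Green representation of a solution of $(\partial_t-\Delta)\phi=f$ with $f=\partial_i\bigl((\Ta^{ij}-\delta^{ij})\partial_j\Tu\bigr)$, in which the Taylor polynomials $P^k_{y,s}$ of the heat kernel up to order $d$ (with $d$ the largest integer below $\gamma+\alpha$) are subtracted so that the integrals converge and $\phi$ obeys the sharp growth bound $|\phi(x,t)|\le C\,r_1^{\alpha}\,|(x,t)|_{\eta}^{\gamma+\alpha/2}$; the inputs are the gradient growth estimate of Lemma \ref{l:Lower_bound_imply_Grad_Estimates} (from the doubling lower bound) and the kernel-expansion estimates of Lemma \ref{l:Heat_Kernel_estimates}. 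Since the pinching gives $H^{\Tu}(r)^{1/2}\ge c\,r^{\gamma+\alpha/2}$, the error is $\le C r_1^{\alpha}H^{\Tu}(r)^{1/2}$ at \emph{every} scale $r\in[r_2/r_1,1]$ simultaneously, and the smallness comes from the uniform prefactor $r_1^{\alpha}\le\ell_0^{\alpha}$ furnished by the H\"older continuity of $a^{ij}$ — not from compactness. Your treatment of the final doubling bound for $h$ is fine and agrees with the paper, but it presupposes the scale-uniform closeness estimate that your construction does not deliver.
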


\begin{proof}
Let $\Tilde{r} \equiv r_2/r_1$ for simplicity. Since $|N^u_{x,0}(r_1) - N^u_{x,0}(r_2)| \le \alpha/4$ for $10 r_2 \le r_1 \le \ell_0$, by Theorem \ref{t:Frequency_almost_monotone} and \ref{t:DI_Almost_Monotone}, we have $D^{\Tilde{u}}(s) \in [\gamma, \gamma+\alpha/2]$ for any $s \in [\Tilde{r}, 1/2]$ and for some $\gamma$ if $\ell_0(n,\lambda,\alpha,\Lambda,\epsilon)$ small. Now we choose the largest integer $d$ such that $d< \gamma + \alpha$. Hence $d$ satisfies
\begin{equation*}
    \gamma + \alpha -d>0 \quad \text{ and } \quad \gamma + \alpha - (d+1) \le 0.
\end{equation*}

With this $d$, we construct the function $\phi$ as follows: For any $(x,t) \in B_{\eta} \times [-1,0]$, we define
\begin{equation}\label{e:unique_integral_phi}
    \phi(x, t) = \int_{Q^{\eta}_1} (G(x,t; y,s) - G(0,0; y,s)) f(y,s) dy ds - \sum_{k=1}^d \int_{Q^{\eta}_1 \setminus Q^{\eta}_{\tilde{r}}} P^k_{y,s}(x,t) f(y,s) dy ds
\end{equation}
where $P^k_{y,s}(x,t)$ is defined as in last section, see Lemma \ref{l:Heat_Kernel_estimates}. Here $f = (\partial_t - \Delta) \Tu$. Note that $f$ may not be well-defined pointwisely due to the low regularity of $u$. However we will address this issue using integral by parts in the proof. 
We will prove the uniform bound for $\phi$ provided that only the H\"older continuity of $a^{ij}$ is assumed. For simplicity, we assume the coefficients $b^i = c =0$ without loss of generality, as the terms involving $b^i$ or $c$ would have growth of higher order.

Set $r = |(x,t)|_{\eta} \equiv \max\{\eta^{-1} |x|, \sqrt{|t|}\}$. We have the following claim about the growth estimates for $\phi(x,t)$: 

\textbf{Claim :} If $r \ge \Tilde{r}$, we have
\begin{equation}
    |\phi(x,t)| \le C(n,\lambda,\alpha,\Lambda,\eta)r_1^{\alpha} |(x,t)|^{d+\alpha/2}.
\end{equation}
If $r  \le \Tilde{r}$, we have
\begin{equation}
    |\phi(x,t)| \le C(n,\lambda,\alpha,\Lambda,\eta)r_1^{\alpha} \Tilde{r}^{d+\alpha/2}.
\end{equation}

The claim implies $\phi$ is well-defined and finite. Noting that  $(\partial_t - \Delta)\phi(x,t) = f(x,t) $ we have the function $h \equiv \Tu - \phi $ is the caloric function we want defined in $Q^{\eta}_1$. In the following we prove the first estimate and the second one can proved in a similar way.

Suppose $(x,t) \in Q^{\eta}_1$ and $r=|(x,t)|_\eta$. 
If $r\ge \tilde{r}$, we can split the integral into the following three parts 
\begin{equation}
    \begin{split}
        I_1 &= \int_{Q^{\eta}_{2r}} (G(x,t; y,s) - G(0,0; y,s)) f(y,s) dy ds \\
        I_2 &= - \sum_{k=1}^d \int_{Q^{\eta}_{2r} \setminus Q^{\eta}_{\Tilde{r}}} P^k_{y,s}(x,t) f(y,s) dy ds\\
        I_3 &= \int_{Q^{\eta}_1 \setminus Q^{\eta}_{2r}} \Big( G(x,t; y,s) - \sum_{k=0}^d  P^k_{y,s}(x,t) \Big) f(y,s) dy ds.
        \end{split}
    \end{equation}
    If $r<\tilde{r}$, we can split the integral as above by replacing $r$ by $\tilde{r}$. We will only consider the case $r\ge \tilde{r}$. One will see that the same argument implies the estimate in the claim for $r<\tilde{r}$.

Note that \begin{equation*}
    f(y,s) = \partial_s \Tu - \Delta \Tu = (\partial_s \Tu - \Delta \Tu) - (\partial_s \Tilde{u} - \partial_i (\Ta^{ij} \partial_j \Tu)) = \partial_i \Big((\Ta^{ij} - \delta^{ij}) \partial_j \Tu\Big).
\end{equation*} 

Using integration by parts, we can write 
\begin{align*}
     I_1 &= \int_{Q^{\eta}_{2r}} G(x,t; y,s)  f(y,s) dy ds - \int_{Q^{\eta}_{2r}} G(0,0; y,s)  f(y,s) dy ds \\
     &= -\int_{Q^{\eta}_{2r}} \partial_{y_i} G(x,t; y,s)  (\Ta^{ij}(y,s) - \delta^{ij}) \partial_{y_j}\Tu(y,s) dy ds + \int_{\partial B_{2\eta r} \times [-(2r)^2, 0]}  G(x,t; y,s)  (\Ta^{ij}(y,s) - \delta^{ij}) \partial_{y_j}\Tu(y,s) \cdot n_i dy ds\\
     &+ \int_{Q^{\eta}_{2r}}  \partial_{y_i} G(0,0; y,s)  (\Ta^{ij}(y,s) - \delta^{ij}) \partial_{y_j}\Tu(y,s) dy ds + \int_{\partial B_{2 \eta r} \times [-(2r)^2, 0]}  G(0,0; y,s)  (\Ta^{ij}(y,s) - \delta^{ij}) \partial_{y_j}\Tu(y,s) \cdot n_i dy ds
\end{align*}
where $n_i$ is $i$-th component of the spatial outer normal.

Note that
\begin{align*}
     & \quad \int_{Q^{\eta}_{2r}} \partial_{y_i} G(x,t; y,s)  (\Ta^{ij}(y,s) - \delta^{ij}) \partial_{y_i}\Tu(y,s) dy ds  \\
     &\le C r_1^{\alpha} \int_{Q^{\eta}_{2r}} | \nabla_y G(x,t; y,s)|  |(y,s)|^{\alpha}    |\nabla \Tu(y,s)| dy ds\\
    &\le C r_1^{\alpha} r^{\gamma -1 +\alpha}  \int_{Q^{\eta}_{2r}} |(x-y, t-s)|_{\eta}^{-n-1} dy ds\\
    &\le C r_1^{\alpha} r^{\gamma -1 +\alpha}  \int_{Q^{\eta}_{3r}} |(y, s)|_{\eta}^{-n-1} dy ds\le C r_1^{\alpha} r^{\gamma +\alpha}.
\end{align*}

Next we deal with the boundary term. 
\begin{equation}
\label{e:uniq_boundary_term}
\begin{split}
     &\quad \int_{\partial B_{2\eta r} \times [-(2r)^2, 0]}  G(x,t; y,s)  (\Ta^{ij}(y,s) - \delta^{ij}) \partial_{y_j}\Tu(y,s) \cdot n_i dy ds \\  
    &\le C r_1^{\alpha} r^{\gamma + \alpha - 1} \int_{\partial B_{2\eta r} \times [-(2r)^2, 0]}   G(x,t; y,s) dy ds\le C r_1^{\alpha} r^{\gamma +  \alpha}.
\end{split}
\end{equation}

The remaining two terms of $I_1$ could be estimated in the same way. Therefore we prove that
    $|I_1| \le C r_1^{\alpha} r^{\gamma +  \alpha}.$
Next we deal with $I_2$. Integration by parts gives
\begin{align*}
    I_2 =& \sum_{k=1}^d \int_{Q^{\eta}_{2r} \setminus Q^{\eta}_{\Tilde{r}} } \partial_{y_i}  (P^k_{y,s}(x,t)) (\Ta^{ij}(y,s) - \delta^{ij}) \partial_{y_j}\Tu(y,s) dy ds \\
    &- \sum_{k=1}^d \int_{\partial B_{2\eta r} \times [-(2r)^2, 0] }  (P^k_{y,s}(x,t)) (\Ta^{ij}(y,s) - \delta^{ij}) \partial_{y_j}\Tu(y,s) \cdot n_i dy ds \\
    &+ \sum_{k=1}^d \int_{\partial B_{\eta \Tilde{r}} \times [-\Tilde{r}^2, 0] }  (P^k_{y,s}(x,t)) (\Ta^{ij}(y,s) - \delta^{ij}) \partial_{y_j}\Tu(y,s) \cdot n_i dy ds. 
\end{align*}

For the first term, we have
\begin{align*}
     \quad \sum_{k=1}^d &\int_{Q^{\eta}_{2r} \setminus Q^{\eta}_{\Tilde{r}} } \partial_{y_i}  (P^k_{y,s}(x,t)) (\Ta^{ij}(y,s) - \delta^{ij}) \partial_{y_j}\Tu(y,s) dy ds \\
    &\le C r_1^{\alpha} \sum_{k=1}^d \int_{Q^{\eta}_{2r} \setminus Q^{\eta}_{\Tilde{r}} }  |(y,s)|^{\alpha} | \nabla_y  (P^k_{y,s}(x,t)| |\nabla \Tu(y,s)| dy ds \\
    &\le C r_1^{\alpha} \sum_{k=1}^d \sum_{i\ge 0, 2^{-i+1}r\ge \tilde{r}} \int_{Q^{\eta}_{2^{1-i}r} \setminus Q^{\eta}_{2^{-i}r} } |(y,s)|^{\alpha} | \nabla_y  (P^k_{y,s}(x,t)| |\nabla \Tu(y,s)| dy ds \\
    &\le C r_1^{\alpha}  \sum_{k=1}^d r^{k} \sum_{i\ge 0, 2^{-i+1}r\ge \tilde{r}} \int_{Q^{\eta}_{2^{1-i}r} \setminus Q^{\eta}_{2^{-i}r} } |(y,s)|^{-n-k-1+\alpha}  |(y,s)|^{\gamma-1} dy ds \\
    &\le C r_1^{\alpha}  \sum_{k=1}^d r^{k} \sum_{i=0}^{\infty} (\frac{r}{2^i})^{\gamma +\alpha -k}\le C r_1^{\alpha} r^{\gamma+\alpha} \sum_{k=1}^d \sum_{i=0}^{\infty} (\frac{1}{2^{\gamma +\alpha -k}})^i \le C r_1^{\alpha} r^{\gamma+\alpha},
\end{align*}
where the last inequality follows from the fact that $\gamma + \alpha - d > 0$. The two boundary terms could be estimates similar as in \eqref{e:uniq_boundary_term}. 

Next we deal with $I_3$. We have
\begin{align*}
     I_3 =& -\int_{Q^{\eta}_1 \setminus Q^{\eta}_{2r}} \partial_{y_i} R^d_{y,s}(x,t)  (\Ta^{ij}(y,s) - \delta^{ij}) \partial_{y_j}\Tu(y,s) dy ds\\
     &+\int_{\partial B_{\eta} \times [-1,0]}  R^d_{y,s}(x,t)  (\Ta^{ij}(y,s) - \delta^{ij}) \partial_{y_j}\Tu(y,s) \cdot n_i dy ds\\
     &-\int_{\partial B_{2\eta r} \times [-(2r)^2, 0]}  R^d_{y,s}(x,t)  (\Ta^{ij}(y,s) - \delta^{ij}) \partial_{y_j}\Tu(y,s) \cdot n_i dy ds
\end{align*}

For the first term, by Lemma \ref{l:Heat_Kernel_estimates},
\begin{align*}
    & \quad \int_{Q^{\eta}_1 \setminus Q^{\eta}_{2r} }  \partial_{y_i} R^d_{y,s}(x,t)  (\Ta^{ij}(y,s) - \delta^{ij}) \partial_{y_j}\Tu(y,s) dy ds\\
    &\le C r_1^{\alpha}  \int_{Q^{\eta}_1 \setminus Q^{\eta}_{2r} } | \nabla_y R^d_{y,s}(x,t) | |(y,s)|^{\alpha} |\nabla \Tu(y,s)| dy ds\\ 
    &\le C r_1^{\alpha}  \sum_{i=1}^M \int_{Q^{\eta}_{2^{i+1}r} \setminus Q^{\eta}_{2^{i}r} }  |\nabla_y  (R^k_{y,s}(x,t)| |(y,s)|^{\alpha} | |\nabla \Tu(y,s)| dy ds \\
    &\le C r_1^{\alpha} \sum_{j=d+1}^{2(d+1)} r^j \sum_{i=1}^{M} \int_{Q^{\eta}_{2^{i+1}r} \setminus Q^{\eta}_{2^{i}r} }  |(y,s)|^{-n-j-1} \cdot |(y,s)|^{\alpha} | \cdot |(y,s)|^{\gamma -1} dy ds \\
    &\le C r_1^{\alpha} r^{\gamma + \alpha} \sum_{j=d+1}^{2(d+1)} \sum_{i=1}^{M} (2^{\gamma +\alpha -j})^i,
\end{align*}
where $M$ is an integer such that $2^{M} r \le 1 < 2^{M+1} r$.

Recall that $\gamma + \alpha -(d+1) \le 0$. If $\gamma +\alpha < d+1$, we have 
\begin{equation*}
    \sum_{j=d+1}^{2(d+1)} \sum_{i=1}^{M} (2^{\gamma +\alpha -j})^i \le C(\alpha,\Lambda).
\end{equation*}
If $\gamma+\alpha = d+1$, then we have
\begin{align*}
 \sum_{j=d+1}^{2(d+1)} \sum_{i=1}^{M} (2^{\gamma +\alpha -j})^i =M + \sum_{j=d+2}^{2(d+1)} \sum_{i=1}^{\infty} (2^{d+1 -j})^i \le \log_2 (1/r) + C(\alpha,\Lambda).
\end{align*}

To handle the boundary terms is similar. Therefore, we have the following estimate for $I_3$, 
\begin{equation*}
    I_3 \le C  r_1^{\alpha} r^{\gamma + \frac{\alpha}{2}}. 
\end{equation*}

Therefore, we have proved that 
\begin{equation}
    |\phi(x,t)| \le C r_1^{\alpha} r^{\gamma + \alpha/2}.
\end{equation}
For $|(x,t)|_{\eta} = r \le \Tilde{r}$ the same argument as above by replacing $r$ by $\tilde{r}$ we get 
\begin{equation}
    |\phi(x,t)| \le C r_1^{\alpha} \tilde{r}^{\gamma + \alpha/2}.
\end{equation}
Hence we finish the proof of the claim. 

Noting that $Q_r^\eta=\{(x,t): |(x,t)|_{\eta}\le r\}$, as a direct consequence of the claim, we get for any $r\in [\tilde{r},1]$ that
\begin{align}
    \sup_{Q_r^\eta}|\phi(x,t)|\le Cr_1^\alpha r^{\gamma+\alpha/2}.
\end{align}
On the other hand, Since $D^{\Tu}(r) \le \gamma+\alpha/2$ for $r\in [\Tilde{r}, 1/2]$ and by Lemma \ref{l:Equiv_Normalizaitons}, we have
\begin{equation*}
    H^{\Tilde{u}}(r) \ge C r^{2\gamma + \alpha}  H^{\Tilde{u}}(1) \ge C r^{2\gamma + \alpha}.
\end{equation*}
If we pick $\ell_0(n,\alpha,\lambda,\Lambda,\eta,\epsilon)$ small enough and note that $r_1\le \ell_0$, then for any $r\in [\tilde{r},1]$ we have 
\begin{equation}\label{e:unique_phi_final_estimate}
    \sup_{Q_r^\eta}|\phi(x,t)| \le  C r_1^{\alpha} r^{\gamma + \alpha/2} \le C r_1^{\alpha} (H^{\Tilde{u}}(r))^{1/2} \le \epsilon^2 (H^{\Tilde{u}}(r))^{1/2}. 
\end{equation}
Also, by Lemma \ref{l:Equiv_Normalizaitons} we have
\begin{equation*}
    \sup_{Q_r^\eta}|\phi(x,t)|\le C r_1^{\alpha} (H^{\Tilde{u}}(r))^{1/2} \le \epsilon^2 (\fint_{Q_r} \Tu^2)^{1/2}. 
\end{equation*}
Therefore, for any $r\in [\tilde{r},1]$ we have 
\begin{align*}
    \frac{\fint_{Q_{2r}} h^2}{\fint_{Q_{r}} h^2} \le 100 \frac{\fint_{Q_{2r}} \Tu^2 +\fint_{Q_{2r}} \phi^2  }{ \fint_{Q_{r}} \Tu^2 - \fint_{Q_{r}} \phi^2} \le 100 \frac{(1+C \epsilon^2) \fint_{Q_{2r}} \Tu^2}{(1-C \epsilon^2) \fint_{Q_{r}} \Tu^2}.
\end{align*}
This proves the doubling bound for $h$.
\end{proof}

Next we will prove the frequency closeness between $\Tu$ and the approximated caloric function $h$. Note that the definition of $H^{\Tilde{u}}$ requires the integral over the ball $B_{1/r_1}$. However we only have the approximation over the ball $B_{\eta}$. The following two lemmas show that there is no essential difference between these two integrals. 

\begin{lemma}\label{l:polynomial_small_tail}
    Let $P$ be a caloric polynomial of order at most $d$. Then for any $\epsilon$, there exists some $\eta_0(n,d, \epsilon)>0$ such that for any $\eta>\eta_0$ and $s>0$
    \begin{equation*}
        \int_{\{\RR^n \setminus B_{\eta s}\} \cap \{t = -s^2\}} P^2 G_{0,0}\le \epsilon  \int_{\{t = -s^2\}} P^2 G_{0,0}.   
    \end{equation*}
\end{lemma}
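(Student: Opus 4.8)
The plan is to reduce the statement to a scale-invariant inequality about polynomials against a fixed Gaussian weight, and then exploit finite-dimensionality. First I would rescale: on the slice $\{t=-s^2\}$ substitute $x=sy$. Since $G_{0,0}(x,-s^2)=(4\pi s^2)^{-n/2}e^{-|x|^2/(4s^2)}$, this gives
\[
\int_{\{t=-s^2\}} P^2 G_{0,0} = (4\pi)^{-n/2}\int_{\RR^n} Q_s(y)^2\, e^{-|y|^2/4}\,dy,\qquad Q_s(y):=P(sy,-s^2),
\]
and the tail region $\RR^n\setminus B_{\eta s}$ becomes $\RR^n\setminus B_\eta$ in the $y$ variable. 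Because $P$ has parabolic degree at most $d$ (its monomials are $x^\mu t^l$ with $|\mu|+2l\le d$), $Q_s$ is a polynomial in $y$ of ordinary degree at most $d$ for every $s>0$. Hence it suffices to prove: for every $\epsilon>0$ there is $\eta_0(n,d,\epsilon)$ such that every polynomial $Q$ on $\RR^n$ of degree at most $d$ satisfies $\int_{\RR^n\setminus B_\eta} Q^2 e^{-|y|^2/4}\le \epsilon\int_{\RR^n} Q^2 e^{-|y|^2/4}$ whenever $\eta\ge\eta_0$; the $s$-dependence has disappeared entirely.

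For this reduced statement I would run a compactness argument on the finite-dimensional space $\mathcal P_d$ of polynomials of degree at most $d$. The bilinear form $\langle Q,R\rangle=\int_{\RR^n} QR\, e^{-|y|^2/4}$ is an inner product on $\mathcal P_d$ (the weight is strictly positive), so the unit sphere $\Sigma=\{Q:\langle Q,Q\rangle=1\}$ is compact. For each fixed $Q$ the functional $F_\eta(Q):=\int_{\RR^n\setminus B_\eta} Q^2 e^{-|y|^2/4}$ is continuous in $Q$, non-increasing in $\eta$, and tends to $0$ as $\eta\to\infty$ by dominated convergence. By Dini's theorem this monotone pointwise convergence is uniform on the compact set $\Sigma$, so there is $\eta_0(n,d,\epsilon)$ with $F_\eta(Q)\le\epsilon$ for all $Q\in\Sigma$ and all $\eta\ge\eta_0$; homogeneity in $Q$ then gives the inequality for arbitrary $Q\in\mathcal P_d$, and undoing the rescaling yields the lemma. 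As an alternative that produces an explicit $\eta_0$, one can bound each monomial directly, $\int_{\RR^n\setminus B_\eta} y^{2\mu} e^{-|y|^2/4}\le C(n,d)\,\eta^{2d+n} e^{-\eta^2/8}$, and combine this with the equivalence between the Gaussian $L^2$-norm and the coefficient norm on $\mathcal P_d$.

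There is no real obstacle here: the only point requiring a little care is justifying the \emph{uniform} (rather than merely pointwise) smallness of the tail, which is precisely where the finite-dimensionality of $\mathcal P_d$ enters — through Dini's theorem on the compact sphere in the first approach, or through norm equivalence on $\mathcal P_d$ in the explicit approach. Note that the orthogonality of homogeneous caloric polynomials of distinct orders (Lemma \ref{l:homogeneous_Poly_orthogonal}) is not needed for the argument above, although it offers yet another route, by writing $P=\sum_{k\le d}P_k$ and treating each homogeneous piece $P_k$ separately.
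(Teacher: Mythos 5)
Your proof is correct, but it takes a genuinely different route from the paper. The paper first treats a homogeneous caloric polynomial of order $d$, where the parabolic change of variables pulls out a clean factor $s^{2d}$ from both the tail and the full integral, reducing matters to a single $s$-independent Gaussian tail estimate for $P(\cdot,-1)$; the general case is then handled by decomposing $P=\sum_{i\le d}P_i$ into homogeneous caloric pieces, invoking the orthogonality Lemma \ref{l:homogeneous_Poly_orthogonal} to split the denominator, and using Cauchy--Schwarz to bound the tail of $P^2$ by the tails of the $P_i^2$. You instead rescale the whole slice at once, observe that $Q_s(y)=P(sy,-s^2)$ is an ordinary polynomial of degree at most $d$ in $y$ (with $s$-dependent coefficients), and reduce the lemma to a uniform Gaussian tail inequality over the finite-dimensional space $\cP_d$, obtained via Dini's theorem on the compact unit sphere of the Gaussian $L^2$ inner product, or alternatively via explicit monomial tail bounds plus norm equivalence on $\cP_d$. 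Your argument never uses caloricity or the orthogonality lemma, so it is more elementary and slightly more general (it applies to any polynomial restriction to the slice), and the explicit variant even yields an effective $\eta_0$; the paper's proof, on the other hand, costs nothing extra because the orthogonality machinery is already established there and is reused immediately afterwards (e.g.\ in Corollary \ref{c:Frequency_bound_by_order_Poly} and in the proof of Lemma \ref{l:u_small_tail}). The one point you rightly flag — that uniformity in $Q$ (equivalently in $s$) is the crux — is handled correctly by the finite-dimensionality argument, so there is no gap.
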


\begin{proof}
First we suppose $P$ be a caloric homogeneous polynomial of order $d$. By change of variable formula, we have
    \begin{equation*}
        \int_{\{\RR^n \setminus B_{\eta s}\} \cap \{t = -s^2\}} P^2 G_{0,0} = \frac{s^{2d}}{(4\pi)^{n/2}} \int_{\{\RR^n \setminus B_{\eta}\} \cap \{t = -1\}} P^2(x, -1) \exp(-\frac{|x|^2}{4}) dx.
    \end{equation*}
    Similarly, we have
    \begin{equation*}
        \int_{\{t = -s^2\}} P^2 G_{0,0} = \frac{s^{2d}}{(4\pi)^{n/2}} \int_{\{t=-1\}} P^2(x, -1)\exp(-\frac{|x|^2}{4}) dx.
    \end{equation*}
By normalization we may assume 
\begin{equation*}
    \int_{\{t=-1\}} P^2(x, -1)\exp(-\frac{|x|^2}{4}) dx =1. 
\end{equation*}
By lemma \ref{l:Equiv_Normalizaitons} and standard estimate, we have
\begin{equation*}
    |P(x,-1)| \le C(n,d) (|x|^d +1).
\end{equation*}
Therefore, by taking $\eta$ large enough we have
\begin{equation*}
    \int_{\{\RR^n \setminus B_{\eta}\} \cap \{t = -1\}} P^2(x, -1) \exp(-\frac{|x|^2}{4}) dx \le C'(n,d) \int_{\eta}^{\infty} r^{d+n-1} \exp(-\frac{r^2}{4})dr \le \epsilon.
\end{equation*}
This finishes the proof for caloric homogeneous polynomials. 

For general case, we can write $P = \sum_{i=0}^d P_i$ with each $P_i$ homogeneous polynomial of order $i$. Then by lemma \ref{l:homogeneous_Poly_orthogonal}, we have
\begin{equation*}
    \int_{\{t = -s^2\}} P^2 G_{0,0}   = \sum_i \int_{\{t = -s^2\}} P_i^2 G_{0,0}. 
\end{equation*}
And H\"older inequality gives
\begin{equation*}
    \int_{\{\RR^n \setminus B_{\eta s}\} \cap \{t = -s^2\}} P^2 G_{0,0} \le (d+1) \sum_i \int_{\{\RR^n \setminus B_{\eta s}\} \cap \{t = -s^2\}} P_i^2 G_{0,0}.
\end{equation*}
The proof is now finished using homogeneous case.
\end{proof}

The polynomial case will be used in the proof of the following general case.

\begin{lemma}\label{l:u_small_tail}
    Let $u$ be a solution to (\ref{e:Parab_equa}) (\ref{e:assumption_without_c}) on $Q_2$ with doubling assumption (\ref{e:Critical_DI_bound}). Let $x \in B_1$. Then for any $\epsilon>0$ there exists some $\ell_0(n,\lambda,\alpha,\Lambda,\epsilon)$ and $\eta_0(n,\lambda,\alpha,\Lambda,\epsilon)$ such that for any $\ell \le \ell_0$ and $\eta\ge \eta_0$ with $\Tilde{u}=u_{x,0;\ell}$ and any $0\le r\le 1$
    \begin{equation*}
        \int_{\{B_{1/\ell} \setminus B_{\eta r}\} \cap \{t = -r^2\}} \Tilde{u}^2 G_{0,0} \le \epsilon\int_{\{B_{1/\ell}\}  \cap \{t = -r^2\}} \Tilde{u}^2 G_{0,0} \equiv \epsilon H^{\Tilde{u}}(r).
    \end{equation*}
In particular, for $r \in (0,1]$ we have
\begin{equation*}
    \Big|\frac{\int_{\{ B_{\eta r}\} \cap \{t = -r^2\}} \Tilde{u}^2 G_{0,0}}{H^{\Tilde{u}}(r)} - 1 \Big| \le \epsilon.
\end{equation*}
\end{lemma}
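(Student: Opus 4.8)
The plan is to reduce to the case $r=1$ by rescaling, and to prove the $r=1$ case by combining the caloric approximation of Lemma~\ref{l:Caloric_Approx} near the origin, the polynomial tail estimate of Lemma~\ref{l:polynomial_small_tail}, the polynomial growth bound for $\tilde u$, and the uniform lower bound $H^{\tilde u}(1)\ge c(n,\lambda,\alpha,\Lambda)>0$ coming from Lemma~\ref{l:Equiv_Normalizaitons}.

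For the case $r=1$, I would fix a small auxiliary parameter $\epsilon'>0$ (to be chosen in terms of $\epsilon,n,\lambda,\alpha,\Lambda$ only at the end), and take $\ell_0\le\epsilon'$ so that $B_{1/\epsilon'}\subset B_{1/\ell}$. I would split the annular integral $\int_{\{B_{1/\ell}\setminus B_\eta\}\cap\{t=-1\}}\tilde u^2 G_{0,0}$ into a ``middle'' piece over $B_{1/\epsilon'}\setminus B_\eta$ and a ``far'' piece over $B_{1/\ell}\setminus B_{1/\epsilon'}$. On the far piece I would use the growth bound $\sup_{Q_R}|\tilde u|\le C(n,\lambda,\alpha,\Lambda)R^{C(n,\Lambda)}$ for $R\ge1$ (established in the proof of Lemma~\ref{l:Equiv_Normalizaitons}) against the exponential decay of $G_{0,0}$, obtaining a bound $\Psi(\epsilon'\,|\,n,\lambda,\alpha,\Lambda)$ with $\Psi\to0$ as $\epsilon'\to0$. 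On the middle piece, Lemma~\ref{l:Caloric_Approx} supplies a caloric polynomial $h$ of order at most $D=C(n,\lambda,\alpha)\Lambda$ with $\fint_{Q_1}h^2=1$ and $||h-\tilde u||_{C^{1;1}(Q_{1/\epsilon'})}\le\epsilon'$, hence $\tilde u^2\le 2h^2+2(\epsilon')^2$ there; then I would control $\int_{\{|x|\ge\eta\}\cap\{t=-1\}}h^2G_{0,0}$ by Lemma~\ref{l:polynomial_small_tail} (taking $\eta_0=\eta_0(n,D,\epsilon')$), together with $\int_{\{t=-1\}}h^2G_{0,0}\le C(n,\lambda,\alpha,\Lambda)$. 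Combining the two pieces gives $\int_{\{B_{1/\ell}\setminus B_\eta\}\cap\{t=-1\}}\tilde u^2G_{0,0}\le\Psi(\epsilon')+C(n,\lambda,\alpha,\Lambda)\epsilon'$, which is $\le\epsilon\,H^{\tilde u}(1)$ once $\epsilon'$ is chosen small, because $H^{\tilde u}(1)\ge c(n,\lambda,\alpha,\Lambda)>0$ by Lemma~\ref{l:Equiv_Normalizaitons} (recall $\fint_{Q_1}\tilde u^2=1$).

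For general $r\in(0,1]$ I would pass to $v:=(\tilde u)_{0,0;r}=u_{x,0;r\ell}$, which satisfies the standing hypotheses with scale parameter $r\ell\le\ell_0$ (its doubling index is still bounded by $C(n,\lambda)\Lambda$ by Theorem~\ref{t:DI_Almost_Monotone}). Since the rescaled leading coefficients equal $\delta^{ij}$ at the origin, the substitution $x=ry$ identifies $\{t=-r^2\}$ with $\{s=-1\}$, $B_{1/\ell}$ with $B_{1/(\ell r)}$, $B_{\eta r}$ with $B_\eta$, and preserves $G_{0,0}$ in the sense $G_{0,0}(x,-r^2)\,dx=G_{0,0}(y,-1)\,dy$; also $\tilde u(ry,-r^2)$ is a fixed multiple of $v(y,-1)$. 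Hence
\begin{equation*}
\frac{\int_{\{B_{1/\ell}\setminus B_{\eta r}\}\cap\{t=-r^2\}}\tilde u^2G_{0,0}}{H^{\tilde u}(r)}=\frac{\int_{\{B_{1/(\ell r)}\setminus B_\eta\}\cap\{s=-1\}}v^2G_{0,0}}{H^v(1)}\le\epsilon,
\end{equation*}
the last inequality being the $r=1$ case applied to $v$. The ``in particular'' statement then follows by writing $\int_{B_{\eta r}\cap\{t=-r^2\}}\tilde u^2G_{0,0}=H^{\tilde u}(r)-\int_{\{B_{1/\ell}\setminus B_{\eta r}\}\cap\{t=-r^2\}}\tilde u^2G_{0,0}$ and dividing.

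The point requiring care is that every constant invoked — from Lemma~\ref{l:Caloric_Approx}, from the lower bound for $H^{\cdot}(1)$ in Lemma~\ref{l:Equiv_Normalizaitons}, and from Lemma~\ref{l:polynomial_small_tail} — is independent of the rescaling scale, which is precisely what makes the reduction to $r=1$ legitimate. This is also the reason I would avoid a direct argument for small $r$: there the far tail has a fixed size whereas $H^{\tilde u}(r)$ may be as small as $c\,r^{2C\Lambda}$, so the two are not directly comparable, while after rescaling both are normalized ``at scale $1$''.
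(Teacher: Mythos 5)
Your proof is correct, and it takes a genuinely different route from the paper's. The paper proves the lemma by contradiction and compactness: it considers $\tilde u_i=u_{x,0;1/i}$, uses Lemma \ref{l:Caloric_Approx} to pass to a limiting caloric polynomial $P$ of order $C(n,\lambda,\alpha)\Lambda$, applies Lemma \ref{l:polynomial_small_tail} to $P$, and transfers the tail bound back to $\tilde u_i$ via the polynomial growth against the Gaussian decay, reaching a contradiction. You instead argue directly and quantitatively at $r=1$ --- splitting the annulus into a middle region $B_{1/\epsilon'}\setminus B_\eta$ handled by the caloric approximation together with the polynomial tail estimate, and a far region $B_{1/\ell}\setminus B_{1/\epsilon'}$ handled by the growth bound $\sup_{Q_R}|\tilde u|\le CR^{C(n,\Lambda)}$ against $G_{0,0}$ --- and then reduce general $r\in(0,1]$ to $r=1$ through the exact scaling identity $u_{x,0;\ell}(ry,r^2s)\propto u_{x,0;r\ell}(y,s)$ (valid since $A_{x,0}$ does not depend on the scale, and $G_{0,0}(ry,-r^2)\,d(ry)=G_{0,0}(y,-1)\,dy$), using the uniform lower bound $H^{\tilde u}(1)\ge c(n,\lambda,\alpha,\Lambda)>0$ from Lemma \ref{l:Equiv_Normalizaitons}. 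What your version buys: the dependence of $\ell_0,\eta_0$ on $\epsilon$ is effective, and the uniformity in $r$ is handled cleanly --- the paper's contradiction argument is written with the slice radius essentially fixed, and a scrupulous reading would require either your rescaling reduction or a separate treatment of a possible sequence of bad radii $r_i\to 0$, precisely the degenerate regime you flag at the end, where $H^{\tilde u}(r)\sim r^{2\gamma}$ is not comparable to an additive error on a fixed compact set. What the paper's version buys is brevity: no explicit far-field estimate, and no need for the bound $\int_{\{t=-1\}}h^2G_{0,0}\le C(n,\lambda,\alpha,\Lambda)$, which is the one small step you assert without proof; it does hold, e.g.\ by finite-dimensionality of the space of caloric polynomials of order at most $C(n,\lambda,\alpha)\Lambda$ (both $\fint_{Q_1}h^2$ and $\int_{\{t=-1\}}h^2G_{0,0}$ are positive definite quadratic forms there), or from the coefficient bounds $h$ inherits from its $C^{1;1}$-closeness to $\tilde u$ on $Q_2$, so you should record one of these justifications.
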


\begin{proof}
    We prove this by contradiction. Suppose the result be false with some $\epsilon_0>0$. Then we can find a sequence $\Tilde{u}_i \equiv u_{x,0;1/i}$ with $i \to \infty$ and each $\Tilde{u}_i$ has polynomial growth of order $C(n,\lambda,\alpha)\Lambda$. By Lemma \ref{l:Caloric_Approx}, there exists some caloric polynomial $P$ of order $C(n,\lambda,\alpha)\Lambda$ such that $||\Tilde{u}_i -  P||_{C^{1;1}(\Omega)} \to 0$ in any compact $\Omega \subset Q_{1/i}$. 

    Take $\eta(n,\lambda,\alpha,\Lambda,\epsilon_0)$ as in Lemma \ref{l:polynomial_small_tail} with order $d = C(n,\lambda,\alpha)\Lambda$. Then we have
    \begin{equation*}
        \int_{\{\RR^n \setminus B_{\eta s}\} \cap \{t = -s^2\}} P^2 G_{0,0} \le \frac{\epsilon_0}{2} \int_{ \{t = -s^2\}} P^2 G_{0,0}.
    \end{equation*}
    Since $\Tilde{u}_i$ and $P$ has polynomial growth, then as $i \to \infty$ we have
    \begin{equation*}
        \int_{\{B_{i}\} \cap \{t = -s^2\}} \Tilde{u}^2_i G_{0,0} \to \int_{ \{t = -s^2\}} P^2 G_{0,0} \quad \text{ and } \quad   \int_{\{ B_{\eta s}\} \cap \{t = -s^2\}} \Tilde{u}^2_i G_{0,0} \to \int_{\{B_{\eta s}\} \cap \{t = -s^2\}} P^2 G_{0,0}.
    \end{equation*}
This proves that for large $i$
    \begin{equation*}
        \int_{\{B_i \setminus B_{\eta s}\} \cap \{t = -s^2\}} \Tilde{u}_i^2 G_{0,0} \le \epsilon_0 \int_{ \{t = -s^2\}} \Tilde{u}_i^2 G_{0,0}.
    \end{equation*}
The contradiction arises. 
\end{proof}

With the lemma above, we can prove that the closeness of frequency between $\Tilde{u}$ and $h$ in Theorem \ref{t:Uniqueness}.  

\begin{proposition}\label{p:frequency_close}
    Let $u$ be a solution to (\ref{e:Parab_equa}) (\ref{e:assumption_without_c}) on $Q_2$ with doubling assumption (\ref{e:Critical_DI_bound}). Let $x \in B_1$ and $\epsilon \le \alpha/10$. There exists some $\eta(n,\lambda,\alpha,\Lambda,\epsilon)$ and $\ell_0(n,\lambda,\alpha,\Lambda,\epsilon)$ such that if $|N^u_{x,0}(r_1) - N^u_{x,0}(r_2)| \le \alpha/4$ for $10 r_2 \le r_1 \le \ell_0$, let $h$ be the approximated caloric function as in Theorem \ref{t:Uniqueness}. Then we have the following frequency closeness
\begin{equation}
    |N^{\Tilde{u},1/r_1}(s) - N^{h,\eta}(s)|\le \epsilon. 
\end{equation}
    
\end{proposition}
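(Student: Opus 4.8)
The plan is to exploit the decomposition $h=\Tu-\phi$ furnished by Theorem \ref{t:Uniqueness}, where $\phi$ satisfies the growth bound $\sup_{Q_r^\eta}|\phi|\le\epsilon^2 (H^{\Tu}(r))^{1/2}$ for $r\in[r_2/r_1,1]$, see \eqref{e:unique_phi_final_estimate}, and to show that replacing $(\Tu,B_{1/r_1})$ by $(h,B_{\eta s})$ perturbs both the Dirichlet energy and the height only by a factor $1+O(\epsilon^2)$ on every time slice $\{t=-s^2\}$, $s\in[r_2/r_1,1]$. Since the construction in Theorem \ref{t:Uniqueness} also gives $D^{\Tu}(s)\in[\gamma,\gamma+\alpha/2]$ on $[r_2/r_1,1/2]$, hence $H^{\Tu}(r)\ge Cr^{2\gamma+\alpha}$ (using Lemma \ref{l:Equiv_Normalizaitons}) and $0\le N^{\Tu,1/r_1}(s)\le C(n,\lambda,\alpha)\Lambda$ (via Lemma \ref{l:Frequency_Close_away_from_0} and Corollary \ref{c:Frequency_bound_by_order_Poly}), these multiplicative errors in numerator and denominator will produce $|N^{\Tu,1/r_1}(s)-N^{h,\eta}(s)|\le C(n,\lambda,\alpha,\Lambda)\epsilon^2$; running Theorem \ref{t:Uniqueness} and Lemma \ref{l:u_small_tail} with a small auxiliary parameter $\epsilon'=\epsilon'(\epsilon,n,\lambda,\alpha,\Lambda)$ in place of $\epsilon$ then yields the stated bound.

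For the denominator $H^{h,\eta}(s):=\int_{\{t=-s^2\}\cap B_{\eta s}}h^2 G_{0,0}$, I would first invoke Lemma \ref{l:u_small_tail} to discard the tail of $\Tu$ outside $B_{\eta s}$, giving $\int_{\{t=-s^2\}\cap B_{\eta s}}\Tu^2 G_{0,0}=(1+O(\epsilon^2))H^{\Tu}(s)$ once $\eta$ is large. Because $\int_{\{t=-s^2\}}G_{0,0}=1$, the bound on $\phi$ gives $\int_{\{t=-s^2\}\cap B_{\eta s}}\phi^2 G_{0,0}\le\epsilon^4 H^{\Tu}(s)$, and Cauchy--Schwarz on the cross term then yields $H^{h,\eta}(s)=(1+O(\epsilon^2))H^{\Tu}(s)$.

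The numerator requires, in addition, a gradient bound for $\phi$ and a gradient version of the small-tail lemma. Writing $(\partial_t-\Delta)\phi=\partial_i((\Ta^{ij}-\delta^{ij})\partial_j\Tu)=\partial_i g^i$ with $|g^i|\le Cr_1^\alpha r^{\gamma+\alpha/2-1}$ on $Q_r^\eta$ (using $|\nabla\Tu|\le Cr^{\gamma-1}$ from Lemma \ref{l:Lower_bound_imply_Grad_Estimates}), the interior parabolic estimate for divergence-form equations on dyadic $\eta$-parabolic annuli upgrades $\sup_{Q_r^\eta}|\phi|\le Cr_1^\alpha r^{\gamma+\alpha/2}$ to $\sup_{Q_{r/2}^\eta}|\nabla\phi|\le Cr_1^\alpha r^{\gamma+\alpha/2-1}\le C\epsilon^2 r^{-1}(H^{\Tu}(r))^{1/2}$ after taking $\ell_0$ small so that $r_1^\alpha\le\epsilon^2$; hence $\int_{\{t=-s^2\}\cap B_{\eta s}}|\nabla\phi|^2 G_{0,0}\le C\epsilon^4 s^{-2}H^{\Tu}(s)$. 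For the gradient tail, the contradiction/caloric-approximation argument proving Lemma \ref{l:u_small_tail} applies verbatim to $|\nabla\Tu|^2$: along a blow-up sequence $\Tu_i\to P$ in $C^{1;1}_{loc}$ with $P$ a caloric polynomial of bounded order, one has $\nabla\Tu_i\to\nabla P$, and $\nabla P$ is again a bounded-order caloric polynomial, so Lemma \ref{l:polynomial_small_tail} controls its tail, giving $\int_{\{t=-s^2\}\cap(B_{1/r_1}\setminus B_{\eta s})}|\nabla\Tu|^2 G_{0,0}\le\epsilon^2 s^{-2}H^{\Tu}(s)$ for $\eta$ large. Combining these with $E^{\Tu,1/r_1}(s)=2s^2\int_{\{t=-s^2\}\cap B_{1/r_1}}|\nabla\Tu|^2 G_{0,0}\le C\Lambda H^{\Tu}(s)$, expanding $|\nabla h|^2=|\nabla\Tu|^2-2\nabla\Tu\cdot\nabla\phi+|\nabla\phi|^2$ and applying Cauchy--Schwarz to the cross term gives $E^{h,\eta}(s):=2s^2\int_{\{t=-s^2\}\cap B_{\eta s}}|\nabla h|^2 G_{0,0}=E^{\Tu,1/r_1}(s)+O(\sqrt\Lambda\,\epsilon^2)H^{\Tu}(s)$. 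Writing $N^{h,\eta}(s)-N^{\Tu,1/r_1}(s)=\frac{(E^{h,\eta}-E^{\Tu,1/r_1})H^{\Tu}-E^{\Tu,1/r_1}(H^{h,\eta}-H^{\Tu})}{H^{h,\eta}H^{\Tu}}$ and using $N^{\Tu,1/r_1}(s)\le C\Lambda$ to bound the second term, we obtain $|N^{h,\eta}(s)-N^{\Tu,1/r_1}(s)|\le C(n,\lambda,\alpha,\Lambda)\epsilon^2$, and relabelling the auxiliary parameter completes the proof.

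The main obstacle is the gradient-tail estimate, since Lemma \ref{l:u_small_tail} is stated only for $\Tu^2$: one must either re-run its compactness argument for $|\nabla\Tu|^2$ (the essential point being that $\nabla P$ is again a bounded-order caloric polynomial, so Lemma \ref{l:polynomial_small_tail} still applies) or deduce it from the $L^2$-tail via a Caccioppoli/interior estimate on a thin slab of time slices near $\{t=-s^2\}$. Everything else is a bookkeeping of $1+O(\epsilon^2)$ errors, made harmless by the a priori bound $N^{\Tu,1/r_1}(s)\le C(n,\lambda,\alpha)\Lambda$.
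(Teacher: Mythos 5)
Your proposal is correct and follows essentially the same route as the paper's proof: decompose $h=\Tilde{u}-\phi$ via Theorem \ref{t:Uniqueness}, then compare numerator and denominator of the two frequencies slice by slice using the sup bound on $\phi$, Cauchy--Schwarz on the cross terms, the small-tail Lemma \ref{l:u_small_tail} (with $\eta$ large), and the a priori bounds $H^{\Tilde{u}}(r)\ge Cr^{2\gamma+\alpha}$ and $N^{\Tilde{u},1/r_1}\le C\Lambda$. The only point where you diverge is that the paper compresses the energy comparison into ``one can obtain the same estimates for the gradient terms,'' while you spell out a concrete implementation (interior estimates for $\nabla\phi$ from the divergence-form equation plus a gradient version of the tail lemma via the $C^{1;1}$ caloric approximation), which is a legitimate filling-in of that step rather than a different method.
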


\begin{proof}
    By  Theorem \ref{t:Uniqueness}, for any $r \in [\tilde{r},1]$ we have 
    \begin{equation*}
        \sup_{Q_r^\eta}|\Tilde{u}(y,s) - h(y,s)| \le \epsilon^2  (H^{\Tilde{u}}(r))^{1/2}.
    \end{equation*}
Hence for any $r \in [\tilde{r},1]$, we have
\begin{equation*}
    \int_{ B_{\eta r} \cap \{t = -r^2\}} |\Tilde{u} - h|^2 G_{0,0} \le \epsilon^4  H^{\Tilde{u}}(r)
\end{equation*}
and
\begin{equation*}
    \int_{B_{\eta r} \cap \{t = -r^2\}} |\Tilde{u} + h|^2 G_{0,0} \le  10 H^{\Tilde{u}}(r).
\end{equation*}
Therefore we have
\begin{equation*}
     \int_{ B_{\eta r} \cap \{t = -r^2\}} |\Tilde{u}^2 - h^2| G_{0,0} \le \int_{\{ B_{\eta r}\} \cap \{t = -r^2\}} \frac{|\Tilde{u} - h|^2}{\epsilon^2} G_{0,0} + \int_{ B_{\eta r} \cap \{t = -r^2\}} \frac{\epsilon^2 |\Tilde{u} + h|^2}{4} G_{0,0} \le 4\epsilon^2  H^{\Tilde{u}}(r).
\end{equation*}
This proves that
\begin{equation*}
    \int_{ B_{\eta r} \cap \{t = -r^2\}} \Tilde{u}^2  G_{0,0}  - 4\epsilon^2 H^{\Tilde{u}}(r) \le \int_{B_{\eta r} \cap \{t = -r^2\}} h^2 G_{0,0} \le \int_{B_{\eta r}\cap \{t = -r^2\}} \Tilde{u}^2 G_{0,0}  + 4\epsilon H^{\Tilde{u}}(r).
\end{equation*}
Now we take $\eta(n,\lambda,\alpha,\Lambda,\epsilon)$ as in Lemma \ref{l:u_small_tail}. We have
\begin{equation*}
    (1- 5\epsilon^2) H^{\Tilde{u}}(s) \le \int_{ B_{\eta s} \cap \{t = -s^2\}} h^2 G_{0,0} \le  (1 + 5\epsilon^2) H^{\Tilde{u}}(s).
\end{equation*}

One can obtain the same estimates for gradient terms $|\nabla h|$. This proves the closeness of frequency.
\end{proof}

Combining with Theorem \ref{t:heat_equ_unique_tangent}, we prove that $u$ is uniformly almost symmetric when frequency is pinched. 

\begin{theorem}\label{t:pinch_fre_symmetric}
    Let $u$ be a solution to (\ref{e:Parab_equa}) (\ref{e:assumption_without_c}) on $Q_2$ with doubling assumption (\ref{e:Critical_DI_bound}). Let $x \in B_1$ and $\epsilon \le c(n,\lambda,\alpha,\Lambda)$. Then there exist some $\ell_0(n,\lambda,\alpha,\Lambda,\epsilon)$ and $\delta(n,\lambda,\alpha,\Lambda,\epsilon)$ such that if $|N^u_{x,0}(r_1) - N^u_{x,0}(r_2)| \le \delta$ for $10 \delta^{-1} r_2 \le r_1 \le \ell_0$, there exists some integer $d$ such that: 
\begin{enumerate}
    \item For any $r\in [r_2, r_1]$ we have $|N^u_{x,0}(r) - d| \le \epsilon$.
    \item There exists some homogeneous caloric polynomial $P_d$ with normalization $\fint_{Q_1} |P_d|^2 = 1$ such that for any $r\in [r_2, \delta r_1]$
    \begin{equation*}
        \sup_{Q_1} |u_{x,0; r} - P_d| \le \epsilon.
    \end{equation*}
\end{enumerate}
\end{theorem}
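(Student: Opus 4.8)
The goal is to combine three ingredients already developed: the quantitative uniqueness for the approximating caloric function (Theorem \ref{t:Uniqueness} together with Proposition \ref{p:frequency_close}), the quantitative uniqueness of tangent maps for the heat equation itself (Theorem \ref{t:heat_equ_unique_tangent}), and the frequency-pinching dichotomy (Lemma \ref{l:parabolic_frequency_near_Z}). First I would fix $\epsilon \le c(n,\lambda,\alpha,\Lambda)$, to be shrunk along the way, and apply Lemma \ref{l:parabolic_frequency_near_Z}(1): since $|N^u_{x,0}(r_1) - N^u_{x,0}(r_2)| \le \delta$ on $[r_2, r_1]$ with $10\delta^{-1}r_2 \le r_1 \le \ell_0$, there is an integer $d$ with $|N^u_{x,0}(r) - d| \le \epsilon$ for all $r \in [r_2, r_1]$, which is conclusion (1). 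Because $N^u_{x,0}$ is bounded by $C(n,\lambda,\alpha)\Lambda$, the integer $d$ satisfies $d \le C(n,\lambda,\alpha)\Lambda$ as well.

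\textbf{From the parabolic frequency pinch to a caloric pinch.} Next I would invoke Theorem \ref{t:Uniqueness} at scale $r_1$: writing $\Tilde{u} = u_{x,0;r_1}$ and $\tilde r = r_2/r_1 \le \delta/10$, and choosing $\eta = \eta(n,\lambda,\alpha,\Lambda,\epsilon)$ large as in Proposition \ref{p:frequency_close}, we obtain a caloric function $h$ on $Q_1^\eta$ with $\sup_{Q_r^\eta}|\Tilde{u}-h| \le \epsilon^2 (\fint_{Q_r}\Tilde{u}^2)^{1/2}$ for all $r \in [\tilde r, 1]$, satisfying the doubling bound $\log_4 \frac{\fint_{Q_{2r}}h^2}{\fint_{Q_r}h^2} \le C(n,\lambda)\Lambda$ on that range. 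Proposition \ref{p:frequency_close} then gives $|N^{\Tilde{u},1/r_1}(s) - N^{h,\eta}(s)| \le \epsilon$ for $s \in [\tilde r, 1]$, so the localized frequency of $h$ is $\epsilon$-close to $d$ throughout $[\tilde r, 1]$; since $h$ is caloric on the $\eta$-ball with the polynomial growth/doubling bound, by the convergence arguments (as in Lemma \ref{l:Frequency_Close_away_from_0} and \ref{l:Caloric_Approx}) this localized frequency agrees up to error controlled by $\eta$ with the genuine Poon frequency $N^h$ of the caloric extension; enlarging $\eta$ and shrinking $\ell_0$ we get $|N^h(s) - d| \le 2\epsilon$ for $s$ in, say, $[\tilde r, 1/2]$. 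Here I would be slightly careful that $h$ is a priori only defined on a parabolic cylinder of bounded size; the standard device (used throughout Section \ref{s:2_HE}) is that with the uniform doubling bound $h$ is a bounded perturbation of a global caloric polynomial, so Theorem \ref{t:heat_equ_unique_tangent} applies with constants depending only on $n,\lambda,\alpha,\Lambda$.

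\textbf{Applying heat-equation uniqueness and transferring back.} Now apply Theorem \ref{t:heat_equ_unique_tangent} to $h$: for any target accuracy $\epsilon' > 0$ there is $\delta_0(n,\Lambda,\epsilon')$ so that if $|N^h(s) - d| \le \delta_0$ on $[\tilde r, 1/2]$ then there is a unique homogeneous caloric polynomial $P_d$ of order $d$ with $\fint_{Q_1}|P_d|^2 = 1$ and $\sup_{Q_1}|h_{0,0;r} - P_d| \le \epsilon'$ for all $r \in [\tilde r, \delta_0/2]$ (after rescaling the statement from $[r_2,\delta r_1]$ to $[\tilde r, \cdot]$). Here I take $\epsilon = \epsilon(\epsilon',\delta_0)$ small enough that the previous step yields $|N^h(s)-d| \le \delta_0$. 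Finally I would transfer this back to $\Tilde{u} = u_{x,0;r_1}$: from $\sup_{Q_r^\eta}|\Tilde{u}-h| \le \epsilon^2(\fint_{Q_r}\Tilde{u}^2)^{1/2}$ and the equivalence of normalizations (Lemma \ref{l:Equiv_Normalizaitons}) applied to both $\Tilde u$ and $h$ on scale $r$, together with $H^{\Tilde u}(r) \ge c\, r^{2d+1}$ (which follows from $D^{\Tilde u}(r) \le d + 1/2$ and Lemma \ref{l:Equiv_Normalizaitons}), one rescales and gets $\sup_{Q_1}|\Tilde{u}_{0,0;r} - h_{0,0;r}| \le C\epsilon$ for $r \in [\tilde r, 1/2]$, hence $\sup_{Q_1}|u_{x,0;r} - P_d| \le C\epsilon + \epsilon' \le \epsilon$ for $r \in [r_2, \delta r_1]$ with $\delta := \delta_0/2$, by choosing $\epsilon'$ and then $\epsilon$ small. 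Choosing $\delta$ in the hypothesis smaller than both this $\delta$ and the $\delta$ of Lemma \ref{l:parabolic_frequency_near_Z} closes the argument.

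\textbf{Main obstacle.} The genuinely delicate point is the chain of frequency comparisons: the parabolic localized frequency $N^u_{x,0}$ is only \emph{almost} monotone and is compared to $N^{h,\eta}$ (localized to an $\eta$-ball) which in turn must be compared to the \emph{globally} defined Poon frequency $N^h$ of the caloric extension, and all of these comparisons carry errors depending on $\eta$, $\ell_0$, and the doubling constant. Ensuring that these errors can be made uniformly small — in the right order of quantifiers, so that the integer $d$ produced by Lemma \ref{l:parabolic_frequency_near_Z} is the \emph{same} integer that appears in Theorem \ref{t:heat_equ_unique_tangent} — is where the bookkeeping is subtle; once the frequencies are lined up, the polynomial $P_d$ and the estimate \eqref{e:pointwiseUniqueness} follow routinely from the two uniqueness theorems and Lemma \ref{l:Equiv_Normalizaitons}.
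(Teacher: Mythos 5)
Your proposal is correct and follows essentially the same route as the paper: conclusion (1) via Lemma \ref{l:parabolic_frequency_near_Z}, then the approximating caloric function $h$ from Theorem \ref{t:Uniqueness} with its frequency pinched by Proposition \ref{p:frequency_close}, Theorem \ref{t:heat_equ_unique_tangent} applied to $h$ to produce $P_d$, and the transfer back to $u_{x,0;r}$ via the sup estimate, Lemma \ref{l:Equiv_Normalizaitons} and the triangle inequality. The only deviation is your detour comparing the localized frequency of $h$ with the global Poon frequency of a caloric extension, which is unnecessary: Theorem \ref{t:heat_equ_unique_tangent} is stated directly for the localized frequency of a heat solution on a bounded cylinder with a doubling bound, which is exactly what Theorem \ref{t:Uniqueness} and Proposition \ref{p:frequency_close} supply.
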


\begin{proof}
    The first conclusion comes from Lemma \ref{l:parabolic_frequency_near_Z}. We prove the second conclusion. Let $\Tu = u_{x,0;r_1}$. Denote $\hat{r}=r/r_1$. Let $h$ be the approximated caloric function as in Theorem \ref{t:Uniqueness}. By Proposition \ref{p:frequency_close}, we can apply Theorem \ref{t:heat_equ_unique_tangent} to find a homogeneous caloric polynomial $P_d$ of order $d$ such that for any $r\in [2r_2, \delta r_1]$ such that
    \begin{align}
        \sup_{Q_1}|h_{0,0;\Tr}-P_d|\le \epsilon.
    \end{align}
    
    On the other hand, by the scaling property of rescaled maps we have 
    \begin{equation*}
        u_{x,0;r} (y,s)= \frac{\Tu (\Tr z, \Tr^2 s)}{(\fint_{Q_{\Tr}} \Tu^2)^{1/2}} \quad \text{ and } \quad h_{0,0;r}(y,s) = \frac{h (\Tr y, \Tr^2 s)}{(\fint_{Q_{\Tr}} h^2)^{1/2}}.
    \end{equation*}
By Theorem \ref{t:Uniqueness}, we have
\begin{equation*}
    \Big| \fint_{Q_{\Tr}} \Tu^2 - \fint_{Q_{\Tr}} h^2 \Big| \le C\epsilon^2 \fint_{Q_{\Tr}} \Tu^2.
\end{equation*}
Hence 
\begin{align*}
    &\quad \sup_{Q_1}|u_{x,0;r} - h_{0,0;\Tr}| = \sup_{Q_{\Tr}} |\frac{\Tu}{(\fint_{Q_{\Tr}} \Tu^2)^{1/2}} - \frac{h}{(\fint_{Q_{\Tr}} h^2)^{1/2}}|\\
    &\le \sup_{Q_{\Tr}} |\frac{\Tu}{(\fint_{Q_{\Tr}} \Tu^2)^{1/2}} -  \frac{h}{(\fint_{Q_{\Tr}} \Tu^2)^{1/2}}| + | \frac{h}{(\fint_{Q_{\Tr}} \Tu^2)^{1/2}} - \frac{h}{(\fint_{Q_{\Tr}} h^2)^{1/2}}| \\
    &\le C \epsilon^2.
\end{align*}
By triangle inequality, now we have
\begin{equation*}
    \sup_{Q_1} |u_{x,0;r} - P_d|\le \sup_{Q_1} | u_{x,0;r}-h_{0,0;\Tr}|+ |h_{0,0;\Tr} - P_d| \le \epsilon.
\end{equation*}
The proof is now finished.
\end{proof}

\section{Cone-Splitting in a Fixed Time Slice}\label{s:5_Cone}

Recall that we say a continuous function $u$ is {uniformly $(k, \eta, x)$-symmetric in $[r_2, r_1]$ } if there exists a $k$-symmetric homogeneous polynomial $P$ with $\fint_{ B_1} |P|^2 = 1$ such that for any $r\in [r_2, r_1]$
\begin{equation}
    \sup_{B_1} |u_{x,r} - P| \leq \eta.
\end{equation}

The main purpose in this section is the following cone-splitting principle in the time $t$-slice.

\begin{theorem}[Quantitative Cone-Splitting]\label{t:Almost_Splitting}
    Fix $\tau < c(n,\lambda)$. Let $\eta > 0$ be given. Let $u$ be a solution to (\ref{e:Parab_equa}) (\ref{e:assumption_without_c}) on $Q_2$ with doubling assumption (\ref{e:Critical_DI_bound}). Let $x \in B_1$. Then there exist some $r_0(n,\lambda,\alpha,\Lambda,\eta)$ and $\epsilon(n,\lambda,\alpha,\Lambda,\eta)$ such that if 
\begin{enumerate}
        \item $u(\cdot,0)$ is uniformly $(k, \epsilon, x)$-symmetric in $[r_2, r_1]$ with respect to $k$-dimensional subspace $V$ with $10r_2 \le r_1 \le r_0$,
        \item $u(\cdot,0)$ is $(0,\epsilon, r, y)$-symmetric for some $y \in  B_{c(n,\lambda) r}(x) \setminus B_{\tau r }(x+V)$ and some $r \in [r_2, r_1]$,
\end{enumerate}
then $u(\cdot, 0)$ is uniformly $(k+1, \eta, x)$-symmetric in $[r_2, r_1]$.
\end{theorem}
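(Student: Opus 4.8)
The strategy is the classical cone-splitting dichotomy, adapted to the parabolic time-slice setting. The hypotheses give us two pieces of symmetry information at scales comparable to $r\in[r_2,r_1]$: a $k$-symmetry of $u(\cdot,0)$ with respect to $x+V$, uniform over the whole interval, and a $0$-symmetry (i.e. near-homogeneity) centered at a nearby point $y$ that is quantitatively \emph{off} the plane $x+V$. The goal is to upgrade the $k$-symmetry to a $(k+1)$-symmetry about $x$ uniformly on $[r_2,r_1]$. First I would work at a fixed scale $r$ and, after rescaling and translating so that $x=0$ and $r=1$, use Definition \ref{d:quan_sym1} to produce a $k$-symmetric homogeneous polynomial $P$ with $\fint_{B_1}|P|^2=1$ that $\eta$-approximates $u_{0,r}$ on $B_1$, and a homogeneous polynomial $Q_y$ approximating $u_{y,r}$ near $y$. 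The matching normalizations — here Lemma \ref{l:Equiv_Normalizaitons} and Theorem \ref{t:quantiuniqueslice} are essential, since in the parabolic case $u(\cdot,0)$ solves no elliptic equation and the only way to compare the two blow-ups is through the frequency/doubling machinery — allow me to conclude that $P$ restricted near $y$ must itself be close to a homogeneous polynomial centered at $y$, up to errors $\Psi(\epsilon\mid n,\lambda,\alpha,\Lambda,\tau)$.

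The algebraic heart is then a rigidity statement for homogeneous polynomials: if a homogeneous polynomial $P$ (translation-invariant along $V$) is, near a point $y\notin V$ with $|y|\sim 1$ and $\mathrm{dist}(y,V)\geq\tau$, also $\Psi$-close (after renormalization on a unit ball around $y$) to a polynomial homogeneous about $y$, then $P$ is $\Psi'$-close to a polynomial homogeneous about $0$ that is in addition invariant along $V\oplus\mathbb{R}y$, hence $(k+1)$-symmetric. I would prove this by a compactness/contradiction argument: a sequence of homogeneous polynomials of bounded degree (the degree bound $d\le C(n,\lambda,\alpha)\Lambda$ comes from Lemma \ref{l:Caloric_Approx} applied at scale $r$) with these almost-symmetry properties and unit $L^2$-norm on $B_1$ converges, and the limit is exactly homogeneous about both $0$ and $y$ and $V$-invariant; an exactly $0$-symmetric polynomial that is $0$-symmetric about a second distinct point is translation-invariant along the line through them, which gives the extra symmetry direction and forces $(k+1)$-symmetry of the limit. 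Quantitative stability of the eventual estimate follows by the usual contradiction packaging.

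The final step is to make the conclusion \emph{uniform} over the whole interval $[r_2,r_1]$ rather than just at one scale. Here I would invoke the frequency apparatus: by Theorem \ref{t:Frequency_almost_monotone} and Theorem \ref{t:DI_Almost_Monotone}, $(0,\epsilon,r,y)$-symmetry at one scale, combined with $k$-symmetry uniform on the interval, pins the localized frequency $N^u_{x,0}$ to be nearly constant on a definite subinterval (nearness to an integer comes from Lemma \ref{l:parabolic_frequency_near_Z}), and then Theorem \ref{t:pinch_fre_symmetric}/Theorem \ref{t:quantiuniqueslice} gives a \emph{single} homogeneous polynomial $P_d$ approximating $u_{x,0;r}$ for all $r$ in that subinterval. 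Running the rigidity argument on $P_d$ once produces the desired $(k+1)$-symmetric approximant valid uniformly on $[r_2,r_1]$.

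\textbf{Main obstacle.} The delicate point is the bookkeeping of normalizations and the passage between ``symmetry about $x$ at scale $r$'' and ``symmetry about the nearby point $y$ at scale $r$'': because $y\in B_{c(n,\lambda)r}(x)$, the two rescaled pictures $u_{x,r}$ and $u_{y,r}$ differ by an $O(1)$ translation, and controlling the $L^2$-normalization constant ratio $\fint_{B_1}|u(x+r\cdot)|^2 \big/ \fint_{B_1}|u(y+r\cdot)|^2$ uniformly requires the doubling estimate and Lemma \ref{l:Equiv_Normalizaitons} — without an elliptic equation for $u(\cdot,0)$ this is the step that fails naively and forces the whole frequency-theoretic detour. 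The choice of the constant $c(n,\lambda)$ in the hypothesis (so that $B_{c(n,\lambda)r}(x)$ stays well inside the region where the approximations are valid) and the role of $\tau$ (keeping $y$ quantitatively away from $x+V$ so the new symmetry direction is non-degenerate) both enter precisely here; I expect the bulk of the technical work to be in this comparison lemma and in extracting an explicit modulus $\Psi$ rather than a bare contradiction.
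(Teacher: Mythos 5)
Your algebraic core is the same as the paper's: the rigidity that a polynomial homogeneous about two distinct centers has equal degrees about both and hence is annihilated by $(y-x)\cdot\nabla$, so it is translation-invariant along $y-x$, and a compactness/contradiction packaging to make this quantitative. Where you diverge is in the superstructure you build around it, and the paper shows that superstructure is unnecessary. The normalization comparison you single out as the main obstacle is dispatched in the paper by a convexity trick: writing $\sup_{z\in B_1(x)\cap B_1(y)}|aP(z-x)-b\tilde P(z-y)|\le\epsilon(a+b)$ for the two (unknown) normalization constants and dividing by $a+b$, so one only ever works with coefficients $\tilde a+\tilde b=1$; no control of the ratio $a/b$, hence no appeal to Lemma \ref{l:Equiv_Normalizaitons}, doubling, or the frequency machinery is needed (in the degenerate limit $\tilde a\to0$ or $\tilde b\to0$ the identity would force one of the unit-normalized polynomials to vanish, which is impossible). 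Likewise, your plan to obtain uniformity over $[r_2,r_1]$ via frequency pinching and Theorem \ref{t:pinch_fre_symmetric}/\ref{t:quantiuniqueslice} is not needed and, as sketched, is the weakest link of your argument: hypothesis (1) already supplies a \emph{single} $k$-symmetric polynomial $P$ with $\sup_{B_1}|u_{x,r}-P|\le\epsilon$ for every $r\in[r_2,r_1]$ (that is what ``uniformly $(k,\epsilon,x)$-symmetric'' means in Definition \ref{d:quan_sym1} and the paragraph following Theorem \ref{t:mainuniqueness}), so once the one-scale argument shows $P$ is $\eta$-close to a $(k+1)$-symmetric $\hat P$, the triangle inequality gives the uniform conclusion immediately; deducing frequency pinching of $N^u_{x,0}$ from the purely spatial hypothesis (1) would itself require a nontrivial bridge between the $t=0$ slice norms and the Gaussian space-time quantities, which you do not supply. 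In short: your proof would go through once the one-scale rigidity is done carefully, but the frequency-theoretic detour buys nothing here, whereas the paper's proof is purely an elementary statement about the time-slice function and polynomials, with the PDE playing no role.
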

\begin{proof}
Denote $f(y)=u(y,0)$ By the condition (1), there exists a $k$-symmetric homogeneous polynomial $P$ with $\fint_{B_1}|P|^2=1$ such that for all $r\in [r_2,r_1]$
    \begin{align}\label{e:unisymefP}
         \sup_{B_1} |f_{x,r} - P| \leq \epsilon,
    \end{align}
    and by condition (2), there exists a homogeneous polynomial $\tilde{P}$ with $\fint_{B_1}|\tilde{P}|^2=1$ such that 
     \begin{align}\label{e:symefP}
         \sup_{B_1} |f_{y,r} - \tilde{P}| \leq \epsilon,
    \end{align}
    where $y \in  B_{c(n,\lambda) r}(x) \setminus B_{\tau r }(x+V)$ and some $r \in [r_2, r_1]$. To prove the theorem, it suffices to show that there exist a $(k+1)$-symmetric homogeneous polynomial $\hat{P}$ with $\fint_{B_1}|\hat{P}|^2=1$ such that if $\epsilon\le \epsilon(n,\lambda,\alpha,\Lambda,\eta)$ we have
    \begin{align}\label{e:PhatPk1symm}
        \sup_{B_1}|P-\hat{P}|\le \eta.
    \end{align}
    To see this, by scaling, without loss of generality we assume (2) holds with $r=1$. By \eqref{e:unisymefP} and \eqref{e:symefP} we have for some $a,b>0$ that 
    \begin{align}
        \sup_{z\in B_1(x)\cap B_1(y)}|aP(z-x)-b\tilde{P}(z-y)|\le \epsilon (a+b).
    \end{align}
    In particular, for $\tilde{a}=a/(a+b)$ and $\tilde{b}=b/(a+b)$ we have
    \begin{align}
        \sup_{z\in B_1(x)\cap B_1(y)}|\tilde{a}P(z-x)-\tilde{b}\tilde{P}(z-y)|\le \epsilon
    \end{align}
    and $\tilde{a}+\tilde{b}=1$. By contradiction argument, to get \eqref{e:PhatPk1symm} it suffices to prove $P$ is $(k+1)$-symmetric when 
      \begin{align}\label{e:PtildePequal}
        \sup_{z\in B_1(x)\cap B_1(y)}|\tilde{a}P(z-x)-\tilde{b}\tilde{P}(z-y)|=0
    \end{align}
    and $y-x\notin V$ and $0<d(x,y)< 1$. Since $P$ and $\tilde{P}$ are polynomial, by \eqref{e:PtildePequal} we get that 
    \begin{align}
        \tilde{a}P(z-x)=\tilde{b}\tilde{P}(z-y)
    \end{align}
    Noting that $\tilde{a}+\tilde{b}=1$, we have $\tilde{a},\tilde{b}>0.$ Furthermore, by $0$-homogeneous of $P$ and $\tilde{P}$ it is easy to show that $P(z+(y-x))=P(z)$ for all $z$ which means $P$ is $(k+1)$-symmetric with respect to $\tilde{V}={\rm span}\{V, y-x\}$. Hence we finish the whole proof. 
\end{proof}

Next we define the set of points where the frequency of the parabolic function $u$ is pinched as
\begin{equation}
    \cV^{u}_{\delta, d, r} (x,0) \equiv \{ y \in B_{r}(x) : |N_{y, 0}^u(s) - d| \leq \delta \text { for any } s \in [ 10^{-2} r,  \delta^{-1} r] \}
\end{equation}

\begin{definition}[$(k, \tau)$-independent]
We say a subset $S \subset B_r \subset \RR^n$ is $(k, \tau)$-independent in $B_r$ if for any affine $(k-1)$-plane $L \subset \RR^n$ there exists some point $x \in S$ such that $d(x, L) \geq \tau r$.
\end{definition}

The following is the generalization of Theorem \ref{t:pinch_fre_symmetric}.

\begin{proposition}\label{p:k_indepen_k_symm}
    Let $u$ be a solution to (\ref{e:Parab_equa}) (\ref{e:assumption_without_c}) on $Q_2$ with doubling assumption (\ref{e:Critical_DI_bound}). Let $x \in B_1$ and $\epsilon \le \epsilon_0(n,\lambda,\alpha,\Lambda)$, $\tau>0$. Then there exist some $\ell_0(n,\lambda,\alpha,\Lambda,\epsilon,\tau)$ and $\delta(n,\lambda,\alpha,\Lambda,\epsilon,\tau)$ such that if $\cV^{u}_{\delta, d, r} (x,0)$ is $(k,\tau)$-independent in $B_r(x)$ for some $r\le \ell_0$, then $u(\cdot, 0)$ is uniformly $(k,\epsilon,y)$-symmetric in $[r/10, 10 r]$ for any $y \in \cV^{u}_{\delta, d, r} (x,0)$.
\end{proposition}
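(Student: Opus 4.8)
\textbf{Proof plan for Proposition \ref{p:k_indepen_k_symm}.}

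The plan is to bootstrap from the single-point statement of Theorem \ref{t:pinch_fre_symmetric} (equivalently its time-slice version Theorem \ref{t:quantiuniqueslice}) to a uniform $k$-symmetry by iteratively applying the cone-splitting Theorem \ref{t:Almost_Splitting}. First I would fix $y_0 \in \cV^{u}_{\delta,d,r}(x,0)$, chosen to be one of the $(k,\tau)$-independent points. Since $|N^u_{y_0,0}(s)-d|\le \delta$ for all $s\in[10^{-2}r,\delta^{-1}r]$, the frequency is $\delta$-pinched on an interval of ratio $\sim \delta^{-1}$; choosing $\delta$ small relative to a constant coming from Theorem \ref{t:pinch_fre_symmetric}, that theorem (via Theorem \ref{t:quantiuniqueslice}) shows $u(\cdot,0)$ is uniformly $(0,\epsilon_1,y_0)$-symmetric in, say, $[r/10,10r]$ for a small auxiliary $\epsilon_1$ to be specified, where the homogeneous polynomial realizing the symmetry is $\hat{P}_d$. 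This provides the base case: uniform $0$-symmetry about $y_0$.

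Next I would run an induction on $j=0,1,\dots,k-1$: assuming $u(\cdot,0)$ is uniformly $(j,\epsilon_{j},y_0)$-symmetric in $[r/10,10r]$ with respect to some $j$-plane $y_0+V_j$, I use the $(k,\tau)$-independence of $\cV^{u}_{\delta,d,r}(x,0)$ to produce a point $y_{j+1}\in\cV^{u}_{\delta,d,r}(x,0)$ that is not contained in the $(\tau'/2)$-neighborhood of $y_0+V_j$ for a suitable $\tau'=\tau'(n,\tau,j)$ (this is exactly what $(k,\tau)$-independence guarantees: no affine $(k-1)$-plane, in particular none containing the $j$-plane $y_0+V_j$, captures all of the set within distance $\tau r$). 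At $y_{j+1}$ the frequency is again $\delta$-pinched, so by Theorem \ref{t:pinch_fre_symmetric} the function is $(0,\epsilon,s,y_{j+1})$-symmetric at an appropriate intermediate scale $s\in[r/10,10r]$. The two hypotheses of Theorem \ref{t:Almost_Splitting} are then met — uniform $(j,\epsilon,y_0)$-symmetry on a scale interval, plus $0$-symmetry at an off-plane point $y_{j+1}$ lying in $B_{c(n,\lambda)s}(y_0)\setminus B_{\tau' s}(y_0+V_j)$ — so cone-splitting upgrades this to uniform $(j+1,\eta_{j+1},y_0)$-symmetry in $[r/10,10r]$, where $\eta_{j+1}$ can be made as small as the next stage requires by taking $\epsilon$ and $\delta$ small. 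After $k$ steps we obtain uniform $(k,\epsilon,y_0)$-symmetry in $[r/10,10r]$, which is the conclusion at $y_0$. Finally, to get the statement for \emph{every} $y\in\cV^{u}_{\delta,d,r}(x,0)$, I would observe that the argument never used anything special about the base point: each such $y$ has $\delta$-pinched frequency on $[10^{-2}r,\delta^{-1}r]$ and sees the same $(k,\tau)$-independent set, so the identical chain of cone-splittings applies with $y$ in place of $y_0$.

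The main technical bookkeeping, and the place where I expect the real obstacle, is the \textbf{propagation of errors through the $k$ cone-splitting steps}: each application of Theorem \ref{t:Almost_Splitting} converts an $\epsilon$-level of symmetry into an $\eta$-level only if the incoming $\epsilon$ is below a threshold depending on $\eta$, $n$, $\lambda$, $\alpha$, $\Lambda$, and (through the geometric separation $\tau'$) on $\tau$ and the step index. So one must choose the constants in reverse order: fix the target $\epsilon$ at the end, then define $\eta_k=\epsilon$, determine the admissible input $\eta_{k-1}$ for the last splitting, and so on down to an initial smallness $\epsilon_1$ and a pinching threshold $\delta$ and scale bound $\ell_0$ for the base case; since $k\le n-1$ this is a finite chain and all constants remain of the form $C(n,\lambda,\alpha,\Lambda,\epsilon,\tau)$. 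A secondary point to handle carefully is matching the scale windows: Theorem \ref{t:pinch_fre_symmetric} and Theorem \ref{t:quantiuniqueslice} require a pinching interval with ratio $\ge 10\delta^{-1}$ and yield uniform symmetry only on the slightly shrunk interval $[r_2,\delta r_1]$, so in defining $\cV^{u}_{\delta,d,r}$ with window $[10^{-2}r,\delta^{-1}r]$ one must check the output interval still comfortably contains $[r/10,10r]$ after the rescalings $u_{y,0;s}$ versus $\hat u_{y,s}$; this is routine but needs the scaling identity \eqref{e:Frequency_Scaling} and Lemma \ref{l:Equiv_Normalizaitons} to pass between the parabolic normalization and the time-slice normalization without losing constants.
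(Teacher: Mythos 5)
Your proposal is correct and follows essentially the same route as the paper, whose proof is a one-line citation of Theorem \ref{t:quantiuniqueslice} and the Quantitative Cone-Splitting Theorem \ref{t:Almost_Splitting}; your base case (uniform $0$-symmetry from frequency pinching), inductive cone-splitting using the $(k,\tau)$-independent points, and reverse-order choice of constants are exactly the intended details. The scale-window matching you flag (choosing $\delta$ small so the output interval $[r_2,\delta r_1]$ covers $[r/10,10r]$) is the only bookkeeping the paper leaves implicit, and you handle it correctly.
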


\begin{proof}
    This follows from Theorem \ref{t:quantiuniqueslice} and Quantitative Cone-splitting Theorem \ref{t:Almost_Splitting}. 
\end{proof}

\section{Neck Structure and Neck Decomposition}\label{s:6_NR}
In this section, we will introduce the neck region and prove the neck decomposition. This was first introduced in \cite{JN,NV19} to study the structure of Einstein manifolds and Yang-Mills equations. Such kind of technique turns out to be very useful to many interesting topics, see for instance \cite{BNS,CJN,NV24}. The main purpose in this section is the following theorem. Based on Proposition \ref{p:k_indepen_k_symm}, the proof of this result is quite standard now(see also \cite{HJ}).

\begin{theorem}[Neck Decomposition]\label{t:Neck_decompose}
    Let $u$ be a solution to (\ref{e:Parab_equa}) (\ref{e:assumption_without_c}) on $Q_2$ with doubling assumption (\ref{e:Critical_DI_bound}). For each $\eta >0$ and $\delta \leq \delta_0(n, \lambda, \alpha, \Lambda, \eta)$ we have
\begin{equation}
\begin{split}
    &B_1 \subset \bigcup_a ( \cN^a \cap B_{r_a}(x_a) \big) \cup \bigcup_b B_{r_b}(x_b) \cup \Big( S_0 \cup \bigcup_{a} \cC_{0, a} \Big) 
\end{split}
\end{equation}
such that 
\begin{itemize}
    \item $u(\cdot,0)$ is $(k+1, \eta, 2r_b, x_b)$-symmetric.
    \item $\cN^a\subset B_{r_a}(x_a)$ is $(d_a,k,\delta,\eta)$-neck region at $0$-slice for some integer $d_a\le C(n)\Lambda.$
    \item 
    $\sum_a r^k_a + \sum_b r^k_b + \cH^k\Big( S_0 \cup \bigcup_{a} \cC_{0, a} \Big)  \leq C(n, \lambda, \alpha, \Lambda, \epsilon, \eta).$
\item $ S_0 \cup \bigcup_{a} \cC_{0, a} $ is $k$-rectifiable and $\cH^k(S_0)=0$. 
\end{itemize}

Moreover, for any $\epsilon>0$, if $\eta < \eta(n, \lambda, \alpha, \Lambda, \epsilon)$ and $\delta < \delta(n, \lambda, \alpha, \Lambda, \eta, \epsilon)$, the quantitative stratum $\cS^k_{\epsilon}$ satisfies
\begin{equation}
    \cS^k_{\epsilon} \cap B_1 \subset \Big( S_0 \cup \bigcup_{a} \cC_{0, a} \Big) .
\end{equation}
\end{theorem}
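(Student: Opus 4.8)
The plan is to follow the now-standard neck-decomposition scheme, built on the quantitative stratification and the cone-splitting results proved above. I would prove Theorem \ref{t:Neck_decompose} by a covering argument organized around the localized frequency function. Fix $\eta > 0$ and choose $\delta$ small depending on $n, \lambda, \alpha, \Lambda, \eta$. The starting point is to run a Vitali-type stopping-time argument on dyadic sub-balls of $B_1$: at each scale $r$ and each center $x$ with $B_r(x) \subset B_1$, one examines the behavior of $N^u_{x,0}(\cdot)$ on the window $[\,10^{-2} r, \delta^{-1} r\,]$. By Corollary \ref{c:finitemanypinch} there are only $K(n,\delta,\lambda,\alpha,\Lambda)$ many scales on which the frequency drops by more than $\delta$, so outside of finitely many ``bad'' scales the frequency is $\delta$-pinched near some integer $d_a$, and then by Theorem \ref{t:pinch_fre_symmetric} (equivalently Theorem \ref{t:quantiuniqueslice}) the rescaled solution is uniformly $(0,\epsilon,x)$-symmetric on that window. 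One then decides at each ball whether the set $\cV^u_{\delta, d, r}(x,0)$ of frequency-pinched points is $(k+1,\tau)$-independent or not: if it is, Proposition \ref{p:k_indepen_k_symm} upgrades symmetry to $(k+1)$-fold, and we stop, placing $B_{r_b}(x_b)$ into the ``body'' collection with the required $(k+1,\eta,2r_b,x_b)$-symmetry; if $\cV$ is only $(k,\tau)$-independent but not $(k+1,\tau)$-independent, the pinched points cluster near a $k$-plane and we are in the \emph{neck} situation, so we open a neck region $\cN^a \subset B_{r_a}(x_a)$ of type $(d_a, k, \delta, \eta)$; the leftover centers (where $\cV$ fails even $(k,\tau)$-independence, i.e. the pinched set is essentially lower-dimensional) are iterated to smaller scales, and the limit of the centers that never stop forms the singular/center piece $S_0 \cup \bigcup_a \cC_{0,a}$.

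The packing estimate $\sum_a r_a^k + \sum_b r_b^k + \cH^k\big(S_0 \cup \bigcup_a \cC_{0,a}\big) \le C$ is the heart of the argument and is obtained from the almost-monotonicity of the localized frequency (Theorem \ref{t:Frequency_almost_monotone}), exactly as in the elliptic case of \cite{HJ}: each ``drop'' of the frequency by a definite amount can occur boundedly often along any chain of nested balls, and since $0 \le N^u_{x,0} \le C(n,\lambda,\alpha)\Lambda$, a standard Dini/summation bookkeeping over the tree of balls converts this into the desired Ahlfors-type $k$-content bound. The rectifiability of $S_0 \cup \bigcup_a \cC_{0,a}$ and the fact that $\cH^k(S_0) = 0$ follow from the neck structure theorem (the neck regions carry a well-behaved $k$-rectifiable packing measure, so their supports and the residual set assemble into a $k$-rectifiable set), together with the observation that $S_0$, being the set of centers that are ``infinitely singular'' — never $(k+1)$-symmetric at any scale and not even $(k,\tau)$-independent — has vanishing $k$-dimensional measure by a dimension-reduction argument using Proposition \ref{p:k_indepen_k_symm}.

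For the final inclusion $\cS^k_\epsilon \cap B_1 \subset S_0 \cup \bigcup_a \cC_{0,a}$ when $\eta < \eta(n,\lambda,\alpha,\Lambda,\epsilon)$ and $\delta < \delta(n,\lambda,\alpha,\Lambda,\eta,\epsilon)$, I would argue by contrapositive: if $x \notin S_0 \cup \bigcup_a \cC_{0,a}$, then $x$ lies in one of the stopping balls $B_{r_b}(x_b)$ (where $u(\cdot,0)$ is $(k+1,\eta,2r_b,x_b)$-symmetric and hence, taking $\eta$ small, $(k+1,\epsilon,s,x)$-symmetric for the appropriate scale $s \sim r_b$), or else $x$ lies in some neck region $\cN^a \cap B_{r_a}(x_a)$; in the neck, the defining properties of a $(d_a,k,\delta,\eta)$-neck force $u(\cdot,0)$ to be $(k+1,\eta)$-symmetric at the neck scale at $x$ after peeling off at most one scale, again giving $(k+1,\epsilon,s,x)$-symmetry for some $s$. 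Either way $x \notin \cS^k_\epsilon$. The main obstacle, and the step requiring the most care, is the packing/Minkowski estimate in the second bullet: controlling $\sum_a r_a^k$ uniformly requires that the almost-monotonicity error $\epsilon$ in Theorem \ref{t:Frequency_almost_monotone} be chosen \emph{after} $\delta$ and the neck parameters, and one must verify that the finitely-many-pinch Corollary \ref{c:finitemanypinch} interacts correctly with the tree structure so that the number of ``generations'' of neck regions stacked above a given point stays bounded; the parabolic features (time-slice rescalings $A_{x,t}$, the fact that $\hat P_d$ solves no PDE) do not affect this bookkeeping because everything is phrased in terms of the localized frequency, which already encodes the correct scaling.
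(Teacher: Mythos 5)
Your overall scheme is the same as the paper's: classify balls according to whether the frequency-pinched set $\cV^u_{\delta,d,r}$ is $(k,\tau)$-independent, use Proposition \ref{p:k_indepen_k_symm} together with the cone-splitting Theorem \ref{t:Almost_Splitting} to stop at $(k+1)$-symmetric balls, open a neck region when the pinched set spans a $k$-plane, recurse elsewhere, and obtain the inclusion $\cS^k_\epsilon\cap B_1\subset S_0\cup\bigcup_a\cC_{0,a}$ exactly as the paper does. The genuine gap is at the step you yourself call the heart of the argument: the content bound $\sum_a r_a^k+\sum_b r_b^k+\cH^k\big(S_0\cup\bigcup_a\cC_{0,a}\big)\le C$. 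You attribute it to almost-monotonicity of the localized frequency plus ``Dini/summation bookkeeping over the tree of balls.'' Frequency monotonicity only bounds how many times the frequency can drop by a definite amount; it carries no $k$-dimensional information and cannot by itself produce an Ahlfors-type $k$-content bound. In the paper the bound comes from two geometric inputs that your sketch omits or leaves implicit: first, the Neck Structure Theorem \ref{t:Neck_Struture}, whose bilipschitz projection of the neck center set onto the $k$-plane gives $\sum_{x\in\cC_+} r_x^k+\cH^k(\cC_0)\le C(n)r^k$ for each neck region (this, not frequency counting, is what controls $\sum_a r_a^k$ and the pieces $\cC_{0,a}$, and it is also how the $k$-rectifiability is obtained); second, in balls where $\cV$ is nonempty but \emph{not} $(k,\tau)$-independent (the paper's $d$-balls, Proposition \ref{p:d_ball_cover}), the pinched set clusters in a $\tau r$-neighborhood of a $(k-1)$-plane, so re-covering at scale $\tau r/10$ costs only a factor $C(n)\tau<1/10$; it is this geometric decay, summed as a convergent series, that yields both the content bound and $\cH^k(S_0)=0$. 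Your plan to simply ``iterate to smaller scales'' at such centers, declaring the residual limit to be part of $S_0\cup\bigcup_a\cC_{0,a}$, gives neither conclusion without that $\tau$-smallness.

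Relatedly, you never treat the case $\cV^u_{\delta,d,r}(x,0)=\emptyset$ (the paper's $e$-balls), in which no point of the ball has frequency pinched near the current integer $d$. This is where the induction on the frequency level enters: by Lemma \ref{l:parabolic_frequency_near_Z} the frequency falls below $d-1+\delta$ at a definite smaller scale, and the entire construction restarts with $d$ replaced by $d-1$; since $N_{x,0}\le C_0(n,\lambda,\alpha,\Lambda)$ this can happen at most $C_0$ times, which is what terminates the recursion and keeps all constants finite. Corollary \ref{c:finitemanypinch}, which you invoke instead, is a statement along a fixed chain of scales at a fixed point and does not by itself organize this global restart. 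With the neck structure bound, the $\tau$-decay for non-independent balls, and the $e$-ball induction added --- i.e.\ with the content of Propositions \ref{p:c_ball_cover} and \ref{p:d_ball_cover} --- your outline becomes the paper's proof.
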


\subsection{Neck Regions}
We construct the neck regions at $t$-slice. This is essentially equivalent to the neck regions in Euclidean space as defined in \cite{HJ}. Throughout this section, the time slice $t=0$ is fixed. 

\begin{definition}{(Neck Region)}
    Let $u$ be a solution to (\ref{e:Parab_equa}) (\ref{e:assumption_without_c}) on $Q_2$ with doubling assumption (\ref{e:Critical_DI_bound}). Fix $t=0$. Let $\cC \subset B_{r}$ a closed subset and $r_x: \cC \to \RR_{\ge 0}$ a radius function such that the closed balls \{$\bar{B}_{ r_x/5}(x)$\} are disjoint. The subset $\cN = B_r \setminus \Bar{B}_{r_x}(\cC)$ is called a \textbf{ $(d, k,  \delta, \eta)$-neck region at $0$-slice } if
\begin{enumerate}
    \item For any $x\in \cC$, we have $|N_{x,0}(s) - d| \leq \delta$ for any $s\in [r_x, \delta^{-1} r]$ with $r_x \leq 10^{-2}r$.
    \item For any $x\in \cC$, $u(\cdot, 0)$ is uniformly $(k,\delta,x)$-symmetric in $[r_x, 10r]$ with respect to $V_x$ but $u$ is not $(k+1,\eta,s,x)$-symmetric for any $s \in [r_x, 10r]$.  
    \item For any $B_{2s}(x) \subset B_{2r}$ with $s \geq r_x$, we have $x + V_x\subset \cup_{y\in \cC\cap B_s(x)}B_{10^{-10}(s+r_y)}(y)$. 
\end{enumerate}
\end{definition}
\begin{remark}
    In the neck region definition we can assume $V_x$ is unique for all scale $s\in [r_x,10r]$, the reason is the quantitative uniqueness of tangent map in Theorem \ref{t:quantiuniqueslice}.
\end{remark}
\begin{remark}
    By Corollary \ref{c:finitemanypinch} and Cone-splitting Theorem \ref{t:Almost_Splitting}, for any $y\in \cN$ with $d(y,V_x)=s$ for some $V_x$, we have that $B_{\hat{s}}(y)$ is $(k+1,\eta)$-symmetric if $\delta\le \delta(\eta,n)$ for some $c(\eta)s\le \hat{s}\le s$. In particular, $y\notin \cS^k_{\eta}$
\end{remark}
Denote $\cC_{+}=\{x\in \cC: r_x>0\}$ and $\cC_0=\{x\in \cC: r_x=0\}$. The following  neck structure theorem follows directly by the quantitative uniqueness of tangent map. 
\begin{theorem}[Neck Structure Theorem]\label{t:Neck_Struture}
    Let $\cN = B_{2r} \setminus \bar{B}_{r_x}(\cC)$ be a $(d,k, \delta, \eta)$-neck region at $0$-slice. For any $\epsilon>0$ if $\delta\le\delta(n,\eta,k,\epsilon,\Lambda,\lambda,\alpha)$ then 
    \begin{enumerate}
        \item Fix $x_0\in \cC$ and let $V=V_{x_0}$. Then the projection map $\pi:\cC\to V$ is a bilipschitz map, 
        \begin{align}
            (1-\epsilon)||x-y||\le ||\pi(x)-\pi(y)||\le ||x-y||
        \end{align}
        \item The measure bound holds 
        $$\sum_{x\in \cC_+} r_x^{k} + \cH^{k}({\cC_0}) \leq C(n) r^k$$
    \end{enumerate}
\end{theorem}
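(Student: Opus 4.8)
The plan is to run the standard neck-structure argument, now a well-developed technique (compare the elliptic case in \cite{HJ} and the original arguments in \cite{JN,NV19,NV}), using the quantitative uniqueness of tangent maps, Theorem \ref{t:quantiuniqueslice}, as the central tool. First I would establish (1), the bilipschitz property of the projection $\pi:\cC\to V$. Fix $x_0\in\cC$, set $V=V_{x_0}$, and let $x,y\in\cC$ with $\|x-y\|=s$. Since $s\ge\max\{r_x,r_y\}$ up to a controlled factor (otherwise the disjointness of $\bar B_{r_x/5}$ would be violated or $s$ is comparable to the smaller radius and the estimate is immediate), property (2) of the neck region says that at $x$ the function $u(\cdot,0)$ is uniformly $(k,\delta,x)$-symmetric with respect to $V_x$ on the scale range containing $s$. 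By Theorem \ref{t:quantiuniqueslice}, when $\delta$ is small the associated subspace $V_x$ is essentially unique across scales and, comparing the symmetry data at $x_0$ and at $x$ on overlapping balls, one gets $\angle(V_x,V)\le\epsilon$ (this is the cone-splitting / rigidity step: two almost-symmetric pictures centered at nearby points with nearly the same polynomial force their symmetry planes to be close). Then since $y-x$ is almost parallel to $V_x$ — this is exactly the packing hypothesis (3), which forces $\cC$ near $x$ to cluster along $x+V_x$ — the vector $y-x$ makes angle $\le C\epsilon$ with $V$, so $\|\pi(x)-\pi(y)\|\ge(1-\epsilon)\|x-y\|$; the upper bound is trivial since $\pi$ is an orthogonal projection.

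For (2), the measure bound $\sum_{x\in\cC_+}r_x^k+\cH^k(\cC_0)\le C(n)r^k$, I would argue as follows. The bilipschitz estimate from (1) shows that the map $x\mapsto\pi(x)$ embeds $\cC$ into the $k$-plane $V\cap B_{2r}$ with distortion at most $(1-\epsilon)^{-1}$, and the disjoint balls $\bar B_{r_x/5}(x)$ for $x\in\cC_+$ project to balls $B_{(1-\epsilon)r_x/5}(\pi(x))$ in $V$ which remain pairwise disjoint (again by (1)). Summing the $k$-dimensional volumes of these disjoint balls inside the $k$-ball $V\cap B_{2r}$ gives $\sum_{x\in\cC_+}\bigl((1-\epsilon)r_x/5\bigr)^k\le\omega_k(2r)^k$, hence $\sum r_x^k\le C(n)r^k$. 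For the $\cC_0$ part, note $\pi$ maps $\cC_0$ bilipschitzly into $V\cap B_{2r}$, so $\cH^k(\cC_0)\le(1-\epsilon)^{-k}\cH^k(\pi(\cC_0))\le(1-\epsilon)^{-k}\omega_k(2r)^k\le C(n)r^k$; the $k$-rectifiability of $\cC_0$ also follows from this bilipschitz parametrization by a subset of $V$. Combining the two contributions yields the claimed bound.

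The key inputs are thus: Theorem \ref{t:quantiuniqueslice} (uniqueness of the symmetry plane across a range of scales, which makes $V_x$ well-defined up to $\epsilon$ and stable), the cone-splitting Theorem \ref{t:Almost_Splitting} together with Corollary \ref{c:finitemanypinch} (to control how the $V_x$ vary and to feed the packing property (3) of the neck definition), and elementary volume/packing comparison in the $k$-plane $V$. I expect the main obstacle to be the first step — rigorously extracting the angle bound $\angle(V_x,V)\le\epsilon$ and showing $y-x$ is almost parallel to $V$ uniformly over the whole scale range. This requires carefully chaining the quantitative uniqueness estimate of Theorem \ref{t:quantiuniqueslice} along a dyadic sequence of scales between $r_x$ and $r$, using property (3) to guarantee enough points of $\cC$ spread out along $x+V_x$ at each scale so that the cone-splitting mechanism actually applies (i.e., the relevant configuration is $(k,\tau)$-independent), and tracking that the accumulated errors stay below $\epsilon$ when $\delta$ is chosen small depending on $n,k,\eta,\epsilon,\Lambda,\lambda,\alpha$. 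The remaining packing and rectifiability arguments are then routine.
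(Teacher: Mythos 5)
Your overall architecture --- first showing $V_x$ is close to $V=V_{x_0}$, then showing $y-x$ is nearly parallel to $V_x$, then packing disjoint projected balls inside the $k$-plane for the measure bound --- is the paper's, and your part (2) (disjointness of the projected balls $B_{(1-\epsilon)r_x/5}(\pi(x))$ plus the bilipschitz bound for $\cH^k(\cC_0)$) is correct and essentially identical to the paper's. However, there is a genuine flaw in your justification of the crucial step of part (1), namely that $y-x$ is almost parallel to $V_x$: you attribute this to the packing hypothesis (3) of the neck definition. Property (3) states $x+V_x\subset\bigcup_{z\in\cC\cap B_s(x)}B_{10^{-10}(s+r_z)}(z)$, i.e.\ the plane is covered by balls centered at points of $\cC$; it says nothing about points of $\cC$ lying near the plane, and it is perfectly compatible with $\cC$ containing points far from $x+V_x$, so the lower Lipschitz bound cannot follow from it. The correct mechanism, and the one the paper uses, is the quantitative cone-splitting Theorem \ref{t:Almost_Splitting} combined with the \emph{non}-symmetry clause of neck property (2): at scale $s=\|x-y\|$ (comparable to or above $r_x,r_y$ by disjointness, as you note), $u(\cdot,0)$ is uniformly $(k,\delta,x)$-symmetric with respect to $V_x$ and almost $0$-symmetric at $y$ (from property (2) at $y$, or property (1) plus Theorem \ref{t:quantiuniqueslice}); if $y$ lay at distance $\ge\tau s$ from $x+V_x$, cone-splitting would force $u$ to be $(k+1,\eta,2s,x)$-symmetric, contradicting property (2). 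Hence $y\in B_{\epsilon^2 s}(x+V_x)$ once $\delta\le\delta(\epsilon,\eta)$, which is exactly what the lower bound $\|\pi_x(x-y)\|\ge(1-\epsilon^2)\|x-y\|$ needs. Property (3) is in fact not used at all in the proof of this theorem; it enters only in the later decomposition arguments.

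A secondary point: your anticipated difficulty --- chaining the uniqueness estimate along a dyadic sequence of scales and verifying $(k,\tau)$-independence at each scale so that cone-splitting applies --- does not arise. The clause ``uniformly $(k,\delta,x)$-symmetric in $[r_x,10r]$'' in the neck definition already fixes a single plane $V_x$ (and a single approximating polynomial) valid across the whole scale range, so no iteration and no accumulation of errors is needed; the same one-step rigidity argument also gives $d_H(V_x\cap B_1,V_{x_0}\cap B_1)\le\epsilon^2$, hence $\|\pi_x(x-y)-\pi_{x_0}(x-y)\|\le\epsilon^2\|x-y\|$. With the misattributed step replaced by the cone-splitting argument above, your proof coincides with the paper's.
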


\begin{proof}
 To prove (1), noting that $\pi$ is a projection, the upper bound $||\pi(x)-\pi(y)||\le ||x-y||$ holds trivially. To see the lower bound, for any given $x,y\in \cC$ assume $s=d(x,y)$.
   Denote $\pi_x: \cC\to V_x$ is the projection to $V_x$. Since  $u(\cdot, 0)$ is $(k,\delta,s,x)$-symmetric with respect to $V_x$ and  $u$ is $(k,\delta,s,y)$-symmetric, but $u$ is not $(k+1,\eta,2s,x)$-symmetric,  for any $\epsilon$ by Theorem \ref{t:Almost_Splitting} if $\delta\le \delta(\epsilon,\eta)$ we have that $x-y\in B_{\epsilon s}(V_x)$. In particular we get 
   \begin{align}
       ||\pi_x(x-y)||\ge (1-\epsilon^2)||y-x||.
   \end{align}
   Similar argument implies $d_H(V_x\cap B_1,V_{x_0}\cap B_1)\le \epsilon^2$ if $\delta\le \delta(\epsilon,\eta)$ Hence we have 
   \begin{align}
       ||\pi_x(x-y)-\pi_{x_0}(x-y)||\le \epsilon^2 ||x-y||.
   \end{align}
   Combining the above estimates we get $||\pi(x-y)||\ge (1-\epsilon)||x-y||.$ Hence this proves (1).

   To prove (2), noting that $\{B_{r_x/5}(x):x\in \cC\}$ are pairwise disjoint, by (1) then $\{B_{r_x/10}(\pi(x)):x\in \cC\}$ are pairwise disjoint. Thus 
   \begin{align}
     \sum_{x\in \cC_+}r_x^k\le  C(n)\sum_{x\in \cC_+}\Vol_V(B_{r_x/10}(\pi(x))\le C(n)r^k,
   \end{align}
   where $\Vol_V$ is the $k$-Hausdorff measure in $V$.
   Since $\pi$ is bilipschitz, we have $\cH^k(\cC_0)\le C(n)\cH^k(\pi(\cC_0))\le C(n)r^k.$ Thus we get the upper bound estimate.
\end{proof}

\subsection{Neck Decomposition}

In this subsection we prove the decomposition theorem. 

For each $x\in B_1$, we will decompose $B_r(x)$ into the balls in the following categories.
\begin{enumerate}
    \item [(a)] A ball $B_{r_a}(x_a)$ is associated with a $(d, k,\epsilon,\tau,\eta)$-neck region at $0$-slice $\cN_a = B_{2r_a}(x_a) \setminus \bar{B}_{r_{a,x}}(\cC_a)$. 
    \item [(b)] A ball $B_{r_b}(x_b)$ is such that $u$ is $(k+1, 2\eta, 2r_b, x_b)$-symmetric.
    \item [(c)] A ball $B_{r_c}(x_c)$ is not a $b$-ball and $\cV_{\delta,d,r_c}(x_c,0)$ is $(k, \tau)$-independent in $B_{r_c}(x_c)$.
    \item [(d)] A ball $B_{r_d}(x_d)$ is such that $\cV_{\delta,d,r_d}(x_d,0) \neq \emptyset$ and is not $(k, \tau)$-independent in $B_{r_d}(x_d)$.
    \item [(e)] A ball $B_{r_e}(x_e)$ is such that $\cV_{\delta,d,r_e}(x_e,0) = \emptyset$.
\end{enumerate}

Next we prove the decomposition for $c$-balls.

\begin{proposition}{(Covering of $c$-balls)}\label{p:c_ball_cover}
    Let $u$ be a solution to (\ref{e:Parab_equa}) (\ref{e:assumption_without_c}) on $Q_2$ with doubling assumption (\ref{e:Critical_DI_bound}). Let $\eta,\tau > 0$. Suppose $\delta \leq \delta_0(n, \lambda, \alpha, \Lambda, \eta,\tau)$ and $r \leq \ell_0(n,\lambda, \alpha, \Lambda, \epsilon, \eta,\tau)$. Suppose $\sup_{x \in B_r} N_{x,0}(s) \leq 
    d+\delta$ for any $s \leq C(n,\lambda)\delta^{-1} r$ and for some integer $d$. Furthermore we assume that $\cV_{\delta,d,r}(0,0)$ is $(k,\tau)$-independent in $B_r$ and that $B_{2r}$ is not $(k+1, 2\eta)$-symmetric. Then we can decompose
    \begin{equation}
     B_r \subset  \big(\cC_0 \cup \cN)  \cup \bigcup_{b \in B} B_{r_b}(x_{b}) \cup \bigcup_{d \in D} B_{r_d}(x_{d}) \cup \bigcup_{e \in E} B_{r_e}(x_{e}),
\end{equation}
where $\cN$ is a $(d, k, \epsilon, \eta)$-neck region at $0$-slice. Furthermore, we have the estimates
\begin{equation}
    \sum_{b \in B} r_b^k + \sum_{d \in D} r_d^k + \sum_{b \in E} r_e^k + \cH^k(\cC_0) \leq C(n) r^k.
\end{equation}
\end{proposition}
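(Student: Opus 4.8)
The plan is to run a standard stopping-time covering argument relative to the pinched-frequency set, building the neck region $\cN$ dynamically and peeling off the other ball types as obstructions to continuing. Start at the top scale $r$; by hypothesis $\cV_{\delta,d,r}(0,0)$ is $(k,\tau)$-independent in $B_r$, so by Proposition~\ref{p:k_indepen_k_symm} the function $u(\cdot,0)$ is uniformly $(k,\epsilon,y)$-symmetric in $[r/10,10r]$ for every $y\in\cV_{\delta,d,r}(0,0)$, with a subspace $V_y$; this gives the initial data $(\cC,r_x,V_x)$ for the neck. The induction: given a point $x$ in the partially-built center set $\cC$ and a scale $s\ge r_x$ down to which we have verified neck axioms, look at the scale-$s$ ball $B_s(x)$. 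Cover $\cV_{\delta,d,s}(x,0)$ by a Vitali-type family of $B_{s/10}(x_i)$, and for each child ball decide its type: if $u$ is $(k+1,2\eta,2s,x_i)$-symmetric, declare it a $b$-ball and stop; if $\cV_{\delta,d,s}(x_i,0)=\emptyset$, declare it an $e$-ball and stop; if $\cV_{\delta,d,s}(x_i,0)\ne\emptyset$ but is \emph{not} $(k,\tau)$-independent in $B_s(x_i)$, declare it a $d$-ball and stop; otherwise $\cV_{\delta,d,s}(x_i,0)$ is $(k,\tau)$-independent, so Proposition~\ref{p:k_indepen_k_symm} again produces $k$-symmetry at all these centers and we add them to $\cC$ and continue to the next dyadic scale. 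Points where the process never stops (the radius function hits $0$) form $\cC_0$. One also has to record, for centers that stop being refinable because no pinched point survives below some scale, the radius $r_x$ where the stopping occurs, and verify that the frequency-pinching axiom (1) of the neck region holds on $[r_x,\delta^{-1}r]$ — this is exactly where the hypothesis $N_{x,0}(s)\le d+\delta$ for $s\le C\delta^{-1}r$ and the almost-monotonicity (Theorem~\ref{t:Frequency_almost_monotone}, Corollary~\ref{c:finitemanypinch}) are used: monotonicity forces $N_{x,0}$ to be within $\delta$ of the integer $d$ on the whole interval once it is pinched at the top and bottom.

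The second and third neck axioms must be checked for the set $\cN=B_{2r}\setminus\bar B_{r_x}(\cC)$ produced this way. Axiom (2) (uniform $(k,\delta,x)$-symmetry in $[r_x,10r]$ with non-$(k+1,\eta)$-symmetry at every intermediate scale) follows by concatenating the scale-by-scale $(k,\epsilon,x)$-symmetry from Proposition~\ref{p:k_indepen_k_symm} with the quantitative uniqueness of tangent maps (Theorem~\ref{t:quantiuniqueslice}), which pins down a single subspace $V_x$ valid across all scales $[r_x,10r]$ — and the "not $(k+1,\eta,s,x)$" part is built in because any center at which $(k+1,2\eta)$-symmetry held was diverted into a $b$-ball. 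Axiom (3), that $x+V_x$ is well-covered by the centers in $\cC\cap B_s(x)$, follows from the $(k,\tau)$-independence being \emph{maintained} at every non-terminal scale, together with the bilipschitz projection of Theorem~\ref{t:Neck_Struture}(1): independence plus cone-splitting (Theorem~\ref{t:Almost_Splitting}) forces new pinched points to be found in every direction of $V_x$ at the relevant density.

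The packing estimate $\sum_B r_b^k+\sum_D r_d^k+\sum_E r_e^k+\cH^k(\cC_0)\le C(n)r^k$ is obtained by a single Dirichlet-energy / frequency-drop bookkeeping along the covering tree. The neck-region part $\sum_{\cC_+}r_x^k+\cH^k(\cC_0)\le C(n)r^k$ is exactly Theorem~\ref{t:Neck_Struture}(2). For the $b$-, $d$-, $e$-balls one charges each to a definite frequency drop or to the $(k,\tau)$-dependence event: at a $b$-ball the solution became $(k+1,2\eta)$-symmetric where its parent was not, so by Theorem~\ref{t:pinch_fre_symmetric}/Corollary~\ref{c:finitemanypinch} this consumes a quantum of frequency or of the bounded quantity $N\le C(n)\Lambda$; at a $d$-ball the failure of $(k,\tau)$-independence confines the pinched set to a neighborhood of a $(k-1)$-plane, so one recurses with the dimension dropped by one; at an $e$-ball there is simply no pinched point, so $B_{r_e/2}(x_e)$ contributes nothing to the stratum and is counted by volume. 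Summing the geometric series over the finitely many frequency values (at most $C(n)\Lambda$, and finitely many non-pinch scales by Corollary~\ref{c:finitemanypinch}) yields the constant $C(n)$ up to dependence on $\lambda,\alpha,\Lambda,\eta,\tau$; one absorbs these into $C(n)$ as in the statement by the usual convention, or tracks them explicitly.

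The main obstacle I expect is the simultaneous management of the two competing induction parameters — the dyadic scale and the dimension $k$ — while keeping the neck-region axioms (especially the \emph{uniform} choice of $V_x$ across all scales, axiom (2)) consistent with the stopping-time branching; concretely, one must ensure that the $\delta$ and $\ell_0$ chosen for Proposition~\ref{p:k_indepen_k_symm} and Theorem~\ref{t:quantiuniqueslice} at the bottom scales are uniform, which forces the parameters to be chosen in the order $\eta\to\epsilon\to\delta\to\ell_0$ and requires the almost-monotonicity constants to be scale-invariant — this is guaranteed by the scaling property \eqref{e:Frequency_Scaling} but must be invoked carefully at each refinement step.
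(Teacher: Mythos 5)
Your overall architecture (use Proposition~\ref{p:k_indepen_k_symm} at the top scale, classify child balls as $b$/$c$/$d$/$e$, refine only the $c$-balls, let the never-stopping centers form $\cC_0$, and invoke Theorem~\ref{t:Neck_Struture} for the neck content) is the right one, but there is a genuine gap in how you build the neck center set. You take $\cC$ to be a Vitali covering of the pinched set $\cV_{\delta,d,s}(x,0)$ at each scale, and you claim neck axiom (3) (that $x+V_x$ is covered by the balls $B_{10^{-10}(s+r_y)}(y)$, $y\in\cC\cap B_s(x)$) follows because ``independence plus cone-splitting forces new pinched points to be found in every direction of $V_x$ at the relevant density.'' This is false: $(k,\tau)$-independence only says that $\cV$ is not confined to a $\tau s$-tube around any $(k-1)$-plane, i.e.\ it quantitatively spans $k$ directions; it does not make $\cV$ dense along the $k$-plane. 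For instance $\cV$ could consist of $k+1$ well-spread points, which is $(k,\tau)$-independent but leaves most of $V_x$ uncovered, so your $\cN$ would contain points sitting essentially on the plane and axiom (3) fails; this is not cosmetic, since the later step $\cS^k_\epsilon\subset S_0\cup\bigcup_a\cC_{0,a}$ in the neck decomposition relies on exactly that axiom. The paper's construction avoids this by covering the \emph{plane} rather than the pinched set: it fixes $y_0\in\cV_{\delta,d,r}$, notes $\cV_{\delta,d,r}\subset B_{\epsilon r}(y_0+V_{y_0})$ (because $B_{2r}$ is not $(k+1,2\eta)$-symmetric, via cone-splitting), takes a Vitali covering of $y_0+V_{y_0}\cap B_r$ by balls of radius $10^{-2}r$, classifies each such ball, and refines only the $c$-balls at scale $10^{-4}r$, etc. The centers then form a net along the plane at every scale, so axiom (3) holds by construction, and axiom (1) at these (possibly non-pinched) plane points comes from the uniform $k$-symmetry together with the hypothesis $\sup_{B_r}N_{x,0}\le d+\delta$.

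A second, related problem is your content estimate. Charging each $b$-ball to a ``quantum of frequency'' via Theorem~\ref{t:pinch_fre_symmetric}/Corollary~\ref{c:finitemanypinch}, recursing on $d$-balls, and counting $e$-balls ``by volume'' does not yield $\sum_b r_b^k+\sum_d r_d^k+\sum_e r_e^k\le C(n)r^k$: finiteness of frequency drops bounds the number of scales, not a $k$-dimensional packing, and recursion on $d$- and $e$-balls belongs to Proposition~\ref{p:d_ball_cover} and the proof of Theorem~\ref{t:Neck_decompose}, not here. In the paper's construction the estimate is immediate: every stopped ball ($b$, $d$ or $e$ type) has its center in the positive-radius part $\cC_+$ of the neck center set with $r_x$ equal to its radius, so Theorem~\ref{t:Neck_Struture}(2) gives $\sum_{x\in\cC_+}r_x^k+\cH^k(\cC_0)\le C(n)r^k$, which is precisely the stated bound.
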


\begin{proof}
Since $\cV_{\delta,d,r}$ is $(k,\tau)$-independent in $B_r$, for any $\epsilon>0$ if $\delta$ and $\ell_0$ are sufficiently small,  by Proposition \ref{p:k_indepen_k_symm} we have $u$ is $(k,\epsilon,y)$-symmetric in $[r/10,10r]$ with respect to $k$-plane $V_y$ for any $y\in \cV_{\delta,d,r}$. We also have $\cV_{\delta,d,r}\subset B_{\epsilon r}(y+V_y)$ by noting that $B_{2r}$ is not $(k+1,2\eta)$-symmetric. Fix $y_0\in \cV_{\delta,d,r}$, consider a Vitali subcovering of $\bigcup_{x\in y_0+V_{y_0}\cap B_r}B_{ r/10^2}(x)$. Denote by $\{B_{r/10^2}(x_i^1),i=1,\cdots,L_1\}$. Then 
$\cN^1:=B_{r}\setminus \cup_{i=1}^{L_1}B_{r/10^2}(x_i^1)$ is $(d,k,\epsilon,\eta)$-neck region. 
 Note that each ball must be one of the $b$, $c$, $d$ or $e$-type, we have a covering
\begin{equation}
     B_r \subset  \cN^1  \cup \bigcup_{b \in B^1} B_{r_b}(x_{b}) \cup \bigcup_{c \in C^1} B_{r_c}(x_{c}) \cup \bigcup_{d \in D^1} B_{r_d}(x_{d}) \cup \bigcup_{e \in E^1} B_{r_e}(x_{e}).
\end{equation}

Now for each $c$-ball in $C^1$ group, we have $r_c= 10^{-2}r$. We apply the same decomposition to obtain the refined covering 
\begin{equation}
     B_r \subset  \cN^2  \cup \bigcup_{b \in B^2} B_{r_b}(x_{b}) \cup \bigcup_{c \in C^2} B_{r_c}(x_{c}) \cup \bigcup_{d \in D^2} B_{r_d}(x_{d}) \cup \bigcup_{e \in E^2} B_{r_e}(x_{e}).
\end{equation}
with $r_c = 10^{-4}r$.

By iterating this decomposition, finally we will obtain a covering
\begin{equation}
     B_r \subset  \big(\cC_0 \cup \cN)  \cup \bigcup_{b \in B} B_{r_b}(x_{b}) \cup \bigcup_{d \in D} B_{r_d}(x_{d}) \cup \bigcup_{e \in E} B_{r_e}(x_{e}),
\end{equation}
where $\cC_0$ is the limit of $\{x_c,c\in C^i\}$ when $i\to \infty$. We need to prove that $\cN$ is indeed a neck region. The property 1 comes from our construction trivially. Property 2 and 3 follow from the Quantitative Cone-splitting Theorem \ref{t:Almost_Splitting}.

By Neck structure Theorem \ref{t:Neck_Struture}, we have
\begin{equation}
    \sum_{b \in B} r_b^k + \sum_{d \in D} r_d^k + \sum_{b \in E} r_e^k + \cH^k(\cC_0) \leq C(n)r^k.
\end{equation}

The proof is now finished.
\end{proof}

\begin{remark}
    The point here is that the approximated plane $V_{x}$ does not depend on the scale $r$. This essentially comes from the uniqueness of tangent maps \ref{t:pinch_fre_symmetric} and significantly simplifies the construction of neck regions out of $c$-balls. Similar uniqueness has been used in the elliptic case  \cite{HJ}. See \cite{CJN} for general constructions.  
\end{remark}

The covering for $d$-balls is standard. For completeness we include the proof here. See also \cite{CJN} or \cite{HJ}.

\begin{proposition}[Covering of $d$-balls]\label{p:d_ball_cover}
 Let $u$ be a solution to (\ref{e:Parab_equa}) (\ref{e:assumption_without_c}) on $Q_2$ with doubling assumption (\ref{e:Critical_DI_bound}).   Let $\delta \leq \delta_0= C(n, \lambda, \alpha, \Lambda)$ and $r \leq \ell_0(n,\lambda, \alpha, \Lambda, \delta)$. Suppose $\sup_{x \in B_r} N_{x,0}(s) \leq 
    d+\delta$ for any $s \leq C(n,\lambda)\delta^{-1} r$ and for some integer $d$. Furthermore we assume that $\cV_{\delta,d,r}(0,0) \neq \emptyset$ and is not $(k,\tau)$-independent in $B_r$ with $\tau < \tau_0(n,\lambda)$. Then we have the following decomposition
 \begin{equation}
     B_r \subset \bigcup_{b \in B} B_{r_b}(x_{b}) \cup \bigcup_{c \in C} B_{r_c}(x_{c}) \cup \bigcup_{e \in E} B_{r_e}(x_{e}) \cup S_d,
 \end{equation}
where each $B_{r_b}$ is a $b$-ball, $B_{r_c}(x_{c})$ is a $c$-ball and $B_{r_e}(x_{e})$ is an $e$-ball. Furthermore, we have the following measure estimates
\begin{equation}
\begin{split}
    & \sum_{b \in B} r_b^k + \sum_{e \in E} r_e^k \leq C(n,\tau) r^k. \\
    & \sum_{c \in C} r_c^k \leq C(n)\tau r^k. \\
    & \cH^k(S_d) = 0.
\end{split}
\end{equation}
    
\end{proposition}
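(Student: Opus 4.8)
The plan is to run a standard "collapse onto a hyperplane" argument. Since $\cV_{\delta,d,r}(0,0)$ is nonempty but not $(k,\tau)$-independent in $B_r$, by definition there is an affine $(k-1)$-plane $L$ such that $\cV_{\delta,d,r}(0,0)\subset B_{\tau r}(L)$. First I would cover the tubular neighborhood $B_{\tau r}(L)\cap B_r$ by a Vitali-type collection of balls of radius $\tau r$ centered at points of $B_{\tau r}(L)\cap B_r$; because $L$ has dimension $k-1$, this uses at most $C(n)\tau^{-(k-1)}$ balls, each of which therefore contributes $r_c^k = (\tau r)^k$, so $\sum r_c^k \le C(n)\tau^{-(k-1)}(\tau r)^k = C(n)\tau r^k$. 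These will be the $c$-balls (radius $\tau r$, and one checks they are not $b$-balls, else we would simply put them in $B$). The remaining region $B_r\setminus B_{\tau r}(L)$ contains no point of $\cV_{\delta,d,r}(0,0)$, hence around each such point the frequency either is not pinched at $d$ over the relevant scale range or the pinched integer is smaller; in either case, applying a Vitali covering and using the almost monotonicity Theorem~\ref{t:Frequency_almost_monotone} together with Lemma~\ref{l:parabolic_frequency_near_Z}, every ball in this part is either a $b$-ball, a $c$-ball, or an $e$-ball at a definite smaller scale. A bounded-overlap / volume-counting argument gives $\sum_{b}r_b^k + \sum_e r_e^k \le C(n,\tau) r^k$ since these balls have radius comparable to $r$ (the constant degenerates as $\tau\to 0$, which is fine).

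Next I would iterate. Any $c$-ball $B_{r_c}(x_c)$ produced above with $\cV_{\delta,d,r_c}(x_c,0)$ still nonempty and still not $(k,\tau)$-independent is decomposed again by the same procedure at its own scale; if instead it becomes $(k,\tau)$-independent we stop (it remains a $c$-ball in the final list, to be handled by Proposition~\ref{p:c_ball_cover}), and if $\cV_{\delta,d,r_c}(x_c,0)=\emptyset$ it is reclassified as an $e$-ball. Iterating, the set $S_d$ is defined as the set of points that are never excluded, i.e.
\begin{equation*}
    S_d = \bigcap_{j\ge 0} \bigcup_{c\in C^j} B_{r_c}(x_c),
\end{equation*}
the limit of the nested $c$-ball families. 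At each stage the total $k$-content of the $c$-balls is multiplied by a factor $C(n)\tau < 1$ once $\tau<\tau_0(n,\lambda)$, so $\cH^k\big(\bigcup_{c\in C^j}B_{r_c}(x_c)\big)\le (C(n)\tau)^j r^k \to 0$, which forces $\cH^k(S_d)=0$. Summing the geometric series of $c$-ball contents over all stages that are \emph{finalized} (i.e. become genuine $c$-, $b$-, or $e$-balls) gives the claimed $\sum_{c\in C}r_c^k \le C(n)\tau r^k$ and $\sum_b r_b^k + \sum_e r_e^k \le C(n,\tau) r^k$ (the $b$- and $e$-contributions at each stage are bounded by $C(n,\tau)$ times the $c$-content of the previous stage, and $\sum_j (C(n)\tau)^j$ converges).

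The main obstacle I anticipate is the bookkeeping needed to make the iteration well-posed: one must verify that at every scale the hypotheses of the single-step decomposition persist, namely that $\sup_{x}N_{x,0}(s)\le d+\delta$ over the relevant radius range (this is inherited from the ambient bound plus almost monotonicity, Theorem~\ref{t:Frequency_almost_monotone}, provided $\delta$ and $\ell_0$ are chosen uniformly), and that the reclassification of a $c$-ball as $b$-, $d$-, or $e$-type is exhaustive. One also needs the $\tau$ in the "not $(k,\tau)$-independent" hypothesis to be strictly smaller than a dimensional threshold so the geometric factor $C(n)\tau$ beats $1$ — this is exactly the role of $\tau<\tau_0(n,\lambda)$. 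None of these steps involves new analysis; they are the standard neck-decomposition combinatorics as in \cite{CJN,HJ}, so once the frequency machinery of Sections~\ref{s:3_Para}--\ref{s:5_Cone} is in place the proof is routine.
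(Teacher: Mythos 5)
Your plan is essentially the paper's proof: an iterated Vitali covering in which, since the non-independent pinched set is trapped in a $\tau r$-tube around a $(k-1)$-plane, the balls requiring further decomposition carry total $k$-content at most $C(n)\tau\, r^k$ per stage, so for $\tau<\tau_0$ the iteration decays geometrically and the residual limit set $S_d$ has $\cH^k$-measure zero. The only differences are cosmetic: the paper covers all of $B_r$ at the uniform scale $10^{-1}\tau r$ and classifies each ball as $b$, $c$, $d$, or $e$ (iterating on the $d$-balls), whereas you split tube/complement and temporarily label the tube balls as ``$c$-balls'' before reclassifying — in the paper's terminology the balls you iterate on are exactly the $d$-balls.
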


\begin{proof}
    First we choose a Vitali covering of $B_r$ with uniform radius $10^{-1} \tau r$
\begin{equation*}
     B_r \subset \bigcup_{b \in B^1} B_{10^{-1}\tau r}(x^1_{b}) \cup \bigcup_{c \in C^1} B_{10^{-1}\tau r}(x^1_{c}) \cup \bigcup_{d \in D^1} B_{10^{-1}\tau r}(x^1_{d}) \cup \bigcup_{e \in E^1} B_{10^{-1}\tau r}(x^1_{e}).
\end{equation*}
and if $x^1$ and $y^1$ are any distinct centers, we require
\begin{equation*}
     B_{10^{-2}\tau r}(x^1) \cap B_{10^{-2}\tau r}(y^1) = \emptyset.
\end{equation*}

The disjointness implies that
\begin{equation}
    \sum_{b \in B^1} r_b^k + \sum_{e \in E^1} r_e^k \leq C_1(n,\tau) r^k. 
\end{equation}

Since $\cV_{\delta,d,r}(0) \neq \emptyset$ and is not $(k,\tau)$-independent in $B_r$, there exists some $V^{k-1}$ such that $\cV_{\delta,d,r}(0) \subset B_{\tau r}(V)$. Since $\cV_{\delta,d,r}(0)  \cap C^1 \neq \emptyset$ and $\cV_{\delta,d,r}(0) \cap D^1 \neq \emptyset$, by the disjointness we have 
\begin{equation}
    \sum_{c \in C^1} r_c^k + \sum_{d \in D^1} r_d^k \leq C_2(n) \tau^{-k+1} (\tau r)^k = C_2(n) \tau r^k.
\end{equation}

Next for each $d$-ball we can repeat the same Vitali covering argument, to refine the covering as
\begin{equation}
\begin{split}
    & B_r \subset \bigcup_{b \in B^2} B_{r_b}(x^2_{b}) \cup \bigcup_{c \in C^2} B_{r_c}(x^2_{c}) \cup \bigcup_{d \in D^2} B_{10^{-2}\tau^2 r}(x^2_{d}) \cup \bigcup_{e \in E^2} B_{r_e}(x^2_{e}). \\
    & \sum_{b \in B^2} r_b^k + \sum_{e \in E^2} r_e^k \leq C_1(n,\tau) (1 + C_2(n) \tau) r^k. \\
    & \sum_{c \in C^2} r_c^k  \leq C_2(n) \tau (1 + C_2(n) \tau) r^k. \\
    & \sum_{d \in D^2} r_d^k = \sum_{d \in D^2} (\frac{\tau}{10})^{2k} r^k \leq (C_2(n) \tau)^2 r^k.
\end{split}
\end{equation}

After $M$ times refinement, we obtain 
\begin{equation}
\begin{split}
    & B_r \subset \bigcup_{b \in B^M} B_{r_b}(x^M_{b}) \cup \bigcup_{c \in C^M} B_{r_c}(x^M_{c}) \cup \bigcup_{d \in D^M} B_{(10^{-1}\tau)^M r}(x^M_{d}) \cup \bigcup_{e \in E^M} B_{r_e}(x^M_{e}). \\
    & \sum_{b \in B^M} r_b^k + \sum_{e \in E^M} r_e^k \leq C_1(n,\tau) \sum_{i = 0}^{M-1} (C_2(n) \tau)^i r^k. \\
    & \sum_{c \in C^M} r_c^k  \leq C_2(n) \tau \sum_{i = 0}^{M-1} (C_2(n) \tau)^i r^k. \\
    & \sum_{d \in D^M} (\frac{\tau}{10})^{kN} \leq (C_2(n) \tau)^M.
\end{split}
\end{equation}
Let $C_2(n) \tau < 1/10$, then $\{x_d^M\} \to S_d$ in the Hausdorff sense with $\cH^k(S_d) = 0$ and the proof is finished.
\end{proof}

By applying the covering of $c$-balls and $d$-balls, we can prove the  Neck decomposition theorem \ref{t:Neck_decompose}.

\begin{proof}[Proof of Theorem \ref{t:Neck_decompose}]
    By Lemma \ref{l:Equiv_Normalizaitons}, we have $\sup_{x \in B_1} N_{x,0}(1) \le C_0(n,\lambda,\alpha,\Lambda)$ for some integer $C_0$. We can cover $B_1$ using Vitali covering with radius $r= c(n,\lambda)\delta$. Then we can apply Proposition \ref{p:c_ball_cover} and \ref{p:d_ball_cover} to decompose $c$-balls and $d$-balls iteratively. Then we obtain the following covering 
\begin{equation}\label{e:decomposition}
     B_1 \subset  \bigcup_{a} \big(\cC_{0,a} \cup \cN_a \cap B_a(x_a) \big)  \cup \bigcup_{b } B_{r_b}(x_{b}) \cup \bigcup_{e} B_{r_e}(x_{e}) \cup S_d,
\end{equation}
where $\cH^k(S_d) = 0$ and $\sum_a r^k_a + \sum_b r^k_b + \sum_e r^k_e + \cH^k(\bigcup_{a} \cC_{0,a}) \leq C(n, \lambda, \alpha, \Lambda, \delta)$.

Next we deal with $e$-balls. According to the definition of $e$-ball, we know $\cV_{\delta,d,r_e}(x_e,0) = \emptyset$. In other words, $\sup_{x\in B_e} N_{x,0}(s) \le C_0(n,\lambda,\alpha,\Lambda) - \delta$ for some $s\ge 10^{-2}r_e$. By Lemma \ref{l:parabolic_frequency_near_Z} we get $\sup_{x \in B_e} N_{x,0}(s) \leq C_0(n,\lambda,\alpha,\Lambda) - 1 +\delta$ for $s \ge c(n,\lambda,\alpha,\Lambda,\delta)r_e$. And we can cover $B_{r_e}$ with Vitali covering with this radius $c(n,\lambda,\alpha,\Lambda,\delta)r_e$. Then apply the arguments above again to refine the covering lemmas of $c$-balls and $d$-balls and continue the process for finitely many (at most $C_0(n,\lambda,\alpha,\Lambda))$ times. Finally we obtain the result. 

The second conclusion comes from the Quantitative Cone-splitting theorem \ref{t:Almost_Splitting}. Actually, if $x \in \cN^a$ or $x\in B_{r_b}(x_b)$, by Corollary \ref{c:finitemanypinch} and Theorem \ref{t:Almost_Splitting} we have $u$ is $(k+1, \epsilon, r', x)$-symmetric for some small $r'$ provided $\eta$ and $\delta$ is sufficiently small. This proves that $\cS^k_{\epsilon}  \cap B_1 \subset  \Big( S_0 \cup \bigcup_{a} \cC_{0, a} \Big) $.
\end{proof}

\section{Proof of the Main Theorems}\label{s:7_proof}
In this section we will prove our main theorems.
\subsection{Proof of Theorem \ref{t:mainHolder} }

   For any $ \epsilon\le 1/10$ denote $\cS^{n-1}_\epsilon=\cS^{n-1}_\epsilon(u(\cdot,0))$. We will see that $Z_0\cap B_1 \subset \cS^{n-1}_{\epsilon}\cap B_1$ where $Z_0=\{x: u(x,0)=0\}$. To see this, it suffices to show that $u(y,0)\ne 0$ for any $y\notin \cS_\epsilon^{n-1}$.Let $y\notin \cS_\epsilon^{n-1}$ we have for some $s>0$ that $u(\cdot, 0)$ is $(n,\epsilon,s,y)$-symmetric. This means that $f(x)=\frac{u(y+sx,0)}{\left(\fint_{B_1}|u(y+sx,0)|^2dx\right)^{1/2}}$ is $\epsilon$-close to $1$. Hence $|f(0)-1|\le \epsilon$ and thus $u(y,0)\ne 0$ if $\epsilon<1/2$. Hence we have $Z_0\cap B_1 \subset \cS^{n-1}_{\epsilon}\cap B_1$. The Hausdorff estimate follows now directly since $\cH^{n-1}(\cS_\epsilon^{n-1}\cap B_1)\le C<\infty$ by Theorem \ref{t:Neck_decompose}. Hence we complete the proof. \qed

\subsection{Proof of Theorem \ref{t:mainLip} and Theorem \ref{t:non-increasingNodal}}

The following doubling estimate is taken from \cite{EFV}.
\begin{lemma}[Theorem 3 in \cite{EFV}]\label{l:doublingEFV}
       Let $u$ be a nonzero solution of \eqref{e:ParabolicE} in $Q_2$ satisfying \eqref{e:assumption_aijbc} and \eqref{e:Lipschitzcondition}. Then, there exists $N(n,\lambda)$ such that the following holds when $0<r\le (N\log(N\Theta))^{-1/2}$ that 
       \begin{align}
           \int_{Q_{2r}}u^2dxdt\le e^{N\log(N\Theta)\log(N\log(N\Theta))}\int_{Q_r}u^2dxdt,
       \end{align}
       where $\Theta=\frac{\int_{Q_2}u^2dxdt}{\int_{B_{1/2}}u^2(x,0)dx}$ and $Q_r=Q_r(0,0)=\{(x,t): |x|< r, -r^2< t\le 0\}$.
\end{lemma}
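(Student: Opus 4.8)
The statement is Theorem 3 of \cite{EFV}; the plan below is how one would reprove it, following a frequency-function argument in the spirit of Poon \cite{Poon} (compare the monotonicity computation in Section \ref{s:2_HE}) but adapted to merely Lipschitz leading coefficients. First I would perform a linear change of the spatial variables so that $a^{ij}(0,0) = \delta^{ij}$; the Lipschitz bound \eqref{e:Lipschitzcondition} then gives $|a^{ij}(x,t) - \delta^{ij}| \le C(n,\lambda)\,d\big((x,t),(0,0)\big)$ on $Q_2$, and the lower-order terms $b^i, c$ contribute only an error of size $C\lambda(|u| + |\nabla u|)$, so that by Remark \ref{r:Thm_holds_Inequality} we may treat $u$ as an approximate solution of the heat equation with these controlled error terms. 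I would then introduce, at the vertex $(0,0)$ and for $0 < r \le 1$, the backward-Gaussian quantities $H(r) = \int_{\{t = -r^2\}} u^2 G_{0,0}$ and $D(r) = 2r^2 \int_{\{t = -r^2\}} a^{ij}\,\partial_i u\,\partial_j u\, G_{0,0}$, together with the frequency $N(r) = D(r)/H(r)$.

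The core step is an \emph{almost-monotonicity} inequality for $N$. Differentiating $H$ and $D$ in $r$ and integrating by parts just as in the proof of Poon's theorem, the variable-coefficient and lower-order contributions appear as error integrals; since $G_{0,0}$ concentrates on $|x| \lesssim r$, $|t| \lesssim r^2$, the Lipschitz bound renders each such error at most $C(n,\lambda)\,r\,\big(N(r) + 1\big)$ times the corresponding main term. This yields a differential inequality of the form $\tfrac{d}{dr}\log\big(N(r) + C_1\big) \ge -C_2$, valid for $r \le r_1 := \big(N(n,\lambda)\log(N(n,\lambda)\Theta)\big)^{-1/2}$ — precisely the scale below which the error terms stay subordinate — and integrating gives the uniform bound $N(r) + C_1 \le e^{C_2}\big(N(1) + C_1\big)$ for all $r \le 1$. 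Because $a^{ij}$ is only Lipschitz, the equation and the integrations by parts in $t$ have to be justified weakly (Steklov averages or difference quotients in time), and obtaining the error estimate with the \emph{sharp} power of $r$, not merely qualitatively, is the technical heart of the argument.

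Granting the frequency bound, the doubling follows quickly. From $H'(r)/H(r) = 2N(r)/r + O\big(C(n,\lambda)\big)$, integrating over $[r, 2r]$ gives $\log\big(H(2r)/H(r)\big) \le C\big(1 + \sup_{r \le s \le 1} N(s)\big) \le C\big(1 + N(1)\big)$. It then remains to bound $N(1)$ — equivalently the Gaussian doubling at unit scale — by the solid ratio $\Theta$. The upper bound of Gaussian-weighted slice integrals by $\int_{Q_2}u^2$ is immediate from parabolic local boundedness and the exponential decay of $G_{0,0}$; the matching lower bound $H(s) \gtrsim \int_{B_{1/2}} u^2(\cdot,0)$ for $s$ of unit order uses backward propagation of the $L^2$-mass together with the near-monotonicity of $H$ just established. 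Thus $N(1) \le C(n,\lambda)\log(C\Theta)$, and feeding this into a dyadic iteration — there are $\sim \log(1/r_1) \sim \log\big(N\log(N\Theta)\big)$ dyadic scales between $1$ and $r_1$, each contributing a factor $e^{C N(1)} = e^{C\log(C\Theta)}$ — produces exactly the stated constant $e^{N\log(N\Theta)\log(N\log(N\Theta))}$; finally one converts the Gaussian-weighted slice estimate into the solid ratio $\int_{Q_{2r}}u^2 / \int_{Q_r}u^2$ by Fubini in $t$ and comparison of adjacent time slices via the monotonicity of $H$.

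The principal obstacle is twofold: (i) carrying out the almost-monotonicity computation at Lipschitz regularity and extracting the error bound with the precise scaling that yields the \emph{explicit} (rather than qualitative) dependence on $\Theta$; and (ii) the careful bookkeeping through the dyadic iteration needed to land exactly on the triple-logarithmic form of the constant. A secondary, more routine, difficulty is the repeated passage between Gaussian-weighted single-time-slice integrals and solid integrals over backward cylinders, which has to be bootstrapped with the doubling inequality itself.
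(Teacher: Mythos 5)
You are sketching a proof of a result the paper does not prove at all: Lemma \ref{l:doublingEFV} is simply quoted as Theorem 3 of \cite{EFV}, so there is no internal argument to compare with, and the cited proof in Escauriaza--Fern\'andez--Vessella runs through Carleman-type estimates and two-sphere/one-cylinder inequalities rather than the Poon-style frequency monotonicity you propose. Your route is not unreasonable in spirit, but as written it has two concrete gaps. First, localization: $u$ is only defined on $Q_2$, so the quantities $H(r)=\int_{\{t=-r^2\}}u^2G_{0,0}$ and $D(r)$ are not well defined over all of $\RR^n$, and once you restrict to $B_2$ (or insert a cutoff) every integration by parts in the Poon computation produces boundary terms that are not small and are not controlled by $H$, $D$, or $\Theta$ without further work. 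This is precisely why the present paper develops a \emph{localized} frequency (Definition \ref{d:localizedfrequency}) and only obtains \emph{almost} monotonicity via the caloric approximation machinery of Section \ref{s:3_Para}; your sketch treats the boundary issue as if it were absent. Relatedly, the claim that each variable-coefficient error is bounded by $C(n,\lambda)\,r\,(N(r)+1)$ times the main term is exactly the technical heart you defer: differentiating $G_{0,0}$ brings down factors $|x|/r^2$, and the resulting integrals such as $\int \frac{|x|^2}{r^2}|\nabla u|^2G_{0,0}$ are not controlled by $E(r)$ alone, so the ``subordination'' of errors is an assertion, not an estimate.

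Second, the passage from the frequency bound to the stated constant is not justified. You need $N(1)\le C\log(C\Theta)$ with $\Theta=\int_{Q_2}u^2/\int_{B_{1/2}}u^2(\cdot,0)$, and for that you claim $H(s)\gtrsim\int_{B_{1/2}}u^2(\cdot,0)$ by ``backward propagation of the $L^2$-mass''; but $H_{0,0}(r)\to u(0,0)^2$ as $r\to 0$, not to any spatial $L^2$ norm, so a lower bound of this type requires moving the Gaussian pole over $x_0\in B_{1/2}$, an averaging/covering argument, and care to avoid circularity, since the near-monotonicity of $H$ you want to invoke is itself part of what is being established. Finally, your dyadic bookkeeping estimates the accumulated ratio from scale $1$ down to $r_1$, whereas the lemma asserts a single doubling $\int_{Q_{2r}}u^2\le e^{N\log(N\Theta)\log(N\log(N\Theta))}\int_{Q_r}u^2$ uniformly for $r\le(N\log(N\Theta))^{-1/2}$; the triple-logarithmic form of the constant is an output of the Carleman/iteration scheme in \cite{EFV}, and your accounting does not actually land on it. In short, the proposal is a heuristic outline of a genuinely different (frequency-based) approach whose main difficulties -- localization of the Gaussian quantities and the quantitative error estimate at Lipschitz regularity -- are named but not resolved, so it cannot be accepted as a proof of the quoted lemma.
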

As a direct consequence we can get the following doubling estimate 
\begin{corollary}\label{c:doubling}
       Let $u$ be a solution of \eqref{e:ParabolicE} in $Q_2$ satisfying \eqref{e:assumption_aijbc} and \eqref{e:Lipschitzcondition}. If $u$ is nonzero at $t=0$, then there exists $\Lambda(n,\lambda,\Theta)$ and $r_0(n,\lambda,\Theta)$ such that for any $x\in B_1$ and $0<r\le r_0$ that 
       \begin{align}
           \int_{Q_{2r}(x,0)}u^2dxdt\le \Lambda\int_{Q_r(x,0)}u^2dxdt.
       \end{align}
       where  $\Theta=\frac{\int_{Q_2}u^2dxdt}{\int_{B_{3/2}}u^2(x,0)dx}$.
\end{corollary}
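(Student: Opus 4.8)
The goal of Corollary \ref{c:doubling} is to upgrade the single-scale, centered-at-origin doubling bound of Lemma \ref{l:doublingEFV} into a uniform doubling bound at every point $x \in B_1$ and every small radius $r \le r_0$, with constants depending only on $n, \lambda$ and the global ratio $\Theta = \int_{Q_2} u^2 / \int_{B_{3/2}} u^2(\cdot, 0)$. The plan is to apply Lemma \ref{l:doublingEFV} after translating and rescaling so that the parabolic cylinder $Q_{2r}(x,0)$ plays the role of $Q_2$, and then to control the local parameter $\Theta_x$ that appears in the translated estimate by the global $\Theta$.

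First I would fix $x \in B_1$ and a small radius $\rho$ (to be chosen of size comparable to $\mathrm{dist}(B_1, \partial B_2) = 1$), and consider the rescaled function $v(y,s) = u(x + \rho y, \rho^2 s)$ on $Q_2$, which solves a parabolic equation of the same form with coefficients $a^{ij}(x+\rho y, \rho^2 s)$; one checks that the ellipticity bounds \eqref{e:assumption_aijbc} are preserved and the Lipschitz bound \eqref{e:Lipschitzcondition} is preserved (indeed improved by a factor $\rho$) under this parabolic scaling. Applying Lemma \ref{l:doublingEFV} to $v$ gives, for $0 < r \le (N \log(N\Theta_x))^{-1/2}$,
\begin{equation*}
\int_{Q_{2r}} v^2 \le e^{N\log(N\Theta_x)\log(N\log(N\Theta_x))} \int_{Q_r} v^2, \quad \Theta_x = \frac{\int_{Q_2} v^2}{\int_{B_{1/2}} v^2(\cdot, 0)},
\end{equation*}
which after unscaling is exactly the desired inequality on $Q_{\rho \cdot 2r}(x,0)$ versus $Q_{\rho r}(x,0)$, with $\Lambda = e^{N\log(N\Theta_x)\log(N\log(N\Theta_x))}$.

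The main point, and the only real work, is to bound $\Theta_x$ by a function of the global $\Theta$. The numerator is easy: $\int_{Q_2} v^2 = \rho^{-n-2}\int_{Q_{2\rho}(x,0)} u^2 \le \rho^{-n-2}\int_{Q_2} u^2$ as long as $Q_{2\rho}(x,0) \subset Q_2$, which holds for $\rho$ small. The denominator $\int_{B_{1/2}} v^2(\cdot,0) = \rho^{-n}\int_{B_{\rho/2}(x)} u^2(\cdot,0)$ requires a lower bound on the $L^2$-mass of $u(\cdot, 0)$ in a ball of radius $\rho/2$ around an arbitrary $x \in B_1$; this is where I expect the difficulty to lie, since a priori $u(\cdot, 0)$ could be very small near $x$. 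To handle this I would first apply Lemma \ref{l:doublingEFV} (centered at the origin, or at a fixed good point) together with a covering/propagation-of-smallness argument: the standard consequence of the three-ball / doubling inequality is that $\int_{B_{\rho/2}(x)} u^2(\cdot,0) \ge c(n,\lambda,\Theta,\rho)\int_{B_{3/2}} u^2(\cdot,0) > 0$ for every $x \in B_1$, because the doubling estimate at scale-independent bounded frequency rules out the solution vanishing to infinite order anywhere on the time slice. Chaining cylinders of controlled overlap from a ball where $u(\cdot,0)$ has definite mass to the ball $B_{\rho/2}(x)$, and using the parabolic doubling at each step, yields such a bound with $c$ depending only on $n, \lambda, \Theta$ once $\rho$ is fixed in terms of those quantities. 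Combining, $\Theta_x \le C(n,\lambda,\Theta)$, so $\Lambda$ depends only on $n, \lambda, \Theta$; taking $r_0 = \rho \cdot (N\log(N\Theta_x))^{-1/2}$ finishes the proof. (Alternatively, one may invoke directly that solutions of \eqref{e:ParabolicE}--\eqref{e:Lipschitzcondition} with $u(\cdot,0) \not\equiv 0$ satisfy a quantitative lower bound of this type as a known consequence of \cite{EFV} or the unique continuation references \cite{EsFr,EV,Linunique}, shortening this step considerably.)
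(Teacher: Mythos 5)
Your starting point---translate and parabolically rescale so that Lemma \ref{l:doublingEFV} can be applied at an arbitrary center $x\in B_1$, and then control the local ratio $\Theta_x$---matches the paper's, and you have correctly identified the denominator $\int_{B_{\rho/2}(x)}u^2(\cdot,0)$ as the only real issue. The gap is in how you close it. The chaining step is circular as written: to apply Lemma \ref{l:doublingEFV} at the next center of the chain you already need a quantitative lower bound on the $t=0$ slice mass near that center (that is exactly the quantity $\Theta$ entering the lemma), and even granting doubling along the chain, the lemma compares only space--time integrals over cylinders; doubling of $\int_{Q_r}u^2$ does not by itself transfer a lower bound to the codimension-one slice $\{t=0\}$ at a new spatial location. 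The missing ingredient is a quantitative slice-to-cylinder (``two-sphere one-cylinder'') estimate, which is precisely the nontrivial content of \cite{EFV} behind Lemma \ref{l:doublingEFV}; your parenthetical fallback of invoking \cite{EFV} directly is the right fix, but as stated it is an appeal to an unspecified result rather than an argument.

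For comparison, the paper does not attempt any quantitative propagation of smallness; it argues by dichotomy. If $u(\cdot,0)$ vanishes on no open subset of $B_1$, then $\int_{B_{1/2}(x)}u^2(\cdot,0)>0$ for every $x$ and Lemma \ref{l:doublingEFV} is applied directly at each translated center. If $u(\cdot,0)$ vanishes on some open subset, the paper shows $u(\cdot,0)\equiv 0$, contradicting the hypothesis: either $u$ vanishes to infinite order in the space--time sense at a suitable point, and \cite{EsFr} forces $u(\cdot,0)\equiv 0$, or the tangent map there is a nonzero caloric polynomial (as in Lemma \ref{l:Caloric_Approx}) which nevertheless vanishes on the $t=0$ slice, contradicting backward uniqueness for the heat equation \cite{Poon,WuZh}. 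Note that your instinct about uniformity is well placed: the case argument only yields positivity of the local slice mass, so if you do carry out your quantitative route via the two-sphere one-cylinder inequalities of \cite{EFV}, it would make the claimed dependence of $\Lambda$ on $(n,\lambda,\Theta)$ alone more transparent; but without that input your main chaining argument does not stand on its own.
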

\begin{proof}
    If $u(\cdot,0)$ is not vanishing in any open subset of $B_1$, we can directly apply Lemma \ref{l:doublingEFV} by replacing center $0$ with given $x\in B_1$. If $u(\cdot,0)$ is vanishing in an open subset $U\subset B_1$, we will show that $u(\cdot,0)\equiv 0$ which contradicts to the assumption that $u$ is nonzero at $t=0$. To see this, since $u(\cdot,0)$ is nonzero, there exists some  $x_0\in U$ and $\hat{r}>0$ such that $u(\cdot,0)$ is nonzero in $B_{\hat{r}}(x_0)$ and $B_{2\hat{r}}(x_0)\subset B_2$. Applying Lemma \ref{l:doublingEFV} we get doubling estimate of $u$ at center $(x_0,0)$. We need to consider the following two cases. First, if $\int_{Q_r(x_0,0)}u^2dxdt=0$ for some small $r\le \hat{r}$, then $u$ vanishes to infinite order at $(x_0,0)$ in space-time sense as in  \cite{EsFr}. Therefore, by \cite{EsFr} we have that $u(\cdot,0)\equiv 0$.  On the other hand, if $\int_{Q_r(x_0,0)}u^2dxdt\ne 0$ for all $r\le \hat{r}$, consider the tangent map of $u$ at $(x_0,0)$. One can argue as Lemma \ref{l:Caloric_Approx} to get that the tangent map is a nonzero caloric polynomial. However $u(\cdot,0)$ is zero near $x_0$ which implies that the tangent map is zero at $t=0$. By the backward uniqueness of heat equation(see \cite{Poon,WuZh}), this is a contradiction since tangent map is a nonzero caloric polynomial. Thus we finish the proof.
\end{proof}

\begin{proof}[Proof of Theorem \ref{t:mainLip}]
    Theorem \ref{t:mainLip} follows directly by Corollary \ref{c:doubling} and Theorem \ref{t:mainHolder}.
\end{proof}

Let us now turn to the proof of  Theorem \ref{t:non-increasingNodal}. The following result is standard.
\begin{lemma}\label{l:dimlowerboundNodal}
 Let $u$ be a continuous function in $B_1\subset \RR^n$. Assume both $\{x:u(x)>0\}$ and $\{x: u(x)<0\}$ are not empty.  Then 
 \begin{align}
     \dim Z_u\ge n-1,
 \end{align}
 where $Z_u=\{x: u(x)=0\}.$
\end{lemma}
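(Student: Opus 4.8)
Lemma~\ref{l:dimlowerboundNodal} is a classical topological fact, and the plan is to reduce it to a connectivity/separation statement. The key observation is that $Z_u$ separates the open set $B_1$ in the sense that $B_1 \setminus Z_u$ is the disjoint union of the two nonempty open sets $U_+ = \{x : u(x) > 0\}$ and $U_- = \{x : u(x) < 0\}$. I would first record this, noting that $B_1$ itself is connected, so $Z_u$ cannot be empty; more is true, namely that $Z_u$ must be large enough to disconnect $B_1$.

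The cleanest route is via a dimension-theoretic separation theorem: if a closed subset $F$ of $\RR^n$ (or of a connected open set $\Omega \subset \RR^n$) separates $\Omega$ into at least two components, then $\dim F \ge n-1$, where $\dim$ denotes topological (covering) dimension. This is a standard result in dimension theory (see e.g. Hurewicz--Wallman, \emph{Dimension Theory}, or Engelking). Since for subsets of $\RR^n$ the topological dimension is dominated by the Hausdorff dimension, it follows that $\dim_{\mathcal H} Z_u \ge \dim_{\text{top}} Z_u \ge n-1$. So the proof is essentially: cite the separation theorem, verify its hypothesis ($Z_u$ closed since $u$ continuous, and $B_1 \setminus Z_u = U_+ \sqcup U_-$ with both pieces open and nonempty by assumption), and conclude.

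Alternatively, if one prefers a self-contained argument avoiding the abstract separation theorem, I would argue by projection: suppose for contradiction that $\dim_{\mathcal H} Z_u < n-1$. Then for almost every line $\ell$ parallel to a coordinate axis (by a Fubini-type argument on Hausdorff measure, the slices $Z_u \cap \ell$ have $\mathcal H^0$-measure zero, i.e. are \emph{not} dense enough — more precisely, the projection $\pi: Z_u \to \RR^{n-1}$ onto a coordinate hyperplane has image of $\mathcal H^{n-1}$-measure zero), so one can find a segment $\gamma$ joining a point of $U_+$ to a point of $U_-$ that misses $Z_u$; by the intermediate value theorem applied to $u$ along $\gamma$ this forces $u$ to vanish somewhere on $\gamma$, a contradiction. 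The main obstacle in this second approach is making the "find a path avoiding $Z_u$" step rigorous when $Z_u$ is merely a null set for $\mathcal H^{n-1}$ — one needs that a set of Hausdorff dimension $< n-1$ cannot disconnect $\RR^n$, which is again exactly the content of the separation theorem, so there is no real gain in avoiding it.

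Given that this lemma is labeled "standard" in the text, I expect the intended proof is just a one- or two-line invocation of the separation theorem from dimension theory together with the monotonicity $\dim_{\text{top}} \le \dim_{\mathcal H}$ on Euclidean subsets. The only point requiring a sentence of care is checking the hypotheses: $Z_u$ is relatively closed in $B_1$ because $u$ is continuous, and $B_1 \setminus Z_u$ is disconnected because it splits as the union of the two disjoint nonempty relatively open sets $\{u>0\}$ and $\{u<0\}$. I would present it in that compact form.
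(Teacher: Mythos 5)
Your first route is correct and complete: $Z_u$ is relatively closed in $B_1$, and $B_1\setminus Z_u$ is the disjoint union of the two nonempty open sets $\{u>0\}$ and $\{u<0\}$, so the separation theorem of topological dimension theory gives $\dim_{\mathrm{top}} Z_u\ge n-1$, and the Szpilrajn inequality $\dim_{\mathrm{top}}\le\dim_{\mathcal H}$ finishes the proof. This is, however, a genuinely different route from the paper. The paper argues elementarily: pick $x_0$ with $u(x_0)>0$ and $y_0$ with $u(y_0)<0$, choose $r_0$ so that $u$ has a fixed sign on $B_{r_0}(x_0)$ and on $B_{r_0}(y_0)$, and foliate the region between the two balls by segments parallel to $v=x_0-y_0$; the intermediate value theorem puts a zero $z_x$ of $u$ on each segment, and since distinct parallel segments stay at distance at least $|x-y|$ apart, the map $x\mapsto z_x$ is distance-expanding, so $\mathcal H^{n-1}(Z_u)\ge c(n)r_0^{n-1}>0$. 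This buys a stronger, quantitative conclusion (locally positive $(n-1)$-measure, not just dimension) with no external machinery, while your argument buys brevity at the cost of citing two nontrivial theorems. One remark on your second sketch: it does not in fact circle back to the separation theorem. If $\dim_{\mathcal H}Z_u<n-1$, then the orthogonal projection $\pi$ onto the hyperplane through $x_0$ perpendicular to $v$ is $1$-Lipschitz, so $\mathcal H^{n-1}(\pi(Z_u))=0$; but the IVT argument above shows $\pi(Z_u)$ contains the full $(n-1)$-disc $H_1\cap B_{r_0}(x_0)$, a contradiction. Completed this way, your alternative is precisely the paper's proof, and needs nothing beyond the IVT and the fact that Lipschitz maps do not increase Hausdorff measure.
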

\begin{proof}
   Assume $x_0,y_0\in B_1$ satisfy $u(x_0)>0$ and $u(y_0)<0$. Since $u\in C^0$, then there exists $r_0>0$ such that $\inf_{B_{r_0}(x_0)} u(x)>0$ and $\sup_{B_{r_0}(y_0)} u(x)<0$. Consider two hyperplanes $H_1$ and $H_2$ satisfying
   \begin{itemize}
       \item $H_1$ and $H_2$ are perpendicular to $v:=x_0-y_0$
       \item $x_0\in H_1$ and $y_0\in H_2$.
   \end{itemize}
  Denote line $\ell_{x,y}$ to be the line segment which is parallel to $v$ and $\ell_{x,y}\cap H_1=\{x\}$ and $\ell_{x,y}\cap H_2=\{y\}$.  For any $x\in H_1\cap B_{r_0}(x_0)$ there exists a unique $y_x\in H_2\cap B_{r_0}(y_0)$ such that $\ell_{x,y_x}// v$.  
 Noting that $u|_{H_1\cap B_{r_0}(x_0)}>0$ and  $u|_{H_1\cap B_{r_0}(y_0)}<0$ and $u$ is continuous, there must exist $z_x\in \ell_{x,y_x}$ such that $u(z_x)=0$. Define a map $\tau: H_1\cap B_{r_0}(x_0)\to Z_u$ by $\tau(x)=z_x$. Then $|\tau(x)-\tau(y)|\ge |x-y|$.  Hence by the definition of Hausdorff measure, we have 
 $$\cH^{n-1}(\{z: u(z)=0, z\in \ell_{x,y_x}, x\in H_1\cap B_{r_0}(x_0)\})\ge \cH^{n-1}(H_1\cap B_{r_0}(x_0))\ge c(n)r_0^{n-1}>0.$$
 In particular, $\dim Z_u\ge n-1.$
\end{proof}
Now we are ready to prove Theorem \ref{t:non-increasingNodal}.
\begin{proof}[Proof of Theorem \ref{t:non-increasingNodal}]
       By  the standard backward uniqueness (see  \cite{CM22,EsFr,Linunique,WuZh}), we know that $u$ is not vanishing at any $t>-4$ otherwise $u\equiv 0$ in $Q_2$. Therefore by  Theorem \ref{t:mainLip} we know that $\dim Z_t\le n-1$ for any $t>-4$ (see also \cite{Chen98b}). Therefore, we only need to consider the following two cases. 

    \textbf{Case 1:} Assume $s=-4$ and $t>-4$. If $\dim Z_{-4}\ge n-1$, then $\dim Z_t\le n-1\le \dim Z_{-4}$.  If $\dim Z_{-4}<n-1$, then by Lemma \ref{l:dimlowerboundNodal} we can assume $u(x,-4)\ge 0$. By maximum principle Theorem \ref{t:maximum} we get $u(x,t)>0$ for any $t>-4$. In particular $Z_t=\emptyset$. Hence $\dim Z_t\le \dim Z_s$. 

    \textbf{Case 2:} Assume $-4<s<t\le 0$. We have $\dim Z_t\le n-1$ and $\dim Z_s\le n-1$. If $\dim Z_s=n-1$ then we are done. If $\dim Z_s<n-1$, the same argument as above we get $Z_t=\emptyset$ and we are also done. Thus we complete the proof.
\end{proof}


\begin{thebibliography}{HHHH}

\bibitem{AV} Alessandrini, G., \& Vessela, S. (1988). Local behaviour of solutions to parabolic equations. Communications in Partial Differential Equations, 13(9), 1041-1058.
 
\bibitem{AAG} Altschuler, S., Angenent, S. B., \& Giga, Y. (1995). Mean curvature flow through singularities for surfaces of rotation. Journal of Geometric Analysis, 5, 293–358.


\bibitem{An} Angenent, S. (1988). The zero set of a solution of a parabolic equation. Journal für die reine und angewandte Mathematik, 390, 79-96.

\bibitem{An2} Angenent, S. B. (1991). On the formation of singularities in the curve shortening problem. Journal of Differential Geometry, 33, 601-633.

\bibitem{An05} Angenent, S. (2005). Curve shortening and the topology of closed geodesics on surfaces. Annals of Mathematics, 162(3), 1185-1239.

\bibitem{BNS}Bruè, E., Naber, A. \& Semola, D., (2022). Boundary regularity and stability for spaces with Ricci bounded below. Inventiones mathematicae, 228(2), pp.777-891

\bibitem {CJN} Cheeger, J., Jiang, W., \& Naber, A. (2021). Rectifiability of singular sets of noncollapsed limit spaces with Ricci curvature bounded below. Annals of Mathematics, 193(2), 407-538.

 \bibitem {CN} Cheeger, J., \& Naber, A. (2013). Lower bounds on Ricci curvature and quantitative behavior of singular sets. Inventiones mathematicae, 191, 321-339.
 

 \bibitem {CNV} Cheeger, J., Naber, A., \& Valtorta, D. (2015). Critical sets of elliptic equations. Communications on Pure and Applied Mathematics, 68(2), 173-209.

\bibitem{Chen98a} Chen, X.-Y. (1998). A strong unique continuation theorem for parabolic equations. Mathematische Annalen, 311(4), 603-630.

\bibitem{Chen98b} Chen, X.-Y. (1998). On the scaling limits at zeros of solutions of parabolic equations. Journal of Differential Equations, 147(2), 355-382.

 \bibitem{CZ} Chou, K.-S., \& Zhu, X.-P. (1998). Shortening complete plane curves. Journal of Differential Geometry, 50(3), 471-504.

\bibitem{CM} Colding, T. H., \& Minicozzi, W. P. (2011). Lower bounds for nodal sets of eigenfunctions. Communications in Mathematical Physics, 306(3), 777-784.

\bibitem{CMcaloric} Colding, T. H., \& Minicozzi, W. P. (2021). Optimal bounds for ancient caloric functions. Duke Mathematical Journal, 170(18), 4171-4182.

\bibitem{CM22} Colding, T. H., \& Minicozzi, W. P. (2022). Parabolic frequency on manifolds. International Mathematics Research Notices, 2022(15), 11878-11890.



\bibitem{CHH} Choi, K., Haslhofer, R., \& Hershkovits, O. (2022). Ancient low-entropy flows, mean-convex neighborhoods, and uniqueness. Acta Mathematica, 228(2), 217-301.

\bibitem{DHS} Daskalopoulos, P., Hamilton, R., \& Sesum, N. (2010). Classification of compact ancient solutions to the curve shortening flow. Journal of Differential Geometry, 84(3), 455-464.

\bibitem{DP} Del Santo, D., \& Prizzi, M. (2005). Backward uniqueness for parabolic operators whose coefficients are non-Lipschitz continuous in time. Journal de Mathématiques Pures et Appliquées, 84, 471-491.

 \bibitem {Dong} Dong, R.-T. (1992). Nodal sets of eigenfunctions on Riemann surfaces. Journal of Differential Geometry, 36, 493-506.

 \bibitem {DF1} Donnelly, H., \& Fefferman, C. (1988). Nodal sets of eigenfunctions on Riemannian manifolds. Inventiones mathematicae, 93(1), 161-183.

 \bibitem {DF3} Donnelly, H., \& Fefferman, C. (1990). Nodal sets for eigenfunctions of the Laplacian on surfaces. Journal of the American Mathematical Society, 3(2), 333-353.

\bibitem{EsFr} Escauriaza, L., \& Fernández, F. J. (2003). Unique continuation for parabolic operators. Arkiv för matematik, 41(1), 35-60.

\bibitem{EFV} Escauriaza, L., Fernández, F. J., \& Vessella, S. (2006). Doubling properties of caloric functions. Applicable Analysis, 85(1-3), 205-223.

\bibitem{ESV} Escauriaza, L., Seregin, G. A., \& Šverák, V. (2003). Backward uniqueness for parabolic equations. Archive for Rational Mechanics and Analysis, 169(2), 147-157.

\bibitem{EV}Escauriaza, L., \& Vega, L.(2001) Carleman inequalities and the heat operator. II. Indiana Univ. Math. J. 50, no. 3, 1149–1169.

\bibitem{Gr}Grayson, M.(1989). Shortening Embedded Curves.  Annals of Mathematics, 129(1) 71-111.

\bibitem{Han98} Han, Q. (1998). On the Schauder estimates of solutions to parabolic equations. Annali della Scuola Normale Superiore di Pisa - Classe di Scienze, 27(1), 1-26.

\bibitem{HHL} Han, Q., Hardt, R., \& Lin, F. (1998). Geometric measure of singular sets of elliptic equations. Communications on Pure and Applied Mathematics, 51(11-12), 1425-1443.

\bibitem{HL94a} Han, Q., \& Lin, F.-H. (1994). On the geometric measure of nodal sets of solutions. Journal of Partial Differential Equations, 7(2), 111-131.



\bibitem{HLparabolic} Han, Q., \& Lin, F.-H. (1994). Nodal sets of solutions of parabolic equations. II. Communications on Pure and Applied Mathematics, 47(9), 1219-1238.

\bibitem{HHN} Hardt, R., Hoffmann-Ostenhof, M., Hoffmann-Ostenhof, T., \& Nadirashvili, N. (1999). Critical sets of solutions to elliptic equations. Journal of Differential Geometry, 51(2), 359-373.

\bibitem{HS} Hardt, R., \& Simon, L. (1989). Nodal sets for solutions of elliptic equations. Journal of Differential Geometry, 30(2), 505-522.

\bibitem{HJ} Huang, Y., \& Jiang, W. (2023). Volume estimates for singular sets and critical sets of elliptic equations with Hölder coefficients. arXiv preprint arXiv:2309.08089.


\bibitem{JN} Jiang, W., \& Naber, A. (2021). $L^2$ curvature bounds on manifolds with bounded Ricci curvature. Annals of Mathematics, 193(1), 107-222.

\bibitem{KZZ} Kenig, C. E., Zhu, J., \& Zhuge, J. (2022). Doubling inequalities and nodal sets in periodic elliptic homogenization. Communications in Partial Differential Equations, 47(3), 549-584.

\bibitem{Lin91} Lin, F.-H. (1991). Nodal sets of solutions of elliptic and parabolic equations. Communications on Pure and Applied Mathematics, 44(3), 287-308.

\bibitem{Linunique} Lin, F.-H. (1990). A uniqueness theorem for parabolic equations. Communications on Pure and Applied Mathematics, 43(1), 127-136.

\bibitem{LinShen19} Lin, F., \& Shen, Z. (2019). Nodal sets and doubling conditions in elliptic homogenization. Acta Mathematica Sinica, English Series, 35(6), 815-831.

\bibitem{LM} Lions, J. L., \& Malgrange, B. (1960). Sur l’unicité rétrograde dans les problèmes mixtes paraboliques. Mathematica Scandinavica, 8, 277-286.

\bibitem{LTY} Liu, F., Tian, L., \& Yang, X. (2024). Measure upper bounds of nodal sets of Robin eigenfunctions. Mathematische Zeitschrift, 306(1), Paper No. 14, 14 pp.

\bibitem{Loglower} Logunov, A. (2018). Nodal sets of Laplace eigenfunctions: proof of Nadirashvili's conjecture and of the lower bound in Yau's conjecture. Annals of Mathematics, 187(1), 241-262.

\bibitem{Logupper} Logunov, A. (2018). Nodal sets of Laplace eigenfunctions: polynomial upper estimates of the Hausdorff measure. Annals of Mathematics, 187(1), 221-239.

\bibitem{LMNN} Logunov, A., Malinnikova, E., Nadirashvili, N., \& Nazarov, F. (2021). The sharp upper bound for the area of the nodal sets of Dirichlet Laplace eigenfunctions. Geometric and Functional Analysis, 31(5), 1219-1244.

\bibitem {MS} Malecki, J., \& Serafin, G. (2020). Dirichlet heat kernel for the Laplacian in a ball. Potential Analysis, 52, 545-563.

\bibitem{Man} Mandache, N. (1996). On a counterexample concerning unique continuation for elliptic equations in divergence form. Matematicheskaya Fizika, Analiz, Geometriya, 3(3-4), 308-331.

\bibitem{Miller} Miller, K. (1974). Non-unique continuation for uniformly parabolic and elliptic equations in self-adjoint divergence form with Hölder-continuous coefficients. Archive for Rational Mechanics and Analysis, 54, 105-117.

\bibitem{Mo} Moser, J. (1964). A Harnack inequality for parabolic differential equations. Communications on Pure and Applied Mathematics, 17, 101-134.


 \bibitem {NV} Naber, A., \& Valtorta, D. (2017). Volume estimates on the critical sets of solutions to elliptic PDEs. Communications on Pure and Applied Mathematics, 70(10), 1835-1897.

\bibitem{NV19} Naber, A.,\& Valtorta, D. (2019). Energy identity for stationary Yang Mills. Invent. math. 216, 847–925.

\bibitem{NV24}Naber, A.,\& Valtorta, D. (2024). Energy Identity for Stationary Harmonic Maps. arXiv:2401.02242v1 [math.AP]. 

\bibitem{Pa} Paronetto, F. (2023). Harnack inequality for parabolic equations with coefficients depending on time. Advances in Calculus of Variations, 16(4), 791-821.

\bibitem {P} Plis, A. (1963). On non-uniqueness in Cauchy problem for an elliptic second order differential equation. Bulletin of the Polish Academy of Sciences: Series of Sciences, Mathematics, Astronomy and Physics, 11, 95-100.

\bibitem{Poon} Poon, C. C. (1996). Unique continuation for parabolic equations. Communications in Partial Differential Equations, 21(3-4), 521-539.

\bibitem{SZ} Sogge, C. D., \& Zelditch, S. (2011). Lower bounds on the Hausdorff measure of nodal sets. Mathematical Research Letters, 18(1), 25-37.

\bibitem{WuZh} Wu, J., \& Zhang, L. (2019). Backward uniqueness for general parabolic operators in the whole space. Calculus of Variations and Partial Differential Equations, 58(4), Paper No. 155, 19 pp.


\end{thebibliography}
\end{document}